\ifpdf \usepackage[pdftex,pdfstartview=FitH,pdfpagemode=none,colorlinks,linkcolor=blue]{hyperref} \else  \usepackage[hypertex]{hyperref} \fi
\newtheorem{theorem}{Theorem}[section]
\newtheorem{lemma}[theorem]{Lemma}
\newtheorem{corollary}[theorem]{Corollary}
\newtheorem{definition}[theorem]{Definition}
\newtheorem{condition}[theorem]{Condition}
\newtheorem{notation}[theorem]{Notation}
\newtheorem{remark}[theorem]{Remark}
\newtheorem{proposition}[theorem]{Proposition}
\newcounter{subconst}[subsection]
\newcommand{\demosubconst}{\iota_1,\iota_2,\dots}
\newcommand{\declaresubconst}[1]{{\refstepcounter{subconst}\label{#1}}}
\newcounter{const}
\newcommand{\democonst}{\kappa_1,\kappa_2,\dots}
\newcommand{\declareconst}[1]{{\refstepcounter{const}\label{#1}}}
\newcounter{CONST}
\newcommand{\demoCONST}{c_1,c_2,\dots}
\newcommand{\declareCONST}[1]{{\refstepcounter{CONST}\label{#1}}}
\numberwithin{equation}{section}
\begin{document}

\title{Quantitative Density under Higher Rank Abelian Algebraic Toral Actions}
\author{Zhiren Wang\footnote{Department of Mathematics, Princeton University, Princeton, NJ 08544, USA; {\tt zhirenw@math.princeton.edu}}}

\date{}

\maketitle{\thispagestyle{empty}\begin{abstract}We generalize Bourgain-Lindenstrauss-Michel-Venkatesh's recent one-dimensional quantitative density result to abelian algebraic actions on higher dimensional tori. Up to finite index, the group actions that we study are conjugate to the action of $U_K$, the group of units of some non-CM number field $K$, on a compact quotient of $K\otimes_{\mathbb Q}\mathbb R$. In such a setting, we investigate how fast the orbit of a generic point can become dense in the torus. This effectivizes a special case of a theorem of Berend; and is deduced from a parallel measure-theoretical statement which effectivizes a special case of a result by Katok-Spatzier. In addition, we specify two numerical invariants of the group action that determine the quantitative behavior, which have number-theoretical significance.\end{abstract}\newpage}

{\thispagestyle{empty}\small\tableofcontents\newpage}

\section{Introduction}
\setcounter{page}{1}

\subsection{Background}

\hskip\parindent The rigidity of higher rank abelian algebraic actions has since long been studied in various forms. The first result of this type was achieved by Furstenberg's disjointness theory :
\begin{theorem}{\rm (Furstenberg \cite{F67}, '67)} Any minimal closed subset of $\mathbb T=\mathbb R/\mathbb Z$ simultaneously invariant under $\times 2$ and $\times 3$ is either $\mathbb T$ itself or a finite set of rational points.\end{theorem}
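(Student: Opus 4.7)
The plan is to dichotomize on whether $X$ is finite. If $X$ is finite, then for each $x\in X$ the $\times 2$-orbit is finite, so there exist $m>n\geq 0$ with $2^m x\equiv 2^n x\pmod 1$; this forces $(2^m-2^n)x\in\mathbb Z$, whence $x\in\mathbb Q$. So finite minimal $\Sigma$-invariant sets consist of rationals, where $\Sigma=\{2^a 3^b:a,b\geq 0\}$ denotes the multiplicative semigroup generated by $2$ and $3$.

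For the main case where $X$ is infinite, I would show $X=\mathbb T$. Compactness produces distinct $x_n,y_n\in X$ with $\epsilon_n:=x_n-y_n\to 0$, and by $\Sigma$-invariance the difference set $D:=X-X$ is closed, $\Sigma$-invariant, and has $0$ as a non-isolated point. The central ingredient is Furstenberg's key lemma: any closed $\Sigma$-invariant set $A\subset\mathbb T$ having $0$ as an accumulation point equals $\mathbb T$. This lemma rests on the fact that $\log 2/\log 3\notin\mathbb Q$, which implies that for any small $\epsilon>0$ the set $\{2^a 3^b\epsilon\bmod 1:a,b\geq 0\}$ is dense in $\mathbb T$: taking logarithms, $\{a\log 2+b\log 3:a,b\geq 0\}$ is dense in $[L,\infty)$ for every $L$, and since the spacings $\log(k+t)-\log(k+t-1)$ tend to $0$ as $k\to\infty$, one can realize any target residue modulo $1$. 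Applying the lemma to $D$ gives $X-X=\mathbb T$.

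The main obstacle is the final bootstrap from $X-X=\mathbb T$ to $X=\mathbb T$. Here I would invoke minimality to note that $X$ is perfect, fix $x_0\in X$ together with a sequence $x_n\to x_0$ in $X$ satisfying $\epsilon_n=x_n-x_0\to 0$, and then along the orbit of $x_0$ observe that the pairs $(2^a 3^b x_n,2^a 3^b x_0)\in X\times X$ can be arranged to realize an arbitrary prescribed difference modulo $1$. A compactness argument extracting a convergent subsequence of $2^a 3^b x_0$, combined with $X=\overline{\Sigma x_0}$ by minimality, then forces every point of $\mathbb T$ to lie in $X$. The delicate point of this bootstrap is obtaining uniformity in the rate $\epsilon_n\to 0$ so that the $\Sigma$-orbit of $\epsilon_n$ becomes $\eta$-dense in $\mathbb T$ after a controlled number of steps, which is precisely the kind of quantitative question that the present paper seeks to effectivize in the higher-rank setting.
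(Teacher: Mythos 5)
Your finite case is fine, and your overall route for the infinite case (pass to the difference set $D=X-X$, which is closed, $\Sigma$-invariant, and accumulates at $0$, then invoke the key lemma that such a set must be all of $\mathbb T$) is the standard one. But two intermediate assertions are false as stated: $\{a\log 2+b\log 3: a,b\geq 0\}$ is a \emph{discrete} set whose consecutive gaps tend to $0$; it is not dense in any half-line, and consequently it is not true that $\{2^a3^b\epsilon \bmod 1\}$ is dense for every small $\epsilon>0$ (take $\epsilon=2^{-k}$, or any rational $\epsilon$, whose orbit is finite). What the gap statement actually gives — and all you need — is: for every $\delta>0$ there is $\epsilon_0(\delta)>0$ such that every nonzero $|\epsilon|<\epsilon_0$ has a $\delta$-dense $\Sigma$-orbit (the successive points $s\epsilon$ move by steps $s\epsilon(s'/s-1)\leq\delta$ while they are below $1$). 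Since $D$ is closed and contains nonzero points arbitrarily close to $0$, this yields $D=\mathbb T$. So this part is repairable.

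The genuine gap is the bootstrap from $X-X=\mathbb T$ to $X=\mathbb T$. Your step 3 only reproduces membership in the difference set: choosing $s\in\Sigma$ with $sx_n-sx_0$ close to a target $z$ and extracting a subsequence with $sx_0\to w\in X$ shows $w+z\in X$, i.e. $z\in X-X$ — which you already know — and nothing in the argument controls the base point $w$ (you cannot, for instance, force $sx_0\to 0$, since an infinite minimal set cannot contain $0$). Minimality, perfectness and compactness are soft properties and do not by themselves convert a full difference set into a full set; a proper closed subset of $\mathbb T$ can easily satisfy $X-X=\mathbb T$ (e.g. the middle-thirds Cantor set), so the finish must re-invoke the joint $\times 2,\times 3$ structure in an essentially new way. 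The classical completions do exactly that: either one proves Furstenberg's stronger orbit theorem that $\{2^a3^b\alpha\}$ is dense for every irrational $\alpha$ (by Furstenberg's original argument or Boshernitzan's elementary one) and then observes that an infinite minimal set contains no rational point — rational points have finite orbit closures, contradicting minimality — so any of its points has dense orbit; or one runs a finer structural argument on closed invariant sets directly. Also, the difficulty you flag (uniformity in the rate $\epsilon_n\to 0$) is not the missing ingredient; the missing ingredient is control of \emph{where} the orbit of the base point lands, not how fast the differences shrink. As written, step 3 is an assertion rather than a proof.
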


The measure-theorectical analogue of this theorem is the famous Furstenberg's conjecture, which asks whether any ergodic $\times 2, \times 3$-invariant measure on $\mathbb T$ is either uniform or finitely supported on rational points. The conjecture remains open.

There are several ways to extend Furstenberg's theorem. One of them is achieved by Berend, who proved an analogue on higher dimensional tori.

\begin{theorem}{\rm(Berend \cite{B83}, '83)} Let $\Sigma<M_d(\mathbb Z)=\mathrm{End}(\mathbb T^d)$ be an abelian subsemigroup such that: (i) for any common eigenspace $V$ of $\Sigma$, there is an element $g$ whose eigenvalue corresponding to $V$ has absolute value strictly greater than 1; (ii) $\mathrm{rank}(\Sigma)\geq 2$; (iii) $\Sigma$ contains a totally irreducible element. Then any minimal $\Sigma$-invariant closed set on $\mathbb T^d$ is either the full torus or finite.\end{theorem}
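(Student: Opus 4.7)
The plan is to adapt Furstenberg's one-dimensional argument to the higher-dimensional setting, using the three hypotheses in the statement to compensate for the loss of one-dimensionality. Let $X \subseteq \mathbb T^d$ be an infinite minimal closed $\Sigma$-invariant subset; the goal is to show $X = \mathbb T^d$. The starting point is the difference set $Y = X - X$, which is closed, $\Sigma$-invariant, and has $0$ as a non-isolated point since $X$ is infinite and compact.

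The first substantive step is a microscope analysis at $0 \in Y$. Decomposing $\mathbb R^d \otimes_{\mathbb Q} \mathbb C$ into common generalized eigenspaces $V_\chi$ of the abelian action of $\Sigma$, condition~(i) supplies for each $\chi$ an element of $\Sigma$ expanding on $V_\chi$. Given a sequence $y_n \in Y \setminus \{0\}$ with $y_n \to 0$, I would lift to $\widetilde y_n \in \mathbb R^d$ and choose $\sigma_n \in \Sigma$ so that $\sigma_n \widetilde y_n$ remains in a bounded fundamental domain, producing limit points back in $Y$ by closedness. The key technical ingredient is a higher-rank density lemma for logarithmic characters, playing the role of the density of $\{2^a 3^b\}$ in Furstenberg's original proof: by condition~(ii), the semigroup of vectors $\bigl(\log|\chi(\sigma)|\bigr)_\chi$ for $\sigma \in \Sigma$ spans at least two dimensions, and combined with condition~(i) its projection onto the appropriate Weyl chamber accumulates densely along every ray. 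Consequently, any prescribed rescaling of $\widetilde y_n$ can be realized in the limit, so the scaled limits carve out a nontrivial closed subset of $Y$ near $0$; the closed subgroup it generates is a nontrivial closed connected $\Sigma$-invariant subgroup $H \subseteq \mathbb T^d$ contained in $Y$.

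Condition~(iii) then finishes the first half of the argument: a totally irreducible element of $\Sigma$ has no proper nontrivial closed invariant subgroup in $\mathbb T^d$, forcing $H = \mathbb T^d$ and hence $Y = \mathbb T^d$. To pass from $X - X = \mathbb T^d$ to $X = \mathbb T^d$, one would examine the stabilizer $\mathrm{Stab}(X) = \{h \in \mathbb T^d : h + X = X\}$, a closed $\Sigma$-invariant subgroup; a recurrence argument based on the same microscope construction (applied now to a limit point of $X$ rather than of $Y$) produces a nontrivial element of $\mathrm{Stab}(X)$, and total irreducibility once again forces $\mathrm{Stab}(X) = \mathbb T^d$, which together with minimality yields $X = \mathbb T^d$.

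I expect the main obstacle to be the higher-rank density lemma. Furstenberg's one-dimensional version relies only on the irrationality of $\log 2/\log 3$, but the multidimensional version must simultaneously control all eigenspace projections while achieving an arbitrary prescribed rescaling in each coordinate. This forces a careful quantitative analysis of the logarithmic character semigroup of $\Sigma$ inside $\mathbb R^{\#\chi}$, together with a pigeonhole selection of the $\sigma_n$ that only becomes available once conditions~(i) and~(ii) are combined; condition~(iii) will then play the purely rigidifying role of ruling out degenerate limits inside proper subtori.
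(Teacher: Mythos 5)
The paper does not prove this statement at all---it is quoted as background from Berend \cite{B83}---so your proposal can only be judged on its own terms, and as it stands it has several genuine gaps rather than one. The pivotal one is the passage from small scaled limits in $Y=X-X$ to ``the closed subgroup it generates \dots contained in $Y$'': $Y$ is a closed $\Sigma$-invariant \emph{set}, not a subgroup, so the subgroup generated by points of $Y$ has no reason to lie in $Y$, and without that containment the appeal to total irreducibility (which only kills proper invariant \emph{subtori}) cannot be made. Furstenberg's argument, Berend's proof, and the quantitative analogue developed in this paper all avoid this by a different mechanism: one shows that the $\Sigma$-orbit of a single short difference vector is $\epsilon$-dense, essentially by producing a long approximate arithmetic progression inside one eigenspace $V_i$ and then proving that such a segment equidistributes in $\mathbb T^d$. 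Your ``higher-rank density lemma'' also overstates what conditions (i)--(ii) give: the vectors $(\log|\chi(\sigma)|)_\chi$ control only the moduli of the eigenvalue action, not the arguments, so in a complex (two-dimensional) eigenspace the rotation component of $\sigma_n\widetilde y_n$ is uncontrolled and ``any prescribed rescaling can be realized in the limit'' is false; moreover, even granting a long segment in a complex eigenspace, a line there need not be equidistributed in $\mathbb T^d$---this is exactly the difficulty treated in \S\ref{escapeline} of the paper (``escape from a fixed line''), and it is where the real work lies.

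The endgame is also not sound. Knowing $X-X=\mathbb T^d$ does not produce a nontrivial element of $\mathrm{Stab}(X)$ (Cantor-type sets have full difference set and trivial stabilizer), the promised ``recurrence argument'' is not supplied, and the final appeal to total irreducibility is based on a false statement: total irreducibility excludes proper nontrivial invariant \emph{subtori}, but not finite invariant subgroups---the $N$-torsion subgroup $(\tfrac1N\mathbb Z/\mathbb Z)^d$ is invariant under all of $M_d(\mathbb Z)$---so even a nontrivial closed $\Sigma$-invariant $\mathrm{Stab}(X)$ need not be all of $\mathbb T^d$ unless you first show it is infinite (hence has a nontrivial connected component). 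To repair the argument you would need (a) a genuine equidistribution/irrationality input for eigenspaces inside the torus (the role played here by Proposition \ref{quantirr} and, in Berend's setting, by his subtorus analysis), (b) a device for escaping bad directions in complex eigenspaces, and (c) a correct passage from density of orbits of differences to density of $X$ itself, which in Berend's proof is done by working with $X$ directly rather than through the stabilizer of $X$.
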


Here the action of a single toral endomorphism $A\in M_d(\mathbb Z)$ is {\it irreducible} if there is no non-trivial $A$-invariant subtorus on $\mathbb T^d=\mathbb R^d/\mathbb Z^d$; it is {\it totally irreducible} if $A^n$ is irreducible for every positive integer $n$.

Another way of extension was to go to the measure-theoretical category. Under the assumption of positive entropy, Furstenberg's conjecture has been proved by Rudolph  and Johnson following work of Lyons\cite{L88}.

\begin{theorem}{\rm(Rudolph\cite{R90},'90-Johnson\cite{J92},'92)} Suppose a Borel probability measure $\mu$ on $\mathbb T$ is invariant and ergodic under both $\times 2$ and $\times 3$. If the measure-theoretical entropy $h_\mu(\times 2)$ of the $\times 2$ action with respect to $\mu$ is positive, then $\mu$ is the Lebesgue measure. \end{theorem}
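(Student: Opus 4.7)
The plan is to exploit the commutativity $T_2 T_3 = T_3 T_2$ of the two maps $T_p := \times p$ together with the positive entropy hypothesis, via conditional measures along unstable leaves. I would first pass to the natural extension $(\bar X, \bar T_2, \bar\mu)$ of $(\mathbb T, T_2, \mu)$, on which $T_2$ becomes invertible. Because $T_3$ commutes with $T_2$ on $\mathbb T$, it lifts to a commuting $\bar\mu$-preserving automorphism $\bar T_3$ on $\bar X$. Let $\Pi$ denote the Pinsker $\sigma$-algebra of $(\bar X, \bar T_2, \bar\mu)$, and disintegrate $\bar\mu$ along $\Pi$; the resulting conditional measures $\{\mu_x^{\Pi}\}$ are supported on the one-dimensional unstable leaves of the $\times 2$ action, and the hypothesis $h_\mu(T_2) > 0$ forces the generator (binary digits) to have positive conditional entropy over $\Pi$, so that the $\mu_x^{\Pi}$ are non-atomic on their leaves.

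The heart of the argument is to show that these fiber measures are in fact Haar (translation-invariant) on the leaves. Since $\bar T_3$ preserves $\bar\mu$ and commutes with $\bar T_2$, it preserves $\Pi$ and transports $\mu_x^{\Pi}$ to $\mu_{\bar T_3 x}^{\Pi}$ while acting on each leaf by multiplication by $3$. Combining this with the analogous action of $\bar T_2$ (multiplication by $2$) and the fact that $\log 3 / \log 2 \notin \mathbb Q$ furnishes a dense subgroup of $\mathbb R_+^{\times}$ of approximate scaling symmetries of the conditional-measure family. Following Rudolph's strategy, I would then apply a pointwise/mean ergodic argument along orbits of the $\mathbb Z^2$-action generated by $\bar T_2$ and $\bar T_3$ to turn these approximate scalings into genuine invariance of $\mu_x^{\Pi}$ under a dense subgroup of translations on each leaf; regularity of the measures then upgrades this to full translation invariance, so $\mu_x^{\Pi}$ must be Haar.

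Once the fiber measures are leaf-wise Haar, the ergodicity of $(\mathbb T, T_2, \mu)$ propagates this to the global measure: any $T_2$-ergodic measure whose $\Pi$-conditionals are Haar on unstable leaves must itself be Lebesgue, so $\mu$ is Lebesgue on $\mathbb T$. I expect the middle step — extracting translation-invariance from two commuting multiplicative actions — to be the main obstacle, since this is exactly where non-atomicity (and hence the positive entropy hypothesis) is indispensable: without it, atomic measures on rational periodic orbits are $T_2$- and $T_3$-invariant and ergodic counterexamples, and any correct argument must be sensitive enough to exclude them. An alternative would be Lyons' Fourier-analytic route, proving $\widehat\mu(n) \to 0$ along the multiplicative semigroup generated by $2$ and $3$ and thence along all $n$; however, the conditional-measure framework generalizes far more readily to higher rank and higher dimension, and it is the natural precursor to the Katok-Spatzier-type techniques that underpin the main results of this paper.
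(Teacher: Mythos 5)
First, a point of orientation: the paper does not prove this statement at all — it is quoted in the introduction as background, with the proof residing in Rudolph's and Johnson's papers — so your proposal can only be measured against that literature, not against an argument in the text.

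Measured that way, your outline names the right objects (natural extension, Pinsker $\sigma$-algebra, conditional measures along the unstable/circle direction, non-atomicity from $h_\mu(\times 2)>0$), but the step you yourself flag as the heart of the matter is genuinely missing, and the mechanism you sketch for it is not one that works as stated. Multiplication by $2$ and by $3$ act on each leaf as scalings fixing the origin; the density of $\{m\log 2+n\log 3\}$ in $\mathbb R$ gives you a dense group of \emph{approximate scalings} of the family $\{\mu^\Pi_x\}$, and there is no soft ``pointwise/mean ergodic argument'' that converts scaling symmetry into translation invariance — if there were, positive entropy would play no role at that point, whereas it is precisely the input that makes the conversion possible. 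Rudolph's actual proof does not proceed this way: it is a symbolic, conditional-entropy argument comparing $h_\mu(\times 2)$ with entropy relative to $\sigma$-algebras invariant under the joint $\times 2,\times 3$ action, and Johnson's extension and the later proofs (Host's equidistribution argument, Lindenstrauss's) each require a substantial new idea exactly at this juncture. Writing ``following Rudolph's strategy, I would then\ldots'' leaves the theorem unproved. A smaller but telling slip: $\times 3$ lifts to the natural extension of $(\mathbb T,\times 2,\mu)$ only as a commuting \emph{endomorphism}, not an automorphism; to invert both maps you must take the natural extension of the full $\mathbb N^2$-action, which is where Rudolph in fact works. So the proposal is a reasonable roadmap of what kind of proof exists, but it does not contain the decisive argument.
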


It is possible to pursue these two directions of extension at the same time. Namely, under certain conditions, for an abelian action on $\mathbb T^d$ with some kind of hyperbolicity, higher rank and irreducibility,  any ergodic invariant measure with positive entropy is expected to be uniform. This was first proved by Katok and Spatzier\cite{KS96} for abelian subgroups of $SL_d(\mathbb Z)$ under a special assumption called total non-symplecticity (TNS); for a detailed treatment of their result, see Kalinin-Katok\cite[\S 3]{KK01}. Later on, such a measure rigidity statement was proved  by Einsiedler-Lindenstrauss\cite{EL03} for a more general family of groups of toral automorphisms.

Further, there is a third way to extend these results: all the theorems mentioned above have quantitative versions. Recently \cite{BLMV08}, Bourgain, Lindenstrauss, Michel and Venkatesh effectivized Furstenberg's theorem.

\begin{theorem} {\rm(Bourgain-Lindenstrauss-Michel-Venkastesh \cite{BLMV08},'08)} For any pair of multiplicatively independent positive integers $a,b$:

(i). If $x\in\mathbb T$ is diophantine generic: $$|x-\frac pq|\geq q^{-k},\ \forall p,q\in\mathbb Z,\ q\geq 2,$$ then $\{a^mb^nx|0<m,n\leq N\}$ is $(\log\log Q)^{-c}$-dense in $\mathbb T$ for all $Q\geq Q_0$, where $c=c(a,b)$ and $Q_0=N_0(k,a,b)$ are constants.

(ii). If $x=\frac pQ$ where $Q$ is coprime with $ab$, then $\{a^mb^nx|0<m,n<3\log Q\}$ is $C(\log\log\log Q)^{-c}$-dense in $\mathbb T$ where $C=C(a,b)$ and $c$ is the same as in (i). \end{theorem}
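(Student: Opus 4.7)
The plan is to prove part (ii) first and then transfer to (i) via Dirichlet approximation. For (ii) with $x=p/Q$ and $\gcd(Q,ab)=1$, after factoring out $\gcd(p,Q)$ the orbit $\{a^m b^n x\bmod 1\}$ lives inside the cyclic subgroup $\frac{1}{Q'}\mathbb{Z}/\mathbb{Z}$ with $Q'=Q/\gcd(p,Q)$, so density in $\mathbb{T}$ is, up to a negligible $1/Q'$ loss, equivalent to quantitative density of the multiplicative orbit $\mathcal{O}_N=\{a^m b^n\bmod Q':0<m,n<N\}$ inside $(\mathbb{Z}/Q'\mathbb{Z})^*$, with $N=3\log Q$.

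To control the discrepancy of $\mathcal{O}_N$ I would combine two ingredients. First, a nontriviality input: because $a,b$ are multiplicatively independent, a coincidence $a^m b^n\equiv a^{m'} b^{n'}\pmod{Q''}$ for a large divisor $Q''\mid Q'$ forces a linear form in logarithms to be extremely small, which by Baker-type lower bounds cannot happen unless $Q''$ is itself small, giving $|\mathcal{O}_N|\gtrsim N^2$ modulo any large factor of $Q'$. Second, a Bourgain--Glibichuk--Konyagin style exponential sum bound giving, for every nontrivial additive character $e_t(y)=e(ty/Q')$ with $\gcd(t,Q')$ not too large, a power saving of the form $\bigl|\sum_{m,n\le N} e_t(a^m b^n)\bigr|\le \exp\!\bigl(-(\log\log Q)^{c'}\bigr)\cdot N^2$. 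Feeding this into an Erd\H{o}s--Tur\'an inequality converts Fourier control into a pointwise density bound at the cost of one additional logarithm, which accounts for the triple-log scale in the conclusion; characters with $\gcd(t,Q')$ large are handled by induction on $Q'$ via the Chinese remainder theorem.

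For (i), apply Dirichlet's theorem to find $p/Q$ with $Q\le M$ and $|x-p/Q|\le 1/(QM)$, choosing $M$ so that $(ab)^N/M$ is much smaller than the target density scale. The diophantine hypothesis $|x-p/Q|\ge Q^{-k}$ then forces $\log Q\asymp N/k$, so that part (ii) applied to $p/Q$ yields density of the rational orbit at scale $(\log\log\log Q)^{-c}\asymp(\log\log N)^{-c}$. Since the genuine orbit $\{a^m b^n x\}$ shadows that of $p/Q$ to within the negligible error $(ab)^N|x-p/Q|$, the same density bound transfers to $x$ with only a constant-factor loss in $c$, yielding (i).

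The main obstacle will be establishing the exponential sum estimate uniformly over composite $Q'$: small prime-power factors must be separated via CRT and handled individually, exploiting the coprimality $\gcd(Q',ab)=1$, while large prime-power factors admit Bourgain's sum-product bounds directly. A secondary difficulty is the bootstrap from a single pigeonhole-produced ``close return'' in $\mathcal{O}_N$ to genuine equidistribution on $\mathbb{Z}/Q'\mathbb{Z}$; this iterative refinement naturally loses a logarithm per stage and is the mechanism driving the iterated-log dependence in the final density scale.
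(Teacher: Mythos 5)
The statement you are proving is quoted in this paper from \cite{BLMV08}; the paper does not reprove it, but its higher-dimensional generalization (Theorems \ref{effmeas}, \ref{efftopo}, \ref{dioQ}) is proved by the same strategy as in \cite{BLMV08}: an effective measure-theoretic equidistribution statement (positive entropy at scale $\epsilon$ $\Rightarrow$ averages of pushforwards under a ball of $a^mb^n$'s have small nontrivial Fourier coefficients, with Baker--W\"ustholz lower bounds for linear forms in logarithms supplying the quantitative input), then a topological statement for $\epsilon$-separated sets via a bump-function test, and finally both (i) and (ii) deduced from that topological statement by exhibiting sufficiently many separated points in a single orbit and stretching a short difference vector into an approximate arithmetic progression. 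Your proposal takes a genuinely different route, but it has a gap at its central step. The exponential-sum input you posit --- a uniform bound $\bigl|\sum_{m,n\le N}e_t(a^mb^n)\bigr|\le \exp\bigl(-(\log\log Q)^{c'}\bigr)N^2$ for all nontrivial characters mod $Q'$, with $N\asymp\log Q$ --- is not available: Bourgain--Glibichuk--Konyagin-type bounds require the multiplicative orbit to have size at least $(Q')^{\delta}$, whereas your orbit $\mathcal O_N$ has only polylogarithmic size $O((\log Q)^2)$. For orbits this sparse no nontrivial character-sum bound uniform in the modulus is known (results for very small subgroups hold only for almost all moduli, cf.\ the Konyagin--Shparlinski and Bourgain--Chang theorems recalled in \S\ref{minmaxapp}); indeed, avoiding exactly this obstruction is the point of the entropy/Fourier method, and is why the attainable density in (ii) is only a triple logarithm. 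Moreover, were your posited bound true, Erd\H{o}s--Tur\'an would give density at scale $\exp(-(\log\log Q)^{c''})$, far stronger than what is known --- a sign the input is too strong.

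The reduction of (i) to (ii) is also problematic on two counts. First, the Dirichlet approximant $p/Q$ of a diophantine-generic $x$ need not have $\gcd(Q,ab)=1$, so (ii) simply does not apply to it; when $Q$ shares factors with $ab$ the orbit of $p/Q$ can partially collapse, and this case needs a separate treatment. Second, even where it applies, (ii) only yields $(\log\log\log Q)^{-c}$-density, which is strictly weaker than the $(\log\log Q)^{-c}$ asserted in (i); so (i) cannot be recovered as a corollary of (ii) by shadowing. In the actual argument the diophantine case is handled directly: genericity makes the orbit points $\epsilon$-separated with a separation exponent controlled by $k$, so the separated-set theorem applies with a much better parameter than in the rational case, which is where the extra logarithm in (ii) relative to (i) comes from. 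To repair your approach you would need either a character-sum estimate for polylog-size orbits valid for every admissible modulus (out of reach), or to abandon the mod-$Q$ Fourier framework in favor of the entropy-at-real-scales argument used in \cite{BLMV08} and in \S\ref{measresults}--\S\ref{toporesults} of this paper.
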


Roughly speaking, this says the orbit of a point $x$ is of certain quantitative density unless $x$ is very close to a rational number with small denominator.

\subsection{Statement of main results}

\hskip\parindent In this paper we generalize Bourgain-Lindenstrauss-Michel-Venkatesh's theorems to the higher dimensional case. We investigate a special case of the situation studied by Berend \cite{B83}, namely the action on $\mathbb T^d$ by a group of toral automorphisms $G<SL_d(\mathbb Z)$ that satisfies: \begin{condition}\label{condG} $G$ is an abelian subgroup of $SL_d(\mathbb Z)$ such that:

(1). $\mathrm{rank}(G)\geq 2$, where $\mathrm{rank}$ refers to the torsion free rank of finitely generated abelian groups.;

(2). $G$ contains an totally irreducible toral automorphism;

(3). $G$ is maximal in rank: there is no intermediate abelian subgroup $G_1$ in $SL_d(\mathbb Z)$ containing $G$ such that $\mathrm{rank}(G)<\mathrm{rank}(G_1)$ .\end{condition}

We now equips $G$ with a norm.

\begin{definition}\label{mahler} (i). For a matrix $A\in M_d(\mathbb R)$, we define the {\bf logarithmic Mahler measure} of $A$ to be $$m(A):=\frac1{2\pi}\int_0^{2\pi}\log|\det(A-e^{i\theta}\mathrm{id})|\mathrm d\theta.$$ An alternative definition is \begin{equation}\label{mahlerdef}m(A)=\sum_{i=1}^d\log_+|\zeta_A^i|,\end{equation} where $\zeta_A^1,\cdots,\zeta_A^d$ are the eigenvalues of $A$ and $\log_+x=\max(0,\log x)$.

(ii). For a subgroup $G<SL_d(\mathbb Z)$ and $L>0$, let $B_G^\mathrm{Mah}(L)$ be the ball of radius $L$ with respect to logarithmic Mahler measure: $$B_G^\mathrm{Mah}(L):=\{g\in G|m(g)\leq L\}.$$\end{definition}

For more information on Mahler measures, we refer to \cite{EW99}.

Here is our main result in the topological category:

\declareCONST{efftopoepsilonCONST}  \declareCONST{efftopoalphaCONST}  \declareCONST{efftopometricballCONST} \declareCONST{efftopodensityCONST} 
\begin{theorem}\label{efftopo} If an abelian subgroup $G<SL_d(\mathbb Z)$ satisfies Condition \ref{condG} then there are effective constants $\ref{efftopoepsilonCONST} $, $\ref{efftopoalphaCONST} $, $\ref{efftopometricballCONST} $, and $\ref{efftopodensityCONST} $ depending only on $G$ such that if a finite set $E\subset\mathbb T^d$ is $\epsilon$-separated and $|E|\geq\epsilon^{-\alpha d}$ for some $\alpha,\epsilon>0$ with $\epsilon\leq\ref{efftopoepsilonCONST} $, $\alpha\geq \ref{efftopoalphaCONST} \frac{\log\log\log\frac1\epsilon}{\log\log\frac1\epsilon}$ then $B_G^\mathrm{Mah}(\ref{efftopometricballCONST} \log\frac1\epsilon).E:=\{g.x| g\in B_G^\mathrm{Mah}(\ref{efftopometricballCONST} \log\frac1\epsilon), x\in E\}$ is $(\log\frac1\epsilon)^{-\ref{efftopodensityCONST} \alpha}$-dense.\end{theorem}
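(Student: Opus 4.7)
My plan is to derive Theorem~\ref{efftopo} from a parallel effective measure-theoretic statement, namely a quantitative version of the Katok--Spatzier measure rigidity theorem for the $G$-action on $\mathbb T^d$ (this is the parallel measure-theoretic result alluded to in the abstract). Given an $\epsilon$-separated set $E\subset\mathbb T^d$ with $|E|\geq\epsilon^{-\alpha d}$, I form the atomic probability measure $\mu_E=\frac1{|E|}\sum_{x\in E}\delta_x$; its $\epsilon$-separation together with the size bound guarantees that $\mu_E$ carries ``entropy at scale $\epsilon$'' bounded below by $\alpha d\log\frac1\epsilon$, in the sense that it is spread over $\geq\epsilon^{-\alpha d}$ disjoint $\epsilon$-balls. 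Averaging over the Mahler ball, $\nu:=\frac1{|B_G^{\mathrm{Mah}}(L)|}\sum_{g\in B_G^{\mathrm{Mah}}(L)}g_*\mu_E$ with $L$ of order $\log\frac1\epsilon$, produces a measure supported in $B_G^{\mathrm{Mah}}(L).E$ that is approximately invariant under translates by an inner Mahler ball (the ratio between Haar measure of the outer and inner balls is polylogarithmic in $\frac1\epsilon$).

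The effective measure rigidity I aim to prove then reads: any probability measure $\nu$ on $\mathbb T^d$ which is approximately $G$-invariant in the sense above and has entropy at scale $\epsilon$ at least $\alpha d\log\frac1\epsilon$, with $\alpha$ above the threshold $\iota\frac{\log\log\log(1/\epsilon)}{\log\log(1/\epsilon)}$, must have its low-frequency Fourier coefficients $\hat\nu(k)$ small for all $|k|\leq(\log\frac1\epsilon)^{c\alpha}$. Convolving with a bump of radius $(\log\frac1\epsilon)^{-c\alpha}$ and applying a standard Fourier summation then turns this into the claimed density of $\mathrm{supp}(\nu)$ at scale $(\log\frac1\epsilon)^{-\ref{efftopodensityCONST}\alpha}$, completing the reduction.

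To prove the effective measure rigidity I would exploit the structural fact that, up to finite index, $G$ is conjugate to the unit group $U_K$ of a non-CM degree-$d$ number field $K$ acting on a compact quotient of $K\otimes_{\mathbb Q}\mathbb R$. Diagonalising this action over the archimedean places of $K$ splits the tangent space into one- and two-dimensional Lyapunov subspaces, and Condition~\ref{condG}(1)--(3) guarantees rank-$\geq 2$ hyperbolicity together with the absence of $G$-invariant rational subtori. The strategy then mimics the Katok--Spatzier high-entropy argument in effective form: isolate a coarse Lyapunov line, average along a one-parameter subsemigroup of $G$ that expands it to gain Fourier decay in that direction via a one-dimensional input of Bourgain--Lindenstrauss--Michel--Venkatesh type (the discretised sum-product or ring inequality driving~\cite{BLMV08}), then invoke abelian commutativity and total irreducibility to propagate the Fourier decay from this single Lyapunov line to all integer frequencies.

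The principal obstacle is the effectivization itself: the Katok--Spatzier proof uses conditional measures along Lyapunov foliations and the Abramov--Rokhlin entropy formula in ways that are only qualitative, and replacing them by a quantitative Fourier-analytic propagation scheme requires carefully tracking entropy leakage between Lyapunov directions across all dyadic scales from $\epsilon$ up to $1$. A secondary difficulty, analogous to the dichotomy (i)/(ii) in Bourgain--Lindenstrauss--Michel--Venkatesh's theorem, is ruling out concentration of $\mu_E$ near rational $G$-invariant subvarieties: total irreducibility forbids them set-theoretically, but quantifying this at scale $(\log\frac1\epsilon)^{-c\alpha}$ demands effective lower bounds on the distance between integer vectors and generic $G$-eigenvectors, controlled by number-theoretic invariants of $K$ (discriminant, regulator)---the numerical invariants advertised in the abstract.
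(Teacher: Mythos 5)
Your outer reduction is exactly the paper's: form the atomic measure $\mu_E=\frac1{|E|}\sum_{x\in E}\delta_x$, note that $\epsilon$-separation plus $|E|\geq\epsilon^{-\alpha d}$ gives the entropy condition \ref{condmu}, feed this into an effective measure-theoretic statement, and convert Fourier decay into density by testing against a smooth bump at an arbitrary point (this is Propositions \ref{effmeas'} and \ref{efftopo'}). But the route you propose to the measure-theoretic statement has a genuine gap. You average $\mu_E$ uniformly over $B_G^{\mathrm{Mah}}(L)$ with $L\sim\log\frac1\epsilon$ and then invoke an effective rigidity theorem whose hypotheses are approximate invariance \emph{and} entropy $\geq\alpha d\log\frac1\epsilon$ at scale $\epsilon$. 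The entropy hypothesis is not inherited by that average: an element $g$ with $m(g)\sim\log\frac1\epsilon$ contracts some eigendirections by a factor polynomial in $\epsilon$, so $g.E$ need only be $\epsilon^{1+O(1)}$-separated, the entropy of $g_*\mu_E$ at scale $\epsilon$ can drop to essentially nothing, and concavity of entropy then gives no useful lower bound since $\alpha$ is only of order $\frac{\log\log\log(1/\epsilon)}{\log\log(1/\epsilon)}$. The paper never forms such an average and never uses (approximate) invariance: it keeps the entropy in $\mu$ itself, localizes it to one eigenspace $V_i$ at a suitable intermediate scale with an $L^2$ bound (Proposition \ref{dominated}), and only then takes a specifically tailored finite average over elements $u^na_tb_k$ --- powers of a unit expanding only $V_i$ (Proposition \ref{expand}), an approximate arithmetic progression of multipliers built via Baker--W\"ustholz (Proposition \ref{arithpro}), and line-escaping rotations (Proposition \ref{nearlinebd}) --- and bounds the Fourier coefficients of that particular average directly.

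Second, your sketch of the rigidity step omits the main higher-dimensional difficulty and misidentifies the quantitative engine. What drives both \cite{BLMV08} and this paper is a lower bound for linear forms in logarithms (Baker--W\"ustholz), not a discretized sum-product or ring inequality; and ``propagating Fourier decay from one Lyapunov line to all integer frequencies by commutativity and total irreducibility'' is not a proof mechanism here. What is actually needed is the quantitative irrationality of the eigenspaces (Proposition \ref{quantirr}), guaranteeing every nonzero character restricts nontrivially to $V_i$ with an explicit lower bound, and --- when $V_i$ is complex --- a device to avoid the progression pointing in a direction nearly annihilated by the real form $\xi|_{V_i}$; this is precisely the escape-from-a-fixed-line construction of \S\ref{escapeline}, which uses the non-CM hypothesis (Lemma \ref{maxreal}) and a second application of Baker--W\"ustholz, and your plan has no counterpart for it. Without these two ingredients the claimed decay of $\hat\nu(k)$ for all $|k|\leq(\log\frac1\epsilon)^{c\alpha}$ does not follow.
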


The proof of Theorem \ref{efftopo} is based on the analogous effective measure-theoretical Theorem \ref{effmeas} below which studies the behaviour of a measure $\mu$ under the $G$-action. We assume $\mu$ has positive entropy up to a certain scale.

\begin{condition}\label{condmu} $\mu$ is a Borel measure on $\mathbb T^d$ such that:

For some given pair $\alpha>0$, $\epsilon>0$, the entropy $H_\mu(\mathcal P)=\sum_{P\in\mathcal P}-\mu(P)\log\mu(P)$ is at least $\alpha d\log\frac1\epsilon$ for all measurable partitions $\mathcal P$ of $\mathbb T^d$ such that $\mathrm{diam}\mathcal P\leq\epsilon$, where $\mathrm{diam}\mathcal P$ is the maximal diameter of atoms from $\mathcal P$. \end{condition}

\declareCONST{effmeasdeltaCONST} \declareCONST{effmeascoeffCONST} 
\begin{theorem}\label{effmeas} Suppose an abelian subgroup $G<SL_d(\mathbb Z)$ meets Condition \ref{condG}. There are effective constants $\ref{efftopoepsilonCONST} $, $\ref{efftopometricballCONST} $, $\ref{effmeasdeltaCONST} $, and $\ref{effmeascoeffCONST} $ depending only on $G$ such that if $\epsilon\leq \ref{efftopoepsilonCONST} $, $\delta\in[\ref{effmeasdeltaCONST} (\log\log\frac1\epsilon)^{-1},\frac\alpha2]$ and a Borel measure $\mu$ on $\mathbb T^d$ satisfies the entropy condition \ref{condmu} then there exists a measure $\mu'$ which has total mass $|\mu'|\geq\alpha-\delta$ and is dominated by some element $\mu''$ from the convex hull of $B_G^\mathrm{Mah}(\ref{efftopometricballCONST} \log\frac1\epsilon).\mu$ in the space of probability measures on $\mathbb T^d$ (here the group $G$ acts on $\mu$ by pushforward: $g.\mu=g_*\mu$), so that $\forall f\in\mathcal C^\infty(\mathbb T^d)$, $$\big|\mu'(f)-|\mu'|\int_{\mathbb T^d}f(x)\mathrm dx\big|\leq \ref{effmeascoeffCONST} (\log\frac1\epsilon)^{-\frac12\ref{effmeasdeltaCONST} ^{-1}\delta}\|f\|_{\dot H^{\frac{d+1}2}}.$$ \end{theorem}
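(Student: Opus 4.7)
The plan is to follow the strategy of Bourgain-Lindenstrauss-Michel-Venkatesh \cite{BLMV08}, adapted from the rank-one action of $\langle\times a,\times b\rangle$ on $\mathbb T$ to the higher-rank toral setting of Condition \ref{condG}. The target measure $\mu''$ will be an average of pushforwards $g_*\mu$ over $g\in B_G^{\mathrm{Mah}}(\ref{efftopometricballCONST}\log\frac1\epsilon)$, and $\mu'$ will be the restriction of $\mu''$ to the subset of $\mathbb T^d$ on which uniform Fourier decay has been established; the excised mass will absorb the allowed loss $\delta$. Up to finite index, $G$ embeds as a full-rank subgroup of the unit group $U_K$ of a totally non-CM number field $K$ of degree $d$, and $\mathbb T^d$ is a compact quotient of $K\otimes_{\mathbb Q}\mathbb R$. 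Simultaneous diagonalization over $\mathbb C$ yields Lyapunov characters $\chi_\sigma:G\to\mathbb R$ indexed by the archimedean places of $K$, with $m(g)=\sum_\sigma\chi_\sigma^+(g)$; the rank and total-irreducibility hypotheses supply ``regular'' elements of bounded Mahler measure whose Lyapunov data separate the coarse Lyapunov foliations.

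From Condition \ref{condmu}, a Shannon-McMillan plus Ledrappier-Young dissection of the entropy across these foliations extracts a coarse Lyapunov subspace $W\subset\mathbb R^d$ along which the leaf-conditional entropy of $\mu$ at scale $\epsilon$ is still $\gtrsim\alpha\log\frac1\epsilon$. Picking a regular $a\in G$ with $\chi_\sigma(a)>0$ precisely for those $\sigma$ contributing to $W$, and iterating $N\asymp\log\frac1\epsilon$ times, dilates the $W$-scale of $\mu$ from $\epsilon$ to $O(1)$ while keeping $m(a^N)\leq\ref{efftopometricballCONST}\log\frac1\epsilon$. After discarding at most $\delta/2$ of the mass (corresponding to points where the expected scaling fails), the pushed-forward measure acquires positive lower dimension along the $W$-leaves at macroscopic scale.

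To promote leafwise density into full equidistribution on $\mathbb T^d$, I then average pushforwards by $b^j$ for $0\leq j\leq N$, where $b\in G$ is chosen ``Diophantine generic'' with respect to $a$ inside $\mathrm{Hom}(G,\mathbb R)$; the rank-$\geq 2$ hypothesis is used at this point. Using $\widehat{g_*\mu}(k)=\widehat\mu(g^Tk)$, the Fourier coefficient of the resulting average at $k\neq 0$ is a Ces\`aro mean of $\widehat\mu$ along the orbit $\{(b^ja^N)^Tk\}$. The previous step gives $|\widehat{a^N_*\mu}(m)|$ small except when $m$ lies in a narrow cone around the directions contracted by $a$, and the $b$-average rotates $k$ out of this exceptional cone for all but $O(1)$ values of $j$, yielding $|\widehat{\mu''}(k)|\lesssim(\log\frac1\epsilon)^{-\eta\delta}$ uniformly in $k$. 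Pairing against $|k|^{-(d+1)}$ and applying Cauchy-Schwarz converts this pointwise decay into the claimed $\dot H^{(d+1)/2}$ bound.

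The hardest step is the transverse averaging: it requires a quantitative, higher-dimensional analog of Bourgain's discretized sum-product/projection bound, with achievable rate governed by Diophantine properties of the tuple $(\chi_\sigma)$ -- precisely the two number-theoretic invariants advertised in the abstract. Calibrating these inputs so that the overall loss matches the announced $(\log\frac1\epsilon)^{-\frac12\ref{effmeasdeltaCONST}^{-1}\delta}$ rate is the subtle point; the remainder is bookkeeping with measurable partitions, Sobolev embeddings, and the Mahler-norm accounting of Definition \ref{mahler}.
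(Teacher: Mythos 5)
Your outline reproduces the broad architecture of the paper (pass to the number-field model, find a distinguished expanding direction carrying entropy, push forward and average, control Fourier coefficients, finish with a Sobolev/Cauchy--Schwarz pairing), but two essential steps are either unjustified or rest on tools that do not apply here, so as written there is a genuine gap. First, your extraction of a coarse Lyapunov direction with positive leafwise entropy invokes Shannon--McMillan and Ledrappier--Young theory; but in Theorem \ref{effmeas} the measure $\mu$ is \emph{not} assumed $G$-invariant or ergodic --- Condition \ref{condmu} is only a single-scale entropy bound for an arbitrary Borel measure --- so none of that machinery is available. The paper instead runs an elementary accounting argument: it defines homogeneous (conditional) entropies of $\mathbf w$-boxes, uses additivity over nested partitions and a pigeonhole over scales (Lemmas \ref{sumcuts}, \ref{posR0}, \ref{posT}) to find one eigenspace $V_i$ and one scale at which the conditional entropy is $\geq(\alpha-3\delta)d_iT$, and then discards the atoms of too-large conditional mass to produce a sub-measure $\nu\leq\tau$ with the $L^2$-bound of Proposition \ref{dominated}. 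This discarding is also where the loss $\delta$ of total mass comes from; your alternative device of ``restricting $\mu''$ to the subset of $\mathbb T^d$ on which uniform Fourier decay has been established'' is not meaningful, since Fourier decay is not a pointwise property of a set, and it is not how the domination $\mu'\leq\mu''$ with $\mu''$ in the convex hull of $B_G^{\mathrm{Mah}}(\ref{efftopometricballCONST}\log\frac1\epsilon).\mu$ is achieved.

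Second, the step you yourself identify as the hardest --- showing that the averaged pushforwards have uniformly small nontrivial Fourier coefficients --- is left unproved, and the tool you propose (a higher-dimensional discretized sum-product/projection bound) is not what makes the argument effective. In the paper the cancellation comes from an exponential-sum (geometric series) estimate along an approximate arithmetic progression $\zeta_{a_t}^i\approx 1+t\Delta$ of eigenvalues, whose construction and the effective lower bound $|\Delta|\geq s^{-c\mathcal F_{\phi(G)}^2}$ rely on the Baker--W\"ustholz theorem on linear forms in logarithms (Proposition \ref{arithpro}); the ``exceptional cone'' problem you mention (when $V_i\cong\mathbb C$ the frequency may nearly annihilate the progression's direction) is handled by a second Baker--W\"ustholz-based construction, the escape-from-line family $b_k$ of Proposition \ref{nearlinebd}, together with the quantitative irrationality of the eigenspaces (Proposition \ref{quantirr}), proved by a determinant/integrality argument. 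Your plan of averaging over powers $b^j$ of a single ``Diophantine generic'' element, with the claim that all but $O(1)$ values of $j$ rotate $k$ out of the bad cone, has no quantitative substance until such an effective Diophantine input is specified and proved; without it the claimed rate $(\log\frac1\epsilon)^{-\frac12\ref{effmeasdeltaCONST}^{-1}\delta}$ cannot be extracted. The final Sobolev pairing step matches the paper's Lemma \ref{sobolev} and is fine.
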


Theorem \ref{effmeas} actually effectivizes a special case of Katok-Spatzier's result \cite{KS96}.

Another interesting corollary to Theorem \ref{efftopo} will be Theorem \ref{dioQ}, regarding how fast a generic single $G$-orbit can fill up $\mathbb T^d$, which is a quantitative form of the fact that any infinite $G$-orbit is dense (proved by Berend \cite{B83}) and the generalization of a similar theorem in \cite{BLMV08}.

\declareCONST{dioQsizeCONST} \declareCONST{dioQdensityCONST}
\begin{theorem}\label{dioQ} Suppose $G$ satisfies Condition \ref{condG} then there are effective constants $\ref{dioQsizeCONST}$, $\ref{dioQdensityCONST}$ depending only on $G$ such that:

For all $Q\in\mathbb N$, $Q\geq \ref{dioQsizeCONST}$, if a point $x\in\mathbb T^d=\mathbb R^d/\mathbb Z^d$ satisfies one of the following conditions:

(i) $x$ is diophantine generic: $\exists k>1$ such that $|x-\frac{v}{q}|\geq q^{-k}$ for any coprime pair $(v, q)$ where $q\in\mathbb N$ and $v\in\mathbb Z^d$; OR

(ii) $x=\frac vQ$ where $v\in\mathbb Z^d$ is coprime with $Q$, in which case we denote $k=1$,

then the set $B_G^{\mathrm{Mah}}\big((k+2)\log Q\big).x$ is $(\log\log\log Q)^{-\ref{dioQdensityCONST}}$-dense.\end{theorem}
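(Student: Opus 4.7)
The plan is to deduce Theorem \ref{dioQ} from Theorem \ref{efftopo} by exhibiting, in each of the two cases, an $\epsilon$-separated subset $E$ of the orbit of $x$ and invoking Theorem \ref{efftopo}. Taking $E := B_G^{\mathrm{Mah}}(L_0).x$ for a parameter $L_0$ and an appropriately chosen $\epsilon$, one has
\[B_G^{\mathrm{Mah}}(\ref{efftopometricballCONST}\log\tfrac{1}{\epsilon}).E \subseteq B_G^{\mathrm{Mah}}(L_0 + \ref{efftopometricballCONST}\log\tfrac{1}{\epsilon}).x,\]
so I would pick $L_0,\epsilon$ so that this right-hand side is contained in $B_G^{\mathrm{Mah}}((k+2)\log Q).x$. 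The conclusion of Theorem \ref{efftopo} then gives the desired density for the bigger orbit.

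In case (ii), where $x=v/Q$ with $\gcd(v,Q)=1$ and $k=1$, the full $G$-orbit sits inside the $1/Q$-torsion $(1/Q)\mathbb{Z}^d/\mathbb{Z}^d$ and is therefore automatically $1/Q$-separated. Set $\epsilon=1/Q$ and $L_0=(3-\ref{efftopometricballCONST})\log Q$ (renormalizing constants if needed to make this positive). The hypothesis $|E|\geq\epsilon^{-\alpha d}=Q^{\alpha d}$ then becomes a lower bound on the size of the image of $B_G^{\mathrm{Mah}}(L_0)$ in $\mathrm{GL}_d(\mathbb{Z}/Q\mathbb{Z})$, applied to $v\bmod Q$. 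A totally irreducible element $a\in G$ (whose existence is guaranteed by Condition \ref{condG}) remains irreducible modulo most primes dividing $Q$, hence generates a cyclic subgroup of large order in $\mathrm{GL}_d(\mathbb{Z}/Q\mathbb{Z})$ acting freely on primitive residues; this supplies the required orbit size.

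In case (i), the $\epsilon$-separation of $E$ has to be extracted from the Diophantine hypothesis. For distinct $g_1,g_2\in B_G^{\mathrm{Mah}}(L_0)$, the integer matrix $M:=g_1-g_2$ is nonsingular, because $G$ is abelian and simultaneously diagonalizable over the splitting field of a totally irreducible element, and on any common eigenspace the eigenvalues of $g_1, g_2$ must differ (otherwise $g_1=g_2$). Standard estimates give $\|M\|\leq e^{O(L_0)}$, $|\det M|$ a positive integer bounded by $e^{O(L_0)}$, and $\|M^{-1}\|\leq C\|M\|^{d-1}/|\det M|\leq e^{O(L_0)}$. If $\|g_1.x-g_2.x\|_{\mathbb{T}^d}\leq\epsilon$, then $Mx$ is $\epsilon$-close to some $v\in\mathbb{Z}^d$, so $x$ is $\|M^{-1}\|\epsilon$-close to the rational $M^{-1}v$, whose denominator divides $|\det M|$. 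The Diophantine hypothesis forces $\epsilon\geq e^{-CkL_0}$ for an effective $C=C(G)$; take $\epsilon$ at this threshold and pick $L_0$ as the largest value compatible with $L_0+\ref{efftopometricballCONST}\log(1/\epsilon)\leq(k+2)\log Q$, which fixes $L_0$ as a constant fraction of $\log Q$.

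The main obstacle in either case is reconciling the two constraints of Theorem \ref{efftopo} on $\alpha$: the inequality $|E|\geq\epsilon^{-\alpha d}$ upper-bounds $\alpha$ by roughly $\log|E|/(d\log(1/\epsilon))$, while the floor $\alpha\geq\ref{efftopoalphaCONST}\log\log\log(1/\epsilon)/\log\log(1/\epsilon)$ lower-bounds it. With the naive count $|E|\sim L_0^r$, these two bounds approach each other slowly, so parameters must be balanced with care — possibly bootstrapped by feeding the density output of one application of Theorem \ref{efftopo} into a fresh, denser $E'$ for a second application — in order to secure admissibility. Once an admissible $\alpha$ is in hand, plugging $\alpha\sim\ref{efftopoalphaCONST}\log\log\log(1/\epsilon)/\log\log(1/\epsilon)$ into the conclusion of Theorem \ref{efftopo} gives density $\sim(\log\log(1/\epsilon))^{-\ref{efftopodensityCONST}\ref{efftopoalphaCONST}}$, which via $\log(1/\epsilon)\sim\log Q$ translates into $\sim(\log\log Q)^{-c}$, comfortably stronger than the $(\log\log\log Q)^{-\ref{dioQdensityCONST}}$ announced in Theorem \ref{dioQ}.
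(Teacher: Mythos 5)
There is a genuine gap, and it is precisely at the point you flag as ``the main obstacle'': the two constraints of Theorem \ref{efftopo} cannot be reconciled for the set $E=B_G^{\mathrm{Mah}}(L_0).x$ at the fine scale you propose, and no bootstrapping at that scale can repair this. Since $G$ has rank $r\leq d-1$, the cardinality of a Mahler ball is polynomial in its radius, so $|E|\leq|B_G^{\mathrm{Mah}}(L_0)|\lesssim_G L_0^{\,r}\sim(\log Q)^{r}$, i.e.\ $\log|E|\sim\log\log Q$. On the other hand, with $\epsilon\approx Q^{-k}$ (case (i)) or $\epsilon=1/Q$ (case (ii)) one has $\log\frac1\epsilon\sim\log Q$, and the hypothesis $|E|\geq\epsilon^{-\alpha d}$ together with the mandatory floor $\alpha\geq \ref{efftopoalphaCONST} \frac{\log\log\log\frac1\epsilon}{\log\log\frac1\epsilon}$ forces $\log|E|\gtrsim \log Q\cdot\frac{\log\log\log Q}{\log\log Q}$, which is superpolynomial in $\log Q$. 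The gap between what you have ($\sim\log\log Q$) and what is required is exponential, so there is no admissible $\alpha$; your case-(ii) remark about irreducibility modulo primes dividing $Q$ cannot help, because the bottleneck is the polynomial size of the Mahler ball in a rank-$r$ abelian group, not the faithfulness of the action on residues. A second, independent problem is the Mahler budget: your containment needs $L_0+\ref{efftopometricballCONST} \log\frac1\epsilon\leq(k+2)\log Q$ with $\log\frac1\epsilon\sim\log Q$, but $\ref{efftopometricballCONST} $ is an effective constant with no reason to be smaller than $k+2$, and it cannot be ``renormalized'' since the radius $(k+2)\log Q$ is part of the statement being proved. Finally, your predicted density $(\log\log Q)^{-c}$, stronger than the theorem's $(\log\log\log Q)^{-\ref{dioQdensityCONST}}$, is a symptom of these inapplicable parameters rather than a bonus.

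The missing idea is a change of scale before invoking the dispersed-set theorem, which is what the paper's intermediate Proposition \ref{largetopo'} provides. The paper first extracts from the single orbit only a modest set $E=\{g^m.x\colon 0\leq m<\min(\mathcal F_{\phi(G)}^{-1},1)\log(2^{-d}Q)\}$, shown in Lemma \ref{dioQnonrec} (by essentially your $g^m-g^{m'}$ invertibility argument) to be separated at scale comparable to $Q^{-k}$. Then, inside Proposition \ref{largetopo'}, one pigeonholes two nearby points of $E$, and uses the expansion element $u$ and the arithmetic-progression elements $a_t$ (Propositions \ref{expand} and \ref{arithpro}) to stretch their difference into an $s^{-2}$-separated configuration with $s\sim_d(\log|E|)^{1/10}$; a maximal $s^{-2}$-separated subset $E_3$ of $\{u^na_t.x\colon x\in E\}$ then has $|E_3|\geq\sqrt s=\epsilon^{-\alpha d}$ with $\epsilon=s^{-2}$ and the \emph{constant} exponent $\alpha=\frac1{4d}$, so Proposition \ref{efftopo'} applies comfortably. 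The cost of undoing the $Q^{-k}$ separation appears only as the Mahler expenditure $\log\frac1{\epsilon_0}\approx k\log Q$ on $u^n$, while the effective constant $\ref{largetopometricballCONST} $ multiplies only $\log|E|\sim\log\log Q$; this is exactly why the $(k+2)\log Q$ budget closes and why the final density is $(\log\log\log Q)^{-\ref{dioQdensityCONST}}$ with a triple logarithm. To complete your proof you would need to supply this (or an equivalent) scale-changing mechanism; as written, the appeal to Theorem \ref{efftopo} does not go through.
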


\begin{remark}\label{numbertheorydependence} In fact, in all the theorems stated above, we are able to know on which features of $G$ the constants really depend and how the dependence looks like. Actually, all constants are determined by the dimension $d$ and two algebraic invariants $\mathcal M_\psi$ and $\mathcal F_{\phi(G)}$ of $G$. For the explicit expressions, see Propositions \ref{effmeas'}, \ref{efftopo'}, \ref{largetopo'}, \ref{dioQ'}.

The exact definitions of $\mathcal M_\psi$ and $\mathcal F_{\phi(G)}$ are going to appear later in \S\ref{field}. However, essentially $\mathcal M_\psi$ is a measurement of how ``twisted'' the eigenbasis of $G$ is (notice $G$ is commutative so all its elements share a common eigenbasis); and $\mathcal F_{\phi(G)}$ is the bound on a set of generators of $G$ (up to finite-index) in terms of logarithmic Mahler measure. 

We make an effort to track the dependence on $\mathcal M_\psi$ and $\mathcal F_{\phi(G)}$ in this paper, the reason is that this dependence can be interesting from a number-theoretical point of view (cf. Appendix \S\ref{minmaxapp} for example). \end{remark}

\begin{remark}\label{wordnorm} It should be remarked that if $\|\cdot\|_\mathrm{WL}$ is the word length metric with respect to some fixed generating set $S\subset G$, then $m(g)\lesssim_{G,S}\|g\|_\mathrm{WL}+1,\forall g\in G$. Therefore in all the theorems stated above, the ball $B_G^\mathrm{Mah}(\ref{efftopometricballCONST} \log\frac1\epsilon)$ with respect to Mahler measure can be replaced by some ball $B_G^\mathrm{WL}(\ref{efftopometricballCONST} '\log\frac1\epsilon)$ defined in terms of word length metric, where $\ref{efftopometricballCONST} '=\ref{efftopometricballCONST} '(G,S)$ is effective.\end{remark}

A major restricion we adopted in addition to Berend's conditions is that $G$ is supposed to be maximal in rank, which guarantees that one can expand an arbitrary eigenspace while contracting everything else. The study of the case without this assumption is also possible, though more careful arguments are required and only weaker estimates will be obtained, and is hopefully going to appear in another paper \cite{LW0001}.

\subsection{Organization of paper and notations}

\hskip\parindent While our proof follows the structure of that of Bourgain et. al. which uses Fourier analysis, we also borrow several arguments from \cite{B83}. A little bit of number theory is needed as our group $G$ turns out to be closely linked to the full unit group of some number field. There are some new difficulties to be treated in the higher dimensional case. The main point is that instead of the case on a one dimensional circle, we are going to decompose a multidimensional torus into eigenspaces and work on one of them. We will show some kind of ``dense'' distribution along a line in that eigenspace and then make use of the irreduciblity of the action (so any eigenspace does not form a close subtorus). One of the difficulties here is that when the eigenspace is complex (i.e. 2-dimensional), a line in it may not be equidistributed in $\mathbb T^d$, this is going to be dealt with in \S \ref{escapeline}

The organization of paper is as follows:

\S\ref{field} discusses the number-theoretical implication of Condition \ref{condG} and proves the $G$-action comes from the group of units of a certain number field and its eigenspaces correspond to the real and complex embeddings of that field. Further, in \S\ref{actionUK} we discuss a few algebraic properties of this action, in particular the irrationality of the eigenspaces is specified quantitatively.

In \S\ref{vecinVi} we give an effective description of the fact that if a measure on $\mathbb T^d$ has positive entropy then its projection to at least one of the eigenspaces $V_i$ has positive entropy. Since the restriction of the group action on $V_i$ is just a multiplication, we are able to discuss its behaviors in \S\ref{GVi}. Roughly speaking, we are going to show that using the group action, it is possible to stretch a short vector, to generate an approximation of a line segment (or an arithmetic progression) from a given vector, as well as to relocate an arbitrary line to a general position if the eigenspace is a complex one.

Proof of Theorem \ref{effmeas} is given in \S\ref{measresults}. The main tool used there is Fourier analysis. And the underlying geometric idea is that the positive entropy in $V_i$ guarantees that the difference vector between a random pair of nearby points taken with respect to $\mu$ is not too short, and by applying the results obtained in \S\ref{GVi} to this vector we can create a sufficiently long line segment placed in a general position, which is equidistributed on the torus.

In \S\ref{toporesults} Theorem \ref{efftopo} is deduced from \ref{effmeas} by taking a test function, we then prove its corollaries. The appendix \S\ref{minmaxapp} gives an application of results in \S\ref{toporesults} to a number theoretical problem, following a strategy observed by Cerri \cite{C07}.

\begin{notation} Throughout this paper,

\begin{itemize}
\item $\log$ stands for the logarithm of base $2$ and the natural logarithm is denoted by $\ln$.
\item For a linear map $f$, $\|f\|:=\sup_{|x|=1}|f(x)|$ is its norm as an operator.
\item If $A$ and $B$ are subsets of some additive group, let $A-B$ (resp. $A+B$) denote the set $\{a-b\text{ (resp. } a+b \text{) }|a\in A, b\in B\}$.
\item The symbol $\mathrm{rank}$ refers to the torsion-free rank of a finitely generated abelian group.
\item We denote constants by $\demoCONST$ and less important ones by $\democonst$. The constants only locally used by a proof are written as $\demosubconst$.  All constants in this paper are going to be positive and effective: i.e. an explicit value for it can be deduced from the information already known if one really want to. When a constant first appears, we may write it as a function to signify all the variables that it depends on, for example if we write $c_3(N,d)$, it means that the constant $c_3$ depends only on $N$ and $d$, and so forth.
\item We write $A\lesssim B$ (or $B\gtrsim A$) for the estimate $\exists c>0, A\leq cB$. Moreover, we always include all factors that the implied constant $c$ depends on within the $\lesssim$ symbol as subscripts. For example, if $c$ depends on and only on $d$ then we will always write $A\lesssim_d B$; the inequality $A\lesssim B$ without any subscript means the implied constant is absolute.
\end{itemize}
\end{notation}
\noindent{\bf Acknowledgments:} This paper is part of my Ph.D. thesis work. I am grateful to my advisor Prof. Elon Lindenstrauss for introducing me to the subject and guiding me through the research. I also would like to thank Prof. Jean Bourgain for comments and encouragement.

\section{Abelian algebraic actions on the torus}\label{field}

\hskip\parindent In this section, we are going to impose an alternative condition on $G$ to replace Condition \ref{condG}.

\subsection{Preliminaries on number fields}\label{prelimfield}

\hskip\parindent Consider now a degree $d$ number field $K$, $d\geq 3$. $K\otimes_{\mathbb Q}{\mathbb R}\cong\mathbb R^d$. Any element of $K$ acts on this space naturally by multiplication: $\times_s.(t\otimes x)=st\otimes x, \forall s,t\in K, x\in\mathbb R$. Recall that if $K$ has $r_1$ real embeddings $\sigma_1,\cdots,\sigma_{r_1}$ and $r_2$ pairs of conjugate imaginary embeddings $\sigma_{r_1+j},\sigma_{r_1+r_2+j},j=1,\cdots,r_2$, then there is an isomorphism $\sigma:K\otimes_{\mathbb Q}\mathbb R\mapsto\mathbb R^{r_1}\times\mathbb C^{r_2}\cong\mathbb R^d$ where \begin{equation}\label{KotimesR}\begin{split}\sigma(t\otimes x) =&x\cdot\big(\sigma_1(t),\cdots,\sigma_{r_1}(t),\mathrm{Re}\sigma_{r_1+1}(t),\cdots,\mathrm{Re}\sigma_{r_1+r_2}(t),\\&\hskip1cm \mathrm{Im}\sigma_{r_1+1}(t),\cdots,\mathrm{Im}\sigma_{r_1+r_2}(t)\big)\end{split}\end{equation} for all $x\in\mathbb R$, $t\in K$. With this identification, the action of $K$ on $\mathbb R^d\cong\mathbb R^{r_1}\times\mathbb C^{r_2}$ is easy to describe: $\times_s$ acts on $K\otimes_{\mathbb Q}{\mathbb R}$ via the linear mapping $\label{canonemb}(\times_{\sigma_1(s)},\cdots,\times_{\sigma_{r_1}(s)},\times_{\sigma_{r_1+1}(s)},\cdots,\times_{\sigma_{r_1+r_2}(s)}),$ where the first $r_1$ multiplications are on the $r_1$ real subspaces and the last $r_2$ ones are on the complex ones. Here we view the $j$-th copy of $\mathbb C$ as a two-dimensional real subspace. In addition, let it be spanned by the $(r_1+j)$-th (real part) and the $(r_1+r_2+j)$-th (imaginary part) coordinates: a number $z$ in this copy of $\mathbb C$ is identified with $(\mathrm{Re}z)e_{r_1+j}+(\mathrm{Im}z)e_{r_1+r_2+j}$, then the multiplicative action of $s$ on this embedded copy of $\mathbb C$ is given by the matrix $\left(\begin{array}{cc}\mathrm{Re}\sigma_{r_1+j}(s)&-\mathrm{Im}\sigma_{r_1+j}(s)\\\mathrm{Im}\sigma_{r_1+j}(s)&\mathrm{Re}\sigma_{r_1+j}(s)\end{array}\right)$.

Let $\mathcal O_K$ be the ring of integers and $U_K=\mathcal O_K^*$ be the group of units of $K$. Dirichlet's Unit Theorem claims that, modulo the torsion part $\mathcal T_k$ of $U_k$, which is a finite set of roots of unity, the group morphism \begin{equation}\label{Logemb}\mathcal L:t\rightarrow\big(\log|\sigma_1(t)|,\log|\sigma_2(t)|,\cdots,\log|\sigma_{r_1+r_2}(t)|\big)\end{equation} embeds $U_K$ as a lattice in the $(r_1+r_2-1)$-dimensional subspace \begin{equation}\label{Logembspace}W=\{(w_1,w_2,\cdots,w_{r_1+r_2})|\sum_{j=1}^{r_1}w_j+2\sum_{j=1}^{r_2}w_{r_1+j}=0\}\subset\mathbb R^{r_1+r_2}.\end{equation}

Let $r=r_1+r_2-1=\mathrm{rank}(U_K)$ and $d_i=\left\{\begin{array}{ll}1,\ &1\leq i\leq r_1;\\2,& r_1+1\leq i\leq r_1+r_2.\end{array}\right.$ then $d=\sum_1^{r_1+r_2}d_i$.

\begin{remark}\label{rankchoice} For a given $d$, there are only finitely many possible values for $r$ as $\frac d2-1=\frac{r_1}2+r_2-1\leq r=r_1+r_2-1\leq r_1+2r_2-1=d-1$.\end{remark}

For an element in $K$, its logarithmic Mahler measure measures the size of its image under $\mathcal L$.

\begin{definition}\label{logh}For a non-zero element $t$ from a number field $K$ of degree $d$, the {\bf logarithmic Mahler measure} of $t$ is $$h^\mathrm{Mah}(t)=\sum_{\nu}d_\nu\log_+|t|_{\nu},$$ where $\log_+x=\max(\log x,0)$, $\nu$ runs over all finite or infinite places of $K$ and $d_\nu=[K_\nu:\mathbb Q_{\tilde \nu}], \nu|\tilde\nu$ is the local degree of $\nu$.

$h(t)=\dfrac{h^\mathrm{Mah}(t)}d$ is the {\bf absolute logarithm height} of $t$, it is determined by the algebraic number $t$ and does not depend on the field $K$ in which $t$ is regarded as an element.\end{definition}

In particular, if $t\in U_K$ then all its non-Archimedean absolute values are equal to $1$, so the logarithmic Mahler measure involves only Archimedean embeddings:

\begin{equation}\label{unitheight}h^\mathrm{Mah}(t)=\sum_{i=1}^{r_1+r_2}d_i\log_+|\sigma_i(t)|=\frac12\sum_{i=1}^{r_1+r_2}d_i\big|\log|\sigma_i(t)|\big|=\frac12\sum_{i=1}^d|\log\sigma_i(t)|\end{equation} because $\sum_{i=1}^{r_1+r_2}d_i\log|\sigma_i(t)|=\log|\prod_{i=1}^d\sigma_i(t)|=0$.

It is easy to see \begin{equation}\label{heightrules}h^\mathrm{Mah}(tt')\leq h^\mathrm{Mah}(t)+h^\mathrm{Mah}(t');\ h^\mathrm{Mah}(t^n)=|n|h^\mathrm{Mah}(t),\forall n\in\mathbb Z.\end{equation} 

\begin{definition}\label{funduni} Let $U$ be a finite index subgroup in $U_K$. Define the {\bf size of (virtually) fundamental units} to be $$\mathcal F_U:=\inf\Big\{\mathcal F|\{h^\mathrm{Mah}(t)\leq\mathcal F, t\in U\}\text{ generates a finite-index subgroup of }U\Big\}.$$\end{definition}

Notice as $U$ is a discrete set, $\mathcal F_U$ can be achieved, i.e. there are $r$ elements $t_1,\cdots,t_r$ that virtually generate the torsion-free part of $U$, such that $\max_kh^\mathrm{Mah}(t_k)=\mathcal F_U$. 

\declareconst{voutierconst}
\begin{remark}\label{voutier} It is known that for all algebraic numbers $t$ of degree $d$, $h^\mathrm{Mah}(t)$ has an effective positive lower bound $\ref{voutierconst}(d)$ depending only on $d$ as long as $t$ is not a root of unity (see Voutier \cite{V96}). By definition this is also a lower bound for $\mathcal F_U$.\end{remark}

We are interested in lattices embedded by $\sigma$ into $K\otimes_\mathbb Q\mathbb R$, the covolumes and shapes of such lattices will be important for us. 

\begin{definition} \label{isouni}(1). The {\bf scale} of a lattice $\Lambda$ in a $d$-dimensional vector space $V$ is $\mathcal S_\Lambda=\sqrt[d]{\mathrm{covol}(\Lambda)}$.

(2). The {\bf uniformity} of a linear isomorphism $\Psi:\mathbb R^d\mapsto\mathbb R^d$ is \begin{align*}\mathcal M_\Psi:=&\max(\|\Psi\|\mathcal S_{\Psi(\mathbb Z^d)}^{-1},\|\Psi^{-1}\|\mathcal S_{\Psi(\mathbb Z^d)})\\=&\max(\|\Psi\|\cdot|\det\Psi|^{-\frac1d},\|\Psi^{-1}\|\cdot|\det\Psi|^{\frac1d}).\end{align*}\end{definition}

\begin{definition} A number field $K$ is a {\bf CM-number field} if it has a proper subfield $F$ such that $\mathrm{rank}(U_K)=\mathrm{rank}(U_F)$.\end{definition}

Actually, it follows from Dirichlet's Unit Theorem that a number field is CM if and only if $K$ is a totally complex quadratic extension of a totally real field $F$ (see Parry \cite{P75}).

We show a property of non-CM fields that is going to be relevant later.

\begin{lemma}\label{maxreal} If $K$ is not CM and has a complex embedding $\sigma_{r_1+j}, 1\leq j\leq r_2$, let $F$ be the proper subfield $\sigma_{r_1+j}^{-1}(\mathbb R)$. If $F\neq\mathbb Q$ then $\exists k,l\in\{1,\cdots,r_1+r_2\}\backslash\{r_1+j\}$ such that $k\neq l$ but $\forall t\in F$, $|\sigma_k(t)|=|\sigma_l(t)|$.\end{lemma}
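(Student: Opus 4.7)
The plan is to argue by contradiction, relying on the standard degree identity $\sum_{w\mid v}[K_w:F_v]=[K:F]$ from algebraic number theory. The first step would be to recast the absolute-value condition of the lemma in terms of archimedean places of $F$: the equality $|\sigma_k(t)|=|\sigma_l(t)|$ for all $t\in F$ holds precisely when the restrictions $\sigma_k|_F, \sigma_l|_F$ determine the same archimedean place of $F$ (equivalently, they are either equal or complex conjugates on $F$). Thus, assuming the conclusion of the lemma fails, the map sending $k\in\{1,\dots,r_1+r_2\}\setminus\{r_1+j\}$ to the archimedean place of $F$ beneath the $k$-th archimedean place of $K$ is injective.

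I would then single out the place $v_0$ of $F$ corresponding to $\sigma_{r_1+j}|_F$; because $\sigma_{r_1+j}(F)\subset\mathbb{R}$, $v_0$ is real. The injectivity established above forces every archimedean place $v\neq v_0$ of $F$ to carry exactly one place of $K$ above it, while $v_0$ itself carries at most two (one being the complex place $w_0$ given by $\sigma_{r_1+j}$). Combining this with the degree identity at each archimedean place of $F$, and using that $[K_w:F_v]\in\{1,2\}$ when $v$ is real but equals $1$ when $v$ is complex, yields the two constraints $[K:F]\geq 2$ (from the contribution of $w_0$ over $v_0$) and $[K:F]\leq 2$ (whenever a second archimedean place of $F$ is available).

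A short case analysis then concludes the argument. Either $v_0$ is the only archimedean place of $F$, in which case $F$, having one real place and no complex ones, must be $\mathbb{Q}$ --- contradicting $F\neq\mathbb{Q}$; or there is a second archimedean place, forcing $[K:F]=2$ with $w_0$ the unique place of $K$ above $v_0$ and each other $v$ carrying a single complex place of $K$ above a real place of $F$. The latter exhibits $K$ as a totally complex quadratic extension of the totally real field $F$, so $K$ is CM, contradicting the non-CM hypothesis. The main obstacle --- essentially the only substantive step --- is carrying out the local-degree bookkeeping cleanly; once the absolute-value condition has been translated into injectivity of the ``lies-over'' map on places, everything reduces to this finite case analysis.
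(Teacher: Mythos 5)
Your argument is correct, but it is organized differently from the paper's. The paper argues directly with embeddings: it partitions the $d$ embeddings of $K$ into fibers of size $d'=[K:F]$ over the embeddings of $F$, and splits into the cases $d'=2$ and $d'\geq 3$. For $d'\geq 3$ any fiber other than the one containing $\sigma_{r_1+j},\sigma_{r_1+r_2+j}$ (such a fiber exists since $F\neq\mathbb Q$) already contains two non-conjugate embeddings; for $d'=2$ it shows that if every other fiber were a conjugate pair then $K$ would be CM, so some fiber is a non-conjugate pair, and then passes to representatives among $\sigma_1,\dots,\sigma_{r_1+r_2}$. You instead argue by contradiction at the level of archimedean places: failure of the conclusion makes the ``lies-over'' map injective away from the place of $\sigma_{r_1+j}$, and the identity $\sum_{w\mid v}[K_w:F_v]=[K:F]$ then forces either $F=\mathbb Q$ (one real place, no complex ones) or $[K:F]=2$ with $F$ totally real and $K$ totally complex, hence CM by the characterization the paper quotes from Parry — contradicting the hypotheses in both cases. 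The underlying facts are the same (extension of embeddings/places and the CM criterion), but your contrapositive local-degree bookkeeping replaces the paper's case split on $d'$ and its explicit construction of the pair $(k,l)$; the paper's version is slightly more constructive (it exhibits the pair directly), while yours packages the combinatorics into the standard degree identity and a short case analysis, which is arguably cleaner and needs no separate treatment of $d'=2$ versus $d'\geq 3$.
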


\begin{proof} Let $d'=[K:F]\geq 2$, then each embedding of $F$ extends to exactly $d'$ different embeddings of $K$. In other words, the set of embeddings $\{\sigma_1, \cdots, \sigma_d\}$ of $K$ can be divided into $d'$-tuples, each corresponds to one embedding of $F$; the number of these tuples is $[F:\mathbb Q]=\frac d{d'}$.

If $d'=2$, each tuple is a pair. As $\sigma_{r_1+j}$ and its complex conugate $\sigma_{r_1+r_2+j}$ has the same restriction on the maximal real subfield $F=\sigma_{r_1+j}^{-1}(\mathbb R)$, they form one of the pairs. So any other pair does not contain $\sigma_{r_1+j}$ or $\sigma_{r_1+r_2+j}$. Suppose every other pair consists of two conjugate complex embeddings $\sigma$ and $\bar\sigma$ then as they have the same restriction on $F$, $F$ lies in their common real part. So all embeddings of $F$ is real and extends to two conjugate complex embeddings of $K$, which contradicts the assumption that $K$ is not CM. Hence there is a pair made of two embeddings which are not complex conjugates to each other.

If $d'\geq 3$, then one of the tuple contains  $\sigma_{r_1+j}$ and $\sigma_{r_1+r_2+j}$. As $F\neq\mathbb Q$, there is at least one other $d'$-tuple. As $d'\geq 3$, there are two embeddings in that tuple that are not complex conjugate to each other.

So in any case there are two different embeddings $\sigma$ and $\sigma'$ of $K$, which are not $\sigma_{r_1+j}$ or $\sigma_{r_1+r_2+j}$ and are not complex conjugate to each other, such that $\sigma|_F=\sigma'|_F$. If one or both of them is not in $\sigma_1,\cdots,\sigma_{r_1+r_2}$ then we replace it by its complex conjugate, which does not change the absolute value. This completes the proof.\end{proof}

\begin{condition}\label{condG'}$G$ is an abelian subgroup of $SL_d(\mathbb Z)$ such that there are:

(1). a non-CM number field $K$ of degree $d$ whose group of units $U_K$ has rank at least $2$;

(2). an isomorphim $\phi$ from $G$ to a finite-index subgroup of $U_K$;

(3). a cocompact lattice $\Gamma$ in $K\otimes_{\mathbb Q}{\mathbb R}$ which sits in $K<K\otimes_\mathbb Q\mathbb R$ and is invariant under the natural action of $\phi(G)$,

such that by identifying $G$ with $\phi(G)$, the action of $G$ on $\mathbb T^d$ is conjugate to the natural $G$-action on $X=(K\otimes_{\mathbb Q}{\mathbb R})/\Gamma$, i.e., there exists a linear isomorphism $\psi:\mathbb R^d\mapsto K\otimes_\mathbb Q\mathbb R\cong\mathbb R^d$ such that $\psi(\mathbb Z^d)=\Gamma$ and $\psi^{-1}\circ\times_{\phi(g)}\circ\psi=g,\forall g\in G$.\end{condition}

\begin{remark} $\mathrm{rank}(U_K)=r_1+r_2-1\geq 2$ implies $d=r_1+2r_2\geq 3$.\end{remark}

A consequence to Condition \ref{condG'} is that the logarithmic Mahler measures defined respectly by Definition \ref{mahler} on $G$ and by Definition \ref{logh} on $\phi(G)$ are identified via $\phi$.

\begin{lemma}\label{mahlerheight} If $G$ satisfies Condition \ref{condG'}, then $\forall g\in G$, the logarithmic Mahler measure $m(g)=h^\mathrm{Mah}(\phi(g))$.\end{lemma}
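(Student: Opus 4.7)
The plan is to compute both sides explicitly and see that they reduce to the same sum over the Galois conjugates of $\phi(g)$. First I would invoke Condition \ref{condG'} directly: the identity $\psi^{-1}\circ\times_{\phi(g)}\circ\psi=g$ exhibits $g$ and the $\mathbb R$-linear endomorphism $\times_{\phi(g)}$ of $K\otimes_{\mathbb Q}\mathbb R$ as conjugate operators, so they share the same eigenvalues with multiplicities. In particular the list of eigenvalues of $g$ is determined purely by $\phi(g)$.

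Next I would read off the eigenvalues of $\times_{\phi(g)}$ from the decomposition of $K\otimes_{\mathbb Q}\mathbb R\cong\mathbb R^{r_1}\times\mathbb C^{r_2}$ spelled out in \S\ref{prelimfield}. On the $i$-th real factor ($i\le r_1$) the action is scalar multiplication by $\sigma_i(\phi(g))$, contributing one real eigenvalue. On the $j$-th complex factor ($j=1,\dots,r_2$) the action is given by the $2\times 2$ block displayed in \S\ref{prelimfield}, whose eigenvalues are the complex conjugate pair $\sigma_{r_1+j}(\phi(g))$ and $\sigma_{r_1+r_2+j}(\phi(g))$. Collecting these yields exactly the $d$ numbers $\sigma_1(\phi(g)),\dots,\sigma_d(\phi(g))$ as the eigenvalues of $g$. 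Plugging into formula \eqref{mahlerdef} gives
$$m(g)=\sum_{i=1}^d\log_+|\sigma_i(\phi(g))|.$$

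Finally, since $\phi(g)\in U_K$ is a unit, every non-Archimedean place satisfies $|\phi(g)|_\nu=1$ and contributes $0$ to $h^\mathrm{Mah}(\phi(g))$; the Archimedean part is exactly formula \eqref{unitheight}, so
$$h^\mathrm{Mah}(\phi(g))=\sum_{i=1}^{r_1+r_2}d_i\log_+|\sigma_i(\phi(g))|.$$
Using $d_i=1$ for $i\le r_1$, $d_i=2$ for $r_1<i\le r_1+r_2$, and the conjugate-symmetry identity $|\sigma_{r_1+j}(\phi(g))|=|\sigma_{r_1+r_2+j}(\phi(g))|$, this weighted sum rewrites as the uniformly weighted sum $\sum_{i=1}^d\log_+|\sigma_i(\phi(g))|$, matching the expression for $m(g)$ derived above.

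This is essentially a bookkeeping argument: both quantities are linear combinations of $\log_+|\sigma_i(\phi(g))|$ and we only have to check the combinatorics of real versus complex embeddings. The one point that requires mild care is the dictionary between the complex-place weight $d_i=2$ in the height formula and the appearance of a conjugate pair of eigenvalues (each counted once with weight $1$) in the matrix formula; once this is noted the identity is immediate, so I do not anticipate any real obstacle.
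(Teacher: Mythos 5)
Your proof is correct and follows essentially the same route as the paper: identify the eigenvalues of $g$ with the $d$ embeddings $\sigma_i(\phi(g))$ via the conjugacy with $\times_{\phi(g)}$, then compare formula (\ref{mahlerdef}) with the unit-case height formula (\ref{unitheight}). The paper states this in two lines; your version merely spells out the eigenvalue computation on the complex blocks and the weight bookkeeping, which is fine.
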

\begin{proof} The eigenvalues of $g$ are exactly those of $\times_{\phi(g)}$, namely $\sigma_1(\phi(g))$, $\cdots$, $\sigma_d(\phi(g))$. By (\ref{mahlerdef}), $m(g)=\sum_i\log_+\sigma_i(\phi(g))=h^\mathrm{Mah}(\phi(g))$.\end{proof}

\begin{remark} If a generating set $S$ of $G$ is fixed, by Dirichlet's Unit Theorem it is not difficult to see that the word metric $\|g\|^\mathrm{WL}$ of an arbitrary element $g$ with respect to $S$ has some upper bound $\|g\|^\mathrm{WL}\lesssim_{G,S}h^\mathrm{Mah}(\phi(g))+1$, from which Remark \ref{wordnorm} follows.\end{remark}

It turns out Condition \ref{condG'} is a good substitute for Condition \ref{condG}.

\begin{theorem}\label{condGfield}Condition \ref{condG} implies Condition \ref{condG'}.\end{theorem}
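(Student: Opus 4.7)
The plan is to manufacture all the data of Condition \ref{condG'} out of a totally irreducible element $A \in G$ guaranteed by Condition \ref{condG}(2). Let $p(x) \in \mathbb Z[x]$ be the characteristic polynomial of $A$. Irreducibility of $A$ means $A$ has no invariant rational subspace, so $p$ is irreducible over $\mathbb Q$ of degree $d$; being irreducible in characteristic zero, it is separable, hence $A$ has $d$ distinct eigenvalues in $\overline{\mathbb Q}$. Set $K := \mathbb Q[A] \subset M_d(\mathbb Q)$, a number field of degree $d$. Since $A$ has distinct eigenvalues, a standard linear algebra fact gives $C_{M_d(\mathbb Q)}(A) = \mathbb Q[A] = K$; because $G$ is abelian and contains $A$, we conclude $G \subseteq K \cap SL_d(\mathbb Z)$. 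The standard lattice $\Gamma := \mathbb Z^d \subset \mathbb Q^d \cong K$ is preserved by every $g \in G$, so $g$ (viewed as an element of $K$) lies in the order $\mathcal O := \{\alpha \in K : \alpha\Gamma \subseteq \Gamma\}$; invertibility of $g$ over $\mathbb Z$ gives $\phi(g) \in U_{\mathcal O}$, producing an injective homomorphism $\phi : G \hookrightarrow U_{\mathcal O}$. Since $\mathcal O$ is an order in $K$, $U_{\mathcal O}$ has finite index in $U_K$ by a classical result.

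Next I use the maximality condition \ref{condG}(3) to upgrade $\phi(G) \subseteq U_{\mathcal O}$ to a finite index inclusion into $U_K$. The group $\tilde G := K \cap SL_d(\mathbb Z)$ is abelian (as $K$ is commutative), contains $G$, and under $\phi$ is exactly the group of norm-$1$ units of $\mathcal O$, hence has rank $\mathrm{rank}(U_{\mathcal O}) = \mathrm{rank}(U_K)$. Maximality of rank forces $\mathrm{rank}(\phi(G)) = \mathrm{rank}(U_K)$, and two subgroups of the same finite rank in a finitely generated abelian group have finite index, so $\phi(G)$ has finite index in $U_K$. Combined with Condition \ref{condG}(1), this gives $\mathrm{rank}(U_K) = \mathrm{rank}(G) \geq 2$, verifying clause (1) of \ref{condG'}.

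To see $K$ is non-CM, suppose otherwise, so there is a proper subfield $F \subsetneq K$ with $\mathrm{rank}(U_F) = \mathrm{rank}(U_K)$. Then $U_F$ has finite index in $U_K$, so some positive power of $\phi(A)$ lies in $U_F \subset F$. This power $A^n$ therefore has minimal polynomial of degree at most $[F:\mathbb Q] < d$, which, combined with $A^n$ being a $d \times d$ matrix, forces its characteristic polynomial to factor non-trivially over $\mathbb Q$. This contradicts total irreducibility of $A$. Hence $K$ is non-CM, giving clause (1) of \ref{condG'} in its entirety.

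Finally, to produce $\psi$, pick any $\mathbb Q$-basis of $K$ whose $\mathbb Z$-span is $\Gamma$ (automatic from $\Gamma = \mathbb Z^d \subset \mathbb Q^d \cong K$); extending scalars gives an $\mathbb R$-basis of $K \otimes_{\mathbb Q}\mathbb R$, and one defines $\psi : \mathbb R^d \to K \otimes_{\mathbb Q}\mathbb R$ by sending the standard basis to it. By construction $\psi(\mathbb Z^d) = \Gamma$, and $\psi^{-1} \circ \times_{\phi(g)} \circ \psi$ acts on $\mathbb Z^d$ exactly as left multiplication by the matrix of $\phi(g) \in K$ in that basis, which is $g$ itself; cocompactness of $\Gamma$ is automatic since it has full rank. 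I expect the main subtle step to be Step 3 (finite index via maximality), where one must recognize that the centralizer of $A$ in $SL_d(\mathbb Z)$ is already maximal abelian and then match ranks; Step 4 (non-CM) is the other place where the hypotheses of Condition \ref{condG} are genuinely used and must be invoked with care.
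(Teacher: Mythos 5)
Your proposal is correct, and it reproduces the paper's overall architecture (the paper splits Theorem \ref{condGfield} into Propositions \ref{KphiGamma}, \ref{finiteidx} and \ref{subfield}) while implementing each step by different, more module-theoretic means. For the construction of $K$, $\phi$, $\Gamma$, $\psi$, the paper diagonalizes $G$ in an explicit eigenbasis with an eigenvector $v_1\in K^d$ and checks $\psi^{-1}(K)\subset\mathbb Q^d$ by a trace computation, whereas you use the non-derogatory centralizer fact $C_{M_d(\mathbb Q)}(A)=\mathbb Q[A]$ and the fact that $\mathbb Q^d$ is a one-dimensional $K$-vector space; this is shorter, but note that to get $\psi^{-1}\circ\times_{\phi(g)}\circ\psi=g$ on the nose you must take $\psi$ to be the scalar extension of the module identification $\alpha\mapsto\alpha w$ (equivalently, the $\mathbb Z$-basis of $\Gamma$ must be the image of the standard basis of $\mathbb Z^d$), since an arbitrary basis of $\Gamma$ only recovers $g$ up to $GL_d(\mathbb Z)$-conjugacy. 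For the finite-index step, the paper argues by hand that every unit has a power $\gamma(\theta)^n\in SL_d(\mathbb Z)$ (finitely many sublattices of a given index), while you pass through the multiplier order $\mathcal O$ of $\Gamma$ and the classical theorem that $U_{\mathcal O}$ has finite index in $U_K$; both yield $\mathrm{rank}\big(K\cap SL_d(\mathbb Z)\big)=\mathrm{rank}(U_K)$, after which the maximality hypothesis is used identically (your $K\cap SL_d(\mathbb Z)$ is the paper's $\gamma(U_K)\cap SL_d(\mathbb Z)$). For the non-CM step the paper shows $\phi(g^l)\in F$ forces $g^l$ to stabilize the rational subspace $\psi^{-1}(F)$, while you bound the degree of the minimal polynomial of $A^n$ by $[F:\mathbb Q]<d$; to close this you should add the routine remark that a minimal polynomial of degree less than $d$ produces a proper nonzero $A^n$-invariant rational subspace (e.g.\ a cyclic subspace $\mathbb Q[A^n]v$), hence a nontrivial invariant subtorus, because the paper's irreducibility is defined in terms of subtori. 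These are only expository gaps; the paper's more explicit route has the side benefit of remaining self-contained and of exhibiting the data entering the constants $\mathcal M_\psi$ and $\mathcal F_{\phi(G)}$ that are tracked later, but as a proof of the implication your argument is sound.
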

\begin{proof}The theorem decomposes into Propositions \ref{KphiGamma}, \ref{finiteidx} and \ref{subfield}, which are proved below.\end{proof}

In fact, it can be shown that the two conditions are equivalent.

\subsection{Construction of the number field}

\hskip\parindent In the followsing three subsections we are going to prove Theorem \ref{condGfield}. The following statement is a special case of a known theorem (see for example \cite[Prop. 2.1]{EL03}, , for a proof of the general statement see \cite{S95}).
\begin{proposition}\label{KphiGamma} Suppose an abelian subgroup $G$ of $SL_d(\mathbb Z)$ has an element whose action is irreducible, then there exist

(i). a number field $K$ of degree $d$;

(ii). an isomorphim $\phi$ from $G$ to a subgroup of the group of units $U_K$;

(iii). a rank $d$ lattice $\Gamma$ in $K<K\otimes_\mathbb Q\mathbb R$ which is invariant under the natural action of $\phi(G)$,

such that there exists a linear isomorphism $\psi:\mathbb R^d\mapsto K\otimes_\mathbb Q\mathbb R\cong\mathbb R^d$ such that $\psi(\mathbb Z^d)=\Gamma$ and $\psi^{-1}\circ\times_{\phi(g)}\circ\psi=g,\forall g\in G$;\end{proposition}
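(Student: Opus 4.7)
The plan is to extract $K$ directly from the irreducible element $g_0 \in G$ via its characteristic polynomial, and then use the abelian hypothesis to push the rest of $G$ into $K^{*}$. First I would check that the characteristic polynomial $p(x) \in \mathbb{Z}[x]$ of $g_0$ is irreducible over $\mathbb{Q}$: a non-trivial factorization would, via Gauss's lemma and the rational canonical form, give a non-zero proper $g_0$-invariant rational subspace of $\mathbb{Q}^d$, hence a non-trivial $g_0$-invariant subtorus in $\mathbb{T}^d$, contradicting irreducibility of $g_0$. Setting $K := \mathbb{Q}[x]/(p(x)) = \mathbb{Q}(\alpha)$, we obtain $[K:\mathbb{Q}] = d$, and since $g_0 \in SL_d(\mathbb{Z})$ forces $p(0) = \pm 1$, $\alpha$ is a unit in $\mathcal{O}_K$.

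Next I would construct $\psi$. Picking any non-zero $v_0 \in \mathbb{Q}^d$, irreducibility of $p$ ensures that $v_0, g_0 v_0, \ldots, g_0^{d-1} v_0$ is a $\mathbb{Q}$-basis of $\mathbb{Q}^d$. Defining $\psi_\mathbb{Q} : \mathbb{Q}^d \to K$ by $g_0^i v_0 \mapsto \alpha^i$ yields a $\mathbb{Q}$-linear isomorphism intertwining $g_0$ with multiplication by $\alpha$; tensoring with $\mathbb{R}$ produces $\psi : \mathbb{R}^d \to K \otimes_{\mathbb{Q}} \mathbb{R}$. To promote this from $g_0$ alone to the whole of $G$, I would use that $G$ is abelian: every $g \in G$ centralizes $g_0$. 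Because $p$ is irreducible of degree $d$, $g_0$ is a non-derogatory (cyclic) matrix, and its centralizer in $M_d(\mathbb{Q})$ equals $\mathbb{Q}[g_0]$. Transporting across $\psi_\mathbb{Q}$, each $g \in G$ therefore corresponds to a unique element $\phi(g) \in K$, and $\phi$ is an injective group homomorphism into $K^{*}$.

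It then remains to identify the image and the lattice. Set $\Gamma := \psi(\mathbb{Z}^d)$, which lies inside $K$ (since $\psi_\mathbb{Q}(\mathbb{Q}^d) = K$) and forms a full rank-$d$ lattice in $K \otimes_{\mathbb{Q}} \mathbb{R}$ (because $\psi$ is an $\mathbb{R}$-linear isomorphism). For $g \in G \subseteq SL_d(\mathbb{Z})$, preservation of $\mathbb{Z}^d$ translates via $\psi$ into $\phi(g)\cdot \Gamma \subseteq \Gamma$; applying this to $g^{-1}$ too shows $\phi(g)$ is a unit in the order $\mathcal{O} := \{x \in K : x\Gamma \subseteq \Gamma\}$ of $K$, and since $\mathcal{O} \subseteq \mathcal{O}_K$, one obtains $\phi(g) \in \mathcal{O}^{*} \subseteq U_K$. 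The main obstacle I expect is not any individual step but the centralizer identity $Z_{M_d(\mathbb{Q})}(g_0) = \mathbb{Q}[g_0]$: although classical, it is exactly what allows the \emph{entire} abelian group $G$ — not merely $g_0$ — to be shipped into $K$. The remaining content is bookkeeping: verifying $\Gamma \subset K$, that $\phi$ is a well-defined injection, and that the unit group of any order in $K$ sits inside $U_K$.
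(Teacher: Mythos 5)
Your proposal is correct, but it follows a genuinely different route from the paper. The paper also begins by showing the characteristic polynomial of the irreducible element is irreducible, but then it diagonalizes over $\mathbb{C}$: it fixes an eigenvector $v_1\in K^d$, takes its conjugates $v_i=\sigma_i(v_1)$ as an eigenbasis simultaneously diagonalizing all of $G$ (using commutativity and the same ``$h=f(g)$, else $\mathrm{Ker}(f(g)-h)$ is a rational $g$-invariant subspace'' argument you invoke via the centralizer), defines $\phi(h)=\zeta_h^1$, builds $\psi$ from the real and imaginary parts $w_i$ of the eigenvectors matched to the explicit coordinates of $K\otimes_{\mathbb Q}\mathbb R\cong\mathbb R^{r_1}\times\mathbb C^{r_2}$, and then verifies $\Gamma=\psi(\mathbb Z^d)\subset K$ by a trace computation showing $\psi^{-1}(t)=\sum_i\sigma_i(tv_1)\in\mathbb Q^d$. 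You instead exploit the cyclic-module picture: irreducibility of $p$ makes any nonzero $v_0$ a cyclic vector, so $\mathbb Q^d$ becomes a one-dimensional $K$-vector space, $\psi_{\mathbb Q}$ is the resulting module isomorphism, the centralizer identity $Z_{M_d(\mathbb Q)}(g_0)=\mathbb Q[g_0]$ ships all of $G$ into $K^*$, $\Gamma\subset K$ is automatic, and $\phi(G)\subset U_K$ follows from the order $\{x\in K: x\Gamma\subseteq\Gamma\}$ having its units inside $\mathcal O_K^*$. Your version is cleaner for the proposition as stated (no trace computation, no case split into real and complex embeddings); what the paper's heavier construction buys is the explicit eigenbasis and the identification of eigenvalues with the embeddings $\sigma_i$, which set up notation (the subspaces $V_i$, the matrices $\gamma(\theta)$ in the maximal-rank argument) reused throughout the rest of the paper. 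The only steps you should spell out if writing this in full are the classical centralizer identity and the integrality of elements stabilizing a full-rank lattice, both standard.
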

\begin{proof}[Proof of Proposition] Assume $g\in G$ acts irreducibly, then the characteristic polynomial of $g$ is irreducible over $\mathbb Q$; otherwise the rational canonical form of $g$ over $\mathbb Q$ has more than one block, giving a non-trivial $g$-invariant rational subspace of $\mathbb R^d$, which projects to a non-trivial $g$-invariant subtorus in $\mathbb T^d$. Therefore $g$ has $d$ distinct eigenvalues $\zeta_g^1,\cdots,\zeta_g^d$ which are algebraic conjugates to each other, where $\zeta_g^1,\cdots,\zeta_g^{r_1}$ are real and the rest are $r_2$ imaginary pairs, $r_1+2r_2=d$. $\zeta_g^{r_1+j}=\overline{\zeta_g^{r_1+r_2+j}},\forall j=1,\cdots,r_2$.

Construct number field $K_i=\mathbb Q(\zeta_g^i)$, so $\mathrm{deg}K_i=d$.  Denote $K=K_1$, then $K$ has $d$ embeddings: $\sigma_i(\zeta_g^1)=\zeta_g^i, \sigma_i(K)=K_i, \forall i=1,\cdots,d$. The first $r_1$ embeddings are real;  $\sigma_{r_1+j}$ and $\sigma_{r_1+r_2+j}$ are complex conjugates for $1\leq j\leq r_2$.

An eigenvector $v_1$ of $\zeta_g^i$ lies in $\mathrm{Ker}(g-\zeta_g^1\mathrm{id})$. As all entries of $g-\zeta_g^1\mathrm{id}$ belongs to the field $K$, we can fix an eigenvector $v_1\in K^d$.

Let $v_i=\sigma_i(v_1)\in{K_i}^d$, then as $g\in SL_d(\mathbb Z)$ is fixed by $\sigma_i$, $gv_i=\sigma_i(g)\sigma_i(v_1)=\sigma_i(\zeta_g^1v_1)=\sigma_i(\zeta_g^1)\sigma_i(v_1)=\zeta_g^iv_i$. So $v_i$ is an eigenvector corresponding to the eigenvalue $\zeta_g^i$ for all $1\leq i\leq d$ and it follows that $\mathbb C^n=\oplus_{i=1}^d\mathbb Cv_i$. Notice $v_i\in \sigma_i(K^d)=K_i^d\subset\mathbb C^d$, in particular it is a real vector for $1\leq i\leq r_1$.

As $g$ is irreducible all eigenspaces are one dimensional over $\mathbb C$, thus by commutativity the basis $\{v_i\}_{i=1}^d$ diagonalizes not only $g$ but any element $h\in G$ as well: $hv_i=\zeta_h^iv_i,\forall h\in G, \forall i$.

We claim $\zeta_h^i\in K_i, \forall h\in G$. This is because an arbitrary element $t$ in the field $\mathbb Q(\zeta_g^i,\zeta_h^i)$ can be written as $p(\zeta_g^i,\zeta_h^i)$ where $p$ is a rational polynomial. As $gh=hg$, $p(g,h)v_i=p(\zeta_g^i,\zeta_h^i)v_i=tv_i$, so $t$ is an eigenvalue of the rational $d\times d$ matrix $p(g,h)$, thus an algebraic number of degree at most $d$. By choosing $t$ to be a generating
element of $\mathbb Q(\zeta_g^i,\zeta_h^i)=K_i(\zeta_h^i)$, $\mathrm{deg}K_i(\zeta_h^i)\leq d=\mathrm{deg}K_i$. Thus $\zeta_h^i\in K_i$.

Furthermore, $\sigma_i(\zeta_h^1)=\zeta_h^i$ for any $h\in G$. Actually, as $\zeta_h^1\in K$, $\zeta_h^1=f(\zeta_g^1)$ for some polynomial $f\in\mathbb Q[x]$. $f(g)v_1=f(\zeta_g^1)=\zeta_h^1v_1=hv_1$. If $f(g) \neq h$ then $0<\mathrm{rank}(f(g)-h)<d$, $\mathrm{Ker}(f(g)-h)$ is a non-trivial rational subspace of $\mathbb R^d$. Since $g$ commutes with $f(g)-h$, this rational subspace is $g$-invariant, which is prohibited to happen as $g$ acts irreducibly. So $h=f(g)$, in particular $\sigma_i(\zeta_h^1)=\sigma_i(f(\zeta_g^1))=f(\sigma_i(\zeta_g^1))=f(\zeta_g^i)=\zeta_h^i$.

Let $\phi(h)=\zeta_h^1$, which is an algebraic integer as $h$ is an integer matrix. Moreover $\zeta_h^1\in U_K$ because $h$ is invertible. Hence $\phi$ is a group morphism whose image lies in $U_K$. $\phi$ is injective because if $\phi(h)=1$ for some $h\neq\mathrm{Id}$ then $\mathrm{Ker}(h-\mathrm{Id})$ is a non-trivial rational subspace invariant under $g$ since $g$ commutes with $h$,  which again contradicts the irreducibility of $g$. Therefore $\phi:G\mapsto\phi(G)\subset U_K$ is injective.

For $1\leq i\leq r_1$ let $w_i=v_i\in\mathbb R^d$; for $1\leq j\leq r_2$, denote $w_{r_1+j}=2\mathrm{Re}v_{r_1+j}=v_{r_1+j}+v_{r_1+r_2+j}$ and $w_{r_1+r_2+j}=-2\mathrm{Im}v_{r_1+j}={\mathrm i}(v_{r_1+j}-v_{r_1+r_2+j})$. Then the  real vectors $w_1,w_2,\cdots,w_d$ are linearly independent and thus form a basis of $\mathbb R^d\subset\mathbb C^d$.

Let $\psi$ be the isomorphism  from $\mathbb R^d\subset\mathbb C^d$ to $K\oplus_{\mathbb Q}\mathbb R\cong\mathbb R^d$ which maps $w_i$ to $e_i$ for all $1\leq i\leq d$, where $e_i$ is the $i$-th coordinate vector in $K\otimes_{\mathbb Q}\mathbb R$ which is represesented as in (\ref{KotimesR}).

For any $h\in G$ acting on $\mathbb R^d$, it rescales $w_i$ by $\zeta_h^i=\sigma_i(\phi(h))$ when $1\leq i\leq r_1$ and acts on the two-dimensional subspace generated by $w_{r_1+j}$, $w_{r_1+r_2+j}$ as the matrix $\left(\begin{array}{ll}\mathrm{Re}\sigma_{r_1+j}(\phi(h))&-\mathrm{Im}\sigma_{r_1+j}(\phi(h))\\\mathrm{Im}\sigma_{r_1+j}(\phi(h))&\mathrm{Re}\sigma_{r_1+j}(\phi(h))\end{array}\right)$. Compare this action with that of $\times_{\phi(h)}$ on $K\otimes_{\mathbb Q}\mathbb R$ which was described earlier, we see they are conjugate to each other via $\psi$.

Therefore the only fact remaining to show is that the lattice $\Gamma=\psi(\mathbb Z^d)$ is in $K\subset K\otimes_{\mathbb Q}\mathbb R$, which is equivalent to $\psi(\mathbb Q^d)=K$ as $K$ is a $d$-dimensional $\mathbb Q$-vector space in $K\otimes_{\mathbb Q}\mathbb R$. It suffices to prove  $\psi^{-1}(t)\in\mathbb Q^d$, $\forall t\in K$. Actually by description (\ref{KotimesR}), $t$ is identified with $$\sum_{i=1}^{r_1}\sigma_i(t)e_i+\sum_{j=1}^{r_2}(\mathrm{Re}\sigma_{r_1+j}(t)\cdot e_{r_1+j}+\mathrm{Im}\sigma_{r_1+j}(t)\cdot e_{r_1+r_2+j}).$$ Using $\psi^{-1}(e_i)=w_i$, we obtain \begin{align*}
\psi^{-1}(t)=&\sum_{i=1}^{r_1}\sigma_i(t)w_i+\sum_{j=1}^{r_2}(\mathrm{Re}\sigma_{r_1+j}(t)\cdot w_{r_1+j}+\mathrm{Im}\sigma_{r_1+j}(t)\cdot w_{r_1+r_2+j})\\
=&\sum_{i=1}^{r_1}\sigma_i(t)v_i+\sum_{j=1}^{r_2}\big(\mathrm{Re}\sigma_{r_1+j}(t) (v_{r_1+j}+v_{r_1+r_2+j})\\
&\hskip4cm+\mathrm{Im}\sigma_{r_1+j}(t)\cdot\mathrm{i}(v_{r_1+j}-v_{r_1+r_2+j})\big)\\
=&\sum_{i=1}^{r_1}\sigma_i(t)v_i+\sum_{j=1}^{r_2}(\sigma_{r_1+j}(t)v_{r_1+j}+\overline{\sigma_{r_1+j}}(t)v_{r_1+r_2+j}).\end{align*}
Since $\overline{\sigma_{r_1+j}}$ is just $\sigma_{r_1+r_2+j}$, we see  
$$\psi^{-1}(t)=\sum_{i=1}^d\sigma_i(t)v_i=\sum_{i=1}^d\sigma_i(t)\sigma_i(v_1)=\sum_{i=1}^d\sigma_i(tv_1).$$ 
As $t\in K$ and $v_1\in K^d$, each coordinate in the vector $\sum_{i=1}^d\sigma_i(tv_1)$ is the trace of the corresponding coordinate in $tv_1\in K^d$, which is rational. This completes the proof.\end{proof}

\subsection{Maximal rank and the full group of units}

\begin{proposition}\label{finiteidx} In Proposition \ref{KphiGamma}, if there doesn't exist any abelian subgroup $G_1$ of $SL_d(\mathbb Z)$ containing $G$ such that $\mathrm{rank}(G)<\mathrm{rank}(G_1)$, then $\phi(G)$ is a subgroup of finite index inside $U_K$.\end{proposition}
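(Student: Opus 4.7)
The plan is to use the maximality in rank of $G$ to force $\phi(G)$ to already have maximal rank inside $U_K$, hence finite index. The strategy is to enlarge $\phi(G)$ inside $K$ to a full-rank subgroup of units preserving $\Gamma$, then transport this enlargement back to $SL_d(\mathbb Z)$ via $\psi$ and compare with $G$.

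First I would introduce the order of $\Gamma$ inside $K$: $$\mathcal O_\Gamma:=\{t\in K\ :\ t\Gamma\subset\Gamma\}.$$ Since $\Gamma$ is a $\mathbb Z$-lattice of rank $d$ sitting inside $K$, $\mathcal O_\Gamma$ is a $\mathbb Z$-order in $K$; by Proposition \ref{KphiGamma}, $\phi(G)\Gamma=\Gamma$, so $\phi(G)\subset\mathcal O_\Gamma^{*}=:U_\Gamma$. The standard fact that any $\mathbb Z$-order in a number field has a unit group of finite index in $U_K$ then gives $[U_K:U_\Gamma]<\infty$, in particular $\mathrm{rank}(U_\Gamma)=\mathrm{rank}(U_K)=r$.

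Next I would produce a candidate abelian overgroup of $G$ in $SL_d(\mathbb Z)$. For any $t\in U_\Gamma$ the multiplication map $\times_t$ preserves $\Gamma=\psi(\mathbb Z^d)$, so $\psi^{-1}\circ\times_t\circ\psi\in GL_d(\mathbb Z)$; its determinant equals $N_{K/\mathbb Q}(t)=\pm 1$. Restricting to the norm-one units $$U_\Gamma^{+}:=\{t\in U_\Gamma\ :\ N_{K/\mathbb Q}(t)=+1\},$$ which has index at most $2$ in $U_\Gamma$ and therefore still has rank $r$, all such matrices lie in $SL_d(\mathbb Z)$. Since $\phi(G)\subset SL_d(\mathbb Z)$ via $\psi$, elements of $\phi(G)$ have norm $+1$, so $\phi(G)\subset U_\Gamma^{+}$. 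Setting $G_1:=\psi^{-1}\circ\times_{U_\Gamma^{+}}\circ\psi$ yields an abelian subgroup of $SL_d(\mathbb Z)$ containing $G$, isomorphic to $U_\Gamma^{+}$, and thus of rank $r$.

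Finally, apply the maximal-rank hypothesis of Condition \ref{condG}(3) to $G\subset G_1$: it gives $\mathrm{rank}(G)\geq\mathrm{rank}(G_1)=r$. On the other hand $\phi$ injects $G$ into $U_K$, so $\mathrm{rank}(G)=\mathrm{rank}(\phi(G))\leq r$. Hence $\mathrm{rank}(\phi(G))=r=\mathrm{rank}(U_K)$, and since a finitely generated subgroup of $U_K$ of full rank is of finite index, $[U_K:\phi(G)]<\infty$, as required.

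I do not foresee a genuine obstacle here; the only minor care-point is the $SL$ vs.\ $GL$ distinction (all units of an order only give matrices in $GL_d(\mathbb Z)$), which is handled cleanly by passing to $U_\Gamma^{+}$, whose finite index in $U_\Gamma$ does not affect ranks. The key number-theoretic input used is the classical finite-index statement $[U_K:\mathcal O_\Gamma^{*}]<\infty$ for orders in number fields.
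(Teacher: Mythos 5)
Your proposal is correct, and it reaches the conclusion by a genuinely different key lemma than the paper, though the overall skeleton is parallel: both arguments manufacture an abelian overgroup of $G$ inside $SL_d(\mathbb Z)$ of rank $\mathrm{rank}(U_K)$ and then invoke the rank-maximality hypothesis, so that $\mathrm{rank}(G)=\mathrm{rank}(\phi(G))=\mathrm{rank}(U_K)$ and $\phi(G)$ has finite index. The difference lies in how that overgroup is produced. You pass through the multiplier order $\mathcal O_\Gamma=\{t\in K: t\Gamma\subset\Gamma\}$, quote the classical fact that the unit group of any order has finite index in $U_K$ (Dirichlet for orders), and cut down to the norm-one units $U_\Gamma^{+}$ so that conjugation by $\psi$ lands in $SL_d(\mathbb Z)$ rather than merely $GL_d(\mathbb Z)$ --- a point the paper glosses over slightly (it asserts $N(\theta)=1$ for $\theta\in U_K$, when in fact $N(\theta)=\pm1$, though this is harmless there). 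The paper instead argues by hand: it extends $\phi^{-1}$ to a ring embedding $\gamma:K\to M_d(\mathbb Q)$ via the common eigenbasis (each $\theta\in K$ being a rational polynomial in the irreducible element $g$), observes $\gamma(\mathcal O_K)\subset M_d(\tfrac1D\mathbb Z)$ for a single denominator $D$, and then uses the finiteness of sublattices of index $D^d$ to show every unit has a power whose matrix lies in $SL_d(\mathbb Z)$, so that $\gamma(U_K)\cap SL_d(\mathbb Z)$ already has full rank. Your route is shorter and more conceptual but imports a standard theorem on orders; the paper's is more self-contained, essentially reproving the relevant finite-index statement in the special situation at hand. Two small points worth a sentence each if you write this up: justify that $\mathcal O_\Gamma$ is genuinely an order (it is full rank because for any $\alpha\in K$ some integer multiple $m\alpha$ maps $\Gamma$ into $\Gamma$), and note that $\phi(g)^{-1}=\phi(g^{-1})$ also preserves $\Gamma$, which is what puts $\phi(G)$ inside $\mathcal O_\Gamma^{*}$ rather than just $\mathcal O_\Gamma$.
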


\begin{proof} Recall that in the proof of Proposition \ref{KphiGamma}, all elements in $G$ are diagonalized as a complex matrix with respect to basis $v_1,\cdots,v_d$. For any $\theta\in K$, construct a matrix $\gamma(\theta)$ which can be diagonalized in the same basis, such that $\gamma(\theta)v_i=\sigma_i(\theta)v_i, \forall i=1,\cdots, d$, where $\sigma_i:K\mapsto K_i$ is the $i$-th embedding of $K$. It is clear that $\gamma$ is an injective ring homomorphism between $K$ and $M_d(\mathbb C)$. Remark as well that $\forall h\in G$, $\gamma(\phi(h))=h$.

As $K$ is generated by $\phi(g)=\zeta_g^1$ where $g\in G$ is an irreducible element, $\forall\theta\in K$, $\theta$ can be expressed as a rational polynomial in $\phi(g)$. Hence $\gamma(\theta)$ can be written as the same rational polynomial in terms of $g=\gamma(\phi(g))$. So $\gamma(\theta)\in M_d(\mathbb Q)$ because $g\in G<SL_d(\mathbb Z)$. Let $\theta_1,\theta_2,\cdots,\theta_d$ be a $\mathbb Z$-basis of $\mathcal O_K$, the ring of integers, then there must be an integer $D$ such that $\gamma(\theta_k)\in M_d(\frac1D\mathbb Z), \forall k=1,\cdots, d$. Therefore for all $\theta\in\mathcal O_K$, $\gamma(\theta)$, which is a $\mathbb Z$-span of $\gamma(\theta_k)$'s, lies in $M_d(\frac1D\mathbb Z)$ as well.

Moreover $\mathrm{det}(\gamma(\theta))=N(\theta)=1$ when $\theta\in U_K$, thus $\gamma$ is an isomorphism between $U_K$ and a subgroup in $SL_d(\frac1D\mathbb Z)$.

For all $\theta\in U_K$, consider the sequence of lattices $\{\gamma(\theta)^n.\mathbb Z^d|n=1,2,\cdots\}$. For any $n$, $\gamma(\theta)^n.\mathbb Z^d$ has determinant 1 and is a sublattice of $(\frac1D\mathbb Z)^d$ (because $\mathbb Z^d<(\frac1D\mathbb Z)^d$ and $(\gamma(\theta))^n=\gamma(\theta^n)\in M_d(\frac1D\mathbb Z)$), in other words it is a sublattice of index $D^d$ in $(\frac1D\mathbb Z)^d$. But there are only a finite number of sublattices of given index in a given lattice, hence there are $n_1\neq n_2$ such that $\gamma(\theta)^{n_1}.\mathbb Z^d=\gamma(\theta)^{n_2}.\mathbb Z^d$. Which implies $\gamma(\theta)^{n}.\mathbb Z^d=\mathbb Z^d$ for $n=n_1-n_2$, or equivalently, $\gamma(\theta)^{n}\in SL_d(\mathbb Z)$.

So we have actually proved, for any element $\gamma(\theta)$ in the finitely generated abelian group $\gamma(U_K)\cong U_K$, there is an exponent $n$ such that $(\gamma(\theta))^n\in SL_d(\mathbb Z)\cap\gamma(U_K)$. It follows that $\mathrm{rank}(\gamma(U_K)\cap SL_d(\mathbb Z))=\mathrm{rank}(\gamma(U_K))=\mathrm{rank}(U_K)$.

By Proposition \ref{KphiGamma}, $G=\gamma(\phi(G))\subset\gamma(U_K)\cap SL_d(\mathbb Z)$. Moreover, by assumption $\mathrm{rank}(G)=\mathrm{rank}\big(\gamma(U_K)\cap SL_d(\mathbb Z)\big)$, therefore $\mathrm{rank}(G)=\mathrm{rank}(U_K)$. Since $G\cong\phi(G)<U_K$, this yields the proposition.\end{proof}

\subsection{Total irreduciblity and non-CM number fields}

\hskip\parindent Now we deal with the assumption that $G<SL_d(\mathbb Z)$ contains at least one totally irreducible element.

\begin{lemma} If $g\in G$ is an irreducible toral automorphism, then $\phi(g)$ doesn't belong to any non-trivial proper subfield $F$ of $K$.\end{lemma}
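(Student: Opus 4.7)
The proof should be essentially immediate once we recall the degree calculation already performed inside the proof of Proposition \ref{KphiGamma}. My plan is to argue that irreducibility of $g$ forces $\phi(g)=\zeta_g^1$ to be a primitive element of $K$, after which belonging to a proper subfield is ruled out by degree counting.

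More concretely, the first step is to invoke the observation at the beginning of the proof of Proposition \ref{KphiGamma}: if $g\in G$ acts irreducibly on $\mathbb{T}^d$, then the characteristic polynomial of $g$ is irreducible over $\mathbb{Q}$. Otherwise, the rational canonical form of $g$ would split into more than one block, yielding a nontrivial $g$-invariant rational subspace of $\mathbb{R}^d$, which descends to a proper $g$-invariant subtorus of $\mathbb{T}^d$ and contradicts irreducibility. Consequently the eigenvalue $\zeta_g^1$ is a root of an irreducible degree $d$ polynomial over $\mathbb{Q}$, so $[\mathbb{Q}(\zeta_g^1):\mathbb{Q}]=d$.

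Second, by the construction in Proposition \ref{KphiGamma}, $K=\mathbb{Q}(\zeta_g^1)$ and $\phi(g)=\zeta_g^1$. Since $[K:\mathbb{Q}]=d=[\mathbb{Q}(\phi(g)):\mathbb{Q}]$, we must have $\mathbb{Q}(\phi(g))=K$. Now if $F$ is any proper subfield of $K$ containing $\phi(g)$, then $K=\mathbb{Q}(\phi(g))\subseteq F\subsetneq K$, which is absurd. Hence $\phi(g)$ lies in no proper subfield of $K$, which is the assertion of the lemma.

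There is no serious obstacle here; the work is all done in the proof of Proposition \ref{KphiGamma}. The only very small subtlety worth flagging is that the statement speaks of \emph{non-trivial} proper subfields, but the argument above rules out \emph{all} proper subfields containing $\phi(g)$, which is a fortiori stronger, so no refinement of the argument is required.
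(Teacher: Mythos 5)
Your proof is correct, but it takes a different route from the paper. The paper's proof is geometric: assuming $\phi(g)\in F$, it notes that $F$, viewed inside $K\otimes_{\mathbb Q}\mathbb R$, is stable under multiplication by $\phi(g)$, and then uses the fact established in Proposition \ref{KphiGamma} that $\psi^{-1}(K)\subset\mathbb Q^d$ to conclude that $\psi^{-1}(F)$ is a $g$-invariant rational subspace of dimension strictly between $0$ and $d$, contradicting irreducibility directly. You instead argue by degree counting: irreducibility of the action forces the characteristic polynomial of $g$ to be irreducible over $\mathbb Q$, so $\phi(g)=\zeta_g^1$ has degree $d$, hence $\mathbb Q(\phi(g))=K$ and $\phi(g)$ cannot lie in any proper subfield (including $\mathbb Q$ itself, so your conclusion is in fact slightly stronger than the stated lemma). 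Both arguments lean on the same source (the proof of Proposition \ref{KphiGamma}), but yours borrows only the implication ``irreducible $\Rightarrow$ irreducible characteristic polynomial'' together with $\phi(g)=\zeta_g^1\in K$, while the paper's reuses the rationality statement $\psi^{-1}(K)\subset\mathbb Q^d$ and exhibits explicitly the invariant subtorus that a subfield would produce, a correspondence that is in the spirit of how subfields are used elsewhere in the paper. One small phrasing caution: you justify $K=\mathbb Q(\zeta_g^1)$ ``by the construction,'' but that identity is literally part of the construction only for the particular irreducible element used there; for an arbitrary irreducible $g\in G$ (e.g.\ the powers $g^l$ to which the lemma is applied in Proposition \ref{subfield}) you should instead say that $\phi(g)=\zeta_g^1\in K$ by the construction and that equality $\mathbb Q(\phi(g))=K$ then follows from your degree computation --- which is exactly the comparison you perform, so the argument stands as written.
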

\begin{proof} Any subfield $F$, viewed as a subset in $K\otimes_{\mathbb Q}\mathbb R$, is invariant under the multiplicative action of $U_F$. Assume $\phi(g)\in F$, then as the $g$-action on $\mathbb R^d$ is identified with that of $\phi(g)$ on $K\otimes_{\mathbb Q}\mathbb R$,  $g$ stablizes $\psi^{-1}(F)$. But in the proof of Proposition \ref{KphiGamma} we showed $\psi^{-1}(K)\subset\mathbb Q^d$,  hence $\psi^{-1}(F)$ is a rational subspace in $\mathbb Q^d$ of dimension between $1$ and $d-1$ because $F$ is a non-trivial proper $\mathbb Q$-vector subspace in $K$.  This contradicts the irreduciblity of $g$.\end{proof}

\begin{proposition}\label{subfield} In Proposition \ref{KphiGamma}, if $G$ contains a totally irreducible toral automorphism then the number field $K$ is not a CM-field.\end{proposition}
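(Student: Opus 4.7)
The plan is to argue by contradiction and reduce to the lemma immediately preceding the proposition. Assume that $K$ is a CM-field. Then by definition there is a proper subfield $F \subsetneq K$ with $\mathrm{rank}(U_F) = \mathrm{rank}(U_K)$.

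First I would check that $F$ is not the trivial subfield $\mathbb{Q}$. Since we work inside the setting of Condition \ref{condG}, we have $\mathrm{rank}(G) \geq 2$, and Proposition \ref{finiteidx} (just established) tells us $\phi(G)$ has finite index in $U_K$, so $\mathrm{rank}(U_K) = \mathrm{rank}(G) \geq 2$. In particular $\mathrm{rank}(U_F) \geq 2$, which rules out $F = \mathbb{Q}$ (whose unit group has rank $0$). Thus $F$ is a non-trivial proper subfield of $K$.

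Next, since $U_F \leq U_K$ is an inclusion of finitely generated abelian groups of the same rank, the quotient $U_K/U_F$ is a finite torsion group. Consequently, for the totally irreducible element $g \in G$ guaranteed by Condition \ref{condG}, there exists a positive integer $N$ with $\phi(g)^N \in U_F \subset F$. But $g^N$ is itself irreducible by total irreducibility of $g$, and $\phi(g^N) = \phi(g)^N \in F$ with $F$ a non-trivial proper subfield of $K$. This directly contradicts the preceding lemma applied to $g^N$, which asserts that $\phi(h)$ cannot lie in any non-trivial proper subfield of $K$ whenever $h \in G$ acts irreducibly. Hence $K$ cannot be CM.

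The only substantive point to verify carefully is the exclusion $F \neq \mathbb{Q}$, which is why the rank-$\geq 2$ assumption on $G$ is needed; the rest of the argument is a short chase using the equal-rank characterization of CM fields and the lemma already proved. No further estimates or number-theoretic machinery are required beyond Dirichlet's unit theorem, which has already been invoked.
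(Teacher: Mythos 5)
Your proof is correct and follows essentially the same route as the paper: assume a proper subfield $F$ with $\mathrm{rank}(U_F)=\mathrm{rank}(U_K)$, use the equal-rank (hence finite-index) inclusion $U_F\leq U_K$ to find $N$ with $\phi(g)^N\in U_F$, and contradict the preceding lemma applied to the irreducible element $g^N$. The only difference is your explicit check that $F\neq\mathbb Q$ via $\mathrm{rank}(G)\geq 2$, a minor point the paper leaves implicit (and which is harmless, since in the intended application Condition \ref{condG} is in force).
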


\begin{proof} Assume there is a proper subfield $F$ in $K$ with $\mathrm{rank}(U_F)=\mathrm{rank}(U_K)$ for contradiction. Let $g$ denote the totally irreducible element. Because the finite generated abelian groups $U_K$ and $U_F$ have the same rank, $\exists l\in\mathbb N$ such that $\phi(g^l)=\phi(g)^l\in U_K$ lies in $U_F$. The lemma above implies $g^l$ is not irreducible, thus contradicts the total irreducibility.\end{proof}

This proves Theorem \ref{condGfield}.

\section{Action of the group of units}\label{actionUK}

\hskip\parindent In the rest of this paper, we will always assume $G$, $K$, $\Gamma$, $X$, $\phi$ and $\psi$ are as in Condition \ref{condG'}. In addition, two constants will play important roles in ours study: the uniformity $\mathcal M_\psi$ of $\psi$ defined by Definition \ref{isouni} and the size of fundamental units $\mathcal F_{\phi(G)}$ in $\phi(G)<U_K$ defined by Definition \ref{funduni}.

\begin{notation}\label{Xnotation} $\forall 1\leq i\leq d$, $e_i$ denotes the $i$-th standard coordinate vector in $\mathbb R^d$, and by abuse of notation, those in the quotient $X=\mathbb R^d/\Gamma$ as well.

For $1\leq i\leq r_1$ let $V_i=\mathbb Re_i$ be the $i$-th real subspace in the product $\mathbb R^{r_1}\times\mathbb C^{r_2}\cong\mathbb R^d$. And let $V_{r_1+j}, 1\leq j\leq r_2$ be the $j$-th complex subspace (notice it is spanned by $e_{r_1+j}$ and $e_{r_1+r_2+j}$). Then $\mathbb R^d=\oplus_{i=1}^{r_1+r_2}V_i$.

For $g\in G$ and $1\leq i\leq d$, let $\zeta_g^i=\sigma_i(\phi(g))$, the $i$-th embedding of $\phi(g)\in U_K$. It is also the $i$-th eigenvalue of $g$, which is consistent with  earlier notations.

As before, write $r=r_1+r_2-1=\mathrm{rank}(U_K)=\mathrm{rank}(G)$ and $d_i=\mathrm{dim}V_i, 1\leq i\leq r$.\end{notation}

Recall $\forall g\in G$, if $1\leq i\leq r_1$ then $\times_{\phi(g)}.e_i=\sigma_i(g) e_i$; if $1\leq j\leq r_2$ then $$\left\{\begin{aligned}&\times_{\phi(g)}.e_{r_1+j}&&=(\mathrm{Re}\sigma_i(g))e_{r_1+j}+(\mathrm{Im}\sigma_i(g))e_{r_1+r_2+j};\\ &\times_{\phi(g)}.e_{r_1+r_2+j}&&=(-\mathrm{Im}\sigma_i(g))e_{r_1+j}+(\mathrm{Re}\sigma_i(g))e_{r_1+r_2+j}.\end{aligned}\right.$$

\subsection{Bounded totally irreducible units}

\hskip\parindent We proved earlier $K$ is not a CM-field from the total irreducibility. Conversely, a non-CM-field produces total irreducibility as well. Moreover, we can even construct a totally irreducible element with an explicit upper bound on its height. 

\declareconst{planecoverballconst}
\begin{lemma} Given $r\geq 2$, for any $N\in\mathbb N$, one needs at least $\ref{planecoverballconst}N$ proper linear subspaces in $\mathbb R^r$ to cover the discrete set $I_N^r=\{-N,-N+1,\cdots,N-1,N\}^r\subset\mathbb Z^r$ where $\ref{planecoverballconst}=\ref{planecoverballconst}(r)$ is effective.\end{lemma}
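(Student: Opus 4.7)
The plan is a straightforward counting/pigeonhole argument: bound the number of lattice points of $I_N^r$ that can lie on any single proper subspace, then divide into the total count $(2N+1)^r$.

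First, I would observe that any proper linear subspace $H\subset\mathbb R^r$ has dimension $k\leq r-1$. Choose a basis $v_1,\dots,v_k$ of $H$; the $r\times k$ matrix with columns $v_j$ has rank $k$, so there is a $k$-element subset $S\subset\{1,\dots,r\}$ of rows on which the corresponding $k\times k$ minor is nonzero. Then the coordinate projection $\pi_S:\mathbb R^r\to\mathbb R^S$ restricts to a linear injection $H\to\mathbb R^S$. Consequently
$$|H\cap I_N^r|\;\leq\;|\pi_S(H\cap I_N^r)|\;\leq\;|\{-N,\dots,N\}^S|\;=\;(2N+1)^k\;\leq\;(2N+1)^{r-1}.$$

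Next, suppose $I_N^r\subset\bigcup_{j=1}^M H_j$ for proper linear subspaces $H_j$. Then
$$(2N+1)^r\;=\;|I_N^r|\;\leq\;\sum_{j=1}^M|H_j\cap I_N^r|\;\leq\;M(2N+1)^{r-1},$$
which gives $M\geq 2N+1\geq 2N$. Thus the lemma holds with $\ref{planecoverballconst}=2$ (in fact independent of $r$), and the constant is manifestly effective.

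There is no real obstacle: the only mildly non-obvious step is the existence of a coordinate projection that is injective on $H$, which is immediate from the rank of the defining basis matrix. The bound obtained is essentially sharp (take $H_m=\{x_1=m\}\cap\{\text{any }(r-1)\text{-subspace containing }(m,0,\dots,0)\}$-type configurations to see one cannot do better than $\Theta(N)$), so no further refinement is needed for the application.
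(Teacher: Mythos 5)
Your proof is correct and takes essentially the same route as the paper: a pigeonhole count dividing $|I_N^r|=(2N+1)^r$ by a per-subspace bound of order $N^{r-1}$, which the paper obtains by counting lattice points in a ball of radius $\sqrt r N$ in dimension at most $r-1$, and which you obtain more explicitly (and with a constant independent of $r$) via an injective coordinate projection giving $(2N+1)^{r-1}$. The only blemish is the closing sharpness remark, whose example sets $\{x_1=m\}$ are affine rather than linear subspaces; this is irrelevant to the lemma, which only requires the lower bound.
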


\begin{proof} $I_N^r$ is contained in the $r$-dimensional closed ball with radius $\sqrt rN$. The intersection of any proper linear subspace with this ball is a ball of radius $\sqrt rN$ in dimension $r-1$ or less. The number of lattice points in such a ball is $O_r(N^{r-1})$. Hence the number of proper subspaces neccessary to cover $I_N^r$ is at least $\frac{(2N+1)^r}{O_r(N^{r-1})}\gtrsim_r N$. Thus there is a $\ref{planecoverballconst}$ for which the lemma holds.\end{proof}

\declareconst{totirrheightconst}
\begin{proposition}\label{totirrheight} If Condition \ref{condG'} holds then there is a totally irreducible element $g\in G$ such that $h^\mathrm{Mah}(\phi(g))\leq\ref{totirrheightconst}\mathcal F_{\phi(G)}$ for some effective constant $\ref{totirrheightconst}(d)$.\end{proposition}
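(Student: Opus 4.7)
The plan is to use the criterion (implicit in the lemma preceding Proposition \ref{subfield}) that $g\in G$ is totally irreducible if and only if no positive power $\phi(g)^n$ lies in a proper subfield of $K$. Combined with the logarithmic embedding $\mathcal{L}:U_K\to W$ from Dirichlet's Unit Theorem (\ref{Logemb}), this translates ``total irreducibility'' into a linear condition on exponents, which the planecovering lemma then lets us satisfy in a box of controlled size.

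First I would take units $t_1,\ldots,t_r\in\phi(G)$ realizing $\mathcal{F}_{\phi(G)}$ as in Definition \ref{funduni}, so that they generate a finite-index subgroup of $\phi(G)$ and $\max_k h^{\mathrm{Mah}}(t_k)\leq\mathcal{F}_{\phi(G)}$. Since $\mathcal{L}(\phi(G))$ is a rank-$r$ lattice in $W$, the vectors $\mathcal{L}(t_1),\ldots,\mathcal{L}(t_r)$ form an $\mathbb{R}$-basis of $W$. Next, for each proper subfield $F\subsetneq K$, the non-CM hypothesis gives $\mathrm{rank}(U_F)<r$, so $V_F:=\mathrm{span}_{\mathbb{R}}\mathcal{L}(U_F)$ is a proper linear subspace of $W$. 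An element $t=t_1^{a_1}\cdots t_r^{a_r}$ has some positive power in $U_F$ if and only if $n\mathcal{L}(t)\in\mathcal{L}(U_F)\subset V_F$ for some $n\geq 1$, equivalently $\mathcal{L}(t)=\sum_k a_k\mathcal{L}(t_k)\in V_F$; pulling back through the basis $\{\mathcal{L}(t_k)\}$ this carves out a proper linear subspace $L_F\subset\mathbb{R}^r$.

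Now the total number of subfields of $K$ is bounded by a function $\iota_1=\iota_1(d)$ of $d$ alone: passing to the Galois closure $\tilde K/\mathbb{Q}$ with group $H$ of order at most $d!$, subfields of $K$ correspond to subgroups of $H$ containing $\mathrm{Gal}(\tilde K/K)$, giving at most $2^{d!}$ options. Choose $N=N(d)$ such that $\ref{planecoverballconst}N>\iota_1(d)$; by the planecovering lemma, $I_N^r$ cannot be covered by the collection $\{L_F:F\text{ proper subfield of }K\}$, so there exists $(a_1,\ldots,a_r)\in I_N^r$ avoiding every $L_F$. Since $0$ lies in every subspace, this vector is nonzero, and the corresponding element $t=t_1^{a_1}\cdots t_r^{a_r}\in\phi(G)$ has no positive power in any proper subfield of $K$. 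Setting $g=\phi^{-1}(t)$, the lemma preceding Proposition \ref{subfield} (applied to every power of $g$) shows $g$ is totally irreducible. Finally, subadditivity (\ref{heightrules}) yields
$$h^{\mathrm{Mah}}(t)\leq\sum_{k=1}^r|a_k|\,h^{\mathrm{Mah}}(t_k)\leq rN\,\mathcal{F}_{\phi(G)},$$
so $\ref{totirrheightconst}=rN$ works with $r\leq d-1$, effective in $d$.

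The only mildly delicate step is the translation ``$t^n\in F$ for some $n\Leftrightarrow\mathcal{L}(t)\in V_F$'', which requires noting that $t\in U_K$ forces $t^n\in F$ to mean $t^n\in U_F$ (so one lands in the logarithmic lattice, not merely in $F$). Everything else is a packaging of Dirichlet, a subfield count, and the combinatorial lemma; I do not anticipate a serious obstacle.
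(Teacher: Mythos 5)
Your construction is essentially the paper's own proof: the same choice of generators $t_1,\dots,t_r$ realizing $\mathcal F_{\phi(G)}$, the same identification of $I_N^r$ with a box in the lattice $\mathcal L(\phi(G))\subset W$, the same use of the non-CM hypothesis to make each $\mathrm{span}_{\mathbb R}\mathcal L(U_F)$ a proper subspace of $W$, the same bound on the number of subfields via Galois theory, and the same height estimate $h^{\mathrm{Mah}}\leq rN\mathcal F_{\phi(G)}$ with $N=N(d)$.

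One step is mis-cited, though. The lemma preceding Proposition \ref{subfield} gives only the implication ``$g$ irreducible $\Rightarrow$ $\phi(g)$ lies in no proper subfield''; what you need at the end is the converse: if $\phi(g^n)$ lies in no proper subfield for every $n\geq 1$, then every $g^n$ is irreducible. That converse is true but does not follow from the cited lemma; it needs the short argument the paper supplies: since $\phi(g^n)$ generates $K$, it has degree $d$, so the eigenvalues $\sigma_i(\phi(g^n))$ of $g^n$ are $d$ algebraic conjugates of degree $d$, hence the characteristic polynomial of $g^n$ is irreducible over $\mathbb Q$ and $g^n$ admits no nontrivial invariant rational subspace, i.e.\ no invariant subtorus. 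With that one-line justification inserted (your reduction ``$t^n\in F$ forces $t^n\in U_F$, hence $\mathcal L(t)\in V_F$'' is fine, and only that direction is needed), the proof is complete and coincides with the paper's.
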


\begin{proof}By definition of $\mathcal F_{\phi(G)}$, there are $g_1,\cdots,g_r\in G$ such that the $\phi(g_k)$'s generate a finite-index subgroup of $\phi(G)$ and $h^\mathrm{Mah}(\phi(g_k))\leq\mathcal  F_{\phi(G)}$. Let $I_N^G=\{\prod_{i=1}^rg_j^{N_j}|-N\leq N_j\leq N\}$ for some $N\geq 1$. Then $I_N^W:=\{\mathcal L(\phi(g))|g\in I_N^G\}=\{\sum_{i=1}^rN_j\mathcal L(\phi(g_j))|-N\leq N_j\leq N\} $ is a discrete subset in $W$. It is obvious that the $\mathcal L(\phi(g_j))$'s span $W$. So $I_N^W$ is linearly isomorphic to $I_N^r$ and requires at least $\ref{planecoverballconst}N$ proper subspaces of $W$ to cover. 

\declaresubconst{numberofextconst}
It follows from Galois theory that there is an explicit constant $\ref{numberofextconst}(d)$ such that the degree $d$ number field $K$ can have at most $\ref{numberofextconst}(d)$ proper subfields. For each subfield $F$, by the assumption that $K$ is not a CM-field, $U_F$ is an abelian subgroup in $U_K$ of rank $r-1$ or less. Hence the $\mathbb R$-span of $\mathcal L(U_F)$ is a proper linear subspace in $W$. Take $N=\lfloor\frac{\ref{numberofextconst}}{\ref{planecoverballconst}}\rfloor+1$, then $\ref{planecoverballconst}N>\ref{numberofextconst}$ and there exists $g\in I_N^G$ such that $\mathcal L(\phi(g))\notin \mathbb R\mathcal L(U_F)$ for all proper subfields $F$. So $\forall n\in\mathbb N$, $\forall F$, $\mathcal L(\phi(g^n))=n\mathcal L(\phi(g))\notin\mathcal L(U_F)$ and thus $\phi(g^n)\notin F$, namely $\phi(g^n)$ is an irreducible element in $K$. Thus all eigenvalues of $g^n$, which are embeddings of $\phi(g^n)$, are of degree $d$. As a matrix in $SL_d(\mathbb Z)$ is irreducible if and only if its eigenvalues are conjugate to each other and have degree $d$, $g^n$ is irreducible for all $n\in\mathbb N$ and $g$ is totally irreducible.

$h^\mathrm{Mah}(\phi(g))\leq \sum_{j=1}^r N_jh^\mathrm{Mah}(\phi(g_j))\leq rN\mathcal F_{\phi(G)}$. As $N=\lfloor\frac{\ref{numberofextconst}}{\ref{planecoverballconst}}\rfloor+1$ explicitly depends on $d$ and $r$, we may take $\ref{totirrheightconst}=rN$, which depends only on $d$ and $r$. But by Remark \ref{rankchoice} $\ref{totirrheightconst}$ can be made independent of $r$ by taking maximum over all possible values of $r$ when $d$ is fixed.
\end{proof}

\subsection{Characters on $X$ and irrationality of eigenspaces}\label{irreig}

\hskip\parindent $X=\mathbb R^d/\Gamma$ can identified with $\mathbb T^d=\mathbb R^d/\mathbb Z^d$ by $\psi$. Notice the determinant of $\psi$ is exactly $\mathrm{covol}(\Gamma)=\mathrm{vol}(X)=\mathcal S_\Gamma^d$.

A character $\xi:X\mapsto\mathbb R/\mathbb Z$ is associated to the character $\mathbf q\in\mathbb Z^d=(\mathbb T^d)^*$ of $\mathbb T^d$ such that $\xi=\psi_*\mathbf q$, i.e. $\forall x\in X$, $\xi(x)=\langle\mathbf q,\psi^{-1}(x)\rangle$ where $\langle\cdot,\cdot\rangle$ is the usual inner product on $\mathbb R^n$. This association is bijiective.

\begin{definition}\label{absxi} Given $\psi$ with $\psi(\mathbb Z^d)=\Gamma$, for a character $\xi=\psi_*\mathbf q\in X^*$, denote $|\xi|=|\mathbf q|$ where $\mathbf q\in\mathbb Z^d$.\end{definition}

For all $1\leq i\leq r_1+r_2$, we define a vector $\tilde e_i\in V_i\otimes_{\mathbb R}\mathbb C\subset\mathbb C^n$ by: \begin{equation}\left\{\begin{aligned}&\tilde e_i&&=e_i,&& 1\leq i\leq r_1;\\&\tilde e_{r_1+j}&&=\frac{\sqrt2}2(e_{r_1+j}-{\mathrm i}e_{r_1+j}),\ &&1\leq j\leq r_2.\end{aligned}\right.\end{equation}  It is not hard to see $|\tilde e_i|=1$ and $\times_{\phi(g)}.\tilde e_i=\zeta_g^i\tilde e_i, \forall g,i$.

Recall when $V\cong\mathbb R$ or $\mathbb C$ is regarded as a real vector space and $f:V\mapsto\mathbb R$ be a real linear form on it, if $V\cong\mathbb R$ then $f(v)=f_0v$ for some $f_0\in\mathbb R$ and $\|f\|=|f_0|$; and if $V\cong\mathbb C$ then $f(v)=f_1\mathrm{Re} v+f_2\mathrm{Im}v, f_1,f_2\in\mathbb R$ and $\|f\|=\sqrt{f_1^2+f_2^2}$.

The following result characterizes quantitatively the irrationality of the $V_i$'s inside $X$.

\begin{proposition}\label{quantirr} If Condition \ref{condG'} holds, then for all $V_i$ and any non-trivial character $\xi\in X^*$, $$\big\|\xi\big|_{V_i}\big\|\geq d^{-1}2^{-\frac{d(d-1)}2\ref{totirrheightconst}\mathcal F_{\phi(G)}}\mathcal M_\psi ^{-(d-1)}|\xi|^{-(d-1)}\mathcal S_\Gamma^{-1},$$ where $\xi|_{V_i}$ denotes the restriction of $\xi$ to $V_i\subset\mathbb R^d$ and $\ref{totirrheightconst}=\ref{totirrheightconst}(d)$ is as in Proposition \ref{totirrheight}.\end{proposition}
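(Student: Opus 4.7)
\emph{Plan.} Translate the estimate into a Liouville-type inequality for an algebraic integer in $K$ and then apply the product formula.

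First, write $\xi=\psi_{*}\mathbf{q}$ with $\mathbf{q}\in\mathbb{Z}^{d}\setminus\{0\}$, and let $\tilde\xi$ denote the $\mathbb{R}$-linear extension of $\xi$ to $K\otimes_{\mathbb{Q}}\mathbb{R}$. Taking $v_{1}\in K^{d}$ to be the eigenvector used to construct $\psi$ in the proof of Proposition~\ref{KphiGamma}, the identity $\psi^{-1}(t)=\sum_{i}\sigma_{i}(tv_{1})$ for $t\in K$ recasts $\tilde\xi$ as a trace form,
\[
\tilde\xi(t)=\mathbf{q}^{T}\psi^{-1}(t)=\mathrm{Tr}_{K/\mathbb{Q}}(\alpha\, t),\qquad \alpha:=\mathbf{q}^{T}v_{1}\in K\setminus\{0\},
\]
with $\alpha$ nonzero because the Galois conjugates $\sigma_{i}(\alpha)=\mathbf{q}^{T}v_{i}$ arise from the invertible change of basis $[v_{1},\dots,v_{d}]$ applied to $\mathbf{q}$. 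Evaluating $\tilde\xi$ on the normalized eigenvectors $\tilde e_{i}$ from Notation~\ref{Xnotation} then gives $\|\xi|_{V_{i}}\|=d_{i}|\sigma_{i}(\alpha)|$, so the proposition reduces to a lower bound on $|\sigma_{i_{0}}(\alpha)|$. For $i\neq i_{0}$ the trivial estimate $|\sigma_{i}(\alpha)|=|\mathbf{q}^{T}v_{i}|\leq|\mathbf{q}|\,|v_{i}|\leq|\mathbf{q}|\,\|\psi^{-1}\|\leq|\mathbf{q}|\,\mathcal{M}_{\psi}\mathcal{S}_{\Gamma}^{-1}$ (using Definition~\ref{isouni}, $w_{i}=\psi^{-1}(e_{i})$, and either $v_{i}=w_{i}$ in the real case or $v_{r_{1}+j}=\tfrac12(w_{r_{1}+j}-iw_{r_{1}+r_{2}+j})$ in the complex case) controls all the other embeddings uniformly.

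To produce a lower bound on $|N_{K/\mathbb{Q}}(\alpha)|=\prod_{i}|\sigma_{i}(\alpha)|^{d_{i}}$ despite $\alpha$ not a priori lying in $\mathcal{O}_{K}$, I would introduce an auxiliary integer eigenvector $v_{1}^{\#}\in\mathcal{O}_{K}^{d}$ in the same one-dimensional $K$-eigenspace as $v_{1}$—for instance, a nonzero column of $\mathrm{adj}(g_{0}-\zeta_{g_{0}}^{1}\mathrm{Id})$ where $g_{0}\in G$ is the totally irreducible element from Proposition~\ref{totirrheight}, satisfying $h^{\mathrm{Mah}}(\phi(g_{0}))\leq\ref{totirrheightconst}\mathcal{F}_{\phi(G)}$. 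Writing $v_{1}^{\#}=\beta v_{1}$ for some $\beta\in K^{\times}$, the element $\alpha^{\#}:=\beta\alpha=\mathbf{q}^{T}v_{1}^{\#}$ lies in $\mathcal{O}_{K}\setminus\{0\}$, so $|N(\alpha^{\#})|\geq 1$ and hence $|N(\alpha)|\geq|N(\beta)|^{-1}$. The Galois conjugates $v_{i}^{\#}=\sigma_{i}(v_{1}^{\#})$ consist of $(d-1)\times(d-1)$ minors of $g_{0}-\zeta_{g_{0}}^{i}\mathrm{Id}$, which are polynomially controlled by $\max_{j}|\zeta_{g_{0}}^{j}|\leq 2^{\ref{totirrheightconst}\mathcal{F}_{\phi(G)}}$ and by $\|\psi^{\pm 1}\|$ via the conjugacy $g_{0}=\psi^{-1}\circ\times_{\phi(g_{0})}\circ\psi$. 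The ratios $|\sigma_{i}(\beta)|=|v_{i}^{\#}|/|v_{i}|$ (combined with $|v_{i}|\geq 1/(2\|\psi\|)\geq\mathcal{S}_{\Gamma}/(2\mathcal{M}_{\psi})$) then translate these into explicit upper bounds on $|N(\beta)|$.

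Assembling these three ingredients through $|\sigma_{i_{0}}(\alpha)|^{d_{i_{0}}}=|N(\alpha)|/\prod_{i\neq i_{0}}|\sigma_{i}(\alpha)|^{d_{i}}$ and absorbing the combinatorial factor $d^{-1}$ from $\|\xi|_{V_{i_{0}}}\|=d_{i_{0}}|\sigma_{i_{0}}(\alpha)|\geq|\sigma_{i_{0}}(\alpha)|/d$ yields the announced lower bound. The main obstacle is matching the precise exponents claimed in the statement: the $\tfrac{d(d-1)}{2}$ on $\ref{totirrheightconst}\mathcal{F}_{\phi(G)}$, the $-(d-1)$ on $\mathcal{M}_{\psi}$, and the $-1$ on $\mathcal{S}_{\Gamma}$ are sharper than what one gets by bounding each $|v_{i}^{\#}|$ individually by $\|g_{0}\|^{d-1}$; the exponent $\tfrac{d(d-1)}{2}$ in particular strongly suggests that the control of $|N(\beta)|$ should proceed through the Vandermonde-type estimate $\prod_{i<j}|\zeta_{g_{0}}^{i}-\zeta_{g_{0}}^{j}|\leq(2\max_{j}|\zeta_{g_{0}}^{j}|)^{d(d-1)/2}$, symmetrizing over all embeddings rather than estimating them one at a time, and tracking this bookkeeping carefully is the most delicate step.
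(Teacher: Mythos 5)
Your reduction is sound as far as it goes, and it is a genuinely different, classical route: the identity $\xi(t)=\mathrm{Tr}_{K/\mathbb Q}(\alpha t)$ with $\alpha=\mathbf q^{T}v_{1}\neq0$, the formula $\big\|\xi|_{V_i}\big\|=d_i|\sigma_i(\alpha)|$, and the idea of clearing denominators by an integral eigenvector $v_1^{\#}$ (a column of $\mathrm{adj}(g_0-\zeta_{g_0}^1\mathrm{Id})$) so that $|N(\alpha)|\geq|N(\beta)|^{-1}$, are all correct. The genuine gap is exactly the one you flag yourself: the quantitative step is not carried out, and the bounds you actually sketch do not give the stated inequality. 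Controlling $|N(\beta)|$ through the individual conjugates $|\sigma_i(\beta)|=|v_i^{\#}|/|v_i|$ forces you to bound the minors of $g_0-\zeta_{g_0}^i\mathrm{Id}$, and the integer matrix $g_0$ is only controlled through $\|g_0\|\leq\mathcal M_\psi^2\max_j|\zeta_{g_0}^j|$; this puts roughly $\mathcal M_\psi^{2d(d-1)}$ and $2^{d(d-1)\ref{totirrheightconst}\mathcal F_{\phi(G)}}$ (plus factorial constants from the $(d-1)\times(d-1)$ minors and an extra $\mathcal M_\psi^d$ from the lower bounds on $|v_i|$) into $|N(\beta)|$, i.e.\ a bound strictly weaker than the claimed $d^{-1}2^{-\frac{d(d-1)}2\ref{totirrheightconst}\mathcal F_{\phi(G)}}\mathcal M_\psi^{-(d-1)}|\xi|^{-(d-1)}\mathcal S_\Gamma^{-1}$, in the $\mathcal M_\psi$-exponent, the $\mathcal F_{\phi(G)}$-exponent, and the constant. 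The Vandermonde symmetrization you gesture at is the right instinct but is not executed, so the proposition with its specific constants is not proved. (There is also a small scaling slip: $|v_i|\geq\frac1{2\|\psi\|}\geq\frac1{2\mathcal M_\psi\mathcal S_\Gamma}$, not $\frac{\mathcal S_\Gamma}{2\mathcal M_\psi}$; the two differ by $\mathcal S_\Gamma^2$ and the latter is not scale-consistent.)

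For comparison, the paper avoids the denominator of $\alpha$ and all eigenvector bookkeeping by working directly with the integer vectors $(g^{\mathrm T})^n\mathbf q$, $n=0,\dots,d-1$: irreducibility of $g$ forces $\det\big((g^{\mathrm T})^n\mathbf q\big)_{n=0}^{d-1}$ to be a nonzero integer, hence $\big|\det\big((\times_{\phi(g)}^{\mathrm T})^n\xi\big)_{n=0}^{d-1}\big|\geq\mathcal S_\Gamma^{-d}$, which is the integrality input your $|N(\alpha^{\#})|\geq1$ is meant to supply. The smallness of $\xi$ on $V_i$ is then isolated by splitting each $(\times_{\phi(g)}^{\mathrm T})^n\xi$ orthogonally against $\tilde e_i$ and expanding the determinant, so that each of the $d$ surviving terms carries only $d-1$ factors of $\|\xi\|\leq\mathcal M_\psi\mathcal S_\Gamma^{-1}|\xi|$ and operator norms $\|\times_{\phi(g)}^{\mathrm T}\|^{0+1+\cdots+(d-1)}\leq2^{\frac{d(d-1)}2\ref{totirrheightconst}\mathcal F_{\phi(G)}}$ — which is precisely where the exponents $d-1$, $\frac{d(d-1)}2$ and the factor $d^{-1}$ in the statement come from. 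If you want to salvage your route, you should likewise pass to a single global determinant (note $\det\big((g^{\mathrm T})^n\mathbf q\big)=\det(V)^{-1}\prod_i\sigma_i(\alpha)\prod_{i<j}(\zeta_g^j-\zeta_g^i)$, so the two arguments are two packagings of the same identity) rather than estimate $N(\beta)$ one embedding at a time.
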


\begin{proof} Suppose $\xi=\psi_*\mathbf q$ for some $\mathbf q\in\mathbb Z^d$. We view $\xi\in(\mathbb R^d)^*\subset(\mathbb C^d)^*\cong\mathbb C^d$ as a complex linear functional as well. Then $\|\xi\|$ coincide with the usual $l^2$-norm on $(\mathbb C^d)^*$.

By definition, if $1\leq i\leq r_1$ then $\big\|\xi\big|_{V_i}\big\|=|\xi(e_i)|=|\xi(\tilde e_i)|$ and for $1\leq j\leq r_2$ we have \begin{align*}\big\|\xi\big|_{V_i}\big\|=&\sqrt{\xi^2(e_{r_1+j})+\xi^2(e_{r_1+r_2+j})}\\
=&\sqrt{\xi^2(\sqrt 2\mathrm{Re}\tilde e_{r_1+j})+\xi^2(\sqrt 2\mathrm{Im}\tilde e_{r_1+j})}\\
=&\sqrt 2|\xi(\tilde e_{r_1+j})|.\end{align*}
So it suffices to show $|\xi(\tilde e_i)|\geq d^{-1}2^{-\frac{d(d-1)}2\ref{totirrheightconst}\mathcal F_{\phi(G)}}\mathcal M_\psi ^{-(d-1)}|\xi|^{-(d-1)}\mathcal S_\Gamma^{-1}, \forall i$.

Take the irreducible element $g\in G$ in Proposition \ref{totirrheight}. Denote $a=|\xi(\tilde e_i)|$, then $|\zeta_g^i|^na=|\xi( (\zeta_g^i)^n\tilde e_i)|=|\xi(\times_{\phi(g)}^n.\tilde e_i)|=|\big((\times_{\phi(g)}^\mathrm T)^n\xi\big)(\tilde e_i)|$ for all $n$. This means if we write $(\times_{\phi(g)}^\mathrm T)^n\xi=\xi_n^\bot+\xi_n$ where $\xi_n^\bot$ is the projection to the complex hyperplane $\tilde e_i^\bot$ and $\xi_n$ is orthogonal to it, then $\|\xi_n\|\leq\frac{|\zeta_g^i|^na}{|\tilde e_i|}=|\zeta_g^i|^na$.

Consider the determinant $\mathrm{det}\big((\times_{\phi(g)}^\mathrm T)^n\xi\big)_{n=0}^{d-1}$. It can be written as \begin{equation}\label{determdecomp}\mathrm{det}\big(\xi_n^\bot\big)_{n=0}^{d-1}+\sum_{m=0}^{d-1}\mathrm{det}\big(\xi_0^\bot,\cdots,\xi_{m-1}^\bot,\xi_m,(\times_{\phi(g)}^\mathrm T)^{m+1}\xi,\cdots,(\times_{\phi(g)}^\mathrm T)^{d-1}\xi\big).\end{equation}

$\mathrm{det}(\xi_n^\bot)_{n=0}^{d-1}=0$ as the $\xi_n^\bot$'s are all in the same hyperplane $\tilde e_i^\bot$ and for every index $m$, the absolute value of the corresponding term in the sum is at most
\begin{align*}&\|\xi_0^\bot\|\cdots\|\xi_{m-1}^\bot\|\cdot\|\xi_m\|\cdot\|(\times_{\phi(g)}^\mathrm T)^{m+1}\xi\|\cdots\|(\times_{\phi(g)}^\mathrm T)^{d-1}\xi\|\\\leq&\|\xi\|\cdots\|(\times_{\phi(g)}^\mathrm T)^{m-1}\xi\|\cdot\|\xi_m\|\cdot\|(\times_{\phi(g)}^\mathrm T)^{m+1}\xi\|\cdots\|(\times_{\phi(g)}^\mathrm T)^{d-1}\xi\|\\\leq&\|\times_{\phi(g)}^\mathrm T\|^{\sum_{0\leq j\leq d-1, j\neq m}j}\|\xi\|^{d-1}\cdot|\zeta_g^i|^ma.\\\leq&\|\times_{\phi(g)}^\mathrm T\|^{\sum_{j=0}^{d-1}j}\|\xi\|^{d-1}a.\end{align*} Here we used the fact that $\|\times_{\phi(g)}^\mathrm T\|=\max_{1\leq j\leq d}\|\sigma_j(g)\|=\max_{1\leq j\leq d}\|\zeta_g^j\|$, which follows directly from the construction of $\times_{\phi(g)}$. Moreover, $$\|\zeta_g^i\|\leq\|\times_{\phi(g)}^\mathrm T\|\leq 2^{\max_{1\leq j\leq d}\log\|\sigma_j(\phi(g))\|}\leq 2^{h^\mathrm{Mah}(\phi(g))}\leq 2^{\ref{totirrheightconst}\mathcal F_\phi(G)}.$$

From (\ref{determdecomp}) we obtain an upper bound \begin{equation}\label{determbd}|\mathrm{det}\big((\times_{\phi(g)}^\mathrm T)^n\xi\big)_{n=0}^{d-1}|\leq d2^{\frac{d(d-1)}2\ref{totirrheightconst}\mathcal F_\phi(G)}\|\xi\|^{d-1}a.\end{equation}

However $(\times_{\phi(g)}^\mathrm T)^n\xi=\psi_*\big((g^\mathrm T)^n\mathbf q\big)$, so $$\mathrm{det}\big((\times_{\phi(g)}^\mathrm T)^n\xi\big))_{n=0}^{d-1}=\det(\psi^{-1})\mathrm{det}\big((g^\mathrm T)^n\mathbf q\big)_{n=0}^{d-1}=\mathcal S_\Gamma^{-d}\mathrm{det}\big((g^\mathrm T)^n\mathbf q\big)_{n=0}^{d-1}.$$ But $\mathrm{det}\big((g^\mathrm T)^n\mathbf q\big)_{n=0}^{d-1}\neq 0$. This is because if the determinant vanishes then the $\mathbb Q$-span of the vectors $\{(g^\mathrm T)^n\mathbf q\}_{0\leq n\leq d-1}$ is a non-trivial proper rational subspace $L\subset(\mathbb R^d)^*$. But the characteristic polynomial of $g^T$ is rational and has degree $d$ by irreducibility, all powers of $g^\mathrm T$ are in the $\mathbb Q$-span of $\{(g^\mathrm T)^n\}_{0\leq n\leq d-1}$. Whence for all $n\in\mathbb Z$, $(g^\mathrm T)^n\mathbf q\in L$ and as a result, $L$ is invariant under $g^\mathrm T$. So $L^\bot$, which is a non-trivial proper subspace, is stablized by $g$. This contradicts the irreducibility of $g$. Therefore, the fact that $(g^\mathrm T)^n\mathbf q$ is an integer vector for all $n$ gives a lower bound $\big|\mathrm{det}\big((g^\mathrm T)^n\mathbf q\big)_{n=0..d-1}\big|\geq 1$. So $\big|\mathrm{det}\big((\times_{\phi(g)}^\mathrm T)^n\xi\big)_{n=0}^{d-1}\big|\geq\mathcal S_\Gamma^{-d}$.

On the other hand, $\|\xi\|=\|\psi_*q\|\leq\|\psi^{-1}\|\cdot\|q\|\leq\mathcal M_\psi \mathcal S_{\Gamma}^{-1}|q|=\mathcal M_\psi \mathcal S_{\Gamma}^{-1}|\xi|$.

Thus it follows from (\ref{determbd}) that\begin{align*}a\geq&d^{-1}2^{-\frac{d(d-1)}2\ref{totirrheightconst}\mathcal F_\phi(G)}\|\xi\|^{-(d-1)}\big|\mathrm{det}\big((\times_{\phi(g)}^\mathrm T)^n\xi\big)_{n=0}^{d-1}\big|\\
\geq&d^{-1}2^{-\frac{d(d-1)}2\ref{totirrheightconst}\mathcal F_\phi(G)}(\mathcal M_\psi \mathcal S_{\Gamma}^{-1}|\xi|)^{-(d-1)}\mathcal S_\Gamma^{-d}\\
=&d^{-1}2^{-\frac{d(d-1)}2\ref{totirrheightconst}\mathcal F_\phi(G)}\mathcal M_\psi ^{-(d-1)}|\xi|^{-(d-1)}\mathcal S_{\Gamma}^{-1}.\end{align*} Recall $a=|\xi(\tilde e_i)|$, the lemma follows.\end{proof}

\section{Positive entropy and regularity}\label{vecinVi}

\hskip\parindent So far we have only discussed algebraic properties of the group action, now we bring ergodic theory into the scene.
 
\begin{notation}By abuse of notation let $\mathrm m$ be the Lebesgue measure on $\mathbb R^d$ as well as that on $\mathbb T^d$. Denote by $\mathrm m_X$ the pushforward of $\mathrm m$ from $\mathbb R^d$ to $\mathbb R^d\cong\mathbb R^{r_1}\times\mathbb C^{r_2}$ by $\psi$. Abusing notation again, we call the projection of $\mathrm m_X$ to $X$ by $\mathrm m_X$ as well. Notice as the Lebesgue measure on $\mathbb T^d$ is $G$-invariant, $\mathrm m_X$ is $\times_{\phi(G)}$-invariant. And  on $X$, $\mathrm m_X$ is the pushforward of the Lebesgue measure $\mathrm m$ on $\mathbb T^d$ by $\psi$, thus the unique translation invariant probability measure.

Let $\pi$ denote the projection from $\mathbb R^d$ to $X=\mathbb R^d/\Gamma$.\end{notation}

As $|\det(\psi)|=\mathrm{Vol}(\psi(\mathbb Z^d))=\mathrm{Vol}(\Gamma)$, $\mathrm m_X=\frac{\mathrm{Vol}}{\mathrm{Vol}(\Gamma)}=\mathcal S_\Gamma^{-d}\cdot\mathrm{Vol}$.

From now on let $\mu$ be a Borel probability measure on $\mathbb T^d$ satisfying Condition \ref{condmu}, where $\epsilon$ will be supposed later to be very small depending on $\alpha$. We will always write \begin{equation}\eta=\mathcal M_\psi ^{-1}\epsilon.\end{equation}

We define the pushforward measure $\tau=\psi_*\mu$  on $X$. Then it has similar properties.

\begin{lemma}\label{condtau} Assuming Condition \ref{condmu}, then the inequality $H_\tau(\mathcal P)\geq\alpha d\log\frac1\eta-d\log\mathcal M_\psi $ holds for all measurable partitions $\mathcal P$ of $X$ such that $\mathrm{diam}\mathcal P\leq\eta\mathcal S_\Gamma$. \end{lemma}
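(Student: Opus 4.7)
The plan is to pull a partition of $X$ back to one of $\mathbb T^d$ through $\psi$ (which, as elsewhere in the paper, I also use for the induced homeomorphism $\mathbb T^d\to X$ since $\psi(\mathbb Z^d)=\Gamma$) and then invoke the entropy hypothesis on $\mu$. Given a measurable partition $\mathcal P$ of $X$, set $\mathcal P':=\psi^{-1}(\mathcal P)$; since $\tau=\psi_*\mu$, entropy is preserved: $H_\tau(\mathcal P)=H_\mu(\mathcal P')$.

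To control the diameter of $\mathcal P'$, lift to the universal cover: each atom of $\mathcal P'$ is contained in a ball of radius $\|\psi^{-1}\|\cdot\mathrm{diam}\mathcal P$. Definition \ref{isouni}, combined with $\mathcal S_{\psi(\mathbb Z^d)}=\mathcal S_\Gamma$, supplies the built-in estimate $\|\psi^{-1}\|\leq\mathcal M_\psi\mathcal S_\Gamma^{-1}$. Consequently, whenever $\mathrm{diam}\mathcal P\leq\eta\mathcal S_\Gamma$,
\begin{equation*}
\mathrm{diam}\mathcal P'\leq\mathcal M_\psi\mathcal S_\Gamma^{-1}\cdot\eta\mathcal S_\Gamma=\mathcal M_\psi\eta=\epsilon,
\end{equation*}
so Condition \ref{condmu} applied to $\mathcal P'$ gives $H_\mu(\mathcal P')\geq\alpha d\log\frac1\epsilon$.

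Finally, writing $\log\frac1\epsilon=\log\frac1\eta-\log\mathcal M_\psi$ turns this into
\begin{equation*}
H_\tau(\mathcal P)\geq\alpha d\log\tfrac1\eta-\alpha d\log\mathcal M_\psi\geq\alpha d\log\tfrac1\eta-d\log\mathcal M_\psi,
\end{equation*}
where the second inequality uses $\alpha\leq 1$ (forced, up to an $O(1/\log\frac1\epsilon)$ correction, by applying Condition \ref{condmu} to a standard $\epsilon$-cube grid of $\mathbb T^d$, whose entropy is at most $d\log\frac1\epsilon+O(1)$) and $\mathcal M_\psi\geq 1$ (from $\|\psi\|^d\geq|\det\psi|$, giving $\|\psi\|\mathcal S_\Gamma^{-1}\geq 1$). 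No step is delicate; the entire argument is bookkeeping between $\mathbb T^d$ and $X$, and the only real content is the definitional bound on $\|\psi^{-1}\|$.
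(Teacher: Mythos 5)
Your argument is exactly the paper's proof: pull the partition back through $\psi$, bound the diameter of the pullback by $\|\psi^{-1}\|\,\eta\mathcal S_\Gamma\leq\mathcal M_\psi\eta=\epsilon$ using Definition \ref{isouni}, invoke Condition \ref{condmu}, and rewrite $\log\frac1\epsilon=\log\frac1\eta-\log\mathcal M_\psi$. Your added justifications that $\mathcal M_\psi\geq1$ and that $\alpha$ cannot exceed $1$ (up to a negligible correction) only make explicit what the paper uses silently in the final inequality $\alpha d\log\mathcal M_\psi\leq d\log\mathcal M_\psi$, so the proposal is correct and essentially identical to the paper's proof.
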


\begin{proof} If the diameter of a partition $\mathcal P$ on $X$ is at most $\eta\mathcal S_\Gamma$, then that of its pullback $\psi^*\mathcal P$ on $\mathbb T^d$ is bounded by $\|\psi^{-1}\|\eta\mathcal S_\Gamma\leq\mathcal M_\psi \eta=\epsilon$. By Condition \ref{condmu}, $H_{\tau}(\mathcal P)=H_{\mu}(\psi^*\mathcal P)\geq\alpha d\log\frac1\epsilon=\alpha d(\log\frac1\eta-\log\mathcal M_\psi )\geq\alpha\log\frac1\eta-d\log\mathcal M_\psi $\end{proof}

\subsection{Some non-conventional entropies}

\hskip\parindent We demonstrated in \S \ref{irreig} the irrationality of subspace $V_i$ in $X$, a fact that makes it hard to construct measurable partitions on $X$ that are consistent with both the projection $\pi$ and the $G$-action simultaneously. For this reason we need some more technical arrangements.

\begin{definition} Let $\mu$ be a Borel probability measure on $X$. If a Borel set $B\subset\mathbb R^d$ is projected injectively by $\pi$ into $X$, the {\bf homogeneous entropy} of $B$ with respect to $\mu$ is $$\overline H_\mu(B)=\frac1{\mathrm m_X(B)}\int_{x\in X}-\mu(x+B)\log\mu(x+B)\mathrm{dm}_X(x).$$

When $\mathcal Q$ is a finite measurable partition of $B$, the {\bf homogeneous conditional entropy} of $\mathcal Q$ with respect to $\mu$ is defined to be $$\overline H_\mu(B,\mathcal Q)=\frac1{\mathrm m_X(B)}\int_{x\in X}\sum_{Q\in\mathcal Q}-\mu(x+Q)\log\frac{\mu(x+Q)}{\mu(x+B)}\mathrm{dm}_X(x).$$\end{definition}

By change of variable, the following fact is immediate.
\begin{lemma}\label{transinv} The homogeneous entropy and the homogeneous conditional entropy are translation invariant. Namely, $\forall B, \mathcal Q$, $\forall y\in\mathbb R^d$, let $y+B$ be the translation of $B$ by the vector $y$, and $y+\mathcal Q=\{y+Q|Q\in\mathcal Q\}$ be the partition defined by translating $\mathcal Q$ by $y$. Then $$\overline H_\mu(y+B)=\overline H_\mu(B), \overline H_\mu(y+B,y+\mathcal Q)=\overline H_\mu(B,\mathcal Q).$$\end{lemma}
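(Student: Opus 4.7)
The statement is essentially a change-of-variables exercise, so my plan is simply to carry out the substitution $x \mapsto x - y$ in the defining integrals and invoke translation invariance of $\mathrm m_X$ (which is the Haar measure on $X$) and of Lebesgue measure on $\mathbb R^d$ (which controls $\mathrm m_X(B)$).

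More concretely, for the first identity I will expand
\[
\overline H_\mu(y+B)=\frac1{\mathrm m_X(y+B)}\int_{X}-\mu(x+y+B)\log\mu(x+y+B)\,\mathrm{dm}_X(x),
\]
observe that $\mathrm m_X(y+B)=\mathrm m_X(B)$ because $B\subset\mathbb R^d$ and $\mathrm m_X$ is defined from Lebesgue measure which is translation invariant, and then substitute $x' = x + y \pmod{\Gamma}$. Since translation by $y$ is an $\mathrm m_X$-preserving bijection of $X$, the integral becomes $\int_X -\mu(x'+B)\log\mu(x'+B)\,\mathrm{dm}_X(x')$, which is exactly $\mathrm m_X(B)\cdot\overline H_\mu(B)$.

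For the conditional version, the same substitution $x'=x+y$ turns $\mu(x+(y+Q))$ into $\mu(x'+Q)$ and $\mu(x+(y+B))$ into $\mu(x'+B)$ uniformly in $Q\in\mathcal Q$, while the prefactor $1/\mathrm m_X(y+B)$ equals $1/\mathrm m_X(B)$ as before. Interchanging the finite sum over $\mathcal Q$ with the integral is legitimate, so term by term we recover the defining expression for $\overline H_\mu(B,\mathcal Q)$.

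The only thing I need to check with any care is that the integrand makes sense: the map $x\mapsto\mu(x+B)$ is measurable in $x$ because $B$ injects into $X$ under $\pi$, so $x+B$ is well-defined as a measurable subset of $X$ for each $x$, and the integrands are bounded (by $e^{-1}$ and by $-\mu(x+B)\log(1/|\mathcal Q|)$ respectively in the relevant regimes). I do not anticipate any real obstacle; the lemma is a formal consequence of the fact that the definitions were set up so that all $x$-dependence enters only through a translate.
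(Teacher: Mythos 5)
Your proof is correct and is exactly the argument the paper has in mind: the paper simply states that the lemma is "immediate by change of variable," and your substitution $x\mapsto x+y$ together with translation invariance of $\mathrm m_X$ and the equality $\mathrm m_X(y+B)=\mathrm m_X(B)$ is that argument spelled out. (Minor cosmetic point: with the paper's base-$2$ logarithm the bound on $-u\log u$ is $1/(e\ln 2)$ rather than $e^{-1}$, but this does not affect anything.)
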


\begin{lemma}\label{tauprob} Denote $\mathrm{dm}_{\mu,B}(x)=\frac1{\mathrm m_X(B)}\mu(x+B)\mathrm{dm}_X(x)$. Then $\mathrm  m_{\mu,B}$ is a probability measure.\end{lemma}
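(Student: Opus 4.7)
The plan is to verify the normalization $\int_X \mathrm{dm}_{\mu,B} = 1$, which reduces to showing
\[
\int_X \mu(x+B)\,\mathrm{dm}_X(x) = \mathrm{m}_X(B).
\]
Nonnegativity and countable additivity of $\mathrm{m}_{\mu,B}$ are immediate from the fact that $x \mapsto \mu(x+B)$ is a nonnegative measurable function (measurability follows because $B$ projects injectively into $X$, so $x+B$ may be unambiguously regarded as a Borel subset of $X$, and then $x \mapsto \mu(x+B)$ is the convolution of $\mathbf{1}_B$ against $\mu$, which is Borel measurable).

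For the mass computation, I would write $\mu(x+B) = \int_X \mathbf{1}_B(y-x)\,\mathrm d\mu(y)$ and apply Fubini:
\[
\int_X \mu(x+B)\,\mathrm{dm}_X(x) = \int_X \left(\int_X \mathbf{1}_B(y-x)\,\mathrm{dm}_X(x)\right)\mathrm d\mu(y).
\]
For each fixed $y$, the inner integral equals $\mathrm{m}_X(y-B) = \mathrm{m}_X(B)$ by translation invariance of the Haar measure $\mathrm{m}_X$ on the torus $X$. Since $\mu$ is a probability measure, the outer integral yields $\mathrm{m}_X(B)$, and dividing by $\mathrm{m}_X(B)$ gives total mass $1$.

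There is no real obstacle here; the only point that deserves a sentence of care is the measurability/well-definedness issue arising from the fact that $B \subset \mathbb R^d$ is only required to project injectively into $X$, so one must check that $\mu(x+B)$ makes sense for every $x$ and depends Borel-measurably on $x$. This is automatic once one identifies $B$ with its image $\pi(B) \subset X$ and reinterprets the translation $x+B$ as translation inside $X$, which is legitimate because $\pi$ commutes with translations.
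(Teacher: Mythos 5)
Your proposal is correct and follows essentially the same route as the paper: write $\mu(x+B)=\int_X\mathbf 1_{y-x\in B}\,\mathrm d\mu(y)$, apply Fubini, and use translation invariance to get $\mathrm m_X(y-B)=\mathrm m_X(B)$, so the total mass is $1$. The extra remarks on measurability and on identifying $B$ with $\pi(B)$ are fine but not needed beyond what the paper tacitly assumes.
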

\begin{proof}The total mass is

\begin{align*}\int_{x\in X}\mathrm{dm}_{\mu,B}(x)=&\frac1{\mathrm m_X(B)}\int_{x\in X}\mu(x+B)\mathrm{dm}_X(x)\\
=&\frac1{\mathrm m_X(B)}\int_{x\in X}\int_{y\in X}\mathbf 1_{y-x\in B}\mathrm d\mu(y)\mathrm{dm}_X(x)\\
=&\frac1{\mathrm m_X(B)}\int_{y\in X}\int_{x\in X}\mathbf 1_{y-x\in B}\mathrm{dm}_X(x)\mathrm d\mu(y)\\
=&\frac1{\mathrm m_X(B)}\int_{y\in X}\mathrm m_X(y-B)\mathrm d\mu(y)\\
=&\frac1{\mathrm m_X(B)}\int_{y\in X}\mathrm m_X(B)\mathrm d\mu(y)=1.\end{align*}\end{proof}

\begin{remark}\label{entexp} $\overline H_\mu(B,\mathcal Q)$ can be written as \begin{align*}&\frac1{\mathrm m_X(B)}\int_{x\in X}\mu(x+B)\sum_{Q\in\mathcal Q}(-\frac{\mu(x+Q)}{\mu(x+B)}\log\frac{\mu(x+Q)}{\mu(x+B)})\mathrm{dm}_X(x)\\=&\mathbb E_{\mathrm m_{\mu,B}(x)}H_{\frac{\mu|_{x+B}}{\mu(x+B)}}(x+\mathcal Q).\end{align*} Where $H_{\frac{\mu|_{x+B}}{\mu(x+B)}}(x+\mathcal Q)$ stands for the usual measure theoretical entropy of the partition $x+\mathcal Q$ with respect to the renormalized probability measure $\frac{\mu|_{x+B}}{\mu(x+B)}$.\end{remark}

In particular, we deduce
\begin{corollary}\label{sizeQ} (i). For all $B$,$\mathcal Q$ and $\mu$, $\overline H_\mu(B,\mathcal Q)\leq\log|\mathcal Q|$.

(ii). If $\mathcal Q$ and $\mathcal Q'$ are two different partitions of $B$ then $\overline H_\mu(B,\mathcal Q\vee\mathcal Q')\leq \overline H_\mu(B,\mathcal Q)+\overline H_\mu(B,\mathcal Q')$. .\end{corollary}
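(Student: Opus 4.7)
The plan is to reduce both assertions to standard facts about the classical Shannon entropy of a partition, by invoking the reformulation in Remark \ref{entexp}. Recall it says
$$\overline H_\mu(B,\mathcal Q)=\mathbb E_{\mathrm m_{\mu,B}(x)}H_{\frac{\mu|_{x+B}}{\mu(x+B)}}(x+\mathcal Q),$$
so $\overline H_\mu(B,\mathcal Q)$ is an average (against the probability measure $\mathrm m_{\mu,B}$ from Lemma \ref{tauprob}) of ordinary entropies of the partition $x+\mathcal Q$ with respect to the normalized conditional measure $\frac{\mu|_{x+B}}{\mu(x+B)}$ on $x+B$.

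For part (i), I would note that for any probability measure $\nu$ on a set partitioned into $|\mathcal Q|$ pieces, the classical entropy $H_\nu$ of that partition is at most $\log|\mathcal Q|$ (achieved when $\nu$ is uniform, via Jensen or a direct Lagrange multiplier argument). Since $x+\mathcal Q$ has the same cardinality as $\mathcal Q$, this bound persists under the translation, and integrating the pointwise inequality against $\mathrm m_{\mu,B}$ yields $\overline H_\mu(B,\mathcal Q)\leq\log|\mathcal Q|$. One only needs to be slightly careful about atoms $Q\in\mathcal Q$ for which $\mu(x+Q)=0$, but these contribute nothing to the entropy with the usual convention $0\log 0=0$.

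For part (ii), the analogous pointwise statement is the classical subadditivity of Shannon entropy: for any probability measure $\nu$ and any two partitions $\mathcal R,\mathcal R'$ of the underlying space, $H_\nu(\mathcal R\vee \mathcal R')\leq H_\nu(\mathcal R)+H_\nu(\mathcal R')$. Applied to $\nu=\frac{\mu|_{x+B}}{\mu(x+B)}$ and to the partitions $x+\mathcal Q$, $x+\mathcal Q'$ of $x+B$ (whose join is $x+(\mathcal Q\vee\mathcal Q')$), this gives
$$H_\nu\bigl(x+(\mathcal Q\vee\mathcal Q')\bigr)\leq H_\nu(x+\mathcal Q)+H_\nu(x+\mathcal Q')$$
for every $x$. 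Integrating against the probability measure $\mathrm m_{\mu,B}$ and reapplying Remark \ref{entexp} on each term yields the desired inequality.

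There is no real obstacle; the entire content is that the defining integral of $\overline H_\mu$ preserves the pointwise Shannon-entropy inequalities by linearity of expectation, once one recognizes the formulation in Remark \ref{entexp}. The only mild point worth spelling out in the written proof is that $\mathrm m_{\mu,B}$ is indeed a probability measure (Lemma \ref{tauprob}) so that monotonicity of integration gives the averaged inequality with the same constant.
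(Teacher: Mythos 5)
Your argument is correct and is exactly the route the paper intends: Corollary \ref{sizeQ} is stated as an immediate consequence of the expectation formula in Remark \ref{entexp}, with the pointwise bounds $H_\nu\leq\log|\mathcal Q|$ and classical subadditivity integrated against the probability measure $\mathrm m_{\mu,B}$ from Lemma \ref{tauprob}. No gaps; nothing further is needed.
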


\begin{definition} Let $\mu$ be as above. The entropy of a finite measuable cover $\mathcal C$ of $X$ is $H_\mu(\mathcal C)=\sum_{C\in\mathcal C}-\mu(C)\log\mu(C)$.\end{definition}

We are going to cope with special $B$'s and $\mathcal Q$'s.

For $\mathbf v=(v_i)_{i=1}^{r_1+r_2},\mathbf w=(w_i)_{i=1}^{r_1+r_2}\in(\mathbb R_+\cup\{0\})^{\{1,\cdots,r_1+r_2\}}$, $\mathbf v>\mathbf w$ means $v_i>w_i, \forall i$. Let $\mathbf 1_J=\mathbf 1_{i\in J}\in (\mathbb R_+\cup\{0\})^{\{1,\cdots,r_1+r_2\}}$ be the charateristic vector of $J\subset \{1,\cdots,r_1+r_2\}$ and simply denote $\mathbf 1=\mathbf 1_{\{1,\cdots,r_1+r_2\}}$, $\mathbf 1_i=\mathbf 1_{\{i\}}$.

For $\mathbf w\in (\mathbb R_+\cup\{0\})^{\{1,\cdots,r_1+r_2\}}$, we define a family of boxes whose sides in $V_i$ are of length $2^{-w_i}$.

\begin{definition} For $\mathbf w\in (\mathbb R_+\cup\{0\})^{\{1,\cdots,r_1+r_2\}}$, a set $B\subset\mathbb R^d=\oplus_{i=1}^{r_1+r_2}V_i$ is a {\it $\mathbf w$-box} if $B=\prod_{i=1}^{r_1+r_2}B_i$ where:

$B_i=[0,2^{-w_i}\mathcal S_\Gamma)$ inside $V_i$, if $V_i\cong\mathbb R$;

$B_i=\{x+\mathrm iy|x,y\in [0,2^{-w_i}\mathcal S_\Gamma)\}\subset V_i$, i.e. the square $[0,2^{-w_i}\mathcal S_\Gamma)\times[0,2^{-w_i}\mathcal S_\Gamma)$, if $V_i\cong\mathbb C$.

The family of all $\mathbf w$-boxes are denoted by $\Pi^\mathbf w$.\end{definition}

\begin{definition} For $\mathbf t\in(\mathbb N\cup\{0\})^{\{1,\cdots,r_1+r_2\}}$.

Given $B\in\Pi^\mathbf w$, set $F_{\mathbf t}B=f_{\mathbf t}(B)$ where $f_{\mathbf t}=\oplus_{i=1}^{r_1+r_2}2^{-t_i}\mathrm{Id}|_{V_i}$ rescales each $V_i$ by $2^{-t_i}$.

Then $F_{\mathbf t}B\in\Pi^{\mathbf w+\mathbf t}$ is a subset of $B$ and there is a unique partition of $B$ into $2^{\sum_{i=1}^{r_1+r_2}d_i\cdot t_i}$ different translates of $F_{\mathbf t}(B)$. Call this partition by $\mathcal Q_\mathbf tB$.\end{definition}

$F_\mathbf t$ maps $\Pi^\mathbf w$ to $\Pi^{\mathbf w+\mathbf t}$. It is an immediate observation that \begin{equation}\label{cuttwice}F_\mathbf t\circ F_\mathbf s=F_{\mathbf t+\mathbf s}.\end{equation}

\begin{lemma}\label{injectiveradius}Any box in $\Pi^{\mathbf w}$ is projected injectively by $\pi$ if $\mathbf w\geq\log(\sqrt d\mathcal M_\psi)\mathbf 1$.\end{lemma}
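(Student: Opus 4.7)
The plan is to show two things: that the diameter of any $\mathbf{w}$-box $B$ with $\mathbf{w}\geq\log(\sqrt d\mathcal M_\psi)\mathbf 1$ is at most $\mathcal S_\Gamma/\mathcal M_\psi$, and that every non-zero element of $\Gamma$ has norm at least $\mathcal S_\Gamma/\mathcal M_\psi$. Injectivity of $\pi|_B$ then follows because no non-zero vector of $\Gamma$ can arise as a difference of two points of $B$.

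For the diameter bound, I would split $B=\prod_i B_i$ factor by factor. Each factor $B_i\subset V_i$ is a half-open interval of length $2^{-w_i}\mathcal S_\Gamma$ when $d_i=1$, or a half-open square of the same side length when $d_i=2$, so $\sup_{x,y\in B_i}|x-y|^2$ is bounded by $d_i(2^{-w_i}\mathcal S_\Gamma)^2$ and is moreover not attained. Summing across the orthogonal decomposition $\mathbb R^d=\bigoplus_i V_i$ and using the hypothesis $2^{-w_i}\leq(\sqrt d\mathcal M_\psi)^{-1}$ gives
\[
\sup_{x,y\in B}|x-y|^2\;\leq\;\sum_{i=1}^{r_1+r_2}d_i(2^{-w_i}\mathcal S_\Gamma)^2\;\leq\;\frac{\mathcal S_\Gamma^2}{d\mathcal M_\psi^2}\sum_i d_i\;=\;\frac{\mathcal S_\Gamma^2}{\mathcal M_\psi^2},
\]
and half-openness of the factors ensures this supremum is never attained, so $|x-y|<\mathcal S_\Gamma/\mathcal M_\psi$ for all distinct $x,y\in B$.

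For the lattice lower bound, I would invoke the definition of $\mathcal M_\psi$. Since $\Gamma=\psi(\mathbb Z^d)$ and $|\det\psi|=\mathcal S_\Gamma^d$, the bound $\|\psi^{-1}\|\cdot|\det\psi|^{1/d}\leq\mathcal M_\psi$ gives $\|\psi^{-1}\|\leq\mathcal M_\psi/\mathcal S_\Gamma$. For any non-zero $\gamma=\psi(n)\in\Gamma$ with $n\in\mathbb Z^d\setminus\{0\}$, we have $|n|\geq 1$ and
\[
|\gamma|\;\geq\;\|\psi^{-1}\|^{-1}|n|\;\geq\;\mathcal S_\Gamma/\mathcal M_\psi.
\]

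Combining the two estimates, if $x,y\in B$ satisfy $\pi(x)=\pi(y)$, i.e.\ $x-y\in\Gamma$, then $|x-y|<\mathcal S_\Gamma/\mathcal M_\psi\leq|\gamma|$ for every non-zero $\gamma\in\Gamma$, forcing $x-y=0$. Hence $\pi|_B$ is injective. No step here is hard; the only care needed is in the diameter count (respecting $d_i=2$ for the complex factors so that the constant $\sqrt d$ in the hypothesis matches exactly) and in noting that the half-open boxes make the diameter bound an honest strict inequality, so the argument works even in the borderline case where $\Gamma$ attains its minimum at $\mathcal S_\Gamma/\mathcal M_\psi$.
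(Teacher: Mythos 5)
Your proof is correct and follows essentially the same route as the paper: bound the diameter of a $\mathbf w$-box strictly below $\mathcal M_\psi^{-1}\mathcal S_\Gamma$ using the side length $2^{-w_i}\mathcal S_\Gamma\leq(\sqrt d\mathcal M_\psi)^{-1}\mathcal S_\Gamma$, then use $\|\psi^{-1}\|\leq\mathcal M_\psi\mathcal S_\Gamma^{-1}$ from the definition of uniformity to rule out a nonzero element of $\Gamma=\psi(\mathbb Z^d)$ as a difference of two points of the box. Your phrasing via a lower bound on the shortest nonzero vector of $\Gamma$ is just a repackaging of the paper's step of pulling back by $\psi^{-1}$ and noting the image has norm less than $1$.
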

\begin{proof} If $B\in\Pi^{\mathbf w}$, then all side of it are at most of length $2^{-\log(\sqrt d\mathcal M_\psi)}\mathcal S_\Gamma$. So $\forall x,x'\in B$, $|x-x'|<\sqrt d\cdot 2^{-\log(\sqrt d\mathcal M_\psi)}\mathcal S_\Gamma=\mathcal M_\psi^{-1}\mathcal S_\Gamma$. Hence $|\psi^{-1}(x-x')|\leq\|\psi^{-1}\|\mathcal M_\psi^{-1}\mathcal S_\Gamma\leq 1$ by Definition \ref{isouni}. Thus unless $x=x'$, $\psi^{-1}(x-x')\notin\mathbb Z^d$, or equivalently, $x-x'\notin\psi(\mathbb Z^d)=\Gamma$ and $x$, $x'$ project to different points in $X=\mathbb R^d/\Gamma$.\end{proof}

\begin{lemma}\label{sumcuts} If $\pi|_B$ is injective, $\mu$ is a Borel probability measure on $X$ and $\mathbf t^1,\cdots,\mathbf t^n\in(\mathbb N\cup\{0\})^{\{1,\cdots,r_1+r_2\}}$, then $$\overline H_\mu(F_{\sum_{s=1}^n\mathbf t^s}B)=\overline H_\mu(B)+\sum_{s=1}^n\overline H_\mu(F_{\sum_{l=1}^{s-1}\mathbf t^l}B,\mathcal Q_{\mathbf t^s}F_{\sum_{l=1}^{s-1}\mathbf t^l}B).$$\end{lemma}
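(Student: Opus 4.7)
The plan is to reduce to the single-step case $n=1$ by induction, and then derive that case from a pointwise entropy identity combined with translation invariance. For $s\geq 0$, let $B_s=F_{\sum_{l=1}^{s}\mathbf t^l}B$ (so $B_0=B$) and $\mathcal Q^s=\mathcal Q_{\mathbf t^s}B_{s-1}$. Using the identity $F_\mathbf s\circ F_\mathbf t=F_{\mathbf s+\mathbf t}$ from (\ref{cuttwice}), each $B_s$ is obtained from $B_{s-1}$ by a single rescaling of type $F_{\mathbf t^s}$, and all $B_s\subset B$, so $\pi|_{B_s}$ is injective as well. The inductive step is precisely the claimed equality for $n=1$ applied to $B_{s-1}$ with cut $\mathbf t^s$, so it suffices to prove
\[
\overline H_\mu(F_{\mathbf t}B)\;=\;\overline H_\mu(B)+\overline H_\mu(B,\mathcal Q_{\mathbf t}B).
\]

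For the base case, set $B'=F_\mathbf t B$ and $N=2^{\sum_i d_i t_i}$, and write the partition $\mathcal Q_\mathbf tB=\{y_j+B'\}_{j=1}^N$ with $\mathrm m_X(B)=N\mathrm m_X(B')$. The key pointwise identity is that for each fixed $x\in X$, since $\pi|_B$ is injective the sets $\{x+y_j+B'\}_j$ form a partition of $x+B$ inside $X$, so $\mu(x+B)=\sum_j \mu(x+y_j+B')$. The elementary computation
\[
-\mu(x+B)\log\mu(x+B)\;+\;\sum_{j=1}^N-\mu(x+y_j+B')\log\tfrac{\mu(x+y_j+B')}{\mu(x+B)}\;=\;\sum_{j=1}^N-\mu(x+y_j+B')\log\mu(x+y_j+B')
\]
then yields, after dividing by $\mathrm m_X(B)$ and integrating against $\mathrm m_X$,
\[
\overline H_\mu(B)+\overline H_\mu(B,\mathcal Q_\mathbf tB)\;=\;\frac{1}{\mathrm m_X(B)}\sum_{j=1}^N\int_X-\mu(x+y_j+B')\log\mu(x+y_j+B')\,d\mathrm m_X(x).
\]

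Now apply Lemma \ref{transinv} (equivalently, the $\mathrm m_X$-translation invariance obtained by the substitution $x\mapsto x-y_j$) to each integral: each of the $N$ summands equals $\int_X-\mu(x+B')\log\mu(x+B')\,d\mathrm m_X(x)=\mathrm m_X(B')\,\overline H_\mu(B')$. Since $N\,\mathrm m_X(B')=\mathrm m_X(B)$, the right-hand side simplifies to $\overline H_\mu(B')=\overline H_\mu(F_\mathbf t B)$, completing the base case and therefore, by induction, the lemma.

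The only conceptual step to be careful about is the pointwise identity $\mu(x+B)=\sum_j\mu(x+y_j+B')$; this does require $\pi|_B$ (and hence $\pi|_{x+B}$) to be injective so that the abstract translates $x+y_j+B'$ really do form a disjoint partition of $x+B$ inside $X$ rather than wrapping around the quotient. Everything else is bookkeeping using (\ref{cuttwice}) and Lemma \ref{transinv}.
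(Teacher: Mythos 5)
Your proposal is correct and follows essentially the same route as the paper: reduce to the single-cut case $n=1$ via (\ref{cuttwice}), then combine the pointwise identity $\mu(x+B)=\sum_{Q\in\mathcal Q_\mathbf tB}\mu(x+Q)$ (valid by injectivity of $\pi|_B$) with the decomposition of $-\mu(x+Q)\log\mu(x+Q)$ and translation invariance from Lemma \ref{transinv}. The only difference is cosmetic: the paper invokes translation invariance first, writing $\overline H_\mu(F_\mathbf tB)$ as the average of $\overline H_\mu(Q)$ over the atoms, and then splits the logarithm, while you split the logarithm pointwise and apply translation invariance at the end — the underlying computation is identical.
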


\begin{proof} The statement is trivial for $n=0$ by (\ref{cuttwice}), to conclude by induction it suffices to do the $n=1$ case. In this case there is only one vector $\mathbf t=\mathbf t^1$. By Lemma \ref{transinv}, $\overline H_\mu(F_\mathbf tB)=H_\mu(Q),\forall Q\in\mathcal Q_\mathbf tB$, therefore 

\begin{align*}\overline H_\mu(F_\mathbf tB)=&\frac1{|\mathcal Q_\mathbf tB|}\sum_{Q\in\mathcal Q_\mathbf tB}\overline H_\mu(Q)\\
=&\frac1{|\mathcal Q_\mathbf tB|}\sum_{Q\in\mathcal Q_\mathbf tB}\frac{|\mathcal Q_\mathbf tB|}{\mathrm m_X(B)}\int_{x\in X}-\mu(x+Q)\log\mu(x+Q)\mathrm{dm}_X(x)\\
=&\frac1{\mathrm m_X(B)}\int_{x\in X}\sum_{Q\in\mathcal Q_\mathbf tB}-\mu(x+Q)\log\mu(x+Q)\mathrm{dm}_X(x).\end{align*}

By decomposing $-\mu(x+Q)\log\mu(x+Q)$ into the sum of $-\mu(x+Q)\log\mu(x+B)$ and $-\mu(x+Q)\log\frac{\mu(x+Q)}{\mu(x+B)}$, we obtain

\begin{align*}
&\overline H_\mu(F_\mathbf tB)\\
=&\frac1{\mathrm m_X(B)}\int_{x\in X}-\mu(x+B)\log\mu(x+B)\mathrm{dm}_X(x)\\
&\hskip\parindent+\frac1{\mathrm m_X(B)}\int_{x\in X}\sum_{Q\in\mathcal Q_\mathbf tB}-\mu(x+Q)\log\frac{\mu(x+Q)}{\mu(x+B)}\mathrm{dm}_X(x)\\
=&\overline H_\mu(B)+\overline H_\mu(B,\mathcal Q_\mathbf tB).\end{align*}
This establishes the lemma.\end{proof}

\subsection{Positive entropy in an eigenspace}

\hskip\parindent In this part we will show that when both Conditions \ref{condmu} and \ref{condG'} are satisfied, there exists an index $i$ such that the measure $\tau=\psi_*\mu$ has positive entropy in the direction of $V_i$ with respect to some scale that we will specify.

Let us start with a box $B_0\in\Pi^{R_0\mathbf 1}$ where \begin{equation}\label{R0def}R_0:=\log\frac{\sqrt d}\eta=\log\frac{\sqrt d\mathcal M_\psi }\epsilon,\end{equation} then $\mathrm{diam}(B_0)=\sqrt d 2^{-R_0}\mathcal S_\Gamma=\eta\mathcal S_\Gamma$. As $\epsilon<1$, $R_0\geq\log(\sqrt d\mathcal M_\psi)$ always holds and $\pi|_{B_0}$ is injective by Lemma \ref{injectiveradius}. We now try to give an estimate for the average entropy $\overline H_\tau(B_0)$.

\declareconst{posentropydeltaconst}
\begin{lemma}\label{posR0} Assuming Conditions \ref{condmu} and \ref{condG'}, if a constant $\ref{posentropydeltaconst}=\ref{posentropydeltaconst}(d)$ is sufficiently large then for any $\delta\geq\frac {\ref{posentropydeltaconst}\mathcal M_\psi^d}{R_0}$ and box $B_0\in\Pi^{R_0\mathbf 1}$, $$\overline H_\tau(B_0)\geq (\alpha-\delta)dR_0,$$ where $\tau=\psi_*\mu$ with $\psi$ and $\mu$ coming respectively from Conditions \ref{condmu} and \ref{condG'}\end{lemma}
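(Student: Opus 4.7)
The plan is to express $\overline H_\tau(B_0)$ as an average of ordinary partition entropies indexed by a translation parameter, then to apply Lemma~\ref{condtau}. Let $\Lambda^*=\bigoplus_{i=1}^{r_1+r_2}2^{-R_0}\mathcal S_\Gamma\mathbb Z^{d_i}\subset\mathbb R^d$, so that $\{\lambda+B_0:\lambda\in\Lambda^*\}$ tiles $\mathbb R^d$, and fix a fundamental domain $D$ of $\Gamma$ as a translate of $\psi([0,1)^d)$ (the particular translate to be chosen so that the $\eta\mathcal S_\Gamma$-neighbourhood of $\partial D$ has small $\tau$-mass). For each $y\in B_0$ form the partition $\mathcal P_y=\{(y+\lambda+B_0)\cap D:\lambda\in\Lambda^*\}$ of $D\cong X$. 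Every atom of $\mathcal P_y$ has diameter at most $\mathrm{diam}(B_0)=\eta\mathcal S_\Gamma$, so Lemma~\ref{condtau} yields
\[H_\tau(\mathcal P_y)\ge\alpha dR_0-\alpha d\log\sqrt d-d\log\mathcal M_\psi.\]

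Next I split $H_\tau(\mathcal P_y)=S_y^\circ+S_y^\partial$ into interior contributions (from tiles with $y+\lambda+B_0\subset D$) and boundary contributions (from tiles cut by $\partial D$). Because the map $(y,\lambda)\mapsto y+\lambda$ carries $B_0\times\Lambda^*$ bijectively onto $\mathbb R^d$, a Fubini computation gives
\[\mathbb E_{y\in B_0}[S_y^\circ]=\frac1{\mathrm m_X(B_0)}\int_{X_\circ}\varphi(\tau(x+B_0))\,\mathrm{dm}_X(x)\le\overline H_\tau(B_0),\]
where $\varphi(t)=-t\log t$, $X_\circ=\pi(\{z\in D:z+B_0\subset D\})$, and the inequality uses $\varphi\ge 0$ on the deleted strip $X\setminus X_\circ$.

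To control $\mathbb E_y[S_y^\partial]$, I observe that every boundary atom lies inside the shell $S_D=\{z\in D:\mathrm{dist}(z,\partial D)\le\eta\mathcal S_\Gamma\}$, and that $\mathrm m(S_D)\lesssim_d\mathcal M_\psi^{d-1}\eta\mathcal S_\Gamma^d$ because $D$ is the $\psi$-image of a unit cube. Averaging $\tau(\pi(S_D))$ over translates of $D$ and selecting a favourable translate gives $\tau(\pi(S_D))\lesssim_d\mathcal M_\psi^{d-1}\eta$. Combined with the concavity estimate $\sum_j\varphi(p_j)\le m\log(N/m)$ for a partition into $N$ atoms of total mass $m$, and with $|\mathcal P_y|=O(2^{dR_0})$, this yields $\mathbb E_y[S_y^\partial]\lesssim_d\mathcal M_\psi^{d-1}\eta\,dR_0+O(1)$, which is negligible compared to $dR_0$ because $\eta=\mathcal M_\psi^{-1}\epsilon$ is exponentially small in $R_0$.

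Assembling everything,
\[\overline H_\tau(B_0)\ge\mathbb E_y[H_\tau(\mathcal P_y)]-\mathbb E_y[S_y^\partial]\ge\alpha dR_0-O_d(\log\mathcal M_\psi)-O_d(\mathcal M_\psi^{d-1}\eta\,dR_0),\]
and the hypothesis $\delta\ge\kappa_{\ref{posentropydeltaconst}}\mathcal M_\psi^d R_0^{-1}$ with $\kappa_{\ref{posentropydeltaconst}}=\kappa_{\ref{posentropydeltaconst}}(d)$ chosen large enough forces the right-hand side to exceed $(\alpha-\delta)dR_0$. The main technical obstacle is the boundary bookkeeping: since $\tau$ may be singular we cannot control $\tau(S_D(y))$ pointwise for individual $y$, and the averaging-over-$D$ trick is essential for reducing the estimate to the Lebesgue mass of the shell, which is easy.
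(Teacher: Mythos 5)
Your proof is correct, and it handles the key averaging step differently from the paper. The paper never splits off boundary atoms: it forms the cover $\mathcal C=\{\pi(y+B_0):y\in Y\}$ of $X$ by all tiles meeting a fundamental domain, translates the whole cover by $x\in X$, and uses translation invariance to get the exact identity $\int_X H_\tau(x+\mathcal C)\,\mathrm{dm}_X(x)=\mathrm m_X(D_+)\,\overline H_\tau(B_0)$; it then compares the cover entropy with the entropy of the induced small-diameter partition (to which Lemma \ref{condtau} applies), losing only an additive $O_d(1)$ through the multiplicity bound $5^d$ and the multiplicative factor $\mathrm m_X(D_+)\le 1+O_d(\epsilon)$, so the $\tau$-mass of no particular region is ever estimated. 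You instead translate the tiling by $y\in B_0$, keep a fixed fundamental domain, apply Lemma \ref{condtau} to the honest partitions obtained by cutting with $D$, and discard the boundary atoms; your interior Fubini identity and the bound on the boundary contribution are both valid. What your route buys is a purely additive error, so you avoid the paper's division by $1+O_d(\epsilon)$ and its implicit restriction $\delta\le\alpha/10$; what it costs is exactly what you identified: since $\tau$ may be singular you need the pigeonhole choice of the translate of $D$ (via $\int_X\tau(A+x)\,\mathrm{dm}_X(x)=\mathrm m_X(A)$, the computation of Lemma \ref{tauprob}) and the concavity estimate $m\log(N/m)$, neither of which the paper needs. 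Two small repairs: carry the $\alpha d\log\sqrt d$ and $O_d(1)$ terms explicitly in your final display (they are not absorbed by $O_d(\log\mathcal M_\psi)$ when $\mathcal M_\psi=1$; they are harmless because Condition \ref{condmu} forces $\alpha=O_d(1)$ once $\epsilon$ is bounded away from $1$, an implicit bound the paper's own absorption also uses), and note that $t\mapsto t\log(N/t)$ is increasing on the relevant range so that substituting your upper bound on the boundary mass into $m\log(N/m)$ is legitimate.
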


\begin{proof} Let $b_i=\psi(e_i)$, then $|b_i|\leq\|\psi\|\leq\mathcal M_\psi \mathcal S_\Gamma$. The parallelepiped $D=\{\sum_{j=1}^d\theta_jb_j|\theta_j\in[0,1),\forall j\}$ spanned by the linear basis $\{b_1,\cdots ,b_d\}$ for the lattice $\Gamma$ is a fundamental domain of the projection $\pi$. 

The preimage $\psi^{-1}(B_0)\subset \mathbb R^d$ is inside a ball of radius $\|\psi^{-1}\|\mathrm{diam}(B_0)\leq\mathcal M_\psi\mathcal S_\Gamma^{-1}\cdot\eta\mathcal S_\Gamma=\mathcal M_\psi\eta=\epsilon$. Hence $\psi^{-1}(B_0-B_0)=\psi^{-1}(B_0)-\psi^{-1}(B_0)$ is covered by the ball of radius $2\epsilon$ centered at the origin.  Notice $\psi^{-1}(D)=[0,1)^d\subset\mathbb R^d$, hence \begin{equation}\label{fdboudary}\psi^{-1}(D+B_0-B_0)\subset (-2\epsilon,1+2\epsilon)^d,\end{equation} which can be covered by at most $5^d$ translates of $\psi^{-1}(D)$ as $\epsilon<1$. Thus $D+B_0-B-0$ is covered by at most $5^d$ translates of $D$, each of which is a fundamental domain of $\pi$. Hence $\forall x\in  X$, $|\pi^{-1}(x)\cap(D+B_0-B_0)|\leq 5^d$.

We may tile $\mathbb R^d$ by translating $B_0$: $\mathbb R^d=\sqcup_{y\in\Sigma}(y+B_0)$ where $\Sigma\subset \mathbb R^d$ is a lattice. Denote $Y=\{y\in\Sigma|(y+B_0)\cap D\neq\emptyset\}$. Then $D\subset D_+\subset D+B_0-B_0$ where $D_+=\sqcup_{y\in Y}(y+B_0)$.

Let $\mathcal C$ be the collection $\{\pi(y+B_0)|y\in Y\}$ then it is a cover of $X$ as $\cup_{C\in\mathcal C}C\supset \cup_{y\in Y}\pi(y+B_0)=\pi(D_+)\supset\pi(D)= X$. Furthermore every point in $X$ is covered by at most $5^d$ pieces from $\mathcal C$ as $|\{C\in\mathcal C|x\in C\}|=|\{y\in Y|x\in\pi(y+B_0)\}|=|\pi^{-1}(x)\cap D_+|\leq |\pi^{-1}(x)\cap (D+B_0-B_0)|\leq 5^d$.

Denote $x+\mathcal C=\{x+C|C\in\mathcal C\}$ for all $x\in X$. $x+\mathcal C$ is the translate of $\mathcal C$ by $x$ and hence remains a cover of $X$ whose multiplicity at any given point is at most $5^d$. Then 
\begin{equation}\label{entBentC1}\begin{split}&\int_{x\in X}H_\tau(x+\mathcal C)\mathrm{dm}_X(x)\\
=&\int_{x\in X}\sum_{C\in\mathcal C}-\tau(x+C)\log \tau(x+C)\mathrm{dm}_X(x)\\
=&\int_{x\in X}\sum_{y\in Y}-\tau(x+y+B_0)\log \tau(x+y+B_0)\mathrm{dm}_X(x).\end{split}\end{equation}
As $\mathrm{m}_X(y+B_0)=\mathrm{m}_X(B_0)$ and $\overline H_\tau(y+B_0)=\overline H_\tau(B_0)$ for all $y\in Y$, by definition of $\overline H_\tau(y+B_0)$,
\begin{equation}\label{entBentC}\begin{split}
(\ref{entBentC1})=&\sum_{y\in Y}\mathrm m_X(B_0)\overline H_\tau(y+B_0)=|\mathcal C|\mathrm m_X(B_0) \overline H_\tau(B_0)\\
=&\mathrm m_X(D_+) \overline H_\tau(B_0),\end{split}\end{equation}

The cover $\mathcal C$ induces a partition $\mathcal P_\mathcal C$ of $X$: $$\mathcal P_\mathcal C=\{\pi\big((y+B_0)\cap D\big)|y\in Y\}.$$ Again let $x+\mathcal P_\mathcal C=\{x+P|P\in\mathcal P_\mathcal C\}$ denote its translate by $x$. There is a one-to-one correspondence between $\mathcal C$ and $\mathcal P_\mathcal C$: $P_C=\pi\big((y+B_0)\cap D\big)\leftrightarrow C=\pi(y+B_0)$ and for all $C$ we have $P_C\subset C$. As $C\in\mathcal C$ is always a translate of $B_0$ we know the diameter of $\mathcal P_\mathcal C$ is at most $\eta\mathcal S_\Gamma$ and so is that of $x+\mathcal P_\mathcal C$, $\forall x\in  X$. So by Lemma \ref{condtau}, \begin{equation}\label{entC1}H_\tau(x+\mathcal P_\mathcal C)\geq\alpha d\log\frac1\eta-d\mathcal M_\psi.\end{equation}

On the other hand, notice
\begin{equation}\label{entC2}\begin{split}&H_\tau(x+\mathcal C)\\
=&\sum_{C\in\mathcal C}-\tau(x+C)\log\tau(x+C)\\
\geq&\sum_{C\in\mathcal C}-\tau(x+P_C)\log\tau(x+C)\\
=&\sum_{C\in\mathcal C}\big(-\tau(x+P_C)\log\tau(x+P_C)+\tau(x+P_C)\log\frac{\tau(x+P_C)}{\tau(x+C)}\big)\\
=&H_\tau(x+\mathcal P_\mathcal C)-\sum_{C\in\mathcal C}\tau(x+C)\big(-\frac{\tau(x+P_C)}{\tau(x+C)}\log\frac{\tau(x+P_C)}{\tau(x+C)}\big).\end{split}\end{equation}

It follows from (\ref{entC1}), (\ref{entC2}), the inequality $-u\log u\leq\frac1{e\ln2},\forall u\in [0,1]$ and the fact that the multiplicity of the cover $x+\mathcal C$ is at most $5^d$ that 
\begin{equation}\label{entC}\begin{split}H_\tau(x+\mathcal C)\geq&\alpha d\log\frac1\eta-d\log\mathcal M_\psi -\frac1{e\ln2}\sum_{C\in\mathcal C}\tau(x+C)\\
\geq&\alpha dR_0-\alpha d\log\sqrt d-d\log\mathcal M_\psi -\frac1{e\ln2}\cdot 5^d,\end{split}\end{equation}

which, together with (\ref{entBentC}), implies \begin{equation}\label{entB1}\overline H_\tau(B_0)\geq\frac1{\mathrm m_X(D_+)}\big(\alpha dR_0-O_{d}(\log\mathcal M_\psi )-O_d(1)\big).\end{equation}

By (\ref{fdboudary}), the volume of $\psi^{-1}\big((D+B_0-B_0)\backslash D\big)$ is bounded by $(1+4\epsilon)^d-1$, hence
\begin{equation}\label{entB2}\begin{split}&\mathrm m_X(D_+)-1\\
\leq&\mathrm m_X(D+B_0-B_0)-\mathrm m_X(D)=\mathrm m_X\big((D+B_0-B_0)\backslash D\big)\\
=&\mathrm m\Big(\psi^{-1}\big((D+B_0-B_0)\backslash D\big)\Big)\\
\leq&(1+4\epsilon)^d-1=O_d(\epsilon),\end{split}\end{equation}
 where the implied dimensional constant can be made explicit. 

From (\ref{entB1}) and (\ref{entB2}) we conclude \begin{equation}\label{entB}\overline H_\tau(B_0)\geq\frac1{1+O_d(\epsilon)}\big(\alpha dR_0-O_{d}(\log\mathcal M_\psi)-O_d(1)\big),\end{equation} with effective implied constants.

So when $\delta\in[\frac {\ref{posentropydeltaconst}\mathcal M_\psi^d}{R_0},\frac\alpha{10}]$, as $\mathcal M_\psi\geq 1$, $$\epsilon=\frac1{\sqrt  d\mathcal M_\psi}2^{-R_0}\lesssim_{d} 2^{-R_0}\lesssim \frac1{R_0}\leq\frac\delta{\ref{posentropydeltaconst}}<\frac\delta{\ref{posentropydeltaconst}\alpha};$$ in addition $$O_{d}(\log\mathcal M_\psi)+O_d(1)=O_d(\mathcal M_\psi^d)=O_d(\frac{\delta R_0}{\ref{posentropydeltaconst}}).$$ So by (\ref{entB}),
$$\overline H_\tau(B_0)\geq\frac1{1+O_d(\frac\delta{\ref{posentropydeltaconst}\alpha})}\big(\alpha dR_0-O_d(\frac{\delta R_0}{\ref{posentropydeltaconst}})\big)\geq(\alpha-\delta)dR_0,$$  if $\ref{posentropydeltaconst}$ is sufficiently large (depending on $d$).\end{proof}

The following fact is a consequence to additivity (Lemma \ref{sumcuts}) 

\begin{lemma}\label{posT} For a sufficiently large effective constant $\ref{posentropydeltaconst}=\ref{posentropydeltaconst}(d)$, suppose $\delta\geq\frac {\ref{posentropydeltaconst}\mathcal M_\psi^d}{R_0}$ and $T\leq \frac{\delta R_0}2$ be an integer. Then there exist a real number $R\in[\delta R_0, R_0-T]$ and a box $B\in\Pi^{R\mathbf 1}$ such that $$\overline H_\tau(B,\mathcal Q_{T\mathbf 1}B)\geq(\alpha-3\delta)dT.$$\end{lemma}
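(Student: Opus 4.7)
The plan is to telescope Lemma \ref{sumcuts} downward from a box whose scale is controlled by Lemma \ref{posR0}, and then pigeonhole the resulting sum. Set $n:=\lfloor(1-\delta)R_0/T\rfloor$ and $R:=R_0-nT\in[\delta R_0,\delta R_0+T)$. The hypothesis $T\leq\delta R_0/2$ places $R$ in $[\delta R_0,R_0-T]$, and if $\ref{posentropydeltaconst}(d)$ is chosen sufficiently large, then $\delta\geq\ref{posentropydeltaconst}\mathcal M_\psi^d/R_0$ forces $R\geq\log(\sqrt d\mathcal M_\psi)$, so by Lemma \ref{injectiveradius} any $B\in\Pi^{R\mathbf 1}$ has $\pi|_B$ injective. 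By (\ref{cuttwice}), $F_{nT\mathbf 1}B\in\Pi^{R_0\mathbf 1}$, so Lemma \ref{sumcuts} applied with $\mathbf t^s=T\mathbf 1$ for $s=1,\ldots,n$ gives
$$\sum_{s=1}^n\overline H_\tau\bigl(F_{(s-1)T\mathbf 1}B,\mathcal Q_{T\mathbf 1}F_{(s-1)T\mathbf 1}B\bigr)=\overline H_\tau(F_{nT\mathbf 1}B)-\overline H_\tau(B).$$

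The left endpoint satisfies $\overline H_\tau(F_{nT\mathbf 1}B)\geq(\alpha-\delta)dR_0$ by Lemma \ref{posR0}. For the right endpoint, the Fubini identity $\int_X\tau(x+B)\,\mathrm{dm}_X(x)=\mathrm m_X(B)$ (computed exactly as in Lemma \ref{tauprob}) together with Jensen's inequality applied to the concave function $u\mapsto-u\log u$ on the probability space $(X,\mathrm m_X)$ yields
$$\overline H_\tau(B)=\frac 1{\mathrm m_X(B)}\int_X-\tau(x+B)\log\tau(x+B)\,\mathrm{dm}_X(x)\leq-\log\mathrm m_X(B)=dR,$$
the last equality being the elementary volume computation $\mathrm m_X(B)=\mathrm{vol}(B)/\mathrm{vol}(X)=2^{-dR}$.

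Consequently the $n$-term sum is at least $(\alpha-\delta)dR_0-dR$, and by pigeonhole some $s_0\in\{1,\ldots,n\}$ contributes at least $\frac{dT[(\alpha-\delta)R_0-R]}{R_0-R}$. The desired bound $(\alpha-3\delta)dT$ then reduces algebraically to $2\delta R_0\geq R(1-\alpha+3\delta)$, which, using $R\leq\delta R_0+T\leq\tfrac 32\delta R_0$, further reduces to $\delta\leq(1+3\alpha)/9$; this is automatic whenever $\delta<\alpha/3$, while in the complementary range $\delta\geq\alpha/3$ the target $(\alpha-3\delta)dT$ is non-positive and the conclusion becomes trivial. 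Taking $B':=F_{(s_0-1)T\mathbf 1}B\in\Pi^{R'\mathbf 1}$ with $R':=R+(s_0-1)T\in[\delta R_0,R_0-T]$ then produces the pair required. The only delicate point is guaranteeing injectivity of $\pi|_B$ at the starting scale $R$ so that Lemma \ref{sumcuts} applies; this is precisely what the quantitative lower bound on $\delta$ secures once $\ref{posentropydeltaconst}$ is chosen sufficiently large in terms of $d$.
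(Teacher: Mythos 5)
Your proof is correct, and it follows the same skeleton as the paper's: telescope Lemma \ref{sumcuts} between a box at a coarse scale (where $\pi$ is injective, via Lemma \ref{injectiveradius}) and a box in $\Pi^{R_0\mathbf 1}$, feed in the lower bound of Lemma \ref{posR0} at the fine end, bound the coarse-end term, and pigeonhole among the $T$-step conditional entropies. The one genuine difference is how the coarse-end term is handled. The paper pushes the telescope all the way up to an auxiliary box $B_{S_-}$ at essentially the injectivity-radius scale, which produces two boundary terms, $\overline H_\tau(B_{S_-})\lesssim_d\mathcal M_\psi^d$ (via $-u\log u\leq\tfrac1{e\ln 2}$) and $\overline H_\tau(B_{S_-},\mathcal Q_{q\mathbf 1}B_{S_-})\leq dq\leq dS$, and then absorbs them using $d\delta R_0\geq d\,\ref{posentropydeltaconst}\mathcal M_\psi^d$. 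You instead start the telescope directly at $R\in[\delta R_0,\delta R_0+T)$ and dispose of the single boundary term with the Jensen bound $\overline H_\tau(B)\leq-\log\mathrm m_X(B)=dR$, which is valid (concavity of $-u\log u$ plus the Fubini identity from Lemma \ref{tauprob}, with $\mathrm m_X$ a probability measure and $\mathrm m_X(B)=2^{-dR}$). This removes the auxiliary scale $S_-$, the extra $q$-refinement, and the absorption step, at the cost of slightly different closing arithmetic, which you carry out correctly: the inequality reduces to $2\delta R_0\geq R(1-\alpha+3\delta)$, automatic for $\delta<\alpha/3$, while for $\delta\geq\alpha/3$ the target is non-positive and the nonnegativity of the homogeneous conditional entropy makes the claim trivial (the paper implicitly assumes $\delta\leq\alpha/10$ and never meets this case). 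In your version the hypothesis $\delta R_0\geq\ref{posentropydeltaconst}\mathcal M_\psi^d$ is used only to secure injectivity of $\pi|_B$ at scale $R$, whereas the paper also needs it to swallow the $\mathcal M_\psi^d$-sized boundary term; both proofs share the same implicit requirement that $\delta$ is bounded away from $1$ so that the interval $[\delta R_0,R_0-T]$ is nonempty and at least one $T$-step fits.
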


\begin{proof} Write $R_0=S+pT$ where $S\in[\delta R_0,\delta R_0+T)$ and $p\in\mathbb N$. Note $S\geq\ref{posentropydeltaconst}\mathcal M_\psi^d$ by the assumption about $\delta$. We can always take $\ref{posentropydeltaconst}=\ref{posentropydeltaconst}(d)$ large to make sure $\ref{posentropydeltaconst}\mathcal M_\psi^d\geq\log(\sqrt d\mathcal M_\psi)$ for all $\mathcal M_\psi\geq 1$. Hence there is $S_-\in[\log(\sqrt d\mathcal M_\psi),\log(\sqrt d\mathcal M_\psi)+1]$ such that $S-S_-=q\in\mathbb N\cup\{0\}$.

Fix $B_0\in\Pi^{R_0\mathbf 1}$, $B_{S_-}\in\Pi^{S_-\mathbf 1}$ and $B_{S}\in\Pi^{S\mathbf 1}$ such that $B_0=F_{pT\mathbf 1}B_S$ and $B_S=F_{q\mathbf 1}B_{S_-}$. By Lemma \ref{sumcuts} and \ref{posR0}
\begin{equation}\label{entseqdecomp}\begin{split}&\overline H_\tau(B_{S_-})+\overline H_\tau(B_{S_-},\mathcal Q_{q\mathbf 1}B_{S_-})+\sum_{r=0}^{p-1}\overline H_\tau(F_{rT\mathbf 1}B_S,\mathcal Q_{T\mathbf 1}F_{rT\mathbf 1}B_S)\\=&\overline H_\tau(B_0)\geq(\alpha-\delta)dT.\end{split}\end{equation}

On the other hand, using again $-u\log u\leq\frac1{e\ln 2}$, \begin{equation}\label{entbase}\begin{split}\overline H_\tau(B_{S_-})=&\frac1{\mathrm m_X(B_{S_-})}\int_{x\in X}-\nu(x+B_{S_-})\log\nu(x+B_{S_-})\mathrm{dm}_X(x)\\\leq&\frac{(2\sqrt d)^d\mathcal M_\psi^d}{e\ln 2},\end{split}\end{equation}
because $\mathrm m_X(B_{S_-})=\mathcal S_\Gamma^{-d}\mathrm{vol}(B_{S_-})=\mathcal S_\Gamma^{-d}\big(2^{-S_-}\mathcal S_\Gamma)^d=2^{-d\mathcal S_-}$ has lower bound $2^{-d\big(\log(\sqrt d\mathcal M_\psi)+1\big)}=(2\sqrt d)^{-d}\mathcal M_\psi^{-d}$.

Moreover by Corollary \ref{sizeQ} \begin{equation}\label{entbase'}\begin{aligned}\overline H_\tau(B_{S_-},\mathcal Q_{q\mathbf 1}B_{S_-})\leq&\log|\mathcal Q_{q\mathbf 1}B_{S_-}|\leq dq\leq dS\leq d(\delta R_0+T)\\
\leq &d(\delta R_0+\frac{\delta R_0}2).\end{aligned}\end{equation}

Notice by taking the explicit constant $\ref{posentropydeltaconst}(d)$ large enough we have $d\delta R_0\geq d\ref{posentropydeltaconst}\mathcal M_\psi^d>2\cdot \frac{(2\sqrt d)^d\mathcal M_\psi^d}{e\ln 2}$. 

So by (\ref{entseqdecomp}), (\ref{entbase}) and (\ref{entbase'}), $\exists r\in\{0,1,\cdots,p-1\}$ such that 
\begin{equation}\begin{split}&\overline H_\tau(F_{rT\mathbf 1}B_S,\mathcal Q_{T\mathbf 1}F_{rT\mathbf 1}B_S)\\
\geq&\frac1p\big((\alpha-\delta)dR_0-d(\delta R_0+\frac{\delta R_0}2)-\frac{(2\sqrt d)^d\mathcal M_\psi^d}{e\ln 2}\big)\\
\geq&\frac1p\big((\alpha-\delta)dR_0-d(\delta R_0+\frac{\delta R_0}2)-\frac{d\delta R_0}2\big)\\
=&(\alpha-3\delta)d\frac{R_0}p\\
\geq&(\alpha-3\delta)dT\end{split}\end{equation}

$F_{rT\mathbf 1}B_S\in\Pi^{R\mathbf 1}$ where $R=S+rT$. As $R\geq S\geq\delta R_0$ and $R\leq S+(p-1)T=R_0-T$, the lemma follows. \end{proof}

The lemma basically claims $\tau$ has positive entropy at scale $2^{-R}\mathcal S_\Gamma$. It follows from subadditivity (Corollary \ref{sizeQ}) that the projection of $\tau$ to at least one of the $V_i$'s has positive entropy at the same scale, which is characterized by:

\begin{corollary}There is an index $i\in\{1,\cdots,r_1+r_2\}$ such that $$\overline H_\tau(B,\mathcal Q_{T\mathbf 1_i}B)\geq (\alpha-3\delta)d_iT.$$\end{corollary}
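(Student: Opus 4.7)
The plan is a two-line argument combining Lemma \ref{posT} with the subadditivity statement of Corollary \ref{sizeQ}(ii) and a weighted pigeonhole. The key structural observation is that the full coordinate vector $\mathbf 1=\sum_{i=1}^{r_1+r_2}\mathbf 1_i$ decomposes as a sum of the indicator vectors of the individual eigenspaces, and under the product structure of $\mathbf w$-boxes this means the partition $\mathcal Q_{T\mathbf 1}B$ is exactly the common refinement $\bigvee_{i=1}^{r_1+r_2}\mathcal Q_{T\mathbf 1_i}B$: cutting a box in all $V_i$-directions simultaneously at scale $2^{-T}$ is the same as cutting in each direction separately and then intersecting.

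Given this, I would first apply Corollary \ref{sizeQ}(ii) iteratively (or in its obvious $n$-fold form) to obtain
\[
\overline H_\tau(B,\mathcal Q_{T\mathbf 1}B)\;\leq\;\sum_{i=1}^{r_1+r_2}\overline H_\tau(B,\mathcal Q_{T\mathbf 1_i}B).
\]
Next I would invoke Lemma \ref{posT}, which produces $B\in\Pi^{R\mathbf 1}$ with $\overline H_\tau(B,\mathcal Q_{T\mathbf 1}B)\geq(\alpha-3\delta)dT$, and combine the two inequalities to get
\[
\sum_{i=1}^{r_1+r_2}\overline H_\tau(B,\mathcal Q_{T\mathbf 1_i}B)\;\geq\;(\alpha-3\delta)dT\;=\;(\alpha-3\delta)T\sum_{i=1}^{r_1+r_2}d_i,
\]
using $d=\sum_i d_i$.

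Finally, a weighted pigeonhole completes the argument: if for every $i$ we had $\overline H_\tau(B,\mathcal Q_{T\mathbf 1_i}B)<(\alpha-3\delta)d_iT$, summing would contradict the previous display. Hence some index $i$ must satisfy $\overline H_\tau(B,\mathcal Q_{T\mathbf 1_i}B)\geq(\alpha-3\delta)d_iT$, as claimed. There is no real obstacle here; the only point that warrants care is verifying the product-box identity $\mathcal Q_{T\mathbf 1}B=\bigvee_i\mathcal Q_{T\mathbf 1_i}B$, which is immediate from the definition of $F_{\mathbf t}$ as a direct sum of diagonal rescalings on the eigenspaces $V_i$.
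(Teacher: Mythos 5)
Your proposal is correct and follows essentially the same route as the paper: the identity $\mathcal Q_{T\mathbf 1}B=\bigvee_{i=1}^{r_1+r_2}\mathcal Q_{T\mathbf 1_i}B$, subadditivity from Corollary \ref{sizeQ}, the lower bound from Lemma \ref{posT}, and $\sum_i d_i=d$ with a pigeonhole. No gaps.
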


\begin{proof}Since $\mathcal Q_{T\mathbf 1}B=\bigvee_{i=1}^{r_1+r_2}\mathcal Q_{T\mathbf 1_i}B$, by Corollary \ref{sizeQ} $$\sum_{i=1}^{r_1+r_2}\overline H_\tau(B,\mathcal Q_{T\mathbf 1_i}B)\geq \overline H_\tau(B,\mathcal Q_{T\mathbf 1}B)\geq (\alpha-3\delta)dT.$$ Recall $d_i=\dim V_i$; as $\sum_{i=1}^{r_1+r_2}d_i=d$, the corollary is proved.\end{proof}

\subsection{From entropy to $L^2$-norm}

\hskip\parindent Using the positive entropy of $\tau$ in $V_i$ direction, we now construct a new measure dominated by $\tau$ so that its total mass is bounded from below but a certain $L^2$-norm in $V_i$ direction is bounded from above.

Let $T$ and $B$ be as in Lemma \ref{posT}. For all $x \in X$ such that $\tau(x+B)\neq 0$, define a probability measure $\tau_x=\frac{\tau|_{x+B}}{\tau(x+B)}$, supported on $x+B$. Set $\mathcal S_x=\{Q\in\mathcal Q_{T\mathbf 1_i}B|\frac{\tau(x+Q)}{\tau(x+B)}\leq 2^{-d_i\delta T}\}=\{Q\in\mathcal Q_{T\mathbf 1_i}B|\tau_x(x+Q)\leq 2^{-d_i\delta T}\}$ and $\mathcal S'_x=\mathcal Q\backslash\mathcal S_x$.

Notice $\tau_x$, $\mathcal S_x$ and $\mathcal S'_x$ are defined for $\mathrm m_{\tau,B}$-a.e. $x$, where $\mathrm m_{\tau,B}$ is defined as in Lemma \ref{tauprob}. Quoting Remark \ref{entexp} and Lemma \ref{posT} we have \begin{equation}\label{singdecomp}\begin{split}&(\alpha-3\delta)d_iT\\
\leq&\overline H_\tau(B,\mathcal Q_{T\mathbf 1_i}B)\\
=&\mathbb E_{\mathrm m_{\tau,B}(x)}H_{\tau_x}(x+\mathcal Q_{T\mathbf 1_i}B)\\
=&\mathbb E_{\mathrm m_{\tau,B}(x)}H_{\tau_x}\Big(\big\{\bigcup_{Q\in\mathcal S_x}(x+Q),\bigcup_{Q\in\mathcal S'_x}(x+Q)\big\}\Big)\\
&\hskip.5cm+\mathbb E_{\mathrm m_{\tau,B}(x)}H_{\tau_x}\Big(x+\mathcal Q_{T\mathbf 1_i}B\big|\big\{\bigcup_{Q\in\mathcal S_x}(x+Q),\bigcup_{Q\in\mathcal S'_x}(x+Q)\big\}\Big)\\
=&\mathbb E_{\mathrm m_{\tau,B}(x)}H_{\tau_x}\Big(\big\{\bigcup_{Q\in\mathcal S_x}(x+Q),\bigcup_{Q\in\mathcal S'_x}(x+Q)\big\}\Big)\\
&\hskip.5cm+\mathbb E_{\mathrm m_{\tau,B}(x)}\sum_{Q\in\mathcal S_x}\tau_x(x+Q)\cdot\big(-\log\frac{\tau_x(x+Q)}{\tau_x(\bigcup_{Q\in\mathcal S_x}(x+Q))}\big)\\
&\hskip1cm+\mathbb E_{\mathrm m_{\tau,B}(x)}\sum_{Q\in\mathcal S'_x}\tau_x(x+Q)\cdot\big(-\log\frac{\tau_x(x+Q)}{\tau_x(\bigcup_{Q\in\mathcal S'_x}(x+Q))}\big)\end{split}\end{equation}

Observe all three terms on the right-hand side can be bounded from above:

By Corollary \ref{sizeQ}, $H_{\tau_x}\Big(\big\{\bigcup_{Q\in\mathcal S_x}(x+Q),\bigcup_{Q\in\mathcal S'_x}(x+Q)\big\}\Big)\leq\ln 2;$

Moreover, \begin{align*}&\sum_{Q\in\mathcal S_x}\tau_x(x+Q)\cdot\big(-\log\frac{\tau_x(x+Q)}{\tau_x(\bigcup_{Q\in\mathcal S_x}(x+Q))}\big)\\
	=&\tau_x(\bigcup_{Q\in\mathcal S_x}(x+Q))H_{\frac{\tau_x|_{\bigcup_{Q\in\mathcal S_x}(x+Q)}}{\tau_x(\bigcup_{Q\in\mathcal S_x}(x+Q))}}(\mathcal S_x)\\
	\leq&\tau_x(\bigcup_{Q\in\mathcal S_x}(x+Q))\cdot\log|\mathcal S_x|\\
	\leq&\tau_x(\bigcup_{Q\in\mathcal S_x}(x+Q))\cdot\log|\mathcal Q_{T\mathbf 1_i}B|\\
	=&\tau_x(\bigcup_{Q\in\mathcal S_x}(x+Q))\cdot\log2^{d_iT}\\
	=&d_iT\tau_x(\bigcup_{Q\in\mathcal S_x}(x+Q)).\end{align*}

Last, \begin{align*}&\sum_{Q\in\mathcal S'_x}\tau_x(x+Q)\cdot\big(-\log\frac{\tau_x(x+Q)}{\tau_x(\bigcup_{Q\in\mathcal S'_x}(x+Q))}\big)\\
	\leq&\sum_{Q\in\mathcal S'_x}\tau_x(x+Q)\cdot\big(-\log\tau_x(x+Q)\big)\\
	\leq&\sum_{Q\in\mathcal S'_x}\tau_x(x+Q)\cdot(-\log2^{-d_i\delta T})\\
	=&d_i\delta  T\sum_{Q\in\mathcal S'_x}\tau_x(x+Q)\\=&d_i\delta  T\end{align*}

So (\ref{singdecomp}) gives \begin{equation}\label{singdecomp'}(\alpha-3\delta)d_iT\leq\ln 2+d_iT\mathbb E_{\mathrm m_{\tau,B}(x)}\tau_x(\bigcup_{Q\in\mathcal S_x}(x+Q))+d_i\delta  T.\end{equation}

In consequence, if \begin{equation}\label{deltaTassump}\delta T\geq 1\end{equation} then $d_i\delta T\geq\delta T\geq\ln 2$ and by (\ref{singdecomp'}), $d_iT\tau_x(\bigcup_{Q\in\mathcal S_x}(x+Q))\geq(\alpha-3\delta)d_iT-\ln 2-d_i\delta T\geq (\alpha-5\delta)d_iT$, so we get \begin{equation}\label{nonsing}\mathbb E_{\mathrm m_{\tau,B}(x)}\tau_x(\bigcup_{Q\in\mathcal S_x}(x+Q))\geq \alpha-5\delta.\end{equation}

For all $x$ such that $\tau(x+B)\neq 0$ set $$\nu_x:=\sum_{Q\in\mathcal S_x}\tau_x|_{x+Q},$$ which is supported on $\bigcup_{Q\in\mathcal S'_x}(x+Q)\subset x+B$. Then \begin{equation}\label{subprob}|\nu_x|\leq 1\end{equation} as $\nu_x$ is bounded by the probability measure $\tau_x$.

If We define a new measure $$\nu:=\mathbb E_{\mathrm m_{\tau,B}(x)}\nu_x,$$ then it follows from (\ref{nonsing}) that the total mass \begin{equation}\label{numass}|\nu|\geq\alpha-5\delta.\end{equation} Moreover, observe \begin{align*}\mathbb E_{\mathrm m_{\tau,B}(x)}\tau_x=&\frac1{\mathrm m_X(B)}\int_{x\in X}\frac{\tau|_{x+B}}{\tau(x+B)}\cdot\tau(x+B)\mathrm{dm}_X(x)\\
=&\frac1{\mathrm m_X(B)}\int_{x\in X}\tau\cdot\mathbf 1_{x+B}\mathrm{dm}_X(x)\\
=&\frac\tau{\mathrm m_X(B)}\int_{x\in X}\mathbf 1_{x+B}\mathrm{dm}_X(x)\\
=&\frac\tau{\mathrm m_X(B)}\cdot\mathrm m_X(B)\mathbf 1_X=\tau.\end{align*} Hence \begin{equation}\label{nutau}\nu=\mathbb E_{\mathrm m_{\tau,B}(x)}\nu_x\leq \mathbb E_{\mathrm m_{\tau,B}(x)}\tau_x=\tau.\end{equation}
Further, remark \begin{equation}\label{L2norm}\begin{split}&\mathbb E_{\mathrm m_{\tau,B}(x)}\sum_{Q\in\mathcal Q_{T\mathbf 1_i}B}\nu_x^2(x+Q)\\
\leq&\mathbb E_{\mathrm m_{\tau,B}(x)}\big(\max_{Q\in\mathcal Q_{T\mathbf 1_i}B}\nu_x(x+Q)\big)\big(\sum_{Q\in\mathcal Q_{T\mathbf 1_i}B}\nu_x(x+Q)\big)\\
=&\mathbb E_{\mathrm m_{\tau,B}(x)}\big(\max_{Q\in\mathcal S_x}\nu_x(x+Q)\big)\nu_x(x+B),\end{split}\end{equation}
where the last step was because $\nu_x$ is supported on $x+\cup_{Q\in\mathcal S_x}Q$. Since $\forall Q\in\mathcal S_x$, restricted to $x+Q$, $\nu_x$ is identical to $\tau_x$, 
\begin{equation}\label{L2normbound}\begin{split}(\ref{L2norm})=&\mathbb E_{\mathrm m_{\tau,B}(x)}\big(\max_{Q\in\mathcal S_x}\tau_x(x+Q)\big)\nu_x(x+B)\\
\leq&\mathbb E_{\mathrm m_{\tau,B}(x)}2^{-d_i\delta T}\nu_x(x+B)\\
\leq&2^{-d_i\delta T}.\end{split}\end{equation}

To sum up before finishing Section \ref{vecinVi}, what has been proved so far (\ref{subprob}, \ref{numass}, \ref{nutau}, \ref{L2normbound}) is the following proposition.

\begin{proposition}\label{dominated} There is a constant $\ref{posentropydeltaconst}$ depending effectively on the dimension $d$ such that if Conditions \ref{condG'} and \ref{condmu} are satisfied with $\delta\in[\frac{\ref{posentropydeltaconst}\mathcal M_\psi^d}{R_0},\frac\alpha{10}]$, then $\forall T\in[\frac1\delta,\frac{\delta R_0}2]$, where $\sqrt d2^{-R_0}=\eta=\mathcal M_\psi ^{-1}\epsilon$, there exist:

(i). A number $R\in[\delta R_0,R_0-T]$;

(ii).  A box $B\in\Pi^{R\mathbf 1}$;

(iii).  A probability measure $\mathrm m_{\tau,B}$ on $X$;

(iv).  A family of measures $\nu_x$ supported on $x+B$ defined for $\mathrm m_{\tau,B}$-a.e. $x$;

(v).  An index $i\in \{1,2,\cdots,r_1+r_2\}$;\\
such that the measure $\nu$ given by $\nu=\mathbb E_{\mathrm m_{\tau,B}(x)}\nu_x$ satisfies:

(1). $\nu\leq\tau$, where $\tau=\psi_*\mu$;

(2). $|\nu_x|\leq 1$, $\mathrm m_{\tau,B}$-a.e. $x\in X$;

(3). $|\nu|\geq\alpha-5\delta$;

(4).  $\mathbb E_{\mathrm m_{\tau,B}(x)}\sum_{Q\in\mathcal Q_{T\mathbf 1_i}B}\nu_x^2(x+Q)\leq 2^{-d_i\delta T}$, where $d_i=\dim V_i$.

\end{proposition}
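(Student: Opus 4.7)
The plan is to synthesize the construction carried out earlier in the section. First I would invoke Lemma \ref{posT}: for the given range of $\delta$ and for the integer $T\leq\delta R_0/2$, there exist a scale $R\in[\delta R_0,R_0-T]$ and a box $B\in\Pi^{R\mathbf 1}$ with $\overline H_\tau(B,\mathcal Q_{T\mathbf 1}B)\geq(\alpha-3\delta)dT$. Since $\mathcal Q_{T\mathbf 1}B=\bigvee_{i=1}^{r_1+r_2}\mathcal Q_{T\mathbf 1_i}B$, the subadditivity of conditional entropy (Corollary \ref{sizeQ}) and the identity $\sum_i d_i=d$ produce an index $i$ with $\overline H_\tau(B,\mathcal Q_{T\mathbf 1_i}B)\geq(\alpha-3\delta)d_iT$. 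This pins down the eigenspace $V_i$ in which $\tau$ has positive entropy at scale $2^{-R-T}\mathcal S_\Gamma$.

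Next I would localize and threshold. For $\mathrm m_{\tau,B}$-a.e. $x$ with $\tau(x+B)>0$ set $\tau_x=\tau|_{x+B}/\tau(x+B)$ and split $\mathcal Q_{T\mathbf 1_i}B$ into $\mathcal S_x$ (atoms with $\tau_x$-mass at most $2^{-d_i\delta T}$) and its complement $\mathcal S'_x$. Decomposing $H_{\tau_x}(x+\mathcal Q_{T\mathbf 1_i}B)$ via the coarsening $\{\bigcup_{\mathcal S_x},\bigcup_{\mathcal S'_x}\}$ yields three contributions: the two-atom coarse entropy (bounded by $\ln 2$); the conditional entropy on the large atoms (bounded by $d_i\delta T$, because each such atom has $\tau_x$-mass exceeding $2^{-d_i\delta T}$ and the $-u\log u$-type contribution saturates at the threshold); and the conditional entropy on the small atoms (bounded by $d_iT\cdot\tau_x(\bigcup\mathcal S_x)$ by Corollary \ref{sizeQ}). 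Averaging against $\mathrm m_{\tau,B}$, combining with the lower bound $(\alpha-3\delta)d_iT$, and using $\delta T\geq 1$ to absorb the $\ln 2$ into $d_i\delta T$, I would extract $\mathbb E_{\mathrm m_{\tau,B}(x)}\tau_x(\bigcup\mathcal S_x)\geq\alpha-5\delta$.

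Finally, set $\nu_x=\sum_{Q\in\mathcal S_x}\tau_x|_{x+Q}$ and $\nu=\mathbb E_{\mathrm m_{\tau,B}(x)}\nu_x$. Property (2) is immediate since $\nu_x\leq\tau_x$. Property (1) follows from the Fubini computation $\mathbb E_{\mathrm m_{\tau,B}(x)}\tau_x=\tau$ (which uses translation invariance of $\mathrm m_X$), so $\nu\leq\mathbb E\tau_x=\tau$. Property (3) is the averaged mass estimate just established. For (4), on each atom $Q\in\mathcal S_x$ the measures $\nu_x$ and $\tau_x$ agree, so $\sum_{Q\in\mathcal Q_{T\mathbf 1_i}B}\nu_x^2(x+Q)\leq\bigl(\max_{Q\in\mathcal S_x}\tau_x(x+Q)\bigr)\cdot|\nu_x|\leq 2^{-d_i\delta T}$, a bound that survives the $\mathrm m_{\tau,B}$-average.

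The main obstacle is choosing the threshold consistently: the exponent $d_i\delta T$ must simultaneously be small enough that the ``large'' atoms collectively carry only an $O(\delta)$ entropy defect, so that $|\nu|\geq\alpha-O(\delta)$, yet large enough that the per-atom mass bound translates into the $L^2$-style estimate (4). The hypothesis $\delta T\geq 1$ (equivalently $T\geq 1/\delta$) is needed precisely to dominate the constant $\ln 2$ lost in the coarse two-atom split, and explains the lower endpoint of the admissible range of $T$ in the statement.
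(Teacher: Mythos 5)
Your proposal is correct and follows essentially the same route as the paper: Lemma \ref{posT} plus the subadditivity of Corollary \ref{sizeQ} to isolate the index $i$, then thresholding the atoms of $\mathcal Q_{T\mathbf 1_i}B$ at mass $2^{-d_i\delta T}$, the three-term entropy decomposition (with $\delta T\geq 1$ absorbing the $\ln 2$ loss) to get $\mathbb E_{\mathrm m_{\tau,B}}\tau_x\bigl(\bigcup_{Q\in\mathcal S_x}(x+Q)\bigr)\geq\alpha-5\delta$, and finally $\nu_x=\sum_{Q\in\mathcal S_x}\tau_x|_{x+Q}$ with the same Fubini identity $\mathbb E_{\mathrm m_{\tau,B}}\tau_x=\tau$ and the max-times-mass bound for property (4). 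No gaps to report.
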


\section{Group action on a single eigenspace}\label{GVi}

\hskip\parindent In the special direction $V_i$ which is isomorphic to either $\mathbb R$ or $\mathbb C$ we obtained from last section, the group $G$ acts multiplicatively: $\forall v\in V_i, g\in G$, $\times_{\phi(g)}.v=\sigma_i(\phi(g))v=\zeta^i_gv$.

From now on let $i$ be fixed. We are going to discuss several types of behavior of the $G$-action on $V_i$ that can help us.

\subsection{Expansion}

\hskip\parindent This is the simplest behavior of the $G$-action that we study. When the $j$-th eigenvalue $\zeta_g^j$ of an element $g\in G$ is of absolute value greater than 1 (resp. less than 1), we say $g$ {\it expands} (resp. {\it contracts}) $V_i$, which is equivalent to that $\mathcal L(\phi(g))$ lies in $\{w\in W|w_i>0\}$ (resp. $\{w\in W|w_i<0\}$), where $\mathcal L$ and $W$ are defined as in \S \ref{prelimfield}.

\begin{definition} In an abelian group, two elements $x$, $y$ are said to be {\bf multiplicatively indenpendent} if there doesn't exist $(p,q)\in\mathbb Z^2\backslash\{(0,0)\}$ such that $x^py^q$ is identity.\end{definition}

It follows easily from Dirichlet's Unit Theorem that there exists an element from $G$ that expands $V_i$ and contracts all $V_j, j\neq i$. Actually we can say more.

\begin{proposition}\label{expand} Given any $i\in \{1,\cdots,r_1+r_2\}$, there is a pair of multiplicatively independent elements $u,\tilde u$ from $G$ such that $$\left\{\begin{aligned}&h^\mathrm{Mah}(\phi(u)),h^\mathrm{Mah}(\phi(\tilde u))<9d(\frac r2+1)\mathcal F_{\phi(G)};\\
&|\zeta_u^i|\geq 2^{d(\frac r2+1)\mathcal F_{\phi(G)}}, |\zeta_u^j|<1, \forall j\neq i;\\
&|\zeta_{\tilde u}^i|\geq 2^{\frac14d(\frac r2+1)\mathcal F_{\phi(G)}}.\end{aligned}\right.$$ Here $h$ is the logarithmic Mahler measure introduced in Definition \ref{logh}.

Moreover, in case that $V_i\cong\mathbb C$ it can be required that $\zeta_{u^n}^i\notin\mathbb R, \forall n\in\mathbb N$ and if $\zeta_{u^p\tilde u^q}^i\in\mathbb R$ and $|\zeta_{u^p\tilde u^q}^i|>1$ for some pair $(p,q)\in\mathbb Z^2$,  then $|\zeta_{u^p\tilde u^q}^i|\geq2^{\frac14d(\frac r2+1)\mathcal F_{\phi(G)}}$.\end{proposition}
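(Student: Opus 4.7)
The plan rests on Dirichlet's Unit Theorem. It identifies $\phi(G)$ modulo torsion with a full-rank lattice $\Lambda := \mathcal L(\phi(G))$ in $W = \{w \in \mathbb R^{r_1+r_2} : \sum_j d_j w_j = 0\}$, and by the definition of $\mathcal F_{\phi(G)}$ there exist $t_1, \dots, t_r \in \phi(G)$ generating a finite-index sublattice of $\Lambda$ with $h^{\mathrm{Mah}}(t_k) \leq \mathcal F_{\phi(G)}$. The constraint $\sum_j d_j \mathcal L(t_k)_j = 0$ combined with (\ref{unitheight}) forces $\|\mathcal L(t_k)\|_\infty \leq \mathcal F_{\phi(G)}$, so by rounding the coefficients in this basis, every $p \in W$ lies within $\ell^\infty$-distance $r\mathcal F_{\phi(G)}$ of some point of $\Lambda$.

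To build $u$, define the direction $v_i \in W$ by $(v_i)_i = 1$ and $(v_i)_j = -d_i/(d-d_i)$ for $j \neq i$ (this lies in $W$ by the weighted-sum identity, with $\|v_i\|_\infty \leq 2$), so that $v_i$ sits in the open cone $C_i := \{w \in W : w_i > 0,\ w_j < 0\ \text{for } j \neq i\}$. Set $A := (d(\tfrac r2 + 1) + r(d-1))\mathcal F_{\phi(G)}$ and pick $w \in \Lambda$ with $\|w - Av_i\|_\infty \leq r\mathcal F_{\phi(G)}$; then $w_i \geq d(\tfrac r2+1)\mathcal F_{\phi(G)}$, each $w_j < 0$ for $j \neq i$, and $h^{\mathrm{Mah}}(\phi(u)) = d_i w_i \leq 2(A + r\mathcal F_{\phi(G)}) < 9d(\tfrac r2+1)\mathcal F_{\phi(G)}$ (arithmetic using $d \geq 3$ and $d_i \leq 2$). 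For $\tilde u$, I would take $\tilde u := u \cdot t_l$ for an index $l$ chosen so that $\mathcal L(t_l)$ is not proportional to $\mathcal L(\phi(u))$; such an $l$ exists because $\mathcal L(t_1),\dots,\mathcal L(t_r)$ span $W$, which has dimension $r \geq 2$. Then $\mathcal L(\phi(\tilde u))_i \geq A - (r+1)\mathcal F_{\phi(G)} \geq \tfrac14 d(\tfrac r2+1)\mathcal F_{\phi(G)}$, the Mahler measure stays within the required window by (\ref{heightrules}), and multiplicative independence follows: linear independence of $\mathcal L(\phi(u))$ and $\mathcal L(\phi(\tilde u))$ in $W$ implies that $\phi(u)^p\phi(\tilde u)^q$ is non-torsion for all $(p,q) \neq (0,0)$, hence not $1$.

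The hard part is the ``moreover'' clause when $V_i \cong \mathbb C$. The condition $\zeta_{u^n}^i \in \mathbb R$ is equivalent to $\phi(u)^n$ lying in the proper subfield $F_i := \sigma_i^{-1}(\mathbb R) \cap K$, and $\zeta_{u^p\tilde u^q}^i \in \mathbb R$ amounts to $p\mathcal L(\phi(u)) + q\mathcal L(\phi(\tilde u)) \in \mathcal L(U_F)$ for some proper subfield $F$ of $K$ with $\sigma_i|_F$ real. The $\mathbb R$-span of $\mathcal L(U_F)$ is a proper subspace of $W$; when $F \neq \mathbb Q$, Lemma \ref{maxreal} pins it down inside $\{w_k = w_l\}$ for some $k \neq l$. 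Since $K$ has only finitely many proper subfields (Galois theory), only finitely many ``bad'' proper subspaces of $W$ must be avoided. I would perturb the target $Av_i$ (and if necessary replace $t_l$ by a small modification from $\Lambda$) by a bounded amount in a direction of $W$ transverse to all these bad subspaces, preserving the previous estimates while putting both $\mathcal L(\phi(u))$ and $\mathcal L(\phi(\tilde u))$ at definite distance from each $\mathbb Q\mathcal L(U_F)$. The quantitative lower bound $2^{d(r/2+1)\mathcal F_{\phi(G)}/4}$ in the event $\zeta_{u^p\tilde u^q}^i \in \mathbb R$ with $|\zeta_{u^p\tilde u^q}^i| > 1$ then comes from translating this transversality gap into a lower bound for the $i$-th coordinate of the (necessarily non-trivial) integer combination. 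This avoidance-with-quantitative-gap is the technically most delicate step and the one I expect to demand the most care.
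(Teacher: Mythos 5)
Your first half (the construction of $u$ by approximating a target point deep inside the cone $\{w_i>0,\ w_j<0\ \forall j\neq i\}$ of $W$ using the near-generators $t_1,\dots,t_r$) is essentially the same mechanism as the paper, which uses Jarn\'ik's inhomogeneous-minimum bound where you use rounding in the basis $\mathcal L(t_k)$; up to constants this part goes through (note only that $h^\mathrm{Mah}(t_k)\leq\mathcal F_{\phi(G)}$ gives $\|\mathcal L(t_k)\|_\infty\leq 2\mathcal F_{\phi(G)}$, not $\mathcal F_{\phi(G)}$, by (\ref{unitheight})). The genuine gap is exactly the step you defer: the ``moreover'' clause. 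Your plan is to perturb $u$ and $\tilde u$ so that $\mathcal L(\phi(u))$ and $\mathcal L(\phi(\tilde u))$ lie at a definite distance from the bad subspaces $\mathbb R\mathcal L(U_F)$ (for $F=\sigma_i^{-1}(\mathbb R)$ this is the hyperplane $W_{k,l}$ of Lemma \ref{maxreal}), and then ``translate this transversality gap'' into the bound $|\zeta^i_{u^p\tilde u^q}|\geq 2^{\frac14 d(\frac r2+1)\mathcal F_{\phi(G)}}$. That translation does not exist: the quantitative clause is about \emph{all} integer combinations, i.e.\ about the rank-two lattice $\mathbb Z\mathcal L(\phi(u))+\mathbb Z\mathcal L(\phi(\tilde u))$, and no lower bound on the distance of the two generators from $W_{k,l}$ prevents this lattice from meeting $W_{k,l}$ in a nonzero cyclic subgroup whose generator has small $i$-th coordinate (equivalently, a real $\zeta^i_{u^{p_0}\tilde u^{q_0}}$ of absolute value only slightly larger than $1$); the distance gap only controls the linear functional $w_k-w_l$ evaluated at the generators, which says nothing about the $i$-th coordinate of the combination that kills it.

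The paper's proof is built precisely to close this hole, and the two ingredients it uses have no counterpart in your sketch. First, the second target direction is not an arbitrary $\mathcal L(t_l)$ added to $\mathcal L(\phi(u))$: it is a vector $b$ supported on the coordinates $k,l$ and $Q_0$-orthogonal to the expansion direction $a$, chosen so that any vector $w$ of the plane $\mathbb Ra'\oplus\mathbb Rb'$ that lies in $W_{k,l}$ satisfies the concentration estimate $|w_i|\geq\frac1{13}h_0(w)$ (inequality (\ref{heightconcen})); this is what makes membership in $W_{k,l}$ force a large $i$-th coordinate. Second, even with this, one must split into cases according to whether the intersection $(\mathbb Za'\oplus\mathbb Zb')\cap W_{k,l}$ is trivial, or cyclic with generator $a''$ of large height, or cyclic with generator of small height; in the last case the paper \emph{re-chooses} $\tilde u$ so that $\mathcal L(\phi(\tilde u))=Na''$ with $Nh^\mathrm{Mah}(a'')\in(Z,2Z]$, so that every real combination $u^p\tilde u^q$ with $|\zeta^i|>1$ is a positive multiple of $Na''$ and inherits the bound. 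Your perturbation-and-transversality scheme would have to be replaced by something of this kind; as stated, the proposal would fail on the quantitative part of the ``moreover'' clause (it does suffice for the qualitative requirement $\zeta^i_{u^n}\notin\mathbb R$, which the paper also gets by nudging $a^*$ off $W_{k,l}$ using one of the $f_j$).
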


It should be pointed out that the proof of proposition could be much shorter if the last paragraph was dropped from statement. However this restriction is going to be an essential ingredient in obtaining effectiveness in \S \ref{escapeline}.
\begin{proof} Define a linear norm \begin{equation}\label{formh0}h_0(w)=\frac12\sum_{j=1}^{r_1+r_2}d_j|w_j|=\sum_{j=1}^{r_1+r_2}d_j\max(w_j,0)=-\sum_{j=1}^{r_1+r_2}d_j\min(w_j,0)\end{equation} on $W$ and let $\Omega=\{w\in W|h_0(w)\leq 1\}$. Observe $\forall w\in W,\forall j$, \begin{equation}\label{maxh0}|w_j|\leq h_0(w).\end{equation} Moreover, for a unit $t\in U_K$, $h^\mathrm{Mah}(t)=h_0(\mathcal L(t))$.

We also define a positive definite quadratic form $Q_0$ on $W$ by \begin{equation}\label{formQ0}Q_0(w,z)=\frac12\sum_{j=1}^{r_1+r_2}d_jw_jz_j\end{equation}.

If we denote $w_{r_1+r_2+j}=w_{r_1+j}$ for $1\leq j\leq r_2$ then $\sum_{j=1}^dw_d=0$, $Q_0(w,z)=\frac12\sum_{j=1}^{d}w_jz_j$, $h_0(w)=\sum_{j=1}^{d}\max(w_j,0)$. So \begin{equation}\label{Q0h0}\begin{split}Q_0(w,w)=&\frac12\sum_{j=1}^{d}w_j^2\\
=&\frac12(\sum_{1\leq j\leq d,w_j>0}w_j^2+\sum_{1\leq j\leq d,w_j<0}w_j^2)\\
\leq&\frac12\big((\sum_{1\leq j\leq d,w_j>0}w_j)^2+(\sum_{1\leq j\leq d,w_j<0}w_j)^2\big)\\
=&\frac12\big(h_0^2(w)+h_0^2(w)\big)\\
=&h_0^2(w).\end{split}\end{equation} And by Cauchy-Schwarz inequality $|Q_0(w,z)|\leq h_0(w)h_0(z)$.

If $V_i\cong\mathbb C$ and $F:=\sigma_i^{-1}(\mathbb R)$ is not $\mathbb Q$, then by Lemma \ref{maxreal} there are $1\leq k,l\leq r_1+r_2$ such that $k,l,i$ are distinct and for any unit $t$ from $U_F$, $\mathcal L(t)$ is in the hyperplane \begin{equation}\label{realunits}\forall t\in U_F,\ \mathcal L(t)\in W_{k,l}:=\{w_k=w_l\}. \end{equation}

If $V_i\cong\mathbb R$ or $\sigma_i^{-1}(\mathbb R)=\mathbb Q$, then we fix any pair $(k,l)$ so that $k,l,i$ are distinct, which is possible as $r_1+r_2=r+1\geq 3$. So in case that $V_i\cong\mathbb C$ and $F:=\sigma_i^{-1}(\mathbb R)=\mathbb Q$, as $U_F=\{\pm 1\}$, (\ref{realunits}) still holds.

Construct $a,b\in W$ by $$a_i=\frac1{d_i},\ a_j=-\frac1{d-d_i}, \forall j\neq i;$$ $$b_k=\frac1{d_k},\ b_k=-\frac1{d_l},\ b_j=0,\forall j\neq k,l.$$ It is easy to compute $h_0(a)=h_0(b)=1$, moreover $$Q_a:=Q_0(a,a)=\frac12(\frac1{d_i}+\frac1{d-d_i})\geq\frac14,\ Q_b:=Q_0(b,b)=\frac12(\frac1{d_k}+\frac1{d_l})\geq\frac12$$ and $Q_0(a,b)=0$.

Notice the hyperplane $W_{k,l}$ is the orthogonal complement to $b$ with respect to $Q_0$. So $\forall t\in U_F$, $Q_0(\mathcal L(t),b)=0$ if $V_i\cong C$, $F=\sigma_i^{-1}(\mathbb R)$.

$\phi(G)$, a finite-index subgroup of $U_K$, is embedded as a full-rank lattice in $W$ by $\mathcal L$. Let $0<m_1\leq m_2\leq \cdots\leq m_{r}$ be the successive minima of $\mathcal L(\phi(G))$ with respect to $\Omega$ then by definition $m_r=\mathcal F_{\phi(G)}$.

By a theorem of Jarn\'{i}k on inhomogeneous minimum (cf. \cite[p99]{GL87}), for any point $z$ in $W$, there is a lattice point $w$ from $\mathcal L(\phi(G))$ such that $w-z\in m^*\Omega$ where $m^*=\frac{\sum_{j=1}^{r}m_i}2\leq\frac 12r\mathcal F_{\phi(G)}$. Set \begin{equation}m=m^*+m_r\leq(\frac r2+1)\mathcal F_{\phi(G)}.\end{equation}

Taking $z$ to be $Za$ and $Zb$ respectively where \begin{equation}\label{4dminima}Z=4d(\frac r2+1)\mathcal F_{\phi(G)}\geq 4dm,\end{equation} we see there exist $a^*\in(Za+m^*\Omega)\cap \mathcal L(\phi(G))$ and $b^*\in(Zb+m^*\Omega)\cap \mathcal L(\phi(G))$.

By definition of successive minima there are $f_1,\cdots,f_r\in m_r\Omega$ that form a basis of $\mathcal L(\phi(G))$. Hence the $r+1$ vectors $a^*, a^*+f_1, \cdots, a^*+f_r\in Za+m^*\Omega+m_r\Omega\subset Za+m\Omega$ span $W$. At least one of them, denoted by $a'$, lies out of the hyperplane $W_{k,l}$.

Denote $b'=b^*\in Za+m^*\Omega\subset Za+m\Omega$. Write $a'=Za+m\epsilon_a$, $b'=Zb+m\epsilon_b$ where $h_0(\epsilon_a), h_0(\epsilon_b)\leq 1$. Recall $a'$, $b'$ are both in the lattice $\mathcal L(\phi(G))$.

First of all, remark $a'$ and $b'$ are linearly independent. Actually $|(\epsilon_a)_j|\leq h_0(\epsilon_a)=1, \forall j$, so
\begin{equation}\left\{\begin{aligned}
a'_i=&\frac Z{d_i}-m|(\epsilon_a)_i|\geq\frac Z2-m\geq\frac{2d-1}{4d}Z;\\
|a'_j|\leq &\frac Z{d-d_i}+m|(\epsilon_a)_j|\leq Z+m\leq\frac{4d+1}{4d}Z, \forall j\neq i.\end{aligned}\right.\end{equation} But similarly
\begin{equation}\left\{\begin{aligned}
|b'_i|=&m|(\epsilon_b)_i|\leq m\leq\frac 1{4d}Z;\\
|b'_k|\geq&\frac Z{d_k}-m|(\epsilon_b)_k|\geq\frac Z2-m\geq \frac {2d-1}{4d}Z.\end{aligned}\right.\end{equation}
Hence $\frac{|a'_i|}{|a'_k|}\geq\frac{2d-1}{4d+1}>\frac 1{2d-1}\geq\frac{|b'_i|}{|b'_k|}$ when $d\geq 3$, which implies linear independence.

Second, suppose the 2-dimensional subspace $\mathbb Ra'\oplus\mathbb Rb'$ has a non-empty intersection with $W_{k,l}=\{w|Q_0(w,b)=0\}$, then we claim that for all $w$ in this intersection, $|w_i|\gtrsim h_0(w)$. Suppose $w=W_aa'+W_bb'=W_aZa+W_am\epsilon_a+W_bZb+W_bm\epsilon_b$. Then since $Q_0(a,b)=0$ and by (\ref{Q0h0}), $|Q_0(\epsilon_a,b)|, |Q_0(\epsilon_a,a)|, |Q_0(\epsilon_b,b)|$ and $|Q_0(\epsilon_b,b)|$ are all bounded by 1, so \begin{align*}0=&Q_0(w,b)=Q_0(W_aZa+W_am\epsilon_a+W_bZb+W_bm\epsilon_b,b)\\
\geq&|W_bZQ_0(b,b)|-|W_amQ_0(\epsilon_a,b)|-|W_bmQ_0(\epsilon_b,b)|\\
\geq&|W_b|ZQ_b-|W_a|m-|W_b|m,\end{align*} thus \begin{equation}\label{WaWb}\frac{|W_b|}{|W_a|}\leq\frac{m}{ZQ_b-m}\leq\frac m{\frac12Z-m}\leq\frac m{2dm-m}=\frac 1{2d-1}.\end{equation} So \begin{equation}\label{Q0wabound}\begin{split}|Q_0(w,a)|=&|Q_0(W_aZa+W_am\epsilon_a+W_bZb+W_bm\epsilon_b,a)|\\
\geq&|W_aZQ_0(a,a)|-|W_amQ_0(\epsilon_a,a)|-|W_bmQ_0(\epsilon_b,a)|\\
\geq&|W_a|ZQ_a-|W_a|m-|W_b|m\\
\geq&\frac14|W_a|Z-|W_a|\cdot\frac Z{4d}-\frac{|W_a|}{2d-1}\cdot\frac Z{4d}\\
=&\frac{2d-3}{8d-4}|W_a|Z.\end{split}\end{equation}

But on the other hand \begin{equation}\label{Q0wa}\begin{split}2Q_0(w,a)=&d_i\cdot\frac1{d_i}w_i+\sum_{1\leq j\leq r_1+r_2,j\neq i}d_jw_j(-\frac1{d-d_i})\\
=&\frac d{d-d_i}w_i-d_iw_i\cdot \frac1{d-d_i}-\sum_{1\leq j\leq r_1+r_2,j\neq i}d_jw_j\cdot \frac1{d-d_i}\\
=&\frac d{d-d_i}w_i-\frac1{d-d_i}\sum_{j=1}^{r_1+r_2}d_jw_j\\
=&\frac d{d-d_i}w_i.\end{split}\end{equation}
Furthermore by (\ref{4dminima}) and (\ref{WaWb}), \begin{equation}\label{h0w}\begin{split}h_0(w)\leq& |W_a|Z+|W_b|Z+|W_a|m+|W_b|m\\
\leq& (|W_a|+\frac1{2d-1}|W_a|)(Z+\frac1{4d}Z)=\frac{4d+1}{4d-2}|W_a|Z.\end{split}\end{equation} As $d\geq 3$ we eventually deduce from (\ref{Q0wabound}), (\ref{Q0wa}) and (\ref{h0w}) that \begin{equation}\label{heightconcen}\begin{split}|w_i|=&2\cdot\frac{d-d_i}d|Q_0(w,a)|\\
\geq&2\cdot\frac{d-2}d\cdot\frac{2d-3}{8d-4}|W_a|Z\\
\geq&2\cdot\frac{d-2}d\cdot\frac{2d-3}{8d-4}\cdot\frac{4d-2}{4d+1}h_0(w)\\
=&\frac{(d-2)(2d-3)}{d(4d+1)}h_0(w)\\
\geq&\frac1{13}h_0(w).\end{split}\end{equation}

As $a',b'$ are in the lattice $\mathcal L(\phi(G))$, we can choose $u$ and $\tilde u$ as following:

First let $u$ be such that $\mathcal L(\phi(u))=a'$, then \begin{equation}\label{ucontrol}\left\{\begin{aligned}&h^\mathrm{Mah}(\phi(u))&=&h_0(a')\leq Z+m\leq(4d+1)(\frac r2+1)\mathcal F_{\phi(G)};\\
&\log|\zeta_u^i|&=&a'_i\geq\frac{2d-1}{4d}Z\geq (2d-1)(\frac r2+1)\mathcal F_{\phi(G)};\\
&\log|\zeta_u^j|&=&a'_j\leq Z(-\frac1{d-d_i})+m\\
&&\leq&(-\frac{4d}{d-d_i}+1)m\\
&&\leq&-3m<0, \forall j\neq i.\end{aligned}\right.\end{equation}
If $V_i\cong\mathbb C$, then $\zeta_{u^n}^i\notin\mathbb R$ as $\mathcal L(\phi(u^n))=na'\notin W_{k,l}$ because of the way $a'$ was chosen.

\noindent{\bf Case 1.} If $(\mathbb Za'\oplus\mathbb Zb')\cap W_{k,l}=\{0\}$. Choose $\tilde u\in G$ so that $\mathcal L(\phi(\tilde u))=a'+b'$. Then because $a'$ and $b'$ are linearly independent, $u$ and $\tilde u$ are multiplicatively independent.

In this case, by remarks we made at the beginning of proof, either $V_i$ is real or there is no non-trivial unit of the form $u^p\tilde u^q, (p,q)\neq(0,0)$ such that $\zeta_{u^p\tilde u^q}^i\in\mathbb R$ (because otherwise $pa'+q(a'+b')=p\mathcal L(\phi(u))+q\mathcal L(\phi(\tilde u))\in W_{k,l}$, but by linear independence it doesn't vanish, thus contradictis the assumption $(\mathbb Za'\oplus\mathbb Zb')\cap W_{k,l}=\{0\}$).

Notice $|b'_i|=|m(\epsilon_b))_i|\leq m\leq(\frac r2+1)\mathcal F_{\phi(G)}$. Then  \begin{equation}\label{tildeucontrol}\left\{\begin{aligned}&h^\mathrm{Mah}(\phi(\tilde u))&\leq&(Z+m)+(Z+m)\leq (8d+2)(\frac r2+1)\mathcal F_{\phi(G)},\\
&\log|\zeta_{\tilde u}^i|&=&a'_i+b'_i\geq a'_i-|b'_i|\geq(2d-2)(\frac r2+1)\mathcal F_{\phi(G)}.\end{aligned}\right.\end{equation}

\noindent{\bf Case 2.} If $(\mathbb Za'\oplus\mathbb Zb')\cap W_{k,l}\neq\{0\}$ then it is isomorphic to either $\mathbb Z$ or $\mathbb Z^2$. Choose $u$ so that $\mathcal L(\phi(u))=a'$. But as $a'\notin W_{k,l}$, $(\mathbb Ra'\oplus\mathbb Rb')\cap W_{k,l}$ has dimension $1$ and $(\mathbb Za'\oplus\mathbb Zb')\cap W_{k,l}$ has to be cyclic. Let $a''$ be a generator, then by (\ref{heightconcen}), $|a''_i|>\frac1{13}h^\mathrm{Mah}(a'')>0$, without loss of generality assume $a''_i>0$. 

In this case, if there is $\zeta_{u^p\tilde u^q}^i\in\mathbb R$ for a non-trivial pair $(p,q)$ then $p\mathcal L(\phi(u))+q\mathcal L(\phi(\tilde u))\in(\mathbb Za'\oplus\mathbb Zb')\cap W_{k,l}$, thus has to be a multiple of $a''_i$. 

{\bf Case 2.i.} If $h^\mathrm{Mah}(a'')\geq Z$ then we choose $\tilde u$ so that $\mathcal L(\phi(\tilde u))=a'+b'$.  Then $u$ and $\tilde u$ are always multiplicatively independent and the inequalities (\ref{tildeucontrol}) still hold as in case 1. Moreover, if $\zeta_{u^p\tilde u^q}^i\in\mathbb R$ and $|\zeta_{u^p\tilde u^q}^i|>1$, then $\log|\zeta_{u^p\tilde u^q}^i|=p\mathcal L(\phi(u))+q\mathcal L(\phi(\tilde u))=na''_i$ for some $n\in \mathbb N$, thus \begin{equation}\label{nonisometric}\log|\zeta_{u^p\tilde u^q}^i|\geq a''_i\geq\frac1{13}h^\mathrm{Mah}(a'')\geq\frac1{13}Z\geq\frac d4(\frac r2+1)\mathcal F_{\phi(G)}.\end{equation}

{\bf Case 2.ii.} If $h^\mathrm{Mah}(a'')<Z$ then there is $N\in\mathbb N$ such that $Nh^\mathrm{Mah}(a'')\in (Z,2Z]$. There exists $\tilde u\in G$ such that  $\mathcal L(\phi(\tilde u))=Na''$. Then $u,\tilde u$ are multiplicatively independent because $a'\notin W_{k,l}$ and $Na''\in W_{k,l}$ are linearly independent.

In this case, if $\zeta_{u^p\tilde u^q}^i$ is real with $|\zeta_{u^p\tilde u^q}^i|\geq 1$ then $\log|\zeta_{u^p\tilde u^q}^i|=pa'+qNa''\in W_{k,l}$. Because $a'\notin W_{k,l}$ and $a''\in W_{k,l}$, $p=0$ and $q>0$. Thus \begin{equation}\log|\zeta_{u^p\tilde u^q}^i|\geq Na''_i\geq \frac1{13}Nh^\mathrm{Mah}(a'')\geq\frac1{13}Z\geq\frac d4(\frac r2+1)\mathcal F_{\phi(G)}.\hskip2cm(\ref{nonisometric}\text{'})\nonumber\end{equation}\addtocounter{equation}{1} Further, \begin{equation}\hskip1cm\left\{\begin{aligned}&h^\mathrm{Mah}(\phi(\tilde u))&=&Nh^\mathrm{Mah}(a'')\leq 2Z\leq 8d(\frac r2+1)\mathcal F_{\phi(G)},\\&\log|\zeta_{\tilde u}^i|&=&Na''_i\geq\frac d4(\frac r2+1)\mathcal F_{\phi(G)}.\end{aligned}\hskip2.6cm (\ref{tildeucontrol}\text{'})\right.\nonumber\end{equation}\addtocounter{equation}{1}

The proposition is established by combining (\ref{ucontrol}), (\ref{tildeucontrol}), (\ref{tildeucontrol}'), (\ref{nonisometric}) and (\ref{nonisometric}').\end{proof}

\subsection{Approximation of an arithmetic progression}

\hskip\parindent We will construct a sequence inside $\{\zeta_g^i|g\in G\}$ which resembles an arithmetic progression. To achieve this the following important result from Diophantine geometry is needed:

\begin{theorem}\label{BWthm} {\rm(Baker-W\"ustholz\cite{BW93})} Suppose $\alpha_k\in\mathbb C, k=1,\cdots,N$ are non-zero algebraic numbers belonging to the same degree $d$ number field and $\vartheta_k\in\mathbb C$ is a fixed natural logarithm of $\alpha_k$ for all $k$. Let $$h'_k=\max(h^\mathrm{Mah}(\alpha_k),|\log\vartheta_k|,1).$$ Suppose a non-zero integer vector $(p_1,p_2,\cdots,p_N)\in\mathbb Z^N$ satisfies $\sum_{k=1}^Np_k\vartheta_k\neq 0$, then $$-\log |\sum_{k=1}^Np_k\vartheta_k|\lesssim_{N,d}\big(\prod_{k=1}^Nh'_k\big)\max(\log|p_1|,\log|p_2|,\cdots,\log|p_N|,\frac1d),$$ where the implied constant is explicitly determined by $N$ and $d$.\end{theorem}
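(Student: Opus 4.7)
The statement is the Baker--Wüstholz theorem on linear forms in logarithms, which is a deep result of transcendence theory cited from \cite{BW93}; in the paper it is quoted rather than proved. Nonetheless, here is how I would approach a proof, following the classical Gelfond--Baker method in the refined form that yields the sharp dependence on the heights $h'_k$.

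The core idea is to construct an \emph{auxiliary function} whose existence contradicts a hypothetical small nonzero value of $\Lambda := \sum_k p_k\vartheta_k$. Concretely, set $B = \max_k \log|p_k|$, and suppose for contradiction that $0 < |\Lambda| < \exp(-C\,h'_1\cdots h'_N B)$ for an enormous constant $C = C(N,d)$ to be chosen. Pick integer parameters $L_0,L_1,\dots,L_N$ (roughly $L_k \asymp (h'_1\cdots h'_N)/h'_k$ and $L_0$ polynomially larger in $B$) and, via Siegel's lemma in the form tailored to an algebraic number field, produce integer coefficients $c(\lambda_0,\lambda_1,\dots,\lambda_N)$, not all zero, of controlled height so that the exponential polynomial
\[
  \Phi(z) \;=\; \sum_{\lambda_0,\dots,\lambda_N} c(\lambda_0,\dots,\lambda_N)\, z^{\lambda_0}\, \alpha_1^{\lambda_1 z}\cdots \alpha_N^{\lambda_N z}
\]
vanishes to high order at many integer points $z = 1,2,\dots,S$. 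Here the exponential term $\alpha_k^{\lambda_k z} = \exp(\lambda_k z\,\vartheta_k)$ is interpreted using the fixed branches $\vartheta_k$, and the smallness of $\Lambda$ is what makes the linear system small enough for Siegel's lemma to succeed.

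Next I would run the \emph{extrapolation} step: using Schwarz-type estimates on a disc, together with the fact that $\Phi$ vanishes at many points to high order, deduce that $\Phi$ is very small at many further points, and then upgrade ``very small at an algebraic point'' to ``equal to zero'' via Liouville inequalities applied to the conjugates of the value $\Phi(s)$, which lies in a number field of controlled degree. Iterating this with suitably chosen parameters produces a function with far more zeros (counted with multiplicity on a commutative algebraic group, namely $\mathbb{G}_a \times \mathbb{G}_m^N$) than its constructed degree allows. The decisive ingredient, which is exactly the Wüstholz contribution over Baker's original argument, is the \emph{multiplicity/zero estimate on commutative group varieties} of Philippon--Wüstholz: it certifies that such an over-abundance of zeros forces $\Phi$ to vanish on a nontrivial algebraic subgroup, which in turn forces a nontrivial multiplicative relation among the $\alpha_k$. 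Tracking the numerology of the subgroup obstruction versus the parameters $L_k$ and $S$ shows that $C(N,d)$ can be taken of the form effectively computable in $N$ and $d$, yielding the stated bound
\[
  -\log|\Lambda| \;\lesssim_{N,d}\; \bigl(\textstyle\prod_k h'_k\bigr)\,\max(\log|p_1|,\dots,\log|p_N|,\tfrac1d).
\]

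The main obstacle is not any single step but the delicate balancing of the parameters $L_0,\dots,L_N,S$ so that (i) Siegel's lemma actually produces coefficients of the required size, (ii) the analytic extrapolation genuinely gains zeros at each stage, and (iii) the zero estimate on $\mathbb{G}_a\times\mathbb{G}_m^N$ applies with the right codimension. In particular, obtaining the product $\prod_k h'_k$ (as opposed to a weaker $(\max_k h'_k)^N$ bound) requires the anisotropic choice $L_k \asymp H/h'_k$ together with Wüstholz's multi-homogeneous zero estimate; this is precisely the refinement of Baker's original method that I would expect to be the technical heart of the argument, and I would treat it as a black box rather than redo it here.
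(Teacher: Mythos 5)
The paper does not prove this statement at all: Theorem \ref{BWthm} is imported verbatim from Baker--W\"ustholz \cite{BW93} and used as a black box (only the easy reformulation in Lemma \ref{BWlog} is derived from it). So there is no internal proof to compare your argument against, and quoting the result, as you begin by doing, is exactly the intended treatment here.

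As for your sketch itself: it is a faithful outline of how the theorem is actually proved in the literature (auxiliary exponential polynomial via Siegel's lemma with the anisotropic parameters $L_k\asymp H/h'_k$, extrapolation with Schwarz and Liouville estimates, and the Philippon--W\"ustholz multiplicity estimate on $\mathbb G_a\times\mathbb G_m^N$ to rule out excess zeros), but it is a roadmap rather than a proof. Every step that carries the actual difficulty --- the zero estimate on commutative group varieties, the verification that Siegel's lemma produces coefficients of admissible height with those parameter choices, and the bookkeeping that turns the subgroup obstruction into the effective constant $C(N,d)$ and the product $\prod_k h'_k$ rather than $(\max_k h'_k)^N$ --- is explicitly deferred, and these are precisely the points where the stated bound is won or lost. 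Within the context of this paper that deferral is appropriate (one should cite \cite{BW93}, not reprove it), but your text should not be read as establishing the theorem independently.
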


Observe that, if $\mathrm{Im}\vartheta_k\in[0,2\pi]$, then as $\log\vartheta_k=\log|\alpha_k|+\mathrm{Im}\vartheta_k$, $h'_k\lesssim\max(h^\mathrm{Mah}(\alpha_k),\log|\alpha_k|,1)$. However $\log|\alpha_k|\leq \sum_{i=1}^d\log_+|\sigma_i(\alpha_k)|=h^\mathrm{Mah}(\alpha_k)$. Hence (a special case of) Baker-W\"ustholz Theorem can be reformulated as:

\begin{lemma}\label{BWlog}Suppose $\alpha_k\in\mathbb C, k=1,\cdots,N$ are non-zero algebraic numbers belonging to the same degree $d$ number field and $\vartheta_k\in\mathbb C$ is a fixed natural logarithm of $\alpha_k$ with $\mathrm{Im}\vartheta_k\in[0,2\pi]$ for all $k=1,\cdots,N$. For any non-zero integer vector $(p_1,p_2,\cdots,p_N)\in\mathbb Z^N$ such that $\sum_{k=1}^Np_k\vartheta_k\neq 0$, $$-\log |\sum_{k=1}^Np_k\vartheta_k|\lesssim_{N,d} \big(\prod_{k=1}^N\max(h^\mathrm{Mah}(\alpha_k),1)\big)\log\max(|p_1|,|p_2|,\cdots,|p_N|,2),$$ where the implied constant is an explicit function in $N$ and $d$.\end{lemma}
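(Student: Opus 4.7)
The plan is to derive Lemma \ref{BWlog} as a direct repackaging of Theorem \ref{BWthm}. The quantity to replace is $h'_k = \max(h^\mathrm{Mah}(\alpha_k), |\log\vartheta_k|, 1)$ in the product, and we also need to replace $\max(\log|p_1|,\dots,\log|p_N|, \frac{1}{d})$ by $\log\max(|p_1|,\dots,|p_N|,2)$. Both substitutions cost only a constant depending on $N$ and $d$, which can be absorbed into the $\lesssim_{N,d}$ symbol, so the proof is essentially bookkeeping.

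First I would bound $|\vartheta_k|$ in terms of $h^\mathrm{Mah}(\alpha_k)$. Writing $\vartheta_k = \log|\alpha_k| + i\,\mathrm{Im}\vartheta_k$, the hypothesis $\mathrm{Im}\vartheta_k \in [0,2\pi]$ gives $|\vartheta_k| \leq |\log|\alpha_k|| + 2\pi$. For the upper bound on $\log|\alpha_k|$ one invokes $\log|\alpha_k| \leq \log_+|\alpha_k| \leq \sum_{i=1}^d \log_+|\sigma_i(\alpha_k)| \leq h^\mathrm{Mah}(\alpha_k)$, as noted in the paragraph preceding the statement. For the lower bound one uses the invariance $h^\mathrm{Mah}(\alpha_k^{-1}) = h^\mathrm{Mah}(\alpha_k)$, which follows from the product formula $\sum_\nu d_\nu \log|\alpha_k|_\nu = 0$ combined with Definition \ref{logh}; applying the same inequality to $\alpha_k^{-1}$ gives $-\log|\alpha_k| \leq h^\mathrm{Mah}(\alpha_k)$. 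Therefore $|\vartheta_k| \leq h^\mathrm{Mah}(\alpha_k) + 2\pi$, and consequently $h'_k \lesssim \max(h^\mathrm{Mah}(\alpha_k), 1)$ with an absolute implied constant.

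Next I would handle the polynomial factor. Since the paper adopts base-$2$ logarithms, $\log 2 = 1 \geq \tfrac{1}{d}$ for every $d \geq 1$, so
\begin{equation*}
\max\bigl(\log|p_1|,\dots,\log|p_N|, \tfrac{1}{d}\bigr) \leq \max\bigl(\log|p_1|,\dots,\log|p_N|, \log 2\bigr) = \log\max(|p_1|,\dots,|p_N|,2).
\end{equation*}
Substituting the two estimates into Theorem \ref{BWthm} yields
\begin{equation*}
-\log\bigl|\textstyle\sum_k p_k\vartheta_k\bigr| \lesssim_{N,d} \Bigl(\prod_{k=1}^N \max(h^\mathrm{Mah}(\alpha_k),1)\Bigr) \log\max(|p_1|,\dots,|p_N|,2),
\end{equation*}
which is precisely the claim.

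There is no real obstacle here; the lemma is a convenient rephrasing adapted to the paper's notation. The only point requiring care is ensuring that the two-sided bound $|\log|\alpha_k|| \leq h^\mathrm{Mah}(\alpha_k)$ is available, which as just noted follows from the invariance of Mahler measure under inversion, a direct consequence of the product formula.
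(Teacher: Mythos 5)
Your proposal is correct and follows essentially the same route as the paper: deduce the lemma from Theorem \ref{BWthm} by bounding $h'_k\lesssim\max(h^\mathrm{Mah}(\alpha_k),1)$ using $\mathrm{Im}\,\vartheta_k\in[0,2\pi]$ and $\log|\alpha_k|\leq h^\mathrm{Mah}(\alpha_k)$, and by absorbing the replacement of $\max(\log|p_1|,\dots,\frac1d)$ by $\log\max(|p_1|,\dots,2)$ into the implied constant. In fact you are slightly more careful than the paper, which only notes the upper bound on $\log|\alpha_k|$; your use of $h^\mathrm{Mah}(\alpha_k^{-1})=h^\mathrm{Mah}(\alpha_k)$ to control the case $|\alpha_k|<1$ cleanly fills that small gap.
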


\declareconst{arithprolengthconst} \declareconst{arithproexpoconst}
\begin{proposition}\label{arithpro} For some effective constants $\ref{arithprolengthconst}(d)$ and $\ref{arithproexpoconst}(d)$, for all integers $s\geq\ref{arithprolengthconst}\mathcal F_{\phi(G)}$, there exist $a_0,a_1,\cdots,a_{s-1}\in G$ and a number $\Delta$ (which is either real or complex depending on whether $V_i\cong\mathbb R$ or $\mathbb C$) such that:

(1). $s^{-\ref{arithproexpoconst}\mathcal F_{\phi(G)}^2}\leq|\Delta|\leq s^{-3}$;

(2). $h^\mathrm{Mah}(\phi(a_t))\leq s^{10}, \forall t$;

(3). $|\zeta_{a_t}^i-(1+t\Delta)|\leq s^{-1}|\Delta|, \forall t$.\end{proposition}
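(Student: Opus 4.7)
The plan is to construct a single element $\alpha \in \phi(G)$ whose $i$-th embedding $\sigma_i(\alpha)$ is extremely close to $1$, and then take $a_t \in G$ with $\phi(a_t) = \alpha^t$ for $t = 0, 1, \ldots, s-1$. Setting $\Delta := \sigma_i(\alpha) - 1$, one has $\zeta_{a_t}^i = (1+\Delta)^t$, and a direct Taylor expansion $(1+\Delta)^t - (1 + t\Delta) = \sum_{k \geq 2}\binom{t}{k}\Delta^k$ gives $|\zeta_{a_t}^i - (1 + t\Delta)| \leq 2 t^2 |\Delta|^2 \leq 2 s^2 |\Delta|^2$ whenever $t \leq s-1$ and $s|\Delta| \leq 1/2$. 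This is bounded by $s^{-1}|\Delta|$ as long as $|\Delta| \leq (2s^3)^{-1}$, so condition~(3) reduces to producing such an $\alpha$ with $|\Delta|$ bounded below as in~(1).

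To build $\alpha$, I would invoke Proposition~\ref{expand} to obtain multiplicatively independent $u,\tilde u \in G$ of Mahler measure $\lesssim_d \mathcal F_{\phi(G)}$, replacing each by its square if necessary so that $\sigma_i(\phi(u)),\sigma_i(\phi(\tilde u))$ are positive in the real case. Fix branches $\lambda_1 = \log\sigma_i(\phi(u))$, $\lambda_2 = \log\sigma_i(\phi(\tilde u))$ with imaginary part in $[0,2\pi)$, and seek $(p,q) \in \mathbb Z^2$ minimizing $|\sigma_i(\phi(u))^p\sigma_i(\phi(\tilde u))^q - 1|$, which is equivalent to minimizing $|p\lambda_1 + q\lambda_2 + 2\pi i n|$ over $n \in \mathbb Z$ (the $n$ term being absent when $V_i \cong \mathbb R$). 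In the real case a $1$-dimensional Dirichlet pigeonhole applied to the $(2N+1)^2$ values $p\lambda_1+q\lambda_2$, $|p|,|q| \leq N$, produces $(p,q)$ with $|p|,|q| \lesssim N$ and $|p\lambda_1+q\lambda_2| \lesssim \mathcal F_{\phi(G)}/N$; in the complex case a 2D pigeonhole, with the imaginary part measured modulo $2\pi\mathbb Z$, places those values in a region of area $O(\mathcal F_{\phi(G)} N)$ and so yields $(p,q,n)$ with $|p|,|q|,|n| \lesssim N$ and $|p\lambda_1 + q\lambda_2 + 2\pi i n| \lesssim \sqrt{\mathcal F_{\phi(G)}/N}$. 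Choosing $N$ of order $\mathcal F_{\phi(G)} s^3$ (real) or $\mathcal F_{\phi(G)} s^6$ (complex) pushes this bound below $(10 s^3)^{-1}$, yielding the upper bound in~(1) via $|\Delta| \leq 2|p\lambda_1 + q\lambda_2 + 2\pi i n|$.

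The lower bound in~(1) comes from the Baker--W\"ustholz theorem as packaged in Lemma~\ref{BWlog}, applied to the non-zero algebraic numbers $\sigma_i(\phi(u))$, $\sigma_i(\phi(\tilde u))$, and (in the complex case) $1$ with logarithm $2\pi i$. Since each of these numbers lies in the degree-$d$ field $K_i$ and has Mahler measure $\lesssim_d \mathcal F_{\phi(G)}$, the lemma gives $|p\lambda_1 + q\lambda_2 + 2\pi i n| \geq \max(|p|,|q|,|n|,2)^{-C_d \mathcal F_{\phi(G)}^2}$; combined with $\max(|p|,|q|,|n|) \leq O_d(\mathcal F_{\phi(G)} s^6)$ and the standing assumption $s \geq \ref{arithprolengthconst}\mathcal F_{\phi(G)}$, this becomes $s^{-\ref{arithproexpoconst}\mathcal F_{\phi(G)}^2}$ for a suitable effective $\ref{arithproexpoconst}(d)$. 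Condition~(2) then follows from $h^{\mathrm{Mah}}(\phi(a_t)) = t \cdot h^{\mathrm{Mah}}(\alpha) \leq t(|p|+|q|) \cdot O_d(\mathcal F_{\phi(G)}) \lesssim_d s^7\mathcal F_{\phi(G)}^2 \leq s^{10}$ provided $\ref{arithprolengthconst}$ is large enough relative to $d$. The main obstacle is the complex case: the imaginary periodicity forces a genuinely two-dimensional pigeonhole (yielding only $N^{-1/2}$ rather than $N^{-1}$ decay) and a three-term application of Baker--W\"ustholz that must include $\log 1 = 2\pi i$, and the bookkeeping must be arranged so that the three target exponents $3$, $10$, and $\ref{arithproexpoconst}\mathcal F_{\phi(G)}^2$ are compatible with a single choice of $N$.
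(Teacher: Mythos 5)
Your proposal is correct and follows essentially the same route as the paper's proof: the pair $u,\tilde u$ from Proposition \ref{expand}, a pigeonhole argument producing a word $u^p\tilde u^q$ whose $i$-th eigenvalue lies within $O(s^{-3})$ of $1$, Lemma \ref{BWlog} (with $2\pi\mathrm i$ as a logarithm of $1$) for the lower bound on $|\Delta|$, and powers of that word with the same Mahler-measure bookkeeping. The differences are cosmetic — the paper pigeonholes the one-parameter orbit $n(\gamma,\beta-\gamma\tilde\beta)$ in $\mathbb T^2$ instead of your two-parameter family and takes $\Delta$ to be the logarithm of the eigenvalue rather than the eigenvalue minus $1$ — and the one step you leave implicit (non-vanishing of the linear form before applying Baker--W\"ustholz) follows exactly as in the paper from the multiplicative independence of $u,\tilde u$ and the injectivity of $\sigma_i$ and $\phi$.
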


\begin{proof} Let $u$, $\tilde u$ be defined as in Proposition \ref{expand}. Denote $\zeta_u^i=e^{\theta+2\pi\beta\mathrm i},\zeta_{\tilde u}^i=e^{\tilde \theta+2\pi\tilde \beta\mathrm i}$, where $\theta,\tilde\theta>0$ and $\beta,\tilde\beta\in[0,1)$. 

Let $\gamma=\theta/{\tilde\theta}$. Since $\frac{\tilde\theta}{\ln 2}=\log|\zeta_{\tilde u}^i|\geq\frac14d(\frac r2+1)\mathcal F_{\phi(G)}$ and $\frac{\theta}{\ln 2}=\log|\zeta_{u}^i|\leq h^\mathrm{Mah}(\phi(u))\leq 9d(\frac r2+1)\mathcal F_{\phi(G)}$, $\gamma\leq 36$.

Then  $\forall n,m\in\mathbb N$, the $i$-th eigenvalue of $u^n\tilde u^{-m}$ is given by \begin{equation}\label{moduargu}\begin{split}\zeta_{u^n\tilde u^{-m}}^i=&e^{(\theta n-\tilde\theta m)+2\pi(\beta n-\tilde\beta m)\mathrm i}\\
=&e^{(\gamma\tilde\theta n-\tilde\theta m)+2\pi(\beta n-\gamma\tilde\beta n+\tilde\beta(\gamma n-m))\mathrm i}\\
=&e^{(\tilde\theta+2\pi\tilde\beta\mathrm i)(\gamma n-m)+2\pi(\beta-\gamma\tilde\beta)n\mathrm i}.\end{split}\end{equation}

Let $\Delta_0=(\gamma,\beta-\gamma\tilde\beta)\in\mathbb R^2$. We slightly abuse notation and denote the projection of $\Delta_0$ in $\mathbb T^2=(\mathbb R/\mathbb Z)^2$ by $\Delta_0$ as well. Then $\forall n\in\mathbb N$, $n\Delta_0\in\mathbb T^2$ is represented by $(\gamma n,(\beta-\gamma\tilde\beta)n)$. For all $P\geq 10$, $\mathbb T^2$ can be covered by at most $P$ squares of side length $\frac{1}{\lfloor\sqrt P\rfloor}\leq\frac 2{\sqrt P}$. By pigeonhole principle there exist two different $n_1,n_2\in\{0,1,\cdots,P\}$ such that $n_1\Delta_0$ and $n_2\Delta_0$ are in the same square. Let $n=|n_1-n_2|$, then $1\leq n\leq P$ and $n\Delta_0$ is in the neighborhood $\pi\big([-\frac 2{\sqrt P},\frac 2{\sqrt P}]^2\big)$ of $(0,0)\in\mathbb T^2$.

Now we are going to effectively bound the distance between $n\Delta_0$ and the origin using Baker-W\"usholtz Theorem (Lemma \ref{BWlog}). We know for some $(m_1,m_2)\in\mathbb Z^2$,  \begin{equation}\omega_1:=|\gamma n-m_1|\leq\frac 2{\sqrt P}, \omega_2:=|(\beta-\gamma\tilde\beta)n-m_2|\leq\frac 2{\sqrt P}.\end{equation} As $\frac 2{\sqrt P}\leq 1$,  \begin{equation}\label{m1m2bound}|m_1|\leq|\gamma n|+1\leq\gamma P+1;\ |m_2|\leq|(\beta-\gamma\tilde\beta)n|+1\leq |\beta-\gamma\tilde\beta|P+1.\end{equation} Notice $$|\beta n-\tilde\beta m_1-m_2|=|\tilde\beta(\gamma n-m_1)+(\beta -\tilde\beta \gamma) n-m_2|\leq\tilde\beta\omega_1+\omega_2.$$ Multiply both sides by $2\pi$, we get $$|n\cdot 2\pi\beta \mathrm i-m_1\cdot 2\pi\tilde\beta\mathrm i-m_2\cdot 2\pi i|\leq 2\pi\tilde(\beta\omega_1+\omega_2).$$ On the other hand, $$|n\theta-m_1\tilde\theta|=|\tilde\theta(\gamma n-m_1)|\leq\tilde\theta\omega_1.$$
Add these two inequalities together: 
\begin{equation}\label{arithproBWsum}|n(\theta+2\pi\beta\mathrm i)-m_1(\tilde\theta+2\pi\tilde\beta\mathrm i)-m_2\cdot2\pi\mathrm i|\leq(\tilde\theta+2\pi\tilde\beta)\omega_1+2\pi\omega_2.\end{equation}

But $\theta+2\pi\beta\mathrm i$, $\tilde\theta+2\pi\tilde\beta\mathrm i$ and $2\pi\mathrm i$ are respectively natural logarithms of $\zeta_u^i$,$\zeta_{\tilde u}^i$ and $1$.

Moreover we claim $n(\theta+2\pi\beta\mathrm i)-m_1(\tilde\theta+2\pi\tilde\beta\mathrm i)-m_2\cdot2\pi\mathrm i\neq 0$. Actually if this is not the case then $\zeta_{u^n\tilde u^{m_1}}^i=(\zeta_u^i)^n(\zeta_{\tilde u}^i)^{m_1}=1$, but this would imply all of eigenvalues of $u^n\tilde u^{m_1}$ are equal to 1 since they are conjugate to each other. Thus $u^n\tilde u^{m_1}$ is identity, so $n=m_1=0$ by the multiplicative independence; but $n$ is supposed to be positive.

Apply Lemma \ref{BWlog} to (\ref{arithproBWsum}):
\begin{equation}\label{BakerAP}\begin{aligned}
&&&-\log\big((\tilde\theta+2\pi\tilde\beta)\omega_1+2\pi\omega_2\big)\\
&\leq&&-\log|n(\theta+2\pi\beta\mathrm i)-m_1(\tilde\theta+2\pi\tilde\beta\mathrm i)-m_2\cdot2\pi\mathrm i|\\
&\lesssim_d&&\max(h^\mathrm{Mah}(\zeta_u^i),1)\max(h^\mathrm{Mah}(\zeta_{\tilde u}^i),1)\log\max(|n|,|m_1|,|m_2|,2)\\
&=&&\max(h^\mathrm{Mah}(\phi(u)),1)\max(h^\mathrm{Mah}(\phi(\tilde u)),1)\\
&&&\hskip3cm\cdot\log\max(|n|,|m_1|,|m_2|,2)\\
&\lesssim_{d}&&\mathcal F_{\phi(G)}^2\log\max(P,\gamma P+1,|\beta-\gamma\tilde\beta|P+1,2).\end{aligned}\end{equation}
We used in the last inequality the fact that $h^\mathrm{Mah}(\phi(u)),h^\mathrm{Mah}(\phi(\tilde u))\lesssim_{d}\mathcal F_{\phi(G)}$ from Proposition \ref{expand} (using $r\leq d-1$) and $\mathcal F_{\phi(G)}\geq\ref{voutierconst}(d)$ (by Remark \ref{voutier}).

When $P\geq 1$, $\max(P,\gamma P+1,|\beta-\gamma\tilde\beta|P+1, 2)\leq 40P$ as $\gamma\leq 36$ and $\beta,\tilde \beta<1$, thus (\ref{BakerAP}) implies there is an effective constant \declaresubconst{arithprosubconst} $\ref{arithprosubconst}=\ref{arithprosubconst}(d)$ such that  \begin{equation}\label{expBakerAP}(\tilde\theta+2\pi\tilde\beta)\omega_1+2\pi\omega_2\geq P^{-\ref{arithprosubconst}\mathcal F_{\phi(G)}^2}.\end{equation} 

Define a sequence \begin{equation}a_t=(u^n\tilde u^{-m_1})^t\in G,\ t=0,1,\cdots.\end{equation} By (\ref{moduargu}), $\zeta_{a_t}^i=e^{t\Delta}$ with $\Delta={(\tilde\theta+2\pi\tilde\beta\mathrm i)(\gamma n-m_1)+2\pi((\beta-\gamma\tilde\beta)n-m_2)\mathrm i}$. Equivalently \begin{equation}\label{Deltadef}\left\{\begin{aligned}&\mathrm{Re}\Delta=\tilde\theta(\gamma n-m_1)\\&\mathrm{Im}\Delta=2\pi\tilde\beta(\gamma n-m_1)+2\pi((\beta-\gamma\tilde\beta)n-m_2).\end{aligned}\right.\end{equation}

We have two possibilities: either $(\tilde\theta+2\pi\tilde\beta)\omega_1\geq\frac13((\tilde\theta+2\pi\tilde\beta)\omega_1+2\pi\omega_2)$. Then
$|\mathrm{Re}\Delta|=\tilde\theta\omega_1\geq\frac{\tilde\theta}{\tilde\theta+2\pi\tilde\beta}\cdot\frac13((\tilde\theta+2\pi\tilde\beta)\omega_1+2\pi\omega_2)$. Since
$\tilde\beta\in[0,1)$, $\tilde\theta=\log|\zeta_{\tilde u}^i|\gtrsim_{d}\mathcal F_{\phi(G)}\gtrsim_d 1$, we conclude by (\ref{expBakerAP}) in this case that $$|\mathrm{Re}\Delta|\gtrsim_{d}P^{-\ref{arithprosubconst}\mathcal F_{\phi(G)}^2}.$$

Or $2\pi\omega_2>\frac23((\tilde\theta+2\pi\tilde\beta)\omega_1+2\pi\omega_2)$, then $2\pi\tilde\beta\omega_1\leq\frac{2\pi\tilde\beta}{\tilde\theta+2\pi\tilde\beta}\cdot \frac13((\tilde\theta+2\pi\tilde\beta)\omega_1+2\pi\omega_2)\leq \frac13((\tilde\theta+2\pi\tilde\beta)\omega_1+2\pi\omega_2).$ And $$|\mathrm{Im}\Delta|\geq 2\pi\omega_2-2\pi\tilde\beta\omega_1\geq \frac13((\tilde\theta+2\pi\tilde\beta)\omega_1+2\pi\omega_2)\geq \frac13P^{-\ref{arithprosubconst}\mathcal F_{\phi(G)}^2}.$$

So in any case, there is an effective constant \declaresubconst{arithprosubconst1}$\ref{arithprosubconst1}(d)$ such that  \begin{equation}\label{Deltalower} P^{-\ref{arithprosubconst}\mathcal F_{\phi(G)}^2}\leq\ref{arithprosubconst1}|\Delta|.\end{equation}

On the other hand, because $\omega_1,\omega_2\leq\frac2{\sqrt P}$, $\tilde\beta<1$, and $$\tilde\theta\leq h^\mathrm{Mah}(\phi(\tilde u))\lesssim_d\mathcal F_{\phi(G)},$$ by (\ref{Deltadef}) there is some \declaresubconst{arithprosubconst2}$\ref{arithprosubconst2}(d)$ such that \begin{equation} \label{Deltaupper}|\Delta|\leq\ref{arithprosubconst2}\mathcal F_{\phi(G)}P^{-\frac12}.\end{equation}

Let $P=s^8$ where $s$ is an integer such that $s\geq\ref{arithprolengthconst}\mathcal F_{\phi(G)}$ where $\ref{arithprolengthconst}=\ref{arithprolengthconst}(d)\geq \max(\ref{arithprosubconst2},\frac{\ref{arithprosubconst1}}{\ref{voutierconst}},1)$. 

Now we verify all of the three claims from proposition.

(1). By (\ref{Deltalower}) there exists an effective constant $\ref{arithproexpoconst}=\ref{arithproexpoconst}(d)$ such that \begin{align*}|\Delta|\geq&\ref{arithprosubconst1}^{-1}P^{-\ref{arithprosubconst}\mathcal F_{\phi(G)}^2}\geq\frac1{\ref{voutierconst}\ref{arithprolengthconst}}P^{-\ref{arithprosubconst}\mathcal F_{\phi(G)}^2}\geq s^{-1}s^{-8\ref{arithprosubconst}\mathcal F_{\phi(G)}^2}\\
\geq&s^{-\ref{arithproexpoconst}\mathcal F_{\phi(G)}^2},\end{align*} where we used $\mathcal F_{\phi(G)}\geq\ref{voutierconst}$.

On the other hand by as $s\geq \ref{arithprosubconst2}\mathcal F_{\phi(G)}$. (\ref{Deltaupper}) implies $|\Delta|\leq sP^{-\frac12}=s^{-3}.$

(2). We take only the first $s$ terms in the sequence $\{a_t\}$. For all $0\leq t\leq s-1$, \begin{align*}h^\mathrm{Mah}(\phi(a_t))&=\ h^\mathrm{Mah}(\phi((u^n\tilde u^{-m_1})^t))\leq t\big (|n|h^\mathrm{Mah}(\phi( u))+|m_1|h^\mathrm{Mah}(\phi(\tilde u))\big )\\
&\lesssim_ds(|n|+|m_1|)\mathcal F_{\phi(G)}.\end{align*}
As $1\leq n\leq P$, by (\ref{m1m2bound}) $h^\mathrm{Mah}(\phi(a_t))\lesssim_{d}sP\mathcal F_{\phi(G)}=s^9\mathcal F_{\phi(G)}$. When $\ref{arithprolengthconst}$ is made sufficiently large (but still effective), $h^\mathrm{Mah}(\phi(a_t))\leq s^{10}.$ 

(3). Finally, if $|\Delta|\leq s^{-3}$ and $t<s$ then $|t\Delta|<1$. So $$|\zeta_{a_t}^i-(1+t\Delta)|=|e^{t\Delta}-(1+t\Delta)|=\Big|\sum_{k=2}^\infty\frac{(t\Delta)^k}{k!}\Big|\leq(\sum_{k=2}^\infty\frac1{k!})|t\Delta|^2\leq|t\Delta|^2,$$ which is bounded by $s^2|\Delta|^2=(s^2|\Delta|)|\Delta|\leq s^{-1}|\Delta|$. This completes the proof.\end{proof}

\subsection{Escape from a fixed line}\label{escapeline}

\hskip\parindent In the last section we obtained a collection of eigenvalues of different elements from $G$ in $V_i$ which approximate the arithmetic progression $\{1+t\Delta\}_{t=1}^s\subset\mathbb C$. The direction of this progression is given by $\Delta\in C$.

When $V_i\cong\mathbb C$, it will turn out in a later part of this paper that in certain situations it may appear that some directions in $\mathbb C$ are ``bad'' in the sense that while embeded in $X=(\mathbb R^{r_1}\times\mathbb C^{r_2})/\Gamma$, they don't have the desired equidistribution property. So we will have to find a way to move our arithmetic progressions away from these exceptional directions, which is going to be the topic of the following discussion.

\declareconst{nearlinebdheightconst} \declareconst{nearlinebdexpoconst}
\begin{proposition}\label{nearlinebd} There exist effective constants $\ref{nearlinebdheightconst}(d)$ and $\ref{nearlinebdexpoconst}(d)$ such that $\forall l\geq 2$, there are $b_1,b_2,\cdots,b_l\in G$ satisfying

(1). $\forall k=1,\cdots,l$, $h^\mathrm{Mah}(\phi(b_k))\leq\ref{nearlinebdheightconst}\mathcal F_{\phi(G)}^2l$;

(2). $1\leq|\zeta_{b_k}^i|\leq l^{\ref{nearlinebdexpoconst}\mathcal F_{\phi(G)}^2}$, $\forall k=1,\cdots,l$;

(3). For any non-zero $\mathbb R$-linear form $f$ on $V_i$, the number of the $b_k$'s such that $f(\zeta_{b_k}^i)\leq\|f\|$ is bounded by $100$.\end{proposition}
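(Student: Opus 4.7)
The proof splits naturally into two cases depending on whether $V_i\cong\mathbb{R}$ or $V_i\cong\mathbb{C}$.

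In the real case, any nonzero $\mathbb{R}$-linear form $f$ on $V_i\cong\mathbb{R}$ is of the form $f(x)=f_0x$ with $\|f\|=|f_0|$, so the condition $|f(\zeta_{b_k}^i)|\le\|f\|$ reduces to $|\zeta_{b_k}^i|\le 1$. The plan is to apply Proposition \ref{arithpro} with $s=l$ (which is allowed once the constant $\ref{nearlinebdheightconst}$ is inflated so that $l\ge\ref{arithprolengthconst}\mathcal{F}_{\phi(G)}$ for $l\ge 2$) to obtain elements $a_0,\ldots,a_{l-1}$ with $\zeta_{a_t}^i\approx 1+t\Delta$, where $\Delta$ is forced to be real because $V_i$ is real. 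After replacing $a_t$ by $a_t^{-1}$ if $\Delta<0$, I may assume $\Delta>0$, so that $\zeta_{a_t}^i>1$ for every $t\ge 1$. Setting $b_k=a_k$ for $k=1,\ldots,l$ then makes condition (3) hold vacuously, while (1) and (2) are inherited directly from Proposition \ref{arithpro}.

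The complex case is the substantive one. Here $|f(\zeta_{b_k}^i)|\le\|f\|$ says that $\zeta_{b_k}^i$ lies within Euclidean distance $1$ from the line $\ker f$ through the origin. The plan is to construct $l$ elements whose $i$-th eigenvalues all have modulus comparable to $l$ and whose arguments are spread across $[0,2\pi)$ with gaps $\gtrsim \pi/l$; any strip of width $2$ about a line through the origin then intersects the circle $|z|\asymp l$ in two arcs of angular width $O(1/l)$, so it can capture only $O(1)$ of the $\zeta_{b_k}^i$, which is easily within the bound $100$. To produce such a family, I would combine the expanding element $u$ from Proposition \ref{expand} with an auxiliary element $v\in G$ (obtained as $v=u\tilde{u}^m$ for a suitable integer $m$) that acts as a near-rotation on $V_i$: $|\zeta_v^i|$ is bounded by a constant while $\arg\zeta_v^i$ is irrationally related to $\pi$. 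Setting $b_k=u^n v^{j_k}$ for a fixed $n$ with $|\zeta_u^i|^n\asymp l$ and varying $j_k\in\{1,\ldots,l\}$ yields $l$ elements with $|\zeta_{b_k}^i|\asymp l$ and arguments $n\arg\zeta_u^i+j_k\arg\zeta_v^i\pmod{2\pi}$.

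The main obstacle is the effective control on the spacing of the arguments $j\arg\zeta_v^i\pmod{\pi}$, $j=1,\ldots,l$: one needs a lower bound on $|\sin((j-j')\arg\zeta_v^i)|$ that is at most a negative power of $|j-j'|$ with exponent controlled by $\mathcal{F}_{\phi(G)}^2$. This is where Baker-W\"ustholz (Lemma \ref{BWlog}) is invoked, very much in the spirit of the argument already used in the proof of Proposition \ref{arithpro}. One additional subtlety is the distinguished \emph{bad} line $\mathbb{R}\subset V_i$, which could in principle catch too many $\zeta_{b_k}^i$; this is handled using the specific property from Proposition \ref{expand} that $\zeta_{u^n}^i$ is never real and that any real product eigenvalue $\zeta_{u^p\tilde{u}^q}^i$ of modulus $>1$ is exponentially large in $\mathcal{F}_{\phi(G)}$, which keeps such exceptional elements either out of the regime we are working in or away from the unit-distance neighborhood of the real axis. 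The bounds (1) and (2) follow from $h^\mathrm{Mah}(\phi(b_k))\le n\,h^\mathrm{Mah}(\phi(u))+j_k\,h^\mathrm{Mah}(\phi(v))$ together with $n\lesssim_d\mathcal{F}_{\phi(G)}\log l$ and $j_k\le l$, which after expanding the heights produces the claimed polynomial dependence $l^{\ref{nearlinebdexpoconst}\mathcal{F}_{\phi(G)}^2}$.
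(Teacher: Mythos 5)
Your overall skeleton for the complex case (powers of $u$ combined with a near-isometric element $u^a\tilde u^b$, Baker--W\"ustholz for the arguments, the last clause of Proposition \ref{expand} to handle the real direction) is the same as the paper's, but the quantitative normalization you chose breaks the key counting step. You put the moduli at $|\zeta_{b_k}^i|\asymp l$ and ask for angular gaps $\gtrsim\pi/l$, so that a unit-width strip (angular width $\asymp 1/l$ at that radius) catches $O(1)$ points. No available tool gives that spacing: Lemma \ref{BWlog} only yields $|j\arg\zeta_v^i-m\pi|\gtrsim \max(|j|,|m|)^{-C(d)\mathcal F_{\phi(G)}^2}$, i.e.\ a minimal gap of order $l^{-C\mathcal F_{\phi(G)}^2}$, which is far smaller than $\pi/l$ (and indeed, for an orbit of an irrational rotation the minimal gap among $j\alpha$, $j\le l$, is governed by continued fractions and can genuinely be much smaller than $1/l$, so the $\pi/l$ claim is false in general, not just unproved). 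With gap only $l^{-C\mathcal F_{\phi(G)}^2}$ a strip of angular width $\asymp 1/l$ could contain on the order of $l^{C\mathcal F_{\phi(G)}^2-1}$ of your points, so (3) fails. The repair is exactly the paper's choice of scale: push the common modulus up to roughly $l^{\iota\mathcal F_{\phi(G)}^2}$ with $\iota$ large compared to the Baker--W\"ustholz constant, so the strip's angular width $\asymp l^{-\iota\mathcal F_{\phi(G)}^2}$ is \emph{smaller} than the guaranteed gap between any two arguments whose difference is not an exact multiple of $\pi$; the exact-multiple cases (real ratios $\zeta_{u^{n_1}\tilde u^{n_2}}^i$) are then finitely many because all moduli lie in a band of width $O_d(\mathcal F_{\phi(G)})$ while Proposition \ref{expand} forces any real product eigenvalue of modulus $>1$ to be exponentially large in $\mathcal F_{\phi(G)}$. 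This larger modulus is still compatible with (1) and (2), since the exponent enters the height only through a factor $\mathcal F_{\phi(G)}\log l$.

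Your real case is also not correct as written, though it is the easy half. Proposition \ref{arithpro} needs $s\ge\ref{arithprolengthconst}\mathcal F_{\phi(G)}$, which cannot be arranged for $l=2$ (inflating $\ref{nearlinebdheightconst}$ does not change the hypothesis of that proposition), and more importantly it only gives $h^\mathrm{Mah}(\phi(a_t))\le s^{10}=l^{10}$ (indeed the construction there has height of order $s^9\mathcal F_{\phi(G)}$), which violates the linear-in-$l$ bound required in (1). Since the $b_k$ are not required to be distinct, the real case is trivial: take $b_k=u$ for every $k$, with $u$ from Proposition \ref{expand}; then for any nonzero form $|f(\zeta_{b_k}^i)|=\|f\|\,|\zeta_u^i|>\|f\|$, so no index is exceptional, and (1), (2) hold because $h^\mathrm{Mah}(\phi(u))\lesssim_d\mathcal F_{\phi(G)}$.
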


We need a couple of lemmas to prove this result.

\begin{lemma}\label{nearargu} Assume $V\cong\mathbb C$ and $f$ is a non-zero real form on $V$. For $\lambda>0$, if complex numbers $v$, $w$ satisfy $|v|,|w|\geq c>0$ and $|f(v)|,|f(w)|\leq\lambda\|f\|$ then $\arg v$ and $\arg w$ are $\frac{\pi\lambda}c$-close up to a multiple of $\pi$, i.e. $|\arg v-\arg w-m\pi|<\frac{\pi\lambda}c$ for some  $m\in\mathbb Z$.\end{lemma}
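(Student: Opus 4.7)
The plan is to exploit the fact that the real hyperplane (line) $L := \ker f$ is approached by both $v$ and $w$, and then use that the two vectors have moduli bounded below to control their angles from $L$.

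First I would handle the trivial range: if $\lambda/c \geq 1$, then $\pi\lambda/c \geq \pi$, and since $\arg v$ and $\arg w$ are each representatives in $\mathbb{R}/2\pi\mathbb{Z}$, one can always find an integer $m$ (either $0$ or $\pm 1$) such that $|\arg v - \arg w - m\pi| \leq \pi \leq \pi\lambda/c$, so the conclusion holds. Thus I assume $\lambda < c$.

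Next, use the standard fact that for a real linear form $f$ on a Euclidean space, the distance from a point $z$ to $\ker f$ equals $|f(z)|/\|f\|$. The hypothesis $|f(v)| \leq \lambda\|f\|$ therefore gives $\operatorname{dist}(v, L) \leq \lambda$, and likewise $\operatorname{dist}(w, L) \leq \lambda$. Writing $v = |v| e^{i\arg v}$ and letting $\theta_v$ be the (unsigned) angle between $v$ and the line $L$ (taking the nearer of the two rays spanning $L$, so $\theta_v \in [0, \pi/2]$), we have $\operatorname{dist}(v, L) = |v|\sin\theta_v$. Combined with $|v| \geq c$, this yields $\sin \theta_v \leq \lambda/c$, hence $\theta_v \leq \arcsin(\lambda/c) \leq \tfrac{\pi\lambda}{2c}$, where the last inequality uses $\arcsin x \leq \tfrac{\pi}{2}x$ on $[0,1]$. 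The same bound holds for $\theta_w$.

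Finally, let $\omega \in [0, \pi)$ denote the direction of $L$ (viewed as an unoriented line, so defined modulo $\pi$). The bound $\theta_v \leq \tfrac{\pi\lambda}{2c}$ means that $\arg v$ is congruent mod $\pi$ to some number within $\tfrac{\pi\lambda}{2c}$ of $\omega$, and likewise for $\arg w$. The triangle inequality then produces an integer $m$ with $|\arg v - \arg w - m\pi| \leq \theta_v + \theta_w \leq \tfrac{\pi\lambda}{c}$, which is the desired conclusion (the inequality is strict as soon as $\lambda < c$, since $\arcsin(\lambda/c) < \tfrac{\pi\lambda}{2c}$ strictly for $\lambda/c \in (0,1)$). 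There is no real obstacle here; the only mild subtlety is remembering that $\ker f$ is a \emph{line} through the origin in $\mathbb{C}$, so arguments are measured modulo $\pi$ rather than $2\pi$, which is exactly why the conclusion allows the ambiguity $m\pi$.
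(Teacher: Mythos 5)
Your proof is correct and takes essentially the same approach as the paper's: the paper writes $f(v)=\mathrm{Im}(zv)$ for a unimodular $z$ and uses $|\sin b|\ge\tfrac2\pi|b|$ near multiples of $\pi$, which is exactly your estimate $\operatorname{dist}(v,\ker f)=|v|\sin\theta_v\le\lambda$ combined with $\arcsin x\le\tfrac\pi2x$, followed by the same triangle inequality modulo $\pi$. The extra care you take with the range $\lambda\ge c$ and with strictness of the inequality is harmless but not needed for how the lemma is used.
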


\begin{proof} Without loss of generality, it suffices to prove in the case where $\|f\|=1$.

Suppose $f(v)=f_1\mathrm{Re}v+f_2\mathrm{Im}v$, then $f_1^2+f_2^2=1$. Let $z=f_2+f_1\mathrm i$. Observe $$f(v)=\mathrm{Im}(zv)=|zv||\sin(\arg z+\arg v)|=|v||\sin(\arg z+\arg v)|.$$ So $|\sin(\arg z+\arg v)|\leq\frac\lambda{|v|}\leq\frac\lambda c$. Take $m_v\in\mathbb Z$ such that $\arg z+\arg v-m_v\pi\in[-\frac\pi2,\frac\pi2)$ then $|\arg z+\arg v-m_v\pi|\leq\frac\pi2|\sin(\arg z+\arg v-m_v\pi)|=\frac\pi2|\sin(\arg z+\arg v)|\leq\frac{\pi\lambda}{2 c}$. For the same reason $|\arg z+\arg w-m_w\pi|\leq\frac{\pi\lambda}{2 c}$ for some integer $m_w$. Take the difference, we get $|\arg v-\arg w-(m_v-m_w)\pi|\leq\frac{\pi}c$.\end{proof}

\begin{lemma}\label{absvalextension} Suppose $F\subset\mathbb C$ is a number field of degree $d$ which is already embedded into $\mathbb C$ and $\zeta, \rho\in F$ then there is a number field $\tilde F\subset \mathbb C$ of degree no more than $4d^2$ such that $\zeta/|\zeta|, \rho/|\rho|\in\tilde F$.\end{lemma}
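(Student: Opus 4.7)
The plan is to exploit the algebraic identity $|\zeta|^2=\zeta\bar\zeta$, which shows that squares of the normalized quantities lie in a tame extension of $F$. Concretely, I would let $F'=\mathbb Q(\zeta,\bar\zeta,\rho,\bar\rho)$. Since $\mathbb Q(\zeta,\rho)\subset F$ has degree at most $d$, and its complex conjugate $\mathbb Q(\bar\zeta,\bar\rho)=\overline{\mathbb Q(\zeta,\rho)}\subset\bar F$ also has degree at most $d$, the compositum $F'=\mathbb Q(\zeta,\rho)\cdot\mathbb Q(\bar\zeta,\bar\rho)$ has degree at most $d^2$ over $\mathbb Q$.

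Now observe that $\bigl(\zeta/|\zeta|\bigr)^2=\zeta^2/(\zeta\bar\zeta)=\zeta/\bar\zeta\in F'$, and similarly $\bigl(\rho/|\rho|\bigr)^2=\rho/\bar\rho\in F'$. Therefore I would define
\[
\tilde F:=F'\bigl(\zeta/|\zeta|,\,\rho/|\rho|\bigr)\subset\mathbb C.
\]
This field contains both $\zeta/|\zeta|$ and $\rho/|\rho|$ by construction, and it is obtained from $F'$ by successively adjoining two elements, each of which is a square root of an element of $F'$. Each such adjunction multiplies the degree by at most $2$, so
\[
[\tilde F:\mathbb Q]\le 4\cdot[F':\mathbb Q]\le 4d^2,
\]
which is the bound claimed.

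There is essentially no obstacle here: the only thing to verify carefully is the degree bound on $F'$, for which one uses the standard inequality $[L_1L_2:\mathbb Q]\le[L_1:\mathbb Q]\cdot[L_2:\mathbb Q]$ applied to $L_1=\mathbb Q(\zeta,\rho)$ and $L_2=\mathbb Q(\bar\zeta,\bar\rho)$, together with the fact that complex conjugation is a field automorphism of $\mathbb C$ that preserves degrees over $\mathbb Q$. The factor of $4$ is unavoidable in general because $\zeta/|\zeta|$ need not itself lie in $F'$ (only its square does), and the same for $\rho/|\rho|$.
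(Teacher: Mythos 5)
Your proof is correct and follows essentially the same route as the paper: form the compositum of $\mathbb Q(\zeta,\rho)$ with its complex conjugate (degree at most $d^2$), then make two quadratic extensions justified by the identity $|\zeta|^2=\zeta\bar\zeta$. The only cosmetic difference is that the paper adjoins $|\zeta|$ and $|\rho|$ and then notes $\zeta/|\zeta|,\rho/|\rho|\in\tilde F$, whereas you adjoin the unit-modulus quotients directly via $(\zeta/|\zeta|)^2=\zeta/\bar\zeta\in F'$; both give the bound $4d^2$.
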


\begin{proof}$\overline\zeta$ and $\overline\rho$ belong to $\overline F\subset \mathbb C$, the complex conjugate of $F$ which has degree $d$ as well. The number field $F'$ generated by $F$ and $\overline F$ together is of degree no more than $d^2$. Since $|\zeta|^2=\zeta\overline\zeta\in F'$ we see $|\zeta|$ is a quadratic element over $F'$, and so is $|\rho|$. Hence the extension $\tilde F:=F'(|\zeta|,|\rho|)$ of $F$ is of degree at most $4$. So $D:=|\tilde F:\mathbb Q|\leq 4d^2$. It is clear that $\zeta/|\zeta|, \rho/|\rho|\in\tilde F$.\end{proof}

\begin{proof}[Proof of Proposition \ref{nearlinebd}] \noindent{\bf Case 1 }(trivial case). When $V_i\cong\mathbb R$, it suffices to take a single element $u\in G$, which was constructed in Proposition \ref{expand}. Let $b_k=u, \forall 1\leq k\leq l$. Then (1) is easily verified for sufficiently large $\ref{nearlinebdheightconst}(d)$ since $h^\mathrm{Mah}(\phi(b_k))=h^\mathrm{Mah}(\phi(u))\lesssim_d\mathcal F_{\phi(G)}$ and $\mathcal F_{\phi(G)}\gtrsim_{d}1$. We know $1<|\zeta_u^i|\leq 2^{9d(\frac r2+1)\mathcal F_{\phi(G)}}$, so (2) is satisfied if $\ref{nearlinebdexpoconst}=\ref{nearlinebdexpoconst}(d)$ is large enough. Last, since any real linear form $f$ is actually a scalar multiplication, $|f(\zeta_{b_k}^i)|=|f(\zeta_u^i)|=\|f\|\cdot|\zeta_u^i|>\|f\|$ for all $k$, part (3) follows.

\noindent{\bf Case 2} (main purpose). Suppose from now on $V_i\cong\mathbb C$. Take $u$ and $\tilde u$ from Proposition \ref{expand}. As in the proof of Proposition \ref{arithpro}, set $\zeta_u^i=e^{\theta+2\pi\beta\mathrm i},\zeta_{\tilde u}^i=e^{\tilde \theta+2\pi\tilde \beta\mathrm i}$, where $\theta,\tilde\theta>0$ and $\beta,\tilde\beta\in[0,1)$. Let $\theta=\gamma {\tilde\theta}$ then $\gamma\leq 36$ as before.

Construct a sequence of group elements \declaresubconst{nearlinebdsubconst}\begin{equation}b_k=u^{k+J}\tilde u^{-\lceil \gamma k\rceil},\  k=1,\cdots,l,\end{equation} where \begin{equation}J:=\lceil\theta^{-1}\ref{nearlinebdsubconst}\mathcal F_{\phi(G)}^2\ln l\rceil,\end{equation} $\ref{nearlinebdsubconst}(d)$ being a sufficiently large effective constant to be decided later. Moreover we are going to suppose \begin{equation}\label{lassumption}l^{\frac12\ref{nearlinebdsubconst}\mathcal F_{\phi(G)}^2}\geq e^{\tilde\theta};\end{equation}
if $\ref{nearlinebdsubconst}$ is large enough and $l\geq 2$, then this is always valid because $\tilde\theta\lesssim_{d}\mathcal F_{\phi(G)}$.

Since $u$ and $\tilde u$ are multiplicatively independent, the $b_k$'s are distinct. The action of $b_k$ on $V_i$ is characterized by \begin{equation}\label{bkpolar}\zeta_{b_k}^i=e^{(k\theta+J\theta-\lceil \gamma k\rceil\tilde\theta)+2\pi(k\beta+J\beta-\lceil \gamma k\rceil\tilde\beta)\mathrm i}.\end{equation}

Now we check  Proposition \ref{nearlinebd} part by part.

(1). Observe $h^\mathrm{Mah}(\phi(b_k))\leq (k+J)h^\mathrm{Mah}(\phi(u))+\lceil \gamma k\rceil h^\mathrm{Mah}(\phi(\tilde u))$. Moreover,  $\theta=\ln|\zeta_u^i|\gtrsim_{d}\mathcal F_{\phi(G)}$ effectively, which implies $J\lesssim_d\mathcal F_{\phi(G)}\ln l$. Since $h^\mathrm{Mah}(\phi(u)),h^\mathrm{Mah}(\phi(\tilde u))\lesssim_{d}\mathcal F_{\phi(G)}$,  $k\leq l$ and $\gamma\leq 36$, we conclude $$h^\mathrm{Mah}(\phi(b_k))\lesssim_{d} \mathcal F_{\phi(G)}^2l,$$ with an effective implied constant $\ref{nearlinebdheightconst}=\ref{nearlinebdheightconst}(d)$ for all $1\leq k\leq l$.

(2). Because $\lceil \gamma k\rceil\tilde\theta\in[\gamma k\tilde\theta,\gamma k\tilde\theta+\tilde\theta)=[k\theta,k\theta+\tilde\theta)$ and $$J\theta\in[\ref{nearlinebdsubconst}\mathcal F_{\phi(G)}^2\ln l,\ref{nearlinebdsubconst}\mathcal F_{\phi(G)}^2\ln l+\theta),$$ by (\ref{bkpolar}) $\ln|\zeta_{b_k}^i|$ is \begin{equation}\label{band}k\theta+J\theta-\lceil \gamma k\rceil\tilde\theta\in[\ref{nearlinebdsubconst}\mathcal F_{\phi(G)}^2\ln l-\tilde\theta,\ref{nearlinebdsubconst}\mathcal F_{\phi(G)}^2\ln l+\theta).\end{equation} So $$\log|\zeta_{b_k}^i|\lesssim_{d}\mathcal F_{\phi(G)}^2\ln l$$ and the second condition is satisfied by some effective $\ref{nearlinebdexpoconst}$.

(3). Suppose there are more than one $b_k$ such that $f(\zeta_{b_k}^i)\leq\|f\|$ (otherwise we are done). Apply Lemma \ref{nearargu} to two such elements $\zeta_{b_k}^i$ and $\zeta_{b_{k'}}^i$, by (\ref{band}) there is $m\in\mathbb Z$ such that $\big|2\pi(k\beta+J\beta-\lceil \gamma k\rceil\tilde\beta)-2\pi(k'\beta+J\beta-\lceil \gamma k'\rceil\tilde\beta)-m\pi\big|$ is at most $$\frac\pi{\min(|\zeta_{b_k}^i|,|\zeta_{b_{k'}}^i|)}\leq \pi e^{-(\ref{nearlinebdsubconst}\mathcal F_{\phi(G)}^2\ln l-\tilde\theta)}=\pi e^{\tilde\theta}l^{-\ref{nearlinebdsubconst}\mathcal F_{\phi(G)}^2}.$$

If we denote $n_1=k-k'$ and $n_2=-\lceil \gamma k\rceil+\lceil \gamma k'\rceil$, this rewrites \begin{equation}\label{nearlinebdBWsum}|n_1\cdot2\pi\beta\mathrm i+n_2\cdot2\pi\tilde\beta\mathrm i-m\cdot\pi\mathrm i|\leq\pi e^{\tilde\theta}l^{-\ref{nearlinebdsubconst}\mathcal F_{\phi(G)}^2}.\end{equation}

Assume first $n_1\cdot 2\pi\beta+n_2\cdot 2\pi\tilde\beta-m\pi\neq0$. 

It is easy to see \begin{equation}\label{n1n2bound}|n_1|\leq l;\ |n_2|\leq\gamma l+1\leq 40l,\end{equation} (as $\gamma\leq 36$). Hence by assumption (\ref{lassumption}), \begin{equation}\label{n1n2mbound}\begin{split}|m|\leq&\pi^{-1}(2\pi|n_1|+2\pi|n_2|+\pi e^{\tilde\theta}l^{-\ref{nearlinebdsubconst}\mathcal F_{\phi(G)}^2})\\\leq& \pi^{-1}(2\pi|n_1|+2\pi|n_2|+\pi)\leq100l.\end{split}\end{equation}

Notice $2\pi\beta\mathrm i$, $2\pi\tilde\beta\mathrm i$ and $\pi i$ are respectively natural logarithms of the numbers $\zeta_u^i/{|\zeta_u^i|}$, $\zeta_{\tilde u}^i/|\zeta_{\tilde u}^i|$ and $-1$.

By Lemma \ref{absvalextension}, $\zeta_u^i/{|\zeta_u^i|}$ and $\zeta_{\tilde u}^i/|\zeta_{\tilde u}^i|$ are algebraic numbers in a number field $\tilde K\subset\mathbb C$ of degree $D\leq 4d^2$.

Let $\tilde h^\mathrm{Mah}$ be the logarithmic Mahler measure on $\tilde K$. Then for $\lambda\in K$, the absolute logarithmic height $h(d)=\frac{\tilde h^\mathrm{Mah}(\lambda)}{D}=\frac{h^\mathrm{Mah}(\lambda)}d$. Hence $\tilde h^\mathrm{Mah}(\zeta_u^i)=\tilde h^\mathrm{Mah}(\overline{\zeta_u^i})=\frac Ddh^\mathrm{Mah}(\phi(u))\leq\frac{4d^2}d
\cdot 9d(\frac r2+1)\mathcal F_{\phi(G)}\lesssim_d\mathcal F_{\phi(G)}$ and by (\ref{heightrules}) $\tilde h^\mathrm{Mah}(|\zeta_u^i|)=\frac12\tilde h^\mathrm{Mah}(\zeta_u^i\overline{\zeta_u^i})\leq \frac12(\tilde h^\mathrm{Mah}(\zeta_u^i)+\tilde h^\mathrm{Mah}(\overline{\zeta_u^i}))\lesssim_d\mathcal F_{\phi(G)}$. So $\tilde h^\mathrm{Mah}(\zeta_u^i/|\zeta_u^i|)\lesssim_d\mathcal F_{\phi(G)}$, similarly $\tilde h^\mathrm{Mah}(\zeta_{\tilde u}^i/|\zeta_{\tilde u}^i|)\lesssim_d\mathcal F_{\phi(G)}$. All the implied constants are effective here.

So Lemma \ref{BWlog} applies, by (\ref{nearlinebdBWsum}), \begin{equation}\label{Bakerescape}\begin{aligned}
&&&-\log(\pi e^{\tilde\theta}l^{-\ref{nearlinebdsubconst}\mathcal F_{\phi(G)}^2})\\
&\leq&&-\log|n_1\cdot2\pi\beta\mathrm i+n_2\cdot2\pi\tilde\beta\mathrm i-m\cdot\pi\mathrm i|\\
&\lesssim_D&&\max(\tilde h^\mathrm{Mah}(\frac{\zeta_u^i}{|\zeta_u^i|}),1)\max (\tilde h^\mathrm{Mah}(\frac{\zeta_{\tilde u}^i}{|\zeta_{\tilde u}^i|}),1)\\
&&&\hskip3cm\cdot\log\max(|n_1|,|n_2|,|m|,2)\\
&\lesssim_{d}&&\mathcal F_{\phi(G)}^2\log(100l),\end{aligned}\end{equation} with an effective implied constant. In the last step we used the fact there there are only finitely many choices for $D\leq 4d^2$ for a fixed $d$ and estimates (\ref{n1n2bound}), (\ref{n1n2mbound}).

By (\ref{Bakerescape}) and assumption (\ref{lassumption}), \declaresubconst{nearlinebdsubconst2} $l^{-\frac12\ref{nearlinebdsubconst}\mathcal F_{\phi(G)}^2}\geq e^{\tilde\theta}l^{-\ref{nearlinebdsubconst}\mathcal F_{\phi(G)}^2}\geq \frac1\pi l^{-\ref{nearlinebdsubconst2} \mathcal F_{\phi(G)}^2}$, where both $\ref{nearlinebdsubconst2} =\ref{nearlinebdsubconst2} (d)$ is an effective constant arising from (\ref{Bakerescape}). But if we pick $l\geq2$ and $\ref{nearlinebdsubconst}$ sufficently large with respect to $\ref{nearlinebdsubconst2}$ then the inequality cannot be true. Contradiction.

Therefore $n_1\cdot2\pi\beta+n_2\cdot2\pi\tilde\beta-m\pi$ has to vanish.

Since $n_1\cdot2\pi\beta+n_2\cdot2\pi\tilde \beta$ is one argument of $(\zeta_u^i)^{n_1}(\zeta_{\tilde u}^i)^{n_2}$, so this restriction actually says $\zeta_{b_kb_{k'}^{-1}}^i=\zeta_{u^{n_1}\tilde u^{n_2}}^i$ is real.

Consider the set $\Lambda=\{(p,q)\in\mathbb Z^2|(\zeta_u^i)^p(\zeta_{\tilde u}^i)^q\in\mathbb R\}$, which is a subgroup of $\mathbb Z^2$. So $\Lambda$ is isomorphic to either $\mathbb Z$ or $\mathbb Z^2$ if it is not trivial. Assume $\Lambda\cong\mathbb Z^2$ then it is a lattice in $\mathbb Z^2$ and there exists a number $n$ such that $(n,0)\in\Lambda$, i.e. $\zeta_{u^n}^i=(\zeta_{h}^i)^n\in\mathbb R$, which contradicts Proposition \ref{expand}. Therefore $\Lambda$ has to be infinite cyclic. Fix a generator $(p,q)\in \mathbb Z^2\backslash\{(0,0)\}$ of $\Lambda$, then $(n_1,n_2)$ is a non-trivial multiple of $(p,q)$. So $\log|\zeta_{u^{n_1}\tilde u^{n_2}}^i|$ is a non-trivial multiple of $\log|\zeta_{u^p\tilde u^q}^i|$, which is at least $\frac14d(\frac r2+1)\mathcal F_{\phi(G)}$ by Proposition \ref{expand}.

But by (\ref{band}), $\log|\zeta_{b_kb_{k'}^{-1}}^i|\leq\log|e^{\tilde\theta}e^\theta|=\log|\zeta_u^i|+\log|\zeta_{\tilde u}^i|\leq h^\mathrm{Mah}(\phi(u))+h^\mathrm{Mah}(\phi(\tilde u))<18d(\frac r2+1)\mathcal F_{\phi(G)}$. Thus there are at most $72$ choices of $(n_1,n_2)=(k-k',-\lceil\gamma k\rceil+-\lceil\gamma k'\rceil).$ Hence if we fix $k$, there are at most $72$ other indices $k'$ such that $f(\zeta_{b_{k'}}^i)\leq\|f\|$, this establishes the last part of proposition.\end{proof}

\section{Measure-theoretical results}\label{measresults}

\hskip\parindent   We will effectively construct some element in the convex hull of $\{g.\tau,g\in G\}$ which approximately dominates some positive multiple of the Lebesgue measure $\mathrm m$ on $X$.

\subsection{Fourier coefficients on $X$}

\hskip\parindent Assuming Conditions \ref{condG'}, \ref{condmu}, in this part we are going to study the behavior of the measure $\nu$ in Proposition \ref{dominated} under the $G$ action. We will show in an effective way that certain averages of measures in the orbit $G.\nu$ will approach a positive multiple of the Lebesgue measure. For this purpose we will need the $L^2$-bound from Proposition \ref{dominated} as well as the phenomena described in Section \ref{GVi}.

Recall $\psi\in M_d(\mathbb R)$ is a linear conjuagtion between $\mathbb T^d=\mathbb R^d/\mathbb Z^d$ and $X=\mathbb R^d/\Gamma$ as described in Proposition \ref{KphiGamma}.

\begin{definition}Given a measure $\gamma$ on $X$, the Fourier coefficient of $\gamma$ at frequency $\xi\in X^*=\psi_*(\mathbb Z^d)$ is $$\hat\gamma(\xi)=\int_{x\in X}e(\xi(x))\mathrm d\gamma(x)$$ where $e(\beta)=e^{-2\pi\beta\mathrm i}$.\end{definition} When $\gamma$ comes from a measure on $\mathbb T^d$, the following lemma is obvious.

\begin{lemma}\label{Fourierstar} For all measures $\mathbb\gamma$ on $\mathbb T^d$ and all $\mathbf q\in\mathbb Z^d$, $\widehat{\psi_*\gamma}(\psi_*\mathbf q)=\hat\psi(\mathbf q)$.\end{lemma}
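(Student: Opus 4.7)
The plan is to unfold the definitions and apply the change-of-variables formula for pushforward measures; no deep ingredient is required, so I expect the proof to fit in a couple of lines, and the only subtlety is bookkeeping the two uses of ``$\psi_*$'' (one for pushing forward a measure on $\mathbb T^d$ to $X$, and one for pushing forward a character on $\mathbb T^d$ to $X$).

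Concretely, I would start from the definition
\[
\widehat{\psi_*\gamma}(\psi_*\mathbf q)=\int_{x\in X}e\bigl((\psi_*\mathbf q)(x)\bigr)\,\mathrm d(\psi_*\gamma)(x),
\]
then apply the change-of-variables $x=\psi(y)$, $y\in\mathbb T^d$, which by definition of the pushforward of $\gamma$ rewrites the integral as $\int_{y\in\mathbb T^d}e\bigl((\psi_*\mathbf q)(\psi(y))\bigr)\,\mathrm d\gamma(y)$. The final step is to observe that by the definition of the pushed-forward character given in \S\ref{irreig}, namely $(\psi_*\mathbf q)(x)=\langle\mathbf q,\psi^{-1}(x)\rangle$, one has $(\psi_*\mathbf q)(\psi(y))=\langle\mathbf q,y\rangle$, which is exactly how $\mathbf q\in\mathbb Z^d=(\mathbb T^d)^*$ pairs with $y\in\mathbb T^d$. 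Substituting yields $\int_{y\in\mathbb T^d}e(\langle\mathbf q,y\rangle)\,\mathrm d\gamma(y)=\hat\gamma(\mathbf q)$, which is the claimed identity (reading the right-hand side of the statement as $\hat\gamma(\mathbf q)$, since $\hat\psi(\mathbf q)$ is a typo). No obstacle to speak of: the lemma is a tautological compatibility between the two pushforwards under $\psi$, and it is being recorded here simply to allow later computations to be done indifferently on $\mathbb T^d$ or on $X$.
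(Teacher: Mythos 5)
Your proof is correct and is exactly the routine unfolding of the two pushforward definitions that the paper has in mind — indeed the paper records this lemma as ``obvious'' and gives no proof, and you are also right that $\hat\psi(\mathbf q)$ in the statement is a typo for $\hat\gamma(\mathbf q)$.
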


\begin{notation}\label{convexcomb} Given a measure $\nu$ on $X$ and an index $i\in I$, for $n,s,l\in\mathbb N$ such that $s\geq\ref{arithprolengthconst}\mathcal F_{\phi(G)}$, $l\geq2$ where $\ref{arithprolengthconst}$ is as in Propositions \ref{arithpro}, we write \begin{equation}\label{measureaverage}\nu_{n,s,l}^i=\frac1{sl}\sum_{t=0}^{s-1}\sum_{k=1}^l\times_{\phi(u^na_tb_k)}.\nu,\end{equation} with $u,a_t,b_k\in G$ respectively defined in Propositions \ref{expand}, \ref{arithpro} and \ref{nearlinebd} and $\times_{\phi(u^na_tb_k)}.\nu$ denoting the pushforward of $\nu$ by $\times_{\phi(u^na_tb_k)}$ on $X$. \end{notation}

With $\nu$ and $i$ constructed in Proposition \ref{dominated}, we hope to control the sizes of the Fourier coefficients of $\nu_{n,s,l}^i$ when the parameters $n$, $s$, and $l$ are carefully chosen. The first step to do this is the following estimate:

\begin{proposition}\label{fcupper}Assuming Conditions \ref{condG'}, \ref{condmu}, let $\nu$ and $i$ be as in Proposition \ref{dominated} and adopt other notations from that proposition as well. Suppose \begin{equation}\label{xiupper}\mathcal M_\psi A|\zeta_u^i|^n|\Delta|l^{\ref{nearlinebdexpoconst}\mathcal F_{\phi(G)}^2}2^{-R}\leq\frac14,\end{equation} then $|\widehat{\nu_{n,s,l}^i}(\xi)|^2\leq\sum_{c=1}^5L_c$ where \begin{equation}\label{fcdecomp}\begin{aligned}L_1=&9\cdot2^{-d_i\delta T}&\hskip2cm(\ref{fcdecomp}a)\\
L_2=&2\pi\sqrt d\mathcal M_\psi A2^{s^{10}+\ref{nearlinebdheightconst}\mathcal F_{\phi(G)}^2l-R}&(\ref{fcdecomp}b)\\
L_3=&2\pi\sqrt d\mathcal M_\psi A|\zeta_u^i|^ns^{-1}|\Delta|l^{\ref{nearlinebdexpoconst}\mathcal F_{\phi(G)}^2}2^{-R}&(\ref{fcdecomp}c)\\
L_4=&\frac d{2}2^{\frac{d(d-1)}2\ref{totirrheightconst}\mathcal F_{\phi(G)}}\mathcal M_\psi ^{d-1}A^{d-1}|\zeta_u^i|^{-n}s^{-1}|\Delta|^{-1}2^{R+T}&(\ref{fcdecomp}d)\\
L_5=&\frac{100}l&(\ref{fcdecomp}e)
\end{aligned}\nonumber\end{equation}\addtocounter{equation}{1}\end{proposition}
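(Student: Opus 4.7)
The plan is to expand
\begin{equation*}
\widehat{\nu_{n,s,l}^{i}}(\xi)=\frac{1}{sl}\sum_{t=0}^{s-1}\sum_{k=1}^{l}\hat{\nu}(\xi_{t,k,n}),\qquad \xi_{t,k,n}:=\xi\circ\times_{\phi(u^{n}a_{t}b_{k})},
\end{equation*}
and then apply Cauchy-Schwarz twice---first against the normalized sum in $k$, then, using $\nu=\mathbb{E}_{\mathrm{m}_{\tau,B}(x)}\nu_{x}$ from Proposition~\ref{dominated}, against the probability measure $\mathrm{m}_{\tau,B}$. This reduces the desired bound to an estimate on
\begin{equation*}
\mathbb{E}_{\mathrm{m}_{\tau,B}(x)}\frac{1}{l}\sum_{k=1}^{l}\Bigl|\frac{1}{s}\sum_{t=0}^{s-1}\widehat{\nu_{x}}(\xi_{t,k,n})\Bigr|^{2},
\end{equation*}
where each $\nu_{x}$ is supported on $x+B$ and satisfies both $|\nu_x|\leq 1$ and the $L^{2}$-bound from Proposition~\ref{dominated}(4).

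For each fixed $(x,k,n,t)$, I would decompose $\widehat{\nu_{x}}(\xi_{t,k,n})$ over the partition $\mathcal{Q}_{T\mathbf{1}_{i}}B$ by fixing representatives $y_{Q}\in x+Q$, isolating the principal part $\sum_{Q}e(\xi_{t,k,n}(y_{Q}))\nu_{x}(x+Q)$ and controlling the remainder by the oscillation of $e(\xi_{t,k,n}(\,\cdot\,))$ across each $Q$. In every direction $V_{j}$ with $j\neq i$, one has $|\zeta_{u}^{j}|^{n}\leq 1$ by Proposition~\ref{expand}, $|\zeta_{a_{t}}^{j}|\leq 2^{s^{10}}$ by Proposition~\ref{arithpro}, and $|\zeta_{b_{k}}^{j}|\leq 2^{\ref{nearlinebdheightconst}\mathcal{F}_{\phi(G)}^{2}l}$ by Proposition~\ref{nearlinebd}; combined with $\|\xi|_{V_{j}}\|\leq\mathcal{M}_{\psi}A\mathcal{S}_{\Gamma}^{-1}$ and the $V_{j}$-diameter $2^{-R}\mathcal{S}_{\Gamma}$ of $Q$, these contributions sum to $L_{2}$. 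In direction $V_{i}$ the expansion factor $|\zeta_{u}^{i}|^{n}$ prevents a direct variation estimate, so instead I would substitute $\zeta_{a_{t}}^{i}$ by $1+t\Delta$ inside $\xi_{t,k,n}|_{V_{i}}$ and absorb the error through $|\zeta_{a_{t}}^{i}-(1+t\Delta)|\leq s^{-1}|\Delta|$ from Proposition~\ref{arithpro}(3); this produces exactly $L_{3}$.

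After these substitutions the phase is affine in $t$:
\begin{equation*}
\xi_{t,k,n}(y_{Q})\equiv C_{k,n,Q}+t\,\beta_{k,n,Q}\pmod{1},\qquad \beta_{k,n,Q}:=\xi|_{V_{i}}\bigl(\Delta\,\zeta_{u^{n}b_{k}}^{i}\cdot(y_{Q})_{V_{i}}\bigr),
\end{equation*}
so averaging in $t$ produces the standard kernel $U_{s}(\beta):=\tfrac{1}{s}\sum_{t=0}^{s-1}e(t\beta)$ satisfying $|U_{s}(\beta)|\leq\min(1,(s\|\beta\|)^{-1})$. Cauchy-Schwarz in conjunction with Proposition~\ref{dominated}(4) gives
\begin{equation*}
\mathbb{E}_{\mathrm{m}_{\tau,B}(x)}\Bigl|\sum_{Q}\nu_{x}(x+Q)e(C_{k,n,Q})U_{s}(\beta_{k,n,Q})\Bigr|^{2}\leq 2^{-d_{i}\delta T}\sum_{Q}|U_{s}(\beta_{k,n,Q})|^{2}.
\end{equation*}
As $Q$ varies, $(y_{Q})_{V_{i}}$ traces an arithmetic grid in $V_{i}$, so the $\beta_{k,n,Q}$'s form an arithmetic set in $\mathbb{R}/\mathbb{Z}$. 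Hypothesis~(\ref{xiupper}) keeps all such $\beta$'s in a single interval of length $\leq\tfrac{1}{4}$, while their common step size is at least $\|\xi|_{V_{i}}\|\cdot|\Delta|\cdot|\zeta_{u^{n}b_{k}}^{i}|\cdot 2^{-R-T}\mathcal{S}_{\Gamma}$. The quantitative irrationality bound of Proposition~\ref{quantirr} then supplies a lower bound for this step; the resulting pointwise bound on $|U_{s}(\beta)|$ for the smallest non-zero $\|\beta\|$ is precisely $L_{4}$, while a bounded number of near-zero terms accounts for the constant $9$ in $L_{1}$.

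The main obstacle, and the reason for constructing the $b_{k}$'s at all, is the degenerate case in which the $\mathbb{R}$-linear form $v\mapsto\xi|_{V_{i}}(\Delta\,\zeta_{u^{n}}^{i}\cdot v)$ on $V_{i}$ nearly annihilates $\zeta_{b_{k}}^{i}$: then \emph{all} $\beta_{k,n,Q}$'s crowd near $0$ and the above bound collapses. Proposition~\ref{nearlinebd}(3) is tailored precisely for this, ensuring that at most $100$ indices $k$ are exceptional; in the case $V_{i}\cong\mathbb{C}$, Proposition~\ref{expand} guarantees $\zeta_{u^{n}}^{i}\notin\mathbb{R}$, so the relevant form has positive norm. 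For each of the $\leq 100$ exceptional $k$'s I would revert to the trivial estimate $|\widehat{\nu_{x}}(\xi_{t,k,n})|\leq|\nu_{x}|\leq 1$; after the outer $\tfrac{1}{l}\sum_{k}$ average these contribute at most $L_{5}=100/l$, completing the bound $\sum_{c=1}^{5}L_{c}$.
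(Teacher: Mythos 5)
Your high-level ingredients match the paper's (Cauchy--Schwarz against $\mathrm m_{\tau,B}$, the partition $\mathcal Q_{T\mathbf 1_i}B$ with the $L^2$-bound of Proposition \ref{dominated}(4), the substitution $\zeta^i_{a_t}\mapsto 1+t\Delta$, the geometric sum in $t$, Proposition \ref{quantirr} for the lower bound on the step, and Proposition \ref{nearlinebd}(3) with at most $100$ exceptional $k$'s giving $L_5$), but the central reduction is flawed: you discretize $\widehat{\nu_x}(\xi_{t,k,n})$ at fixed representatives $y_Q\in x+Q$ and claim the remainder is controlled by the oscillation of the phase over each atom, with only the $j\neq i$ directions (giving $L_2$) and the $a_t$-substitution (giving $L_3$) accounted for. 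Two things go wrong. First, nothing controls the replacement $y\mapsto y_Q$ in the $V_i$-direction: an atom has $V_i$-extent $2^{-R-T}\mathcal S_\Gamma$, and the phase gradient there is $\|\xi|_{V_i}\|\,|\zeta^i_{u^na_tb_k}|\approx \mathcal M_\psi A|\zeta_u^i|^n|\zeta^i_{b_k}|\mathcal S_\Gamma^{-1}$, so the within-atom oscillation is of order $\mathcal M_\psi A|\zeta_u^i|^n l^{\kappa_6\mathcal F_{\phi(G)}^2}2^{-R-T}$, which under \eqref{xiupper} is only bounded by $(4|\Delta|2^T)^{-1}$; in the regime where \eqref{xiupper} is essentially an equality (exactly the regime forced later by \eqref{nsetting}, where $|\Delta|\le s^{-3}$ and $s$ is exponentially large in $T$) this is $\gg 1$, so your remainder is not small and is not absorbed by any $L_c$. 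Second, your claim that after the substitutions the phase at $y_Q$ is affine in $t$ with slope $\beta_{k,n,Q}=\xi|_{V_i}(\Delta\zeta^i_{u^nb_k}(y_Q)_{V_i})$ silently discards the contributions of $\xi$ on the components of $y_Q$ in the directions $V_j$, $j\neq i$: at an \emph{absolute} point these components are macroscopic, and they carry a $t$-dependence through $\zeta^j_{a_t}$ (of modulus up to $2^{s^{10}}$, with no progression structure), so the phase is not affine in $t$ and the kernel bound $|U_s(\beta)|\le\min(1,(s\|\beta\|)^{-1})$ does not apply. (Relatedly, the bound $|\beta_{k,n,Q}|\le\frac14$ needs $|(y_Q)_{V_i}|\lesssim 2^{-R}\mathcal S_\Gamma$, which holds for relative coordinates inside the box, not for an absolute lift.)

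The paper's proof is arranged precisely to avoid evaluating the phase at single points. One applies Cauchy--Schwarz to the average over $(x,t,k)$ and expands each square as a double integral over $y,z\in x+B$, so that after Fubini the $(t,k)$-average sits inside the integral and the phase is evaluated only at the \emph{difference} $y-z$ of two points of the same box. Then the components of $y-z$ in the directions $V_j$, $j\neq i$, are at most $\sqrt d\,2^{-R}\mathcal S_\Gamma$, so the entire $j\neq i$ contribution is uniformly tiny (this is the true content of $L_2$, valid uniformly in $t,k$); the partition $\mathcal Q_{T\mathbf 1_i}B$ is used only to split pairs of atoms into adjacent ones (handled by Cauchy--Schwarz and the $L^2$-bound, giving $L_1=9\cdot 2^{-d_i\delta T}$) and non-adjacent ones, for which one has the lower bound $|(y-z)_i|\ge 2^{-R-T}\mathcal S_\Gamma$; the geometric-sum estimate is then applied pointwise in $(y,z)$ with slope $\xi\big((\zeta_u^i)^n\Delta\zeta^i_{b_k}(y-z)_i\big)$, bounded above by $\frac14$ via \eqref{xiupper} and below via Proposition \ref{quantirr}, yielding $L_4$. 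To repair your argument you would have to abandon the fixed-representative discretization and pass to differences in this way; as written, the step producing the ``affine in $t$'' phase and the claim that the remainder is covered by $L_2+L_3$ constitute a genuine gap.
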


\begin{lemma}\label{expsumlemma}For $\beta\in(-\frac14,\frac14)$ and $s\in\mathbb N$, $|\sum_{t=0}^{s-1}e(t\beta)|\leq(2|\beta|)^{-1}$.\end{lemma}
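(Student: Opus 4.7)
The plan is to recognize the sum $\sum_{t=0}^{s-1}e(t\beta) = \sum_{t=0}^{s-1} e^{-2\pi i t\beta}$ as a geometric series and apply the standard closed form, then estimate numerator and denominator separately.

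First, I would write
\begin{equation*}
\sum_{t=0}^{s-1}e(t\beta)=\frac{1-e^{-2\pi i s\beta}}{1-e^{-2\pi i \beta}},
\end{equation*}
which is valid since $\beta\in(-\tfrac14,\tfrac14)$ ensures $e^{-2\pi i \beta}\neq 1$. Taking absolute values, the numerator is bounded trivially by $2$.

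For the denominator, I would use the identity $|1-e^{-2\pi i\beta}| = 2|\sin(\pi\beta)|$. The key elementary inequality here is $|\sin x|\geq \tfrac{2}{\pi}|x|$ for $|x|\leq \tfrac{\pi}{2}$, which applies because $|\pi\beta|<\tfrac{\pi}{4}<\tfrac{\pi}{2}$. This gives $|1-e^{-2\pi i\beta}| \geq 2\cdot\tfrac{2}{\pi}\cdot\pi|\beta| = 4|\beta|$. Combining the two estimates yields
\begin{equation*}
\Bigl|\sum_{t=0}^{s-1}e(t\beta)\Bigr| \leq \frac{2}{4|\beta|} = \frac{1}{2|\beta|},
\end{equation*}
as required.

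There is essentially no obstacle here — this is a completely standard bound on partial sums of a geometric series on the unit circle, and the only mild point is ensuring the range of $\beta$ keeps us within the regime where the $\sin$ lower bound is clean (which the hypothesis $|\beta|<\tfrac14$ handles with room to spare).
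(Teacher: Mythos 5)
Your proof is correct and follows essentially the same route as the paper: sum the geometric series, bound the numerator by $2$, and lower-bound the denominator via $|\sin x|\geq\frac2\pi|x|$ (the paper uses $|1-e^{2\pi i\beta}|\geq|\sin 2\pi\beta|$ instead of your exact chord identity $2|\sin(\pi\beta)|$, but this is the same estimate up to cosmetics). The only nitpick, shared with the paper, is that the geometric-series formula requires $\beta\neq 0$, the case $\beta=0$ being vacuous since the bound is infinite.
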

\begin{proof}It is an easy fact that when $b\in(-\frac\pi2,\frac\pi2)$, $|\sin b|\geq\frac2\pi |b|$. Therefore $|\sum_{t=0}^{s-1}e(t\beta)|=|\frac{1-e(s\beta)}{1-e(\beta)}|\leq\frac2{|1-\exp(2\pi\beta\mathrm i)|}\leq\frac2{|\sin2\pi\beta|}\leq\frac2{\frac2\pi\cdot2\pi|\beta|}=(2|\beta|)^{-1}$.\end{proof}

\begin{proof}[Proof of Proposition \ref{fcupper}]
Recall $\nu=\mathbb E_{\mathrm m_{\tau,B}(x)}\nu_x$ where $\mathrm m_{\tau,B}$ is a probability measure on $X$ and each $\nu_x$ is supported on $x+B$.

For any fixed positive number $A$, for all $\xi\in X^*\backslash\{0\}$ with $|\xi|\leq A$, \begin{equation*}\begin{split}|\widehat{\nu_{n,s,l}^i}(\xi)|^2=&\Big|\int_Xe(\xi(y))\cdot \frac1{sl}\mathrm d(\sum_{t=0}^{s-1}\sum_{k=1}^l\times_{\phi(u^na_tb_k)}.\nu)(y)\Big|^2\\
=&\Big|\int_X\frac1{sl}\sum_{t=0}^{s-1}\sum_{k=1}^le\big(\xi(\times_{\phi(u^na_tb_k)}.y)\big)\mathrm d\nu (y)\Big|^2\\
=&\Big|\mathbb E_{\mathrm m_{\tau,B}(x)}\frac1{sl}\sum_{t=0}^{s-1}\sum_{k=1}^l\int_{x+B}e\big(\xi(\times_{\phi(u^na_tb_k)}.y)\big)\mathrm d\nu_x (y)\Big|^2\end{split}\end{equation*}
By Cauchy-Schwarz,

\begin{align*}
|\widehat{\nu_{n,s,l}^i}(\xi)|^2\leq&\mathbb E_{\mathrm m_{\tau,B}(x)}\frac1{sl}\sum_{t=0}^{s-1}\sum_{k=1}^l\Big|\int_{x+B}e\big(\xi(\times_{\phi(u^na_tb_k)}.y)\big)\mathrm d\nu_x (y)\Big|^2\\
=&\mathbb E_{\mathrm m_{\tau,B}(x)}\frac1{sl}\sum_{t=0}^{s-1}\sum_{k=1}^l\Big(\int_{x+B}e\big(\xi(\times_{\phi(u^na_tb_k)}.y)\big)\mathrm d\nu_x (y)\Big)\\
&\hskip3cm \overline{\Big(\int_{x+B}e\big(\xi(\times_{\phi(u^na_tb_k)}.y)\big)\mathrm d\nu_x (y)\Big)}\end{align*}
Using the simple fact that $\overline{e(\beta)}=e(-\beta), \forall\beta\in\mathbb R$, we obtain 
\begin{equation}\begin{split}\label{fcdecomp1}|\widehat{\nu_{n,s,l}^i}(\xi)|^2=&\mathbb E_{\mathrm m_{\tau,B}(x)}\frac1{sl}\sum_{t=0}^{s-1}\sum_{k=1}^l\\
&\hskip1cm\iint_{y,z\in x+B}e\Big(\xi\big(\times_{\phi(u^na_tb_k)}.(y-z)\big)\Big)\mathrm d\nu_x (y)d\nu_x (z).\end{split}\end{equation}

Next, we decompose $x+B$ by $x+\mathcal Q_{T\mathbf1_i}B$:
\begin{equation}\label{adjandnonadj}\begin{split}
&|\widehat{\nu_{n,s,l}^i}(\xi)|^2\\
=&\mathbb E_{\mathrm m_{\tau,B}(x)}\sum_{P,Q\in\mathcal Q_{T\mathbf1_i}B}\\
&\hskip.5cm\iint_{\substack{y\in x+P\\z\in x+Q}}\frac1{sl}\sum_{t=0}^{s-1}\sum_{k=1}^le\Big(\xi\big(\times_{\phi(u^na_tb_k)}.(y-z)\big)\Big)\mathrm d\nu_x (y)d\nu_x (z)\\
=&\mathbb E_{\mathrm m_{\tau,B}(x)}\sum_{\substack{P,Q\in\mathcal Q_{T\mathbf1_i}B\\\bar P\cap\bar Q\neq\emptyset}}\mathcal I_{P,Q,x}+\mathbb E_{\mathrm m_{\tau,B}(x)}\sum_{\substack{P,Q\in\mathcal Q_{T\mathbf1_i}B\\\bar P\cap\bar Q=\emptyset}}\mathcal I_{P,Q,x},\end{split}\end{equation}
where \begin{equation}\label{IntPQ}\mathcal I_{P,Q,x}=\iint_{\substack{y\in x+P\\z\in x+Q}}\frac1{sl}\sum_{t=0}^{s-1}\sum_{k=1}^le\Big(\xi\big(\times_{\phi(u^na_tb_k)}.(y-z)\big)\Big)\mathrm d\nu_x (y)d\nu_x (z)\geq 0.\end{equation}

From the construction of the partition $\mathcal Q_{T\mathbf1_i}B$ we know for each atom $P$ in it there are at most $9=3^2$ atoms $Q$ such that the closures $\bar P$ and $\bar Q$ intersect (when $V_i\cong\mathbb R$, there are actually at most 3 such atoms). So each atom appear in at most 9 adjacent pairs $(P,Q)$ as $P$ (and as $Q$ as well). Since $$\Big|\frac1{sl}\sum_{t=0}^{s-1}\sum_{k=1}^le\Big(\xi\big(\times_{\phi(u^na_tb_k)}.(y-z)\big)\Big)\Big|\leq\Big|\frac1{sl}\sum_{t=0}^{s-1}\sum_{k=1}^l1\Big|=1,$$ we have \begin{equation}\label{adjupper}\begin{split}&\mathbb E_{\mathrm m_{\tau,B}(x)}\sum_{\substack{P,Q\in\mathcal Q_{T\mathbf1_i}B\\\bar P\cap\bar Q\neq\emptyset}}\mathcal I_{P,Q,x}\\
\leq&\mathbb E_{\mathrm m_{\tau,B}(x)}\sum_{\substack{P,Q\in\mathcal Q_{T\mathbf1_i}B\\\bar P\cap\bar Q\neq\emptyset}}\nu_x(x+P)\nu_x(x+Q)\\
\leq&\mathbb E_{\mathrm m_{\tau,B}(x)}\sum_{\substack{P,Q\in\mathcal Q_{T\mathbf1_i}B\\\bar P\cap\bar Q\neq\emptyset}}\nu^2_x(x+P)\hskip.5cm(\text{Cauchy-Schwarz)}\\
\leq&\mathbb E_{\mathrm m_{\tau,B}(x)}9\sum_{P\in\mathcal Q_{T\mathbf1_i}B}\nu^2_x(x+P)\\
\leq&9\cdot2^{-d_i\delta T}=L_1,\end{split}\end{equation} where the last step follows from the $L^2$-bound in Propoperty \ref{dominated}.

From now on we suppose two points $y,z\in X$ and two atoms $P,Q\in \mathcal Q_{T\mathbf1_i}B$ are such that \begin{equation}\label{nonadj}\bar P\cap\bar Q=\emptyset,\ y\in x+P,\  z\in x+Q. \end{equation}

In this case the orthogonal projection of the distance vector $y-z$ in subspace $V_i$, denoted by $(y-z)_i$, has size at least $2^{-R-T}\mathcal S_\Gamma$, which is the length of the sides of atoms from $\mathcal Q_{T\mathbf1_i}B$ in $V_i$-direction. On the other hand $|y-z|\leq\sqrt d2^{-R}\mathcal S_\Gamma$ because $y$, $z$ belongs to the same $d$-dimensional cube $x+B$ whose sides have length $2^{-R}\mathcal S_\Gamma$.

Thus the difference $(y-z)'\overset{\text{def}}=(y-z)-(y-z)_i$, which is the orthogonal projection into $\oplus_{j\neq i}V_j$, has size at most $\sqrt d2^{-R}\mathcal S_\Gamma$. Recall $|\zeta_u^i|\geq2^{d(\frac r2+1)\mathcal F_{\phi(G)}}$ and $|\zeta_u^j|\leq 1, \forall j\neq i$; $\log|\zeta_{a_t}^j|\leq \log h^\mathrm{Mah}(\phi(a_t))\leq s^{10}$ and similarly $|\zeta_{b_k}^j|\leq\ref{nearlinebdheightconst}\mathcal F_{\phi(G)}^2l$. Thus $$|\times_{\phi(u^na_tb_k)}.(y-z)'|\leq2^{s^{10}+\ref{nearlinebdheightconst}\mathcal F_{\phi(G)}^2l}\cdot \sqrt d2^{-R}.$$

$e:\mathbb R\mapsto\mathbb S^1\subset\mathbb C$ is Lipschitz continuous with $2\pi$ as Lipschitz constant. As $\|\xi\|\leq |\xi|\cdot\|\psi^{-1}\|\leq A\mathcal M_\psi \mathcal S_\Gamma^{-1}$, $e(\xi(\cdot))$ has Lipschitz constant $2\pi A\mathcal M_\psi \mathcal S_\Gamma^{-1}$ and \begin{align*}&\Big|e\Big(\xi\big(\times_{\phi(u^na_tb_k)}.(y-z)\big)\Big)-e\Big(\xi\big(\times_{\phi(u^na_tb_k)}.(y-z)_i\big)\Big)\Big|\\\leq& 2\pi A\mathcal M_\psi \mathcal S_\Gamma^{-1}\cdot|\times_{\phi(u^na_tb_k)}.(y-z)'|\\\leq&2\pi\sqrt d\mathcal M_\psi A2^{s^{10}+\ref{nearlinebdheightconst}\mathcal F_{\phi(G)}^2l-R}=L_2.\end{align*} Hence \begin{equation}\label{nonadjexpsum}\begin{split}&\Big|\frac1{sl}\sum_{t=0}^{s-1}\sum_{k=1}^le\Big(\xi\big(\times_{\phi(u^na_tb_k)}.(y-z)\big)\Big)\Big|\\\leq&\Big|\frac1{sl}\sum_{t=0}^{s-1}\sum_{k=1}^le\Big(\xi\big(\times_{\phi(u^na_tb_k)}.(y-z)_i\big)\Big)\Big|+L_2.\end{split}\end{equation}

We now study $\Big|\frac1{sl}\sum_{t=0}^{s-1}\sum_{k=1}^le\Big(\xi\big(\times_{\phi(u^na_tb_k)}.(y-z)_i\big)\Big)\Big|$. First of all observe since $(y-z)_i$ lies in $V_i$, which is regarded as $\mathbb R$ or $\mathbb C$, $\times_{\phi(u^na_tb_k)}.(y-z)_i=\zeta_u^n\zeta_{a_t}^i\zeta_{b_k}^i(y-z)_i$.

By Proposition \ref{arithpro} $|\zeta_{a_t}^i-(1+t\Delta)|\leq s^{-1}|\Delta|$ with $s^{-\ref{arithproexpoconst}}\mathcal F_{\phi(G)}^2\leq|\Delta|\leq s^{-3}$. And the other factor $|\zeta_{b_k}^i|\leq l^{\ref{nearlinebdexpoconst}\mathcal F_{\phi(G)}^2}$. Once again because of Lipschitz continuity, \begin{align*}&\Big|e\Big(\xi\big(\times_{\phi(u^na_tb_k)}.(y-z)_i\big)\Big)-e\Big(\xi\big((\zeta_u^i)^n(1+t\Delta)\zeta_{b_k}^i(y-z)_i\big)\Big)\Big|\\\leq&2\pi A\mathcal M_\psi \mathcal S_\Gamma^{-1}\cdot|\zeta_u^i|^n|\zeta_{a_t}^i-(1+t\Delta)||\zeta_{b_k}^i|\cdot|(y-z)_i|\\\leq&2\pi\sqrt d\mathcal M_\psi A|\zeta_u^i|^ns^{-1}|\Delta|l^{\ref{nearlinebdexpoconst}\mathcal F_{\phi(G)}^2}2^{-R}=L_3.\end{align*} So \begin{equation}\label{aperr}(\ref{nonadjexpsum})\leq\Big|\frac1{sl}\sum_{t=0}^{s-1}\sum_{k=1}^le\Big(\xi\big((\zeta_u^i)^n(1+t\Delta)\zeta_{b_k}^i(y-z)_i\big)\Big)\Big|+L_2+L_3.\end{equation}

Define a $\mathbb R$-linear form $f(v):=\xi\big((\zeta_u^i)^n\Delta v(y-z)_i\big)$ on either $\mathbb R$ or $\mathbb C$ depending on whether $V_i\cong\mathbb R$ or $\mathbb C$. Then $\|f\|=\big\|\xi\big|_{V_i}\big\|\cdot|\zeta_u^i|^n|\Delta|\cdot|(y-z)_i|$. However, by Lemma \ref{quantirr}, $\big\|\xi\big|_{V_i}\big\|\geq d^{-1}2^{-\frac{d(d-1)}2\ref{totirrheightconst}\mathcal F_{\phi(G)}}\mathcal M_\psi ^{-(d-1)}A^{-(d-1)}\mathcal S_\Gamma^{-1}$. Thus by Proposition \ref{nearlinebd}, there is a subset $J\subset\{1,\cdots,l\}$ of cardinality at most $100$ such that $\forall k\in\{1,\cdots,l\}\backslash J$, 
\begin{equation}\label{exceptionalJ}\begin{split}&|\xi\big((\zeta_u^i)^n\Delta\zeta_{b_k}^i(y-z)_i\big)|\\
=&|f(\zeta_{b_k}^i)|\geq\|f\|\\
\geq& \|\xi|_{V_i}\|\cdot|\zeta_u^i|^n|\Delta|\cdot|(y-z)_i|\mathcal S_\Gamma\\
\geq&d^{-1}2^{-\frac{d(d-1)}2\ref{totirrheightconst}\mathcal F_{\phi(G)}}\mathcal M_\psi ^{-(d-1)}A^{-(d-1)}|\zeta_u^i|^n|\Delta|2^{-R-T}\\=&\frac1{2s}L_4^{-1}.\end{split}\end{equation}
Here we used the fact $|(y-z)_i|\geq 2^{-R-T}\mathcal S_\Gamma$.

On the other hand, it follows from assumption (\ref{xiupper}) that \begin{equation}\label{xiquarter}\begin{split}|\xi((\zeta_u^i)^n\Delta\zeta_{b_k}^i(y-z)_i))|\leq& A\mathcal M_\psi \mathcal S_\Gamma^{-1}|(\zeta_u^i)^n\Delta\zeta_{b_k}^i(y-z)_i|\\=&\mathcal M_\psi A|\zeta_u^i|^n|\Delta|l^{\ref{nearlinebdexpoconst}\mathcal F_{\phi(G)}^2}2^{-R}\leq\frac14.\end{split}\end{equation}
Moreover notice that  \begin{equation}\label{expsum}\begin{split}&\Big|\frac1{sl}\sum_{t=0}^{s-1}\sum_{k=1}^le\Big(\xi\big((\zeta_u^i)^n(1+t\Delta)\zeta_{b_k}^i(y-z)_i\big)\Big)\Big|\\
 \leq& \frac1l\sum_{k=1}^l\Big|\frac1s\sum_{t=0}^{s-1}e\Big(\xi\big((\zeta_u^i)^n(1+t\Delta)\zeta_{b_k}^i(y-z)_i\big)\Big)\Big|\\
=&\frac1l\sum_{k=1}^l\Big|e\Big(\xi\big((\zeta_u^i)^n\zeta_{b_k}^i(y-z)_i\big)\Big)\cdot\frac1s\sum_{t=0}^{s-1}e\Big(\xi\big((\zeta_u^i)^nt\Delta\zeta_{b_k}^i(y-z)_i\big)\Big)\Big|\\
=&\frac1l\sum_{k=1}^l\Big|\frac1s\sum_{t=0}^{s-1}e\Big(t\xi\big((\zeta_u^i)^n\Delta\zeta_{b_k}^i(y-z)_i\big)\Big)\Big|,\end{split}\end{equation}
by separating the exception set $J$ from generic indices we get
\begin{equation}\label{expsum1}\begin{split}
(\ref{expsum})\leq&\frac1l\Big(100+\sum_{k\in\{1,\cdots,l\}\backslash J}\Big|\frac1s\sum_{t=0}^{s-1}e\Big(t\xi\big((\zeta_u^i)^n\Delta\zeta_{b_k}^i(y-z)_i\big)\Big)\Big|\Big)\\
\leq&\frac1l\Big(100+\sum_{k\in\{1,\cdots,l\}\backslash J}\frac1s\Big(2\Big|\xi\big((\zeta_u^i)^n\Delta\zeta_{b_k}^i(y-z)_i)\big)\Big|\Big)^{-1}\Big),\end{split}\end{equation}
where Lemma \ref{expsumlemma} was applied in the last step in light of condition (\ref{xiquarter}).

Making use of (\ref{exceptionalJ}), we obtain from (\ref{expsum1}) that 
\begin{equation}\label{expsum2}(\ref{expsum})\leq \frac1l\big(100+l\cdot\frac1s(2\cdot\frac1{2s}L_4^{-1})^{-1}\big)=\frac{100}l+L_4=L_4+L_5\end{equation}

Plug (\ref{expsum2}) into (\ref{aperr}), we get \begin{equation}\label{nonadjexpsumbd}(\ref{nonadjexpsum})\leq L_2+L_3+L_4+L_5.\end{equation}  under the assumptions (\ref{xiupper}) and (\ref{nonadj}).

Plug (\ref{nonadjexpsumbd}) into (\ref{IntPQ}), it follows from the fact $|\nu_x|\leq 1$ almost everywhere (see Proposition \ref{dominated}) that $\mathcal I_{P,Q,x}\leq L_2+L_3+L_4+L_5$ for a.e. $x$ if $\bar P\cap \bar Q=0$ thus  \begin{equation}\label{nonadjupper}\mathbb E_{\mathrm m_{\tau,B}(x)}\sum_{\substack{P,Q\in\mathcal Q_{T\mathbf1_i}B\\\bar P\cap\bar Q=\emptyset}}\mathcal I_{P,Q,x}\leq L_2+L_3+L_4+L_5.\end{equation}

The proposition is verified by combining (\ref{adjandnonadj}), (\ref{adjupper}) and (\ref{nonadjupper}).\end{proof}

\subsection{Manipulation of parameters}

\hskip\parindent We want to show that if the parameters $\delta$, $T$ and $n$, $s$, $l$ are properly chosen then \begin{equation}\label{fcAinverse}|\widehat{\nu_{n,s,l}^i}(\xi)|^2\lesssim_{d}A^{-1}\end{equation} for any non-trivial frequency $\xi$ with $|\xi|\leq A$ as long as $A$ is a large number but still sufficiently small compared to $R$.

Actually by Proposition \ref{fcupper} this would follow from the following collection of inequalities:

\begin{equation}\label{collineq}\left\{\begin{aligned}
&\mathcal M_\psi A|\zeta_u^i|^n|\Delta|l^{\ref{nearlinebdexpoconst}\mathcal F_{\phi(G)}^2}2^{-R}\leq\frac14&\hskip1.5cm(\ref{xiupper})\\
&2^{-d_i\delta T}\lesssim A^{-1}&(\ref{collineq}a)\\
&\mathcal M_\psi A2^{s^{10}+\ref{nearlinebdheightconst}\mathcal F_{\phi(G)}^2l-R}\lesssim A^{-1}&(\ref{collineq}b)\\
&\mathcal M_\psi A|\zeta_u^i|^ns^{-1}|\Delta|l^{\ref{nearlinebdexpoconst}\mathcal F_{\phi(G)}^2}2^{-R}\lesssim A^{-1}&(\ref{collineq}c)\\
&2^{\frac{d(d-1)}2\ref{totirrheightconst}\mathcal F_{\phi(G)}}\mathcal M_\psi ^{d-1}A^{d-1}|\zeta_u^i|^{-n}s^{-1}|\Delta|^{-1}2^{R+T}\lesssim A^{-1}&(\ref{collineq}d)\\
&l^{-1}\lesssim A^{-1}&(\ref{collineq}e)
\end{aligned}\right.\nonumber\end{equation}\addtocounter{equation}{1}

First of all, set \begin{equation}\label{lAsetting}l=A=\lceil2^{d_i\delta T}\rceil\end{equation} so that (\ref{collineq}a) and (\ref{collineq}e) are satisfied. Multiplying (\ref{xiupper}) by (\ref{collineq}d) gives $$2^{\frac{d(d-1)}2\ref{totirrheightconst}\mathcal F_{\phi(G)}}\mathcal M_\psi ^dA^ds^{-1}l^{\ref{nearlinebdexpoconst}\mathcal F_{\phi(G)}^2}2^T\lesssim A^{-1}.$$ So $s$ should be such that \begin{align*}s\gtrsim& 2^{\frac{d(d-1)}2\ref{totirrheightconst}\mathcal F_{\phi(G)}}\mathcal M_\psi ^dA^{d+1}l^{\ref{nearlinebdexpoconst}\mathcal F_{\phi(G)}^2}2^T\\\approx& 2^{\frac{d(d-1)}2\ref{totirrheightconst}\mathcal F_{\phi(G)}}\mathcal M_\psi ^d2^{\big(1+(d+1+\ref{nearlinebdexpoconst}\mathcal F_{\phi(G)}^2)d_i\delta\big)T}.\end{align*}
We take the critical setting \begin{equation}\label{ssetting}s=\lceil2^{\frac{d(d-1)}2\ref{totirrheightconst}\mathcal F_{\phi(G)}}\mathcal M_\psi ^d2^{(1+(d+1+\ref{nearlinebdexpoconst}\mathcal F_{\phi(G)}^2)d_i\delta)T}\rceil.\end{equation}
Now the parameter $\Delta$ is determined by $s$ together with the index $i$. Take $n$ so that both sides of (\ref{xiupper}) are almost equal up to a multiplicative error: \begin{equation}\label{nsetting}\mathcal M_\psi A|\zeta_u^i|^n|\Delta|l^{\ref{nearlinebdexpoconst}\mathcal F_{\phi(G)}^2}2^{-R}\in(\frac1{4|\zeta_u^i|},\frac1{4}].\end{equation}
This together with the way $s$ was determined implies that (\ref{collineq}d) holds. Notice $s\gtrsim A$, therefore (\ref{collineq}c) actually follows from (\ref{xiupper}) by multiplying both sides respectively by $s^{-1}$ and $A^{-1}$. It only remains to check (\ref{collineq}b), which would follow if \begin{equation}2^R\gtrsim\mathcal M_\psi A^22^{s^{10}+\ref{nearlinebdheightconst}\mathcal F_{\phi(G)}^2l}\sim \mathcal M_\psi 2^{s^{10}+\ref{nearlinebdheightconst}\mathcal F_{\phi(G)}^2l+2d_i\delta T}.\end{equation} It suffices to assume \begin{equation}\label{howbigRis}\begin{split}R\geq&\log\mathcal M_\psi +s^{10}+\ref{nearlinebdheightconst}\mathcal F_{\phi(G)}^2l+2d_i\delta T\\\sim&\log\mathcal M_\psi +2^{5d(d-1)\ref{totirrheightconst}\mathcal F_{\phi(G)}}\mathcal M_\psi ^{10d}2^{(10+10(d+1+\ref{nearlinebdexpoconst}\mathcal F_{\phi(G)}^2)d_i\delta)T}\\&\hskip1cm+\ref{nearlinebdheightconst}\mathcal F_{\phi(G)}^22^{d_i\delta T}+2d_i\delta T\end{split}\end{equation}

Remark (\ref{howbigRis}) would also assure that $\mathcal M_\psi A|\Delta|l^{\ref{nearlinebdexpoconst}\mathcal F_{\phi(G)}^2}2^{-R}\ll 1$ (recall $|\Delta|\leq s^{-3}<1$), so that the definition of the positive integer $n$ in (\ref{nsetting}) makes sense.

\declareconst{RTboundconst}
For a sufficiently large constant $\ref{RTboundconst} =\ref{RTboundconst} (d)$, if $T\geq 2$ and \begin{equation}\label{RTbound}R\geq \cdot2^{\ref{RTboundconst} \mathcal F_{\phi(G)}^2(T-1)}\mathcal M_\psi ^{10d}\end{equation} then (\ref{howbigRis}) holds and therefore so do all the inequalities (\ref{xiupper}) and (\ref{collineq}a-e), as a consequence (\ref{fcAinverse}) holds whenever $|\xi|\leq A$, $\xi\neq0$.

So far we have almost proved the following result:

\declareconst{fcAinXdeltaconst} \declareconst{fcAinXmetricballconst} \declareconst{fcAinXboundconst}
\begin{proposition}\label{fcAinX}We can find effective constants $\ref{fcAinXdeltaconst}(d)$, $\ref{fcAinXmetricballconst}(d)$, $\ref{fcAinXboundconst}(d)$ such that:

Assuming Conditions \ref{condG'} and \ref{condmu}; if 
\begin{equation}\label{epsilonfcAinX}\log\frac1\epsilon>\max(\mathcal M_\psi ^{30d},4),\end{equation} and
\begin{equation}\label{deltafcAinX}\delta\in[\ref{fcAinXdeltaconst}\mathcal F_{\phi(G)}^2(\log\log\frac1\epsilon)^{-1},\frac\alpha{10}],\end{equation}
then there exist:

(1) A number $A\geq(\log\frac1\epsilon)^{\ref{fcAinXdeltaconst}^{-1}\mathcal F_{\phi(G)}^{-2}\delta}$;

(2) A measure $\tau'$ with total mass $|\tau'|\geq\alpha-5\delta$ which is dominated by some $\tau''$ in the convex hull of $B_G^\mathrm{Mah}(\ref{fcAinXmetricballconst}\log\frac1\epsilon).\tau:=\{\times_{\phi(g)}.\tau|m(g)\leq\ref{fcAinXmetricballconst}\log\frac1\epsilon\}$, where $\tau=\psi_*\mu$ and $m$ is the logarithmic Mahler measure;\\
such that $|\widehat{\tau'}(\xi)|^2\leq \ref{fcAinXboundconst}A^{-1}$ for any non-trivial character $\xi\in X^*$ with $|\xi|\leq A$, where $|\xi|$ is defined in Definition \ref{absxi}.\end{proposition}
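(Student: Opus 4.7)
The plan is to combine Proposition \ref{dominated} with the Fourier-coefficient estimate of Proposition \ref{fcupper}, after calibrating the auxiliary parameters exactly as in the discussion leading to (\ref{RTbound}). Given $\epsilon$ and $\delta$ in the hypothesized ranges, I would first choose the scale parameter $T$ of order $(\log\log\frac{1}{\epsilon})/\mathcal{F}_{\phi(G)}^2$: precisely, $T$ should be large enough that $A := \lceil 2^{d_i\delta T}\rceil$ beats $(\log\frac{1}{\epsilon})^{c\delta/\mathcal{F}_{\phi(G)}^2}$, and small enough that $T\leq \delta R_0/2$ and (\ref{RTbound}) remains compatible with the interval $R\in[\delta R_0, R_0-T]$ furnished by Proposition \ref{dominated}. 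The lower bound on $\delta$ in the proposition's hypothesis and the assumption $\log\frac{1}{\epsilon} > \mathcal{M}_\psi^{30d}$ are precisely what make such a $T$ exist. Feeding $(T,\delta)$ into Proposition \ref{dominated} yields the eigenspace index $i$, a box $B\in\Pi^{R\mathbf{1}}$ with $R\in[\delta R_0,R_0-T]$, and a measure $\nu = \mathbb{E}_{\mathrm{m}_{\tau,B}(x)}\nu_x$ satisfying $\nu \leq \tau$, $|\nu|\geq \alpha - 5\delta$, and the key $L^2$-smallness at scale $2^{-T}$ in the $V_i$-direction.

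I would then set $l = A$, pick $s$ as in (\ref{ssetting}) and $n$ as in (\ref{nsetting}), and instantiate the expanding element $u$ from Proposition \ref{expand}, the near-arithmetic sequence $\{a_t\}_{t=0}^{s-1}$ with increment $\Delta$ from Proposition \ref{arithpro}, and the escape-from-line sequence $\{b_k\}_{k=1}^{l}$ from Proposition \ref{nearlinebd}. Take as the candidate measure $\tau' := \nu_{n,s,l}^i = \frac{1}{sl}\sum_{t,k}\times_{\phi(u^n a_t b_k)}.\nu$ in the sense of Notation \ref{convexcomb}. Its total mass equals $|\nu|\geq \alpha - 5\delta$ since $G$-pushforwards and averaging preserve mass. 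For the domination, $\nu \leq \tau$ forces $\times_{\phi(g)}.\nu \leq \times_{\phi(g)}.\tau$ for every $g\in G$, so $\tau' \leq \tau'' := \frac{1}{sl}\sum_{t,k}\times_{\phi(u^n a_t b_k)}.\tau$, which is a convex combination of elements of $B_G^{\mathrm{Mah}}(\ref{fcAinXmetricballconst}\log\frac{1}{\epsilon}).\tau$ provided each $m(u^n a_t b_k) = h^{\mathrm{Mah}}(\phi(u^n a_t b_k))$ is $\leq \ref{fcAinXmetricballconst}\log\frac{1}{\epsilon}$. This splits as $n\cdot h^{\mathrm{Mah}}(\phi(u))+h^{\mathrm{Mah}}(\phi(a_t))+h^{\mathrm{Mah}}(\phi(b_k))$; the first and dominant summand is $\lesssim R \lesssim \log\frac{1}{\epsilon}$ by the defining relation (\ref{nsetting}) for $n$ together with the lower bound on $|\Delta|$ from Proposition \ref{arithpro}, while the other two are controlled polylogarithmically in $\frac{1}{\epsilon}$ using Propositions \ref{arithpro}, \ref{nearlinebd} and the sizes of $s$ and $l$.

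Finally, for the Fourier bound, apply Proposition \ref{fcupper} directly to $\tau'$ with any non-trivial $\xi\in X^*$ of size $|\xi|\leq A$. The calibration of $(T, l, s, n, A)$ described above is exactly what forces (\ref{xiupper}) together with each of (\ref{collineq}a)--(\ref{collineq}e), so that the five summands $L_1,\ldots,L_5$ from Proposition \ref{fcupper} are each $\lesssim_d A^{-1}$, yielding $|\widehat{\tau'}(\xi)|^2 \leq \ref{fcAinXboundconst} A^{-1}$ as required. The main obstacle is the simultaneous calibration itself: the parameters $\delta, T, l, s, n, A$ and the scale $R$ have to meet a tightly coupled system of upper and lower bounds in which nearly every inequality is saturated, and the only slack comes from the lower bound $\delta \gtrsim \mathcal{F}_{\phi(G)}^2/\log\log\frac{1}{\epsilon}$ and the hypothesis $\log\frac{1}{\epsilon} > \mathcal{M}_\psi^{30d}$---everything else is essentially forced by the number-theoretic constraints encoded in Propositions \ref{expand}--\ref{nearlinebd}.
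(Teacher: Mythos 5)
Your proposal is correct and follows essentially the same route as the paper: choose $T$ of order $\mathcal F_{\phi(G)}^{-2}\log\log\frac1\epsilon$ so that Proposition \ref{dominated} applies and (\ref{RTbound}) holds, set $l=A$, $s$, $n$ as in (\ref{lAsetting}), (\ref{ssetting}), (\ref{nsetting}), take $\tau'=\nu^i_{n,s,l}$, and read off the mass, the domination by a convex combination over $B_G^\mathrm{Mah}(\ref{fcAinXmetricballconst}\log\frac1\epsilon)$, and the Fourier bound from Proposition \ref{fcupper} together with the inequalities (\ref{xiupper}) and (\ref{collineq}a)--(\ref{collineq}e). This is exactly the paper's argument, with the same calibration of parameters.
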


\begin{proof} As in Proposition \ref{dominated}, put $\sqrt d2^{-R_0}=\eta=\mathcal M_\psi ^{-1}\epsilon$. Set
\begin{equation}\label{Tsetting}T=\lceil\ref{RTboundconst} ^{-1}\mathcal F_{\phi(G)}^{-2}\log(\mathcal M_\psi ^{-10d}\delta\log\frac1\epsilon)\rceil.\end{equation}
We claim Proposition \ref{dominated} applies, to see this it suffices to verify the conditions $\delta\geq\frac{\ref{posentropydeltaconst}\mathcal M_\psi^d}{R_0}$ and $T\in[\frac1\delta,\frac{\delta R_0}2]$.

Set $\ref{fcAinXdeltaconst}=3\ref{RTboundconst} $. When $\log\frac1\epsilon\geq 4$, $\log\log\frac1\epsilon\lesssim(\log\frac1\epsilon)^\frac13$; furthermore we assumed $\mathcal M_\psi ^{10d}\leq (\log\frac1\epsilon)^\frac13$ as well. Thus as $\mathcal F_{\phi(G)}\gtrsim_d1$, when $\ref{RTboundconst} =\ref{RTboundconst} (d)$ is large enough  we have $\ref{fcAinXdeltaconst}\mathcal F_{\phi(G)}^2\geq 1$ and
\begin{equation}\label{Tdeltabound}\begin{split}T\geq&\ref{RTboundconst} ^{-1}\mathcal F_{\phi(G)}^{-2}\log(\ref{fcAinXdeltaconst}\mathcal F_{\phi(G)}^2\mathcal M_\psi ^{-10d}\frac{\log\frac1\epsilon}{\log\log\frac1\epsilon})\\
\geq&\ref{RTboundconst} ^{-1}\mathcal F_{\phi(G)}^{-2}\log\sqrt[3]{\log\frac1\epsilon}\\
\geq&\ref{RTboundconst} ^{-1}\mathcal F_{\phi(G)}^{-2}\cdot \frac13(\ref{fcAinXdeltaconst}^{-1}\mathcal F_{\phi(G)}^{-2}\delta)^{-1}\\
=&\delta^{-1}.\end{split}\end{equation}

On the other hand as (\ref{deltafcAinX}) implicitly gives $$\ref{RTboundconst} ^{-1}\mathcal F_{\phi(G)}^{-2}\log\log\frac1\epsilon=3\big(\ref{fcAinXdeltaconst}\mathcal F_{\phi(G)}^2(\log\log\frac1\epsilon)^{-1}\big)^{-1}\geq 3\delta^{-1}\geq 3,$$ when $\ref{RTboundconst} $ is sufficiently large  \begin{equation}\begin{split}T\leq&\ref{RTboundconst} ^{-1}\mathcal F_{\phi(G)}^{-2}\log(M^{-10d}\delta\log\frac1\epsilon)+1\\
\leq&\ref{RTboundconst} ^{-1}\mathcal F_{\phi(G)}^{-2}\log\log\frac1\epsilon+1\\
\leq&\ref{RTboundconst} ^{-1}\mathcal F_{\phi(G)}^{-2}\cdot 2\log\log\frac1\epsilon\\
\leq&\frac{\log\frac1\epsilon}{2\log\log\frac1\epsilon}\leq\frac{\delta\log\frac1\epsilon}2\leq\frac{\delta R_0}2.\end{split}\end{equation}

By similar arguments, it is quite easy to deduce $\delta\geq\frac{\ref{posentropydeltaconst}\mathcal M_\psi^d}{R_0}$ using  $R_0\geq\log\frac1\epsilon>\mathcal M_\psi^{30d}$ and $\mathcal F_{\phi(G)}\gtrsim_d1$.

Thus we can obtain $R$, $\nu$ and $i$ from Proposition \ref{dominated}. Notice $R\geq\delta R_0\geq\delta\log\frac1\epsilon$, therefore (\ref{RTbound}) follows from (\ref{Tsetting}). And we may define the numbers $n$, $s$, $l$, $A$ as in (\ref{lAsetting}), (\ref{ssetting}) and (\ref{nsetting}). Then (\ref{fcAinverse}) was already proved for all non trivial characters $\xi\in X^*$ with $|\xi|\leq A$. Denote the implied constant by $\ref{fcAinXboundconst}$, which is effective.

Let $\tau'$ be the measure $\nu_{n,s,l}^i$ defined in Notation \ref{convexcomb}. Since $\tau'$ is the average of a certain collection of pushforwards of $\nu$, $|\tau'|=|\nu|\geq\alpha-5\delta$. Because $\nu\leq\tau$, $\tau'$ is dominated by the probability measure $\tau'':=\frac1{sl}\sum_{t=0}^{s-1}\sum_{k=1}^l(u^na_tb_k).\tau$, which lies in the convex hull of $$\{g.\tau,\ h^\mathrm{Mah}(\phi(g))\leq nh^\mathrm{Mah}(\phi(u))+\max_th^\mathrm{Mah}(\phi(a_t))+\max_kh^\mathrm{Mah}(\phi(b_k))\big)\}$$ inside the space of probability measures on $X$.

Observe $h^\mathrm{Mah}(\phi(u))=\log|\zeta_u^i|$ as $V_i$ is the only expanding subspace under $\times_u$. By Propositions in \S \ref{GVi}, $nh^\mathrm{Mah}(\phi(u))+\max_th^\mathrm{Mah}(\phi(a_t))+\max_kh^\mathrm{Mah}(\phi(b_k))\leq n\log|\zeta_u^i|+s^{10}+\ref{nearlinebdheightconst}\mathcal F_{\phi(G)}^2l$, which we will show  can be bounded by a multiple of $\log\frac1\epsilon$. 

It follows from (\ref{nsetting}) that $|\zeta_u^i|^n\leq\mathcal M_\psi ^{-1}|\Delta|^{-1}2^R\leq \mathcal M_\psi ^{-1}s^{\ref{arithproexpoconst}}2^R$ and by (\ref{howbigRis}) $s^{10}\mathcal F_{\phi(G)}^2l\leq R-\log\mathcal M_\psi $. So $n\log|\zeta_u^i|+s^{10}+\ref{nearlinebdheightconst}\mathcal F_{\phi(G)}^2l\leq(\ref{arithproexpoconst}\log s+R-\log\mathcal M_\psi )+(R-\log\mathcal M_\psi )\lesssim_{d}s^{10}+R-\log\mathcal M_\psi \lesssim R_0-\log\mathcal M_\psi =\log\frac{\sqrt d}\epsilon$. Thus by Lemma \ref{mahlerheight} there exists an effective constant $\ref{fcAinXmetricballconst}(d)$ such that $\tau''$ is in the the convex hull of $B_G^\mathrm{Mah}(\ref{fcAinXmetricballconst}\log\frac1\epsilon).\tau$.

It only remains to obtain the lower bound for $A$. As we already remarked in (\ref{Tdeltabound}), $T\geq\ref{RTboundconst} ^{-1}\mathcal F_{\phi(G)}^{-2}\log\sqrt[3]{\log\frac1\epsilon}$. Hence $A\geq 2^{d_i\delta T}\geq 2^{\delta \ref{RTboundconst} ^{-1}\mathcal F_{\phi(G)}^{-2}\log\sqrt[3]{\log\frac1\epsilon}}=(\log\frac1\epsilon)^{\frac13 \ref{RTboundconst} ^{-1}\mathcal F_{\phi(G)}^{-2}\delta}=(\log\frac1\epsilon)^{\ref{fcAinXdeltaconst}^{-1}\mathcal F_{\phi(G)}^{-2}\delta}$.\end{proof}

Now we may pull everything from $X$ back to $\mathbb T^d$. The following corollary is immediate.

\begin{corollary}\label{fcA} For the same constants $\ref{fcAinXdeltaconst}$, $\ref{fcAinXmetricballconst}$ and $\ref{fcAinXboundconst}$ as in Proposition \ref{fcAinX}, if Conditions \ref{condG'} and \ref{condmu} hold and
$$\log\frac1\epsilon>\max(\mathcal M_\psi ^{30d},4);$$ $$\delta\in[5\ref{fcAinXdeltaconst}\mathcal F_{\phi(G)}^2(\log\log\frac1\epsilon)^{-1},\frac\alpha2],$$ then there exist:

(1) A number $A\geq (\log\frac1\epsilon)^{\frac15\ref{fcAinXdeltaconst}^{-1}\mathcal F_{\phi(G)}^{-2}\delta}$;

(2) A measure $\mu'$ with total mass $|\mu'|\geq\alpha-\delta$ which is dominated by some $\mu''$ in the convex hull of $B_G^\mathrm{Mah}(\ref{fcAinXmetricballconst}\log\frac1\epsilon).\mu$;\\
such that $|\widehat{\mu'}(\mathbf q)|^2\leq \ref{fcAinXboundconst}A^{-1}$ for all non-trivial characters $\mathbf q\in\mathbb Z^d$ with $|\mathbf q|\leq A$.
\end{corollary}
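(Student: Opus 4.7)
The plan is to pull Proposition \ref{fcAinX} back from $X$ to $\mathbb{T}^d$ via $\psi^{-1}$, applying the Proposition with $\delta$ replaced by $\delta/5$ so that the factor $5$ in the total-mass bound gets absorbed. There is essentially no new mathematical content here; the task is pure bookkeeping that uses Condition \ref{condG'} (which says $\psi$ conjugates the $G$-action on $\mathbb{T}^d$ to the $\phi(G)$-action on $X$) together with Lemma \ref{Fourierstar} (which says $\psi$ identifies the two Fourier transforms).

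Concretely, I would set $\delta' := \delta/5$ and verify that the hypotheses of Proposition \ref{fcAinX} hold for $\delta'$: the lower bound $\delta \geq 5\ref{fcAinXdeltaconst}\mathcal{F}_{\phi(G)}^2(\log\log\tfrac{1}{\epsilon})^{-1}$ in the corollary is exactly the condition $\delta' \geq \ref{fcAinXdeltaconst}\mathcal{F}_{\phi(G)}^2(\log\log\tfrac{1}{\epsilon})^{-1}$, and the upper bound $\delta \leq \alpha/2$ in the corollary gives $\delta' \leq \alpha/10$. The Proposition then furnishes a number $A \geq (\log\tfrac{1}{\epsilon})^{\ref{fcAinXdeltaconst}^{-1}\mathcal{F}_{\phi(G)}^{-2}\delta'} = (\log\tfrac{1}{\epsilon})^{\tfrac{1}{5}\ref{fcAinXdeltaconst}^{-1}\mathcal{F}_{\phi(G)}^{-2}\delta}$, as required, together with measures $\tau'$ and $\tau''$ on $X$ such that $|\tau'| \geq \alpha - 5\delta' = \alpha - \delta$, $\tau' \leq \tau''$, and $\tau''$ lies in the convex hull of $B_G^{\mathrm{Mah}}(\ref{fcAinXmetricballconst}\log\tfrac{1}{\epsilon}).\tau$, with $|\widehat{\tau'}(\xi)|^2 \leq \ref{fcAinXboundconst}A^{-1}$ for all non-trivial $\xi \in X^*$ with $|\xi| \leq A$.

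I would then define $\mu' := (\psi^{-1})_*\tau'$ and $\mu'' := (\psi^{-1})_*\tau''$. Since $\tau = \psi_*\mu$, pushforward by $\psi^{-1}$ recovers $\mu$ from $\tau$, preserves total mass, preserves the domination relation, and commutes with convex combinations. Moreover, by Condition \ref{condG'}, $\psi^{-1}\circ\times_{\phi(g)}\circ\psi = g$ for each $g \in G$, so the pushforward by $\psi^{-1}$ of $\times_{\phi(g)}.\tau$ is $g.\mu$, and by Lemma \ref{mahlerheight} the Mahler-measure ball $B_G^{\mathrm{Mah}}(\ref{fcAinXmetricballconst}\log\tfrac{1}{\epsilon})$ matches on both sides. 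Hence $|\mu'| \geq \alpha - \delta$ and $\mu''$ lies in the convex hull of $B_G^{\mathrm{Mah}}(\ref{fcAinXmetricballconst}\log\tfrac{1}{\epsilon}).\mu$.

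Finally, for any non-trivial $\mathbf{q} \in \mathbb{Z}^d$ with $|\mathbf{q}| \leq A$, let $\xi = \psi_*\mathbf{q} \in X^*$; by Definition \ref{absxi}, $|\xi| = |\mathbf{q}| \leq A$, and by Lemma \ref{Fourierstar} we have $\widehat{\mu'}(\mathbf{q}) = \widehat{\tau'}(\xi)$, whence $|\widehat{\mu'}(\mathbf{q})|^2 = |\widehat{\tau'}(\xi)|^2 \leq \ref{fcAinXboundconst}A^{-1}$, completing the proof. The only point requiring care is the factor-of-$5$ rescaling, which is the whole reason for the asymmetric constants appearing in the statement.
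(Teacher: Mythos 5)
Your proposal is correct and coincides with the paper's own (one-line) proof: apply Proposition \ref{fcAinX} with $\delta/5$ in place of $\delta$ and pull everything back from $X$ to $\mathbb T^d$ via $\psi$, using Lemma \ref{Fourierstar} and Definition \ref{absxi} to transfer the Fourier bound. Your bookkeeping of the hypotheses, the factor-of-$5$ rescaling, and the conjugation of the $G$-action under $\psi$ all match the intended argument.
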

\begin{proof}Apply Proposition \ref{fcAinX} to $\frac\delta5$ instead of $\delta$ and pull back from $X$ back to $\mathbb T^d$ by $\psi$.\end{proof}

\subsection{The effective measure-theoretical theorem}

\hskip\parindent In Corollary \ref{fcA} we constructed a measure $\mu'$ whose Fourier coefficients are small except the trivial one. Since for the Lebesgue measure $\mathrm m$ on $\mathbb T^d$, all the Fourier coefficients at non-trivial frequencies vanish. It is natural that we can effectively describe how close $\mu'$ is to a multiple of $\mathrm m$.

\begin{definition}For a function $f\in \mathcal C^\infty(\mathbb T^d)$ and $w>0$, $\|f\|_{\dot H^w}$ denotes the Sobolev norm $\big(\sum_{\mathbf q\in\mathbb Z^d}\big||\mathbf q|^w\hat f(\mathbf q)\big|^2\big)^{\frac12}$.\end{definition}

\declareconst{sobolevconst}
\begin{lemma}\label{sobolev} If a measure $\gamma$ on $\mathbb T^d$ satisfies $|\gamma|\leq 1$ and $|\hat\gamma(\mathbf q)|\leq CA^{-\frac12}$,  $\forall\mathbf q\in\mathbb Z^d\backslash\{0\}, |\mathbf q|\leq A$ for some fixed positive number $C$, then there is an explicit constant $\ref{sobolevconst}=\ref{sobolevconst}(d)$ such that $\forall f\in\mathcal C^\infty(\mathbb T^d)$, $$\big|\gamma(f)-|\gamma|\mathrm m(f)\big|\leq\ref{sobolevconst}\cdot(C+1)A^{-\frac12}\|f\|_{\dot H^{\frac{d+1}2}}.$$\end{lemma}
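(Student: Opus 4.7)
The plan is to expand $f$ in its Fourier series on $\mathbb T^d$, write
\[
\gamma(f)-|\gamma|\mathrm m(f)=\sum_{\mathbf q\in\mathbb Z^d\setminus\{0\}}\hat f(\mathbf q)\,\overline{\hat\gamma(\mathbf q)}
\]
(with appropriate sign conventions; note $\mathrm m(f)=\hat f(0)$ and $\hat\gamma(0)=|\gamma|$ up to sign), then split the sum at frequency $A$ and estimate the low and high frequency pieces separately. On both pieces the only tool is Cauchy--Schwarz against the weight $|\mathbf q|^{d+1}$ that defines $\|f\|_{\dot H^{(d+1)/2}}$.

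For the low frequencies $0<|\mathbf q|\leq A$, use the hypothesis $|\hat\gamma(\mathbf q)|\leq CA^{-1/2}$ and Cauchy--Schwarz:
\[
\sum_{0<|\mathbf q|\leq A}|\hat f(\mathbf q)|\cdot|\hat\gamma(\mathbf q)|\leq CA^{-1/2}\Bigl(\sum_{0<|\mathbf q|\leq A}|\mathbf q|^{d+1}|\hat f(\mathbf q)|^2\Bigr)^{1/2}\Bigl(\sum_{0<|\mathbf q|\leq A}|\mathbf q|^{-(d+1)}\Bigr)^{1/2}.
\]
The first factor is bounded by $\|f\|_{\dot H^{(d+1)/2}}$, and the second is bounded by a purely dimensional constant since $\sum_{\mathbf q\neq 0}|\mathbf q|^{-(d+1)}<\infty$ (the exponent strictly exceeds $d$). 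This yields a contribution of order $CA^{-1/2}\|f\|_{\dot H^{(d+1)/2}}$.

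For the high frequencies $|\mathbf q|>A$, use the trivial bound $|\hat\gamma(\mathbf q)|\leq|\gamma|\leq 1$ and again Cauchy--Schwarz:
\[
\sum_{|\mathbf q|>A}|\hat f(\mathbf q)|\leq\Bigl(\sum_{|\mathbf q|>A}|\mathbf q|^{d+1}|\hat f(\mathbf q)|^2\Bigr)^{1/2}\Bigl(\sum_{|\mathbf q|>A}|\mathbf q|^{-(d+1)}\Bigr)^{1/2}.
\]
The tail sum $\sum_{|\mathbf q|>A}|\mathbf q|^{-(d+1)}$ is comparable (up to a dimensional constant) to $\int_A^\infty r^{-(d+1)}\cdot r^{d-1}\,\mathrm dr=A^{-1}$, so the whole high-frequency contribution is $\lesssim_d A^{-1/2}\|f\|_{\dot H^{(d+1)/2}}$. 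Adding the two pieces and absorbing the dimensional constants into $\ref{sobolevconst}=\ref{sobolevconst}(d)$ gives the stated bound with the factor $(C+1)$.

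No step here presents a real obstacle; the only care needed is to check that the weight exponent $d+1$ is strong enough both to render the low-frequency dyadic sum summable and to give the $A^{-1}$ decay of the high-frequency tail, which is exactly the borderline Sobolev embedding that forces the choice $\frac{d+1}{2}$.
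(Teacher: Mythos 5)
Your proof is correct and takes essentially the same route as the paper's: split the nonzero frequencies at $A$, use the hypothesis $|\hat\gamma(\mathbf q)|\leq CA^{-1/2}$ on the low range and the trivial bound $|\hat\gamma(\mathbf q)|\leq|\gamma|\leq 1$ on the tail, and apply Cauchy--Schwarz against the weight $|\mathbf q|^{\pm\frac{d+1}2}$, with the tail sum $\sum_{|\mathbf q|>A}|\mathbf q|^{-(d+1)}\lesssim_d A^{-1}$. The only step the paper makes explicit that you pass over is the justification of $\gamma(f)=\sum_{\mathbf q}\hat\gamma(\mathbf q)\hat f(\mathbf q)$ (the paper invokes uniform convergence of spherical Fourier sums), which for $f\in\mathcal C^\infty(\mathbb T^d)$ is immediate from the rapid decay of $\hat f$.
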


\begin{proof} Denote $B_{\mathbb Z^d}(L)=\{\mathbf q\in\mathbb Z^d,|\mathbf q|\leq L\}$. If $f$ is sufficiently differentiable then the spherical Fourier sums $$S_L(f)(x):=\sum_{\mathbf q\in B_{\mathbb Z^d}(L)}\hat f(\mathbf q)e(\mathbf q\cdot x), x\in\mathbb T^d$$ coverge uniformly to $f$ as $L\rightarrow\infty$ (see Il'in \cite{I68}).

Hence $\gamma(f)=\lim_{L\rightarrow\infty}\gamma(S_L(f))=\lim_{L\rightarrow\infty}\sum_{\mathbf q\in B_{\mathbb Z^d}(L)}\hat\gamma(\mathbf q)\hat f(\mathbf q)$ and
\begin{equation}\label{fcboxsum}\begin{split}&\big|\gamma(f)-|\gamma|\mathrm m(f)\big|\\=&|\gamma(f)-\hat\gamma(0)\hat f(0)|\\=&|\lim_{L\rightarrow\infty}\sum_{\mathbf q\in B_{\mathbb Z^d}(L)\backslash\{0\}}\hat\gamma(\mathbf q)\hat f(\mathbf q)|\\\leq&\overline\lim_{L\rightarrow\infty}\big\||\mathbf q|^{-\frac{d+1}2}\hat\gamma(\mathbf q)\big\|_{l^2(B_{\mathbb Z^d}(L)\backslash\{0\})}\big\||\mathbf q|^{\frac{d+1}2}\hat f(\mathbf q)\big\|_{l^2(B_{\mathbb Z^d}(L)\backslash\{0\})}\\&\hskip5cm\text{(Cauchy-Schwarz)}\\\leq&\big\||\mathbf q|^{-\frac{d+1}2}\hat\gamma(\mathbf q)\big\|_{l^2(\mathbb Z^d\backslash\{0\})}\big\||\mathbf q|^{\frac{d+1}2}\hat f(\mathbf q)\big\|_{l^2(\mathbb Z^d)}\\\leq&\big(\big\||\mathbf q|^{-\frac{d+1}2}\hat\gamma(\mathbf q)\big\|_{l^2(B_{\mathbb Z^d}(A)\backslash\{0\})}+\big\||\mathbf q|^{-\frac{d+1}2}\hat\gamma(\mathbf q)\big\|_{l^2(\mathbb Z^d\backslash B_{\mathbb Z^d}(A))}\big)\|f\|_{\dot H^{\frac{d+1}2}}.\end{split}\end{equation}

However it follows from the assumption that $\big\||\mathbf q|^{-\frac{d+1}2}\hat\gamma(\mathbf q)\big\|_{l^2(B_{\mathbb Z^d}(A)\backslash\{0\})}\leq CA^{-\frac12}\big\||\mathbf q|^{-\frac{d+1}2}\big\|_{l^2(B_{\mathbb Z^d}(A)\backslash\{0\})}\leq C\big\||\mathbf q|^{-\frac{d+1}2}\big\|_{l^2(\mathbb Z^d\backslash\{0\})}\cdot A^{-\frac12}$. Remark the norm $\big\||\mathbf q|^{-\frac{d+1}2}\big\|_{l^2(\mathbb Z^d\backslash\{0\})}$ is a finite constant relying only on $d$.

On the other hand, as $\hat\gamma(\mathbf q)\leq|\gamma|\leq 1$ for any $\mathbf q$, $\big\||\mathbf q|^{-\frac{d+1}2}\hat\gamma(\mathbf q)\big\|_{l^2(\mathbb Z^d\backslash B_{\mathbb Z^d}(A))}$ is bounded by $\big\||\mathbf q|^{-\frac{d+1}2}\big\|_{l^2(\mathbb Z^d\backslash B_{\mathbb Z^d}(A))}\sim_d\|y^{-\frac{d+1}2}\|_{L^2(\mathbb R^d\backslash B_{\mathbb R^n}(A))}$, which has order $(\int_{y\in\mathbb R^d,|y|>A}y^{-d-1}\mathrm dy)^\frac12\sim_d (A^{-1})^{\frac12}=A^{-\frac12}$ where $B_{\mathbb R^n}(A)$ denotes the Euclidean ball of radius $A$ in $\mathbb R^n$.

By plugging these bounds into (\ref{fcboxsum}), it is clear that $\big|\gamma(f)-|\gamma|\mathrm m(f)\big|\lesssim_d(CA^{-\frac12}+A^{-\frac12})\|f\|_{\dot H^{\frac{d+1}2}}=(C+1)A^{-\frac12}\|f\|_{\dot H^{\frac{d+1}2}}$.
\end{proof}

Applying lemma to the measure $\mu'$ from Corollary \ref{fcA}, we obtain the following:

\declareCONST{effmeas'deltaconst}
\begin{proposition}\label{effmeas'}There exist effective constants $\ref{efftopometricballCONST} (d)$, $\ref{effmeascoeffCONST} (d)$ and $\ref{effmeas'deltaconst}(d)$,  such that:

If $G$ and $\mu$ satisfy Condition \ref{condG'} and \ref{condmu} respectively, 
$\log\frac1\epsilon\geq\max(\mathcal M_\psi ^{30d},4)$ and $\delta\in[\ref{effmeas'deltaconst}\mathcal F_{\phi(G)}^2(\log\log\frac1\epsilon)^{-1},\frac\alpha2]$, then there exists a measure $\mu'$ with total mass $|\mu'|\geq\alpha-\delta$ which is dominated by some $\mu''$ in the convex hull of $B_G^\mathrm{Mah}(\ref{efftopometricballCONST} \log\frac1\epsilon).\mu$ such that $$\big|\mu'(f)-|\mu'|\mathrm m(f)\big|\leq \ref{effmeascoeffCONST} (\log\frac1\epsilon)^{-\frac12{\ref{effmeas'deltaconst}}^{-1}\mathcal F_{\phi(G)}^{-2}\delta}\|f\|_{\dot H^{\frac{d+1}2}}, \forall f\in\mathcal C^\infty(\mathbb T^d).$$\end{proposition}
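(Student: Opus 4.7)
The plan is essentially to combine the two preceding technical results, Corollary \ref{fcA} and Lemma \ref{sobolev}, which together contain almost everything needed. Corollary \ref{fcA} produces a measure $\mu'$ with $|\mu'|\geq\alpha-\delta$, dominated by a convex combination $\mu''$ of pushforwards from $B_G^{\mathrm{Mah}}(\ref{fcAinXmetricballconst}\log\tfrac1\epsilon).\mu$, whose non-trivial Fourier coefficients up to frequency $A\geq (\log\tfrac1\epsilon)^{\frac15\ref{fcAinXdeltaconst}^{-1}\mathcal F_{\phi(G)}^{-2}\delta}$ satisfy $|\widehat{\mu'}(\mathbf q)|\leq\sqrt{\ref{fcAinXboundconst}}\cdot A^{-\frac12}$. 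Lemma \ref{sobolev} then converts exactly such Fourier decay into a quantitative weak-$*$ estimate against any Sobolev test function.

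So the proof I would write is a short deduction. First I would invoke Corollary \ref{fcA} with the hypotheses of the proposition, noting that the range $\delta\in[\ref{effmeas'deltaconst}\mathcal F_{\phi(G)}^{2}(\log\log\tfrac1\epsilon)^{-1},\tfrac\alpha2]$ (with $\ref{effmeas'deltaconst}$ chosen at least $5\ref{fcAinXdeltaconst}$) ensures the hypothesis $\delta\geq 5\ref{fcAinXdeltaconst}\mathcal F_{\phi(G)}^{2}(\log\log\tfrac1\epsilon)^{-1}$ of the corollary. This delivers $A$, $\mu'$, $\mu''$ with the properties above and in particular $|\mu'|\leq|\mu''|=1$, so Lemma \ref{sobolev} applies directly to $\gamma=\mu'$ with the constant $C=\sqrt{\ref{fcAinXboundconst}}$. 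The conclusion reads
\begin{equation*}
\bigl|\mu'(f)-|\mu'|\,\mathrm m(f)\bigr|\;\leq\;\ref{sobolevconst}\bigl(\sqrt{\ref{fcAinXboundconst}}+1\bigr)A^{-\frac12}\|f\|_{\dot H^{(d+1)/2}}.
\end{equation*}
Plugging in the lower bound on $A$ gives
\begin{equation*}
A^{-\frac12}\;\leq\;(\log\tfrac1\epsilon)^{-\frac{1}{10}\ref{fcAinXdeltaconst}^{-1}\mathcal F_{\phi(G)}^{-2}\delta},
\end{equation*}
and by choosing $\ref{effmeas'deltaconst}:=5\ref{fcAinXdeltaconst}$ the exponent matches the claimed $-\tfrac12\ref{effmeas'deltaconst}^{-1}\mathcal F_{\phi(G)}^{-2}\delta$. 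Setting $\ref{effmeascoeffCONST}:=\ref{sobolevconst}(\sqrt{\ref{fcAinXboundconst}}+1)$ and $\ref{efftopometricballCONST}:=\ref{fcAinXmetricballconst}$ (these are just renamings of constants already produced effectively in terms of $d$) finishes the estimate, and the assertions about total mass and the dominating convex combination are inherited verbatim from Corollary \ref{fcA}.

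There is essentially no obstacle left at this stage: all of the genuinely hard work is buried in Corollary \ref{fcA}, whose proof required the positive-entropy-in-$V_i$ extraction of Section \ref{vecinVi}, the arithmetic-progression/escape-from-line constructions of Section \ref{GVi}, and the Fourier-coefficient bookkeeping of Proposition \ref{fcupper} with its delicate balancing of the parameters $(n,s,l,T,R,\delta,A)$. The only thing to watch is the cosmetic one of making sure the constant names line up: the $\delta$ range in Corollary \ref{fcA} is $[5\ref{fcAinXdeltaconst}\mathcal F_{\phi(G)}^{2}(\log\log\tfrac1\epsilon)^{-1},\tfrac\alpha2]$, and the exponent in the final bound must be rewritten as $\tfrac12\ref{effmeas'deltaconst}^{-1}\mathcal F_{\phi(G)}^{-2}\delta$; both are achieved by the single choice $\ref{effmeas'deltaconst}=5\ref{fcAinXdeltaconst}$. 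All constants remain explicitly computable functions of $d$, as required.
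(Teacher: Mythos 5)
Your proposal is correct and is essentially the paper's own proof: invoke Corollary \ref{fcA}, apply Lemma \ref{sobolev} to $\mu'$ with $C=\sqrt{\ref{fcAinXboundconst}}$, and set $\ref{effmeas'deltaconst}=5\ref{fcAinXdeltaconst}$, $\ref{efftopometricballCONST}=\ref{fcAinXmetricballconst}$, $\ref{effmeascoeffCONST}=\ref{sobolevconst}(\sqrt{\ref{fcAinXboundconst}}+1)$. The exponent bookkeeping ($A^{-1/2}\leq(\log\frac1\epsilon)^{-\frac1{10}\ref{fcAinXdeltaconst}^{-1}\mathcal F_{\phi(G)}^{-2}\delta}$ matching $-\frac12\ref{effmeas'deltaconst}^{-1}\mathcal F_{\phi(G)}^{-2}\delta$) and the check that $|\mu'|\leq 1$ so Lemma \ref{sobolev} applies are exactly as in the paper.
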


\begin{proof}Let $\ref{effmeas'deltaconst}=5\ref{fcAinXdeltaconst}$, $\ref{efftopometricballCONST} =\ref{fcAinXmetricballconst}$, $\ref{effmeascoeffCONST} =\ref{sobolevconst}(d)(\sqrt{\ref{fcAinXboundconst}}+1)$ and apply Lemma \ref{sobolev} to $\mu'$ from Corollary \ref{fcA}. Note all constants are effective and depend only on the dimension $d$.\end{proof}

Proposition \ref{effmeas'} trivially implies the measure-theoretical form (Theorem \ref{effmeas}) of our main result.

\begin{proof}[Proof of Theorem \ref{effmeas}] By Theorem \ref{condGfield}, Condition \ref{condG'} is implied by Condition \ref{condG}, so Theorem \ref{effmeas'} applies. 

Let $\ref{efftopoepsilonCONST} =2^{-\max(\mathcal M_\psi ^{30d},4)}$, $\ref{effmeasdeltaCONST} =\ref{effmeas'deltaconst}\mathcal F_{\phi(G)}^2$, and $\ref{efftopometricballCONST} $, $\ref{effmeascoeffCONST} $ be the same as in Theorem \ref{effmeas'}. All these constants depend effectively on $d$, $\mathcal M_\psi $ and $\mathcal F_{\phi(G)}$, therefore evenually only on the group $G$.\end{proof}

\section{Topological results}\label{toporesults}

\hskip\parindent We now prove the main result of this paper, as well as two corollaries which give a clearer picture of the $G$-action on the torus.

\subsection{Density of the orbit of a dispersed set}

\hskip\parindent In parallel to the measure-theoretical Proposition \ref{effmeas'}, we prove first a topological result which assumes Condition \ref{condG'} and specifies how the constants depend on number-theoretical features of group $G$.

A subset $E$ of a metric space is said to be $\epsilon$-separated if any pair of points in $E$ are at least of distance $\epsilon$ from each other.

\declareCONST{efftopo'alpha} \declareCONST{efftopo'density}
\begin{proposition}\label{efftopo'}There are effective constants $\ref{efftopometricballCONST} (d)$, $\ref{efftopo'alpha}(d)$  and $\ref{efftopo'density}(d)$ such that:

Suppose $G<SL_d(\mathbb Z)$ satisfies Condition \ref{condG'}, if $\log\frac1\epsilon\geq\max(\mathcal M_\psi ^{30d},4)$ and some $\epsilon$-separated set $E$ has size $|E|\geq\epsilon^{-\alpha d}$ where $\alpha\in[\ref{efftopo'alpha}\mathcal F_{\phi(G)}^2\frac{\log\log\log\frac1\epsilon}{\log\log\frac1\epsilon},1)$, then $B_G^\mathrm{Mah}(\ref{efftopometricballCONST} \log\frac1\epsilon).E$ is $(\log\frac1\epsilon)^{-\ref{efftopo'density}\mathcal F_{\phi(G)}^{-2}\alpha}$-dense.\end{proposition}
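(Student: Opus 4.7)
The plan is to deduce Proposition \ref{efftopo'} from the measure-theoretical Proposition \ref{effmeas'} by attaching the uniform atomic measure $\mu=|E|^{-1}\sum_{x\in E}\delta_x$ to $E$ and then testing the resulting measure against sharply localised bumps. Because $E$ is $\epsilon$-separated, every measurable partition $\mathcal P$ of $\mathbb T^d$ with $\mathrm{diam}\mathcal P<\epsilon$ places each point of $E$ in its own atom, giving $H_\mu(\mathcal P)\geq\log|E|\geq\alpha d\log(1/\epsilon)$. A harmless passage from $\epsilon$ to $\epsilon/2$ (which costs only a $(1-o(1))$ factor in $\alpha$) turns this into the non-strict inequality required by Condition \ref{condmu}.

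I then set $\delta=\alpha/2$; since our hypothesis on $\alpha$ is stronger than the floor $\ref{effmeas'deltaconst}\mathcal{F}_{\phi(G)}^2(\log\log\frac1\epsilon)^{-1}$ imposed on $\delta$ by Proposition \ref{effmeas'}, that proposition applies and produces a measure $\mu'$, dominated by some $\mu''$ lying in the convex hull of $B_G^\mathrm{Mah}(\ref{efftopometricballCONST}\log\frac1\epsilon).\mu$, with $|\mu'|\geq\alpha/2$ and
$$\bigl|\mu'(f)-|\mu'|\,\mathrm m(f)\bigr|\;\leq\;\ref{effmeascoeffCONST}\Bigl(\log\tfrac1\epsilon\Bigr)^{-\frac14\ref{effmeas'deltaconst}^{-1}\mathcal{F}_{\phi(G)}^{-2}\alpha}\|f\|_{\dot H^{\frac{d+1}2}}.$$
Because $\mu$ is supported on $E$, the measure $\mu'$ is then supported in $B_G^\mathrm{Mah}(\ref{efftopometricballCONST}\log\frac1\epsilon).E$.

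Next, I would fix once and for all a smooth non-negative bump $\phi$ on $\mathbb R^d$ with support in the unit ball, $\int\phi\gtrsim_d 1$ and $\|\phi\|_{\dot H^{(d+1)/2}}\lesssim_d 1$. For an arbitrary target $y\in\mathbb T^d$ and scale $\rho>0$ small, let $f_y(x)=\phi((x-y)/\rho)$, so that $\mathrm m(f_y)\gtrsim_d \rho^d$, $\mathrm{supp}(f_y)\subset B(y,\rho)$, and a Fourier rescaling gives $\|f_y\|_{\dot H^{(d+1)/2}}\lesssim_d \rho^{-1/2}$. Substituting into the Sobolev inequality above yields
$$\mu'(f_y)\;\geq\;c_d\frac{\alpha}{2}\rho^d \;-\; C_d\Bigl(\log\tfrac1\epsilon\Bigr)^{-\frac14\ref{effmeas'deltaconst}^{-1}\mathcal{F}_{\phi(G)}^{-2}\alpha}\rho^{-1/2}.$$
I would then choose $\rho=(\log\frac1\epsilon)^{-\ref{efftopo'density}\mathcal{F}_{\phi(G)}^{-2}\alpha}$ with $\ref{efftopo'density}$ small enough that $\ref{efftopo'density}(d+\tfrac12)$ is a definite fraction of $\tfrac14\ref{effmeas'deltaconst}^{-1}$. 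Then the first term dominates the second provided $\alpha\bigl(\log\frac1\epsilon\bigr)^{\eta\,\mathcal{F}_{\phi(G)}^{-2}\alpha}\gtrsim_d 1$ for a positive constant $\eta=\eta(d)$, and positivity of $\mu'(f_y)$ then forces $\mathrm{supp}(\mu')\cap B(y,\rho)\neq\emptyset$, planting a point of $B_G^\mathrm{Mah}(\ref{efftopometricballCONST}\log\frac1\epsilon).E$ within $\rho$ of $y$; since $y$ is arbitrary, this is the desired $\rho$-density.

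The only delicate point—and the source of the iterated logarithm in the statement—is verifying $\alpha(\log\frac1\epsilon)^{\eta\mathcal{F}_{\phi(G)}^{-2}\alpha}\gtrsim 1$. Taking logarithms, this is equivalent to
$$\alpha\,\log\log\tfrac1\epsilon\;\gtrsim_d\;\mathcal{F}_{\phi(G)}^2\log(1/\alpha).$$
Under the hypothesis $\alpha\geq\ref{efftopo'alpha}\mathcal{F}_{\phi(G)}^2\tfrac{\log\log\log(1/\epsilon)}{\log\log(1/\epsilon)}$ one automatically has $1/\alpha\leq\log\log(1/\epsilon)$, so $\log(1/\alpha)\leq\log\log\log\frac1\epsilon+O(\log\mathcal{F}_{\phi(G)})$, and the inequality holds provided $\ref{efftopo'alpha}$ is a sufficiently large multiple of $\eta^{-1}$. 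Everything else is standard Fourier/Sobolev calculus on $\mathbb T^d$ together with bookkeeping of constants.
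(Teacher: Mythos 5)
Your proposal is correct and follows essentially the same route as the paper: attach the uniform measure on $E$, invoke Proposition \ref{effmeas'} with $\delta=\alpha/2$, test against a smooth bump at an arbitrary point, and choose $\rho$ as a negative power of $\log\frac1\epsilon$ so that positivity of $\mu'(f)$ yields the density, with the lower bound on $\alpha$ ensuring the main term beats the Sobolev error. The only differences are cosmetic (unnormalized versus $L^1$-normalized bump, a sharper $\rho^{-1/2}$ scaling in place of the paper's cruder integer-derivative bound, and logarithmic bookkeeping in the final verification), none of which affects the argument.
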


\begin{proof} Let $\mu=\frac1{|E|}\sum_{x\in E}\delta_x$ be the uniform probability measure on $E$. For any partition $\mathcal P$ of the torus such that $\mathrm{diam}\mathcal P\leq\epsilon$, each atom contains at most one point from $E$. Thus $H_\mu(\mathcal P)=\sum_{P\in\mathcal P}-\mu(P)\log\mu(P)=\sum_{x\in E}-\frac1{|E|}\log\frac1{|E|}=\log|E|\geq\alpha d\log\frac1\epsilon$ satisfies Condition \ref{condmu}.

Fix a positive bump function $\theta\in\mathcal C^\infty(\mathbb R^d)$ supported on the ball $B(0,1)$ centered at the origin with radius $1$ such that $\int_{y\in\mathbb R^d}\theta(y)\mathrm d y=1$. For any $x\in\mathbb T^d$ and any positive number $\rho<\frac12$, on the ball $B(x,\rho)\subset\mathbb T^d$ centered at $x$ with radius $\rho$ we define $f(x+v)=\rho^{-d}\theta(\frac v\rho), v\in\mathbb R^d, |v|\leq r$; on the rest of $\mathbb T^d$ let $f$ be equal to $0$. Then $f$ is a positive function in $\mathcal C^\infty(\mathbb T^d)$ and $\mathrm m(f)=1$.

Remark \begin{align*}
\|f\|_{\dot H^{\frac{d+1}2}}\lesssim_d&\big(\sum_{\mathbf q\in\mathbb Z^d}\sum_{j=1}^d|q_j|^{d+1}|\hat f(\mathbf q)|^2\big)^{\frac12}\\
\leq\ &\Big(\sum_{j=1}^d\sum_{\mathbf q\in\mathbb Z^d}\big(q_j^{\lceil\frac{d+1}2\rceil}\hat f(\mathbf q)\big)^2\Big)^{\frac12}\\
=\ &\big(\sum_{j=1}^d\|(-\frac{\partial}{\partial x_j})^{\lceil\frac{d+1}2\rceil}f\|_{L^2(\mathbb T^d)}^2\big)^\frac12.
\end{align*}
Moreover, an easy fact is that the last expression is proportional to $\rho^{-\frac d2-\lceil\frac{d+1}2\rceil}$ once $\theta$ is fixed. Thus there is an effective dimensional constant \declaresubconst{efftoposubconst} $\ref{efftoposubconst}(d)$ such that $\|f\|_{\dot H^{\frac{d+1}2}}\leq\ref{efftoposubconst}\rho^{-\frac d2-\lceil\frac{d+1}2\rceil}$.

If $\alpha\geq \ref{efftopo'alpha}\mathcal F_{\phi(G)}^2\frac{\log\log\log\frac1\epsilon}{\log\log\frac1\epsilon}$ with $\ref{efftopo'alpha}=\max(8\ref{effmeas'deltaconst},3\ref{effmeascoeffCONST} \ref{efftoposubconst}\ref{voutierconst}^{-2})$,  which depends effectively on $d$, then as $\log\log\log\frac1\epsilon\geq\log\log4=1$,  we can apply Proposition \ref{effmeas'} to $\mu$ with $\delta=\frac\alpha2$ and get a measure $\mu'$ such that \begin{equation}\label{measurepositive}\begin{split}\mu'(f)\geq&(\alpha-\delta)\mathrm m(f)-\ref{effmeascoeffCONST} (\log\frac1\epsilon)^{-\frac12{\ref{effmeas'deltaconst}}^{-1}\mathcal F_{\phi(G)}^{-2}\delta}\cdot\ref{efftoposubconst}\rho^{-\frac d2-\lceil\frac{d+1}2\rceil}\\
=&\frac\alpha2-\ref{effmeascoeffCONST} \ref{efftoposubconst}(\log\frac1\epsilon)^{-\frac14{\ref{effmeas'deltaconst}}^{-1}\mathcal F_{\phi(G)}^{-2}\alpha}\cdot \rho^{-\frac d2-\lceil\frac{d+1}2\rceil}.\end{split}\end{equation}

Take \begin{equation}\rho_0=(\log\frac1\epsilon)^{-\frac18(\frac d2+\lceil\frac{d+1}2\rceil)^{-1}{\ref{effmeas'deltaconst}}^{-1}\mathcal F_{\phi(G)}^{-2}\alpha}.\end{equation}
Then (\ref{measurepositive}) rewrites \begin{equation}\label{positivesupport}\begin{split}\mu'(f)\geq&\frac\alpha2-\ref{effmeascoeffCONST} \ref{efftoposubconst}(\log\frac1\epsilon)^{-\frac18{\ref{effmeas'deltaconst}}^{-1}\mathcal F_{\phi(G)}^{-2}\alpha}\\
\geq&\frac12\ref{efftopo'alpha}\mathcal F_{\phi(G)}^2\frac{\log\log\log\frac1\epsilon}{\log\log\frac1\epsilon}-\ref{effmeascoeffCONST} \ref{efftoposubconst}(\log\frac1\epsilon)^{-\frac{\ref{efftopo'alpha}}{8\ref{effmeas'deltaconst}}\cdot\frac{\log\log\log\frac1\epsilon}{\log\log\frac1\epsilon}}\\
\geq&\frac12\ref{efftopo'alpha}\ref{voutierconst}^2\frac{1}{\log\log\frac1\epsilon}-\ref{effmeascoeffCONST} \ref{efftoposubconst}(\log\frac1\epsilon)^{-\frac{\log\log\log\frac1\epsilon}{\log\log\frac1\epsilon}}\\
\geq&(\frac12\ref{efftopo'alpha}\ref{voutierconst}^2-\ref{effmeascoeffCONST} \ref{efftoposubconst})\cdot\frac1{\log\log\frac1\epsilon}.
\end{split}\end{equation}

So as $\ref{efftopo'alpha}>2\ref{effmeascoeffCONST} \ref{efftoposubconst}\ref{voutierconst}^{-2}$, $\mu'(f)$ is strictly positive. In particular, $\mathrm{supp}\mu'\cap\mathrm{supp}f\neq\emptyset$. Since $\mathrm{supp}\mu=E$ and there is $\mu''$ in the convex hull of $B_F^\mathrm{Mah}(\ref{efftopometricballCONST} \log\frac1\epsilon).\mu$ such that $\mu'\leq\mu''$, $\mathrm{supp}\mu'\subset\mathrm{supp}\mu''\subset B_F^\mathrm{Mah}(\ref{efftopometricballCONST} \log\frac1\epsilon).E$. Moreover $f$ is supported on $B(x,\rho_0)$ for an arbitrary point $x\in\mathbb T^d$, therefore we proved $B_F^\mathrm{Mah}(\ref{efftopometricballCONST} \log\frac1\epsilon).E$ is $\rho_0$-dense. It suffices to let $\ref{efftopometricballCONST} $ be the same as in Proposition \ref{effmeas'} and set $\ref{efftopo'density}=\frac18(\frac d2+\lceil\frac{d+1}2\rceil)^{-1}{\ref{effmeas'deltaconst}}^{-1}$ to conclude.\end{proof}

\begin{proof}[Proof of Theorem \ref{efftopo}]By Theorem \ref{condGfield}, Proposition \ref{efftopo'} applies. 

Let $\ref{efftopoepsilonCONST} =2^{-\max(\mathcal M_\psi ^{30d},4)}$, $\ref{efftopometricballCONST} $ be the same as in Theorem \ref{effmeas'}, $\ref{efftopoalphaCONST} =\ref{efftopo'alpha}\mathcal F_{\phi(G)}^2$ and $\ref{efftopodensityCONST} =\ref{efftopo'density}\mathcal F_{\phi(G)}^{-2}$; all of which are effective and depend on $d$, $\mathcal M_\psi $ and $\mathcal F_{\phi(G)}$, thus eventually only on $G$ itself.\end{proof}

\subsection{Density of the orbit of a large set}

\hskip\parindent In Theorems \ref{efftopo'} and \ref{efftopo}, we assumed the set $E$ is sufficiently dispersed with respect to its size. As a corollary to these theorems, the results in this subsection drop this hypothesis. We are going to show that the full $G$-orbit of any sufficiently large subset $E$ in $\mathbb T^d$ is going to have some density. 

If we care about a metric ball $B_G^\mathrm{Mah}(L).E$ with respect to the logarithmic Mahler measure (or the word metric) on $G$ in the orbit instead to the full orbit, then it is still necessary to assume $E$ is $\epsilon_0$-separated for certain $\epsilon_0$. Nevertheless, $\epsilon_0$ is not going to rely on $|E|$. Moreover, $\epsilon_0$ only affects how large the metric ball has to be but has no impact on the effective density we obtain. 

\declareCONST{largetopo'boundbyMCONST} \declareCONST{largetopo'boundbyFCONST} \declareCONST{largetopometricballCONST} \declareCONST{largetopo'densityCONST} 
\begin{proposition}\label{largetopo'} Suppose $G$ satisifes Condition \ref{condG'} and the image $\psi(E)$ of a finite set $E\subset\mathbb T^d$ under $\psi$ is $\epsilon_0\mathcal S_\Gamma$-separated in $X=\psi(\mathbb T^d)$. If \begin{equation}\label{largeassum} \log\log |E|\geq\max\big(\ref{largetopo'boundbyMCONST} \mathcal M_\psi^{30d},\max(\mathcal F_{\phi(G)},2)^{\ref{largetopo'boundbyFCONST} \mathcal F_{\phi(G)}^2}\big),\end{equation} then the set $B_G^\mathrm{Mah}(\log\frac1{\epsilon_0}+\ref{largetopometricballCONST} \log |E|).E$ is $(\log\log |E|)^{-\ref{largetopo'densityCONST} \mathcal F_{\phi(G)}^2}$-dense, where $\ref{largetopo'boundbyMCONST} ,\cdots,\ref{largetopo'densityCONST} $ are effective constants that depend only on $d$. \end{proposition}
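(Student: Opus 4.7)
I plan to deduce the proposition from the effective topological Proposition \ref{efftopo'} applied to the uniform counting measure $\mu_E = |E|^{-1}\sum_{x \in E}\delta_x$ of $E$, possibly preceded by a preprocessing step via a group element. First, I would record the fundamental volume-packing inequality: since $E$ consists of $|E|$ pairwise $\epsilon_0$-separated points in $\mathbb{T}^d$, the disjoint balls of radius $\epsilon_0/2$ around the points give $\log(1/\epsilon_0) \geq \log|E|/d - O_d(1)$. This inequality will link the two free parameters $|E|$ and $\epsilon_0$ throughout the argument.

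Next, observe that each atom of a partition of diameter at most $\epsilon_0$ contains at most one point of $E$, so $\mu_E$ satisfies Condition \ref{condmu} at any scale $\epsilon \leq \epsilon_0$ with $\alpha = \log|E|/(d \log(1/\epsilon))$. The main step is to apply Proposition \ref{efftopo'} with $\epsilon=\epsilon_0$. The first precondition $\log(1/\epsilon_0) \geq \mathcal M_\psi^{30d}$ follows from the hypothesis $\log\log|E| \geq \ref{largetopo'boundbyMCONST}\mathcal M_\psi^{30d}$ together with the packing bound, once $\ref{largetopo'boundbyMCONST}$ is chosen large enough. The second precondition $\alpha \geq \ref{efftopo'alpha}\mathcal F_{\phi(G)}^2 \log\log\log(1/\epsilon_0)/\log\log(1/\epsilon_0)$ should be verified using the assumption $\log\log|E|\geq\max(\mathcal F_{\phi(G)},2)^{\ref{largetopo'boundbyFCONST}\mathcal F_{\phi(G)}^2}$ and elementary manipulations exploiting the monotonicity of $L \mapsto \log L/L$ on $L>e$. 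The resulting density $(\log(1/\epsilon_0))^{-\ref{efftopo'density}\mathcal F_{\phi(G)}^{-2}\alpha}$, with Mahler ball of radius $\ref{efftopometricballCONST}\log(1/\epsilon_0)$, implies the claimed conclusion when $\log(1/\epsilon_0)$ is comparable to $\log|E|/d$ (i.e.\ near the packing bound), since in that regime it gives density of order $(\log|E|)^{-c/\mathcal F^2}$, much stronger than the target $(\log\log|E|)^{-\ref{largetopo'densityCONST}\mathcal F_{\phi(G)}^2}$.

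The main obstacle is the ``ultra-sparse'' regime where $\log(1/\epsilon_0)$ is much larger than $\log|E|/d$; here $\alpha = \log|E|/(d\log(1/\epsilon_0))$ is small, and the quantity $\alpha\log\log(1/\epsilon_0)$ governing the density exponent may fail to exceed $\mathcal F_{\phi(G)}^2\log\log\log|E|$ by direct application alone. To handle this regime, I would exploit the extra slack $\ref{largetopometricballCONST}\log|E|$ in the Mahler ball to preprocess $E$: using the expanding element $u\in G$ from Proposition \ref{expand}, one can apply $u^n$ for $n \asymp \log|E|/\mathcal F_{\phi(G)}$ so that $u^n.E$ or the thickened set $\bigcup_{k}u^k.E$ becomes better distributed in the $V_i$-direction, effectively converting some of the ``extra separation'' $\log(1/\epsilon_0)-\log|E|/d$ into additional entropy for the counting measure. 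The delicate point is to verify that after preprocessing the entropy hypothesis holds at an improved scale and that the resulting Mahler ball, combined with the preprocessing cost, fits within $B_G^{\mathrm{Mah}}(\log(1/\epsilon_0)+\ref{largetopometricballCONST}\log|E|)$ while recovering the uniform density bound $(\log\log|E|)^{-\ref{largetopo'densityCONST}\mathcal F_{\phi(G)}^2}$.
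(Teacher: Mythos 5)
There is a genuine gap: the second half of your plan, which is exactly the regime the proposition is really about, is only a hope, not an argument. Your direct application of Proposition \ref{efftopo'} with $\epsilon=\epsilon_0$ and $\alpha=\log|E|/(d\log\frac1{\epsilon_0})$ works only when the true minimal gap of $E$ is comparable to the packing bound $|E|^{-1/d}$; when $E$ is clustered (minimal gap far below $|E|^{-1/d}$), $\alpha$ is too small for the hypothesis of Proposition \ref{efftopo'} and the resulting density $(\log\frac1{\epsilon_0})^{-c\mathcal F_{\phi(G)}^{-2}\alpha}$ is useless, as you note. But your proposed fix --- apply $u^n$ with $n\asymp\log|E|/\mathcal F_{\phi(G)}$ to ``convert extra separation into entropy'' --- does not survive scrutiny as stated: $u^n$ expands only the single direction $V_i$ and \emph{contracts} all the others, so for a clustered set the images of distinct points can collapse together (differences nearly orthogonal to $V_i$ gain nothing and lose everything else), and there is no reason $u^n.E$ or $\bigcup_k u^k.E$ satisfies Condition \ref{condmu} at any improved scale. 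Making this work for the whole set $E$ is precisely the difficulty, and you have deferred it ("the delicate point is to verify...") rather than resolved it.

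The paper's proof avoids this entirely and is worth comparing. It never tries to boost the entropy of $E$ itself: by pigeonhole it finds two points $z,z'\in E$ with $|z-z'|\lesssim_d|E|^{-1/d}$, takes the single difference vector $\tilde y=\psi(\tilde z-\tilde z')$ (whose length is between $\epsilon_0\mathcal S_\Gamma$ and $\lesssim_d\mathcal M_\psi|E|^{-1/d}\mathcal S_\Gamma$), picks the dominant component $\tilde y_i$, and applies the elements $u^na_t$ with $a_t$ from the arithmetic-progression Proposition \ref{arithpro}, $s\sim(\log|E|)^{1/10}$, and $n$ chosen so that $|\zeta_u^i|^ns|\Delta||\tilde y_i|\approx\mathcal M_\psi^{-1}\mathcal S_\Gamma$. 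This produces $s$ points of $E_2-E_2$ (with $E_2=\{u^na_t.x\}$) that are $2s^{-2}$-separated; a covering argument on difference sets then yields an $s^{-2}$-separated subset $E_3\subset E_2$ with $|E_3|\geq\sqrt s$, to which Proposition \ref{efftopo'} is applied at scale $\epsilon=s^{-2}$ with the \emph{fixed} exponent $\alpha=\frac1{4d}$, giving density in terms of $\log\log|E|$ uniformly in $\epsilon_0$. The lower bound $|\tilde y_i|\gtrsim\epsilon_0\mathcal S_\Gamma$ is what caps $n$ and produces exactly the $\log\frac1{\epsilon_0}$ term in the Mahler radius. These three ingredients --- the difference-vector/pigeonhole step, the progression $\{a_t\}$, and the $E_2-E_2$ separated-subset extraction --- are absent from your proposal, and without them the clustered case is not handled.
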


\begin{remark}\label{largermk}Here we adopt the assumption that $\psi(E)$ is $\epsilon_0\mathcal S_\Gamma$ dense in $X$ because this would be the more convenient formulation for later applications. However it is completely fine to replace the assumption by a more natural one, namely $E$ is $\epsilon_0$-separated in $\mathbb T^d$. Actually if this holds then $\psi(E)$ is $\|\psi^{-1}\|^{-1}\epsilon_0$-separated in $X$. However $\|\psi^{-1}\|^{-1}\epsilon_0\geq\mathcal M_\psi^{-1}\epsilon_0\mathcal S_\Gamma$. So the proposition applies with $\mathcal M_\psi^{-1}\epsilon_0$ in place of $\epsilon_0$ and concludes that the set $B_G^\mathrm{Mah}(\log\frac1{\epsilon_0}+\log\mathcal M_\psi+\ref{largetopometricballCONST} \log |E|).E$ is $(\log\log |E|)^{-\ref{largetopo'densityCONST} \mathcal F_{\phi(G)}^2}$-dense. However since $\log|E|\gg\log\log|E|\gtrsim_{d}\mathcal M_\psi^{30d}\gg\log\mathcal M_\psi$, by making $\ref{largetopometricballCONST} $ slightly larger the term $\log\mathcal M_\psi$ can be absorbed into $\ref{largetopometricballCONST} \log |E|$ and the original claim still holds.\end{remark}

\begin{proof}[Proof of Proposition]
As the torus $\mathbb T^d$ can be covered by at most $|E|-1$ balls of radius $\frac{\sqrt d}{\lfloor\sqrt[d]{|E|-1}\rfloor}\lesssim_d |E|^{-\frac1d}$, there are two elements $z,z'\in E$ such that $|z-z'|\lesssim_d |E|^{-\frac1d}$, where $|z-z'|$ denotes the distance in $\mathbb T^d$

We select lifts $\tilde z,\tilde z'\in\mathbb R^d$ of $z,z'\in\mathbb T^d$ such that $|\tilde z-\tilde z'|=|z-z'|$. Denote $\tilde y=\psi(\tilde z-\tilde z')\in \mathbb R^d$, then by the fact $\|\psi\|\leq\mathcal M_\psi\mathcal S_\Gamma$, \declaresubconst{largetopo'subconst1} $|\tilde y|\leq\ref{largetopo'subconst1}\mathcal M_\psi |E|^{-\frac1d}\mathcal S_\Gamma,$ where $\ref{largetopo'subconst1}$ depends effectively on $d$. On the other hand $\tilde y$ is a lift of $\psi(z)-\psi(z')$, thus $|\tilde y|\geq\epsilon_0\mathcal S_\Gamma$ by assumption.

$\tilde y$ can be decomposed as $\sum_{j=1}^{r_1+r_2}\tilde y_j$ where $\tilde y_j$ is the orthogonal projection onto $V_j$. Without loss of generality, assume $|\tilde y_i|\geq|\tilde y_j|, \forall j\neq i$; then \begin{equation}\label{coordlower}|\tilde y_i|\geq (r_1+r_2)^{-\frac12}\epsilon_0 S_\Gamma\end{equation} but \begin{equation}\label{coordupper}|\tilde y_j|\leq |\tilde y_i|\leq\ref{largetopo'subconst1}\mathcal M_\psi |E|^{-\frac1d}\mathcal S_\Gamma, \forall j\neq i.\end{equation}

Let $s$ be the largest positive integer such that \begin{equation}\label{sbound}2^{s^{10}}\ref{largetopo'subconst1}|E|^{-\frac1{3d}}\leq \frac14(r_1+r_2)^{-\frac12},\end{equation} then by the fact $\mathcal F_\phi(G)\gtrsim_d1$, for some effective constant \declaresubconst{largetopo'subconst2} $\ref{largetopo'subconst2}(d)$ under the assumption \begin{equation}\label{largeassum1}\log |E|\geq\ref{largetopo'subconst2}\mathcal F_{\phi(G)}^{10}\end{equation} it is guaranteed that \begin{equation}\label{ssize}s\sim_{d}(\log |E|)^\frac1{10}\end{equation} and $$s\geq\max\big(\ref{arithprolengthconst}\mathcal F_{\phi(G)},4\big),$$ where $\ref{arithprolengthconst}$ was given by Proposition \ref{arithpro}.

Therefore we may apply Propositon \ref{arithpro} to construct a sequence of elements $a_0,\cdots,a_{s-1}\in G$. We are interested in the set $\{\times_{\phi(a_t)}.\tilde y\}_{t=0,\cdots,s-1}$. Notice the $j$-th coordinate of the $t$-th element is $\zeta_{a_t}^j\tilde y_j$.

By Propositon \ref{arithpro}, $|\zeta_{a_t}^i-(1+t\Delta)|\leq s^{-1}|\Delta|\leq\frac14|\Delta|$. Thus for any two distinct $t$, $t'$, $|\zeta_{a_t}^i\tilde y_i-\zeta_{a_{t'}}^i\tilde y_i|$ is bounded from below by $(|\Delta|-2\cdot\frac14|\Delta|)y_i\geq \frac{|\Delta|}2|y_i|$, and from above by $\big((s-1)|\Delta|+2\cdot\frac14|\Delta|\big)|\tilde y_i|\leq s|\Delta|\cdot|\tilde y_i|$, where $s^{-\ref{arithproexpoconst}\mathcal F_{\phi(G)}^2}\leq|\Delta|\leq s^{-3}$.

Assume in addition \begin{equation}\label{largeassum2}|E|\geq\mathcal M_\psi^{3d},\end{equation} then $\mathcal M_\psi\mathcal |E|^{-\frac1d}\leq\mathcal M_\psi^{-1}|E|^{-\frac1{3d}}$.

So as long as (\ref{largeassum1}) holds with sufficiently large $\ref{largetopo'subconst2}$, as $\mathcal F_{\phi(G)}\gtrsim_d 1$ and $r_1+r_2\leq d-1$,
\begin{align*}s|\Delta|\cdot|\tilde y_i|\leq&s\cdot s^{-3}\cdot\ref{largetopo'subconst1}\mathcal M_\psi |E|^{-\frac1d}\mathcal S_\Gamma\\
\leq&\ref{largetopo'subconst1}\mathcal M_\psi^{-1} |E|^{-\frac1{3d}}\mathcal S_\Gamma\\
\leq&\frac12(r_1+r_2)^{-\frac12}\mathcal M_\psi^{-1}\mathcal S_\Gamma.\end{align*}

Let $u\in G$ be given by Proposition \ref{expand}, then $|\zeta_u^i|\geq 2^{d(\frac r2+1)\mathcal F_{\phi(G)}}$ and $|\zeta_u^j|<1$ for all $j\neq i$. By the observation above, we may take an integer $n\geq 0$ such that \begin{equation}\label{nupper}|\zeta_u^i|^{-1}\cdot\frac12(r_1+r_2)^{-\frac12}\mathcal M_\psi^{-1}\mathcal S_\Gamma\leq|\zeta_u^i|^ns|\Delta|\cdot|\tilde y_i|\leq\frac12(r_1+r_2)^{-\frac12}\mathcal M_\psi^{-1}\mathcal S_\Gamma,\end{equation} which this also implies \begin{equation}\label{nlower}|\zeta_u^i|^n\cdot s|\Delta|\cdot|\tilde y_i|\geq|\zeta_u^i|^{-1}\cdot\frac12(r_1+r_2)^{-\frac12}\mathcal M_\psi^{-1}\mathcal S_\Gamma.\end{equation}

For any pair $t\neq t'$, \begin{equation}\label{coordiupper}\begin{split}|\zeta_{u^na_t}^i\tilde y_i-\zeta_{u^na_{t'}}^i\tilde y_i|=&|\zeta_u^i|^n|\zeta_{a_t}^i\tilde y_i-\zeta_{a_{t'}}^i\tilde y_i|\leq|\zeta_u^i|^n\cdot s|\Delta|\cdot|\tilde y_i|\\
\leq&\frac12(r_1+r_2)^{-\frac12}\mathcal M_\psi^{-1}\mathcal S_\Gamma.\end{split}\end{equation} On the other hand, by Propositon \ref{arithpro}, $h^\mathrm{Mah}(\phi(a_t))\leq s^{10}$, hence $|\zeta_{a_t}^j|\leq 2^{s^{10}}, \forall j$. Therefore if $j\neq i$, using $|\zeta_u^j|<1$ and (\ref{coordupper}), (\ref{sbound}), (\ref{largeassum2}), 
\begin{equation}\label{coordjupper}\begin{split}
&|\zeta_{u^na_t}^j\tilde y_j-\zeta_{u^na_{t'}}^j\tilde y_j|\\
\leq&|\zeta_u^j|^n(|\zeta_{a_t}^j|+|\zeta_{a_{t'}}^j|)|\tilde y_j|\leq 1\cdot (2^{s^{10}}+2^{s^{10}})\ref{largetopo'subconst1}\mathcal M_\psi |E|^{-\frac1d}\mathcal S_\Gamma\\
\leq&2\cdot2^{s^{10}}\ref{largetopo'subconst1}\mathcal M_\psi^{-1}|E|^{-\frac1{3d}}\mathcal S_\Gamma\leq2\cdot\frac14(r_1+r_2)^{-\frac12}\mathcal M_\psi^{-1}\mathcal S_\Gamma\\
=&\frac12(r_1+r_2)^{-\frac12}\mathcal M_\psi^{-1}\mathcal S_\Gamma.\end{split}\end{equation}

By (\ref{coordiupper}) and(\ref{coordjupper}), the orthogonal projection of $\times_{\phi(u^na_t)}.\tilde y-\times_{\phi(u^na_{t'})}.\tilde y$ is bounded by $\frac12(r_1+r_2)^{-\frac12}$ in each $V_j$, $1\leq j\leq r_1+r_2$, hence $$|\times_{\phi(u^na_t)}.\tilde y-\times_{\phi(u^na_{t'})}.\tilde y|\leq \frac12\mathcal M_\psi^{-1}\mathcal S_\Gamma.$$ Since $\times_{\phi(u^na_t)}.\tilde y=\psi\big(u^na_t.(\tilde z-\tilde z')\big), \forall t$ and $\|\psi^{-1}\|\leq\mathcal M_\psi\mathcal S_\Gamma^{-1}$, $$|u^na_t.(\tilde z-\tilde z')-u^na_{t'}.(\tilde z-\tilde z')|\leq\|\psi^{-1}\|\cdot|\times_{\phi(u^na_t)}.\tilde y-\times_{\phi(u^na_{t'})}.\tilde y|\leq \frac12.$$

Because $u^na_t.(\tilde z-\tilde z')\in\mathbb R^d$ is the lift of $u^na_t.(z-z')\in \mathbb T^d$ for all $t$, this bound implies the distance $|u^na_t.(z-z')-u^na_{t'}.(z-z')|$ between $u^na_t.(z-z')$ and $u^na_{t'}.(z-z')$ on $\mathbb T^d$ is exactly $|u^na_t.(\tilde z-\tilde z')-u^na_{t'}.(\tilde z-\tilde z')|$. Therefore \begin{equation}\label{deviationinX}\begin{split}
&|u^na_t.(z-z')-u^na_{t'}.(z-z')|\\
=&|u^na_t.(\tilde z-\tilde z')-u^na_{t'}.(\tilde z-\tilde z')|\\
\geq&\|\psi\|^{-1}|\times_{\phi(u^na_t)}.\tilde y-\times_{\phi(u^na_{t'})}.\tilde y|\\
\geq&\mathcal M_\psi^{-1}\mathcal S_\Gamma^{-1}|\times_{\phi(u^na_t)}.\tilde y_i-\times_{\phi(u^na_{t'})}.\tilde y_i|\\
\geq&\mathcal M_\psi^{-1}\mathcal S_\Gamma^{-1}|\zeta_u^i|^n|\times_{\phi(a_t)}.\tilde y_i-\times_{\phi(a_{t'})}.\tilde y_i|\\
\geq&\mathcal M_\psi^{-1}\mathcal S_\Gamma^{-1}|\zeta_u^i|^n\cdot\frac{|\Delta|}2|\tilde y_i|.\end{split}\end{equation}
By (\ref{nlower}),
\begin{align*}(\ref{deviationinX})\geq&\mathcal M_\psi^{-1}\mathcal S_\Gamma^{-1}\cdot\frac12s^{-1}\cdot|\zeta_u^i|^{-1}\cdot\frac12(r_1+r_2)^{-\frac12}\mathcal M_\psi^{-1}\mathcal S_\Gamma\\
\geq&\frac14(r_1+r_2)^{-\frac12}2^{-h^\mathrm{Mah}(\phi(u))}\mathcal M_\psi^{-2}s^{-1}\\
\geq&\frac14(d-1)^{-\frac12}2^{-9d(\frac r2+1)\mathcal F_{\phi(G)}}\mathcal M_\psi^{-2}s^{-1}
\end{align*}

By (\ref{ssize}), \declaresubconst{largetopo'subconst3} $s\geq\ref{largetopo'subconst3}(\log |E|)^{\frac1{10}}$ for an effective $\ref{largetopo'subconst3}=\ref{largetopo'subconst3}(d)$, so under a new assumption \declaresubconst{largetopo'subconst4}\begin{equation}\label{largeassum3}\log |E|\geq\max(2^{\ref{largetopometricballCONST} \mathcal F_{\phi(G)}},\mathcal M_\psi^{30})\end{equation} where a sufficiently large effective constant $\ref{largetopometricballCONST} =\ref{largetopometricballCONST} (d)$ is used, we have $\mathcal M_\psi^2<\ref{largetopo'subconst3}^{-\frac23}s^\frac23$ and $4(d-1)^{\frac12}2^{9d(\frac r2+1)\mathcal F_{\phi(G)}}<\frac12\ref{largetopo'subconst3}^{\frac23}s^{\frac13}$. Thus \begin{equation}(\ref{deviationinX})\geq\ref{largetopo'subconst3}^{\frac23}s^{-\frac23}\cdot2\ref{largetopo'subconst3}^{-\frac23}s^{-\frac13}\cdot s^{-1}\geq 2s^{-2},\end{equation} which can be interpreted as the set $E_1:=\{u^na_t.( z-z')|0\leq t\leq s-1\}$ is $2s^{-2}$-separated in $\mathbb T^d$. 

Let $E_2=\{u^na_t.x|0\leq t\leq s-1, x\in E\}\subset\mathbb T^d$ and observe $$E_1\subset E_2-E_2:=\{w-w'|w,w'\in E_2\}.$$ Take a maximal $s^{-2}$-separated subset $E_3$ of $E_2$, we claim $|E_3|\geq\sqrt s$. Actually, $E_2$ is covered by the union of $|E_3|$ balls $\cup_{v\in E_3}B_v$, where $B_v$ is centered at $v$ with radius $s^{-2}$. Then the set $E_1\subset E_2-E_2$ is covered by $\cup_{v,v'\in E}(B_v-B_{v'})$. But for each pair $v,v'$, the set $B_v-B_v'$ is a ball of radius $2s^{-2}$ centerd at $v-v'$, hence contains at most one point from $E_1$. So the number of pairs, which equals $|E_3|^2$, is at least $|E_1|=s$. This proves the claim.

We want to apply Proposition \ref{efftopo'} to $\epsilon:=s^{-2}$ and $|E_3|\geq s^\frac12=\epsilon^{-\frac14}=\epsilon^{-\alpha d}$ where $\alpha=\frac1{4d}$, for which purpose it is necessary to have $\log\frac1\epsilon\geq\max(\mathcal M_\psi^{30d},4)$ and $\ref{efftopo'alpha}\mathcal F_{\phi(G)}^2\frac{\log\log\log\frac1\epsilon}{\log\log\frac1\epsilon}\leq\frac1{4d}$. Because \begin{equation}\label{sizesep}\frac1\epsilon=s^2\geq\ref{largetopo'subconst3}^2(\log |E|)^\frac15\end{equation} and we assumed $|E|$ is sufficiently large with respect to $d$, these conditions would respectively follow from
\begin{equation}\label{largeassum4}\log\log |E|\geq \ref{largetopo'boundbyMCONST} \max(\mathcal M_\psi^{30d},4);\end{equation} and
\begin{equation}\label{largeassum5}\log\log\log |E|\geq \ref{largetopo'boundbyFCONST} \mathcal F_{\phi(G)}^2\max(\log\mathcal F_{\phi(G)},1),\end{equation}
if $\ref{largetopo'boundbyMCONST} $ and $\ref{largetopo'boundbyFCONST} $ are sufficiently large constants that depend effectively on $d$.

Again by $\mathcal F_{\phi(G)}\gtrsim_d1$, if $\ref{largetopo'boundbyFCONST} $ is large enough with respect to $\ref{largetopo'boundbyMCONST} $ then (\ref{largeassum5}) implies the $\ref{largetopo'boundbyMCONST} \cdot 4$ part in (\ref{largeassum4}). So we can combine the two inequalities into assumption (\ref{largeassum}) in the statement of theorem.

Furthermore, notice that (\ref{largeassum}) actually contains all the earlier assumptions (\ref{largeassum1}), (\ref{largeassum2}) and (\ref{largeassum3}) given that $\ref{largetopo'boundbyMCONST} $ and $\ref{largetopo'boundbyFCONST} $ are large with respect to all previous constants.

So eventually we can apply Proposition \ref{efftopo'} to claim $E_3$ is $(\log\frac1\epsilon)^{-\frac1{4d}\ref{efftopo'density}\mathcal F_{\phi(G)}^2}$-dense. By (\ref{sizesep}) when $|E|$ is sufficiently large as in our situation, $(\log\frac1\epsilon)^{-\frac1{4d}\ref{efftopo'density}\mathcal F_{\phi(G)}^2}\leq (\log\log |E|)^{-\ref{largetopo'densityCONST} \mathcal F_{\phi(G)}^2}$ for some effective constant $\ref{largetopo'densityCONST} =\ref{largetopo'densityCONST} (d)$. 

Thus $E_3$ is $(\log\log |E|)^{-\ref{largetopo'densityCONST} \mathcal F_{\phi(G)}^2}$-dense.

Observe $$E_3\subset E_2=\{u^na_t|0\leq t\leq s-1\}.E.$$ It is clear that \begin{equation}\label{heightat}h^\mathrm{Mah}(\phi(a_t))\leq s^{10}\sim_{d}\big((\log |E|)^\frac1{10}\big)^{10}=\log |E|.\end{equation} Now we investigate how large $u^n$ can be. 

By (\ref{coordlower}) and (\ref{nupper}), $|\zeta_u^i|^n\leq \epsilon_0^{-1}\cdot s^{-1}|\Delta|^{-1}\cdot\frac12\mathcal M_\psi^{-1}\leq \epsilon_0^{-1}s^{\ref{arithproexpoconst}\mathcal F_{\phi(G)}^2-1}$ as $\mathcal M_\psi\geq 1$ and $|\Delta|\geq s^{-\ref{arithproexpoconst}\mathcal F_{\phi(G)}^2}$. Because $|\zeta_u^i|>1$ but $|\zeta_u^j|<1,\forall j\neq i$ the logarithmic Mahler measure $h^\mathrm{Mah}(\phi(u))$ equals $\log|\zeta_u^i|$, so  \begin{equation}\label{heightun}\begin{split}h^\mathrm{Mah}(\phi(u^n))=&nh^\mathrm{Mah}(\phi(u))\leq n\log|\zeta_u^i|\leq \log(\epsilon_0^{-1}s^{\ref{arithproexpoconst}\mathcal F_{\phi(G)}^2-1})\\
\leq&\log\frac1{\epsilon_0}+\ref{arithproexpoconst}\mathcal F_{\phi(G)}^2\log s.\end{split}\end{equation}
Remark $\log s\lesssim_{d}\log\log |E|$ and by (\ref{largeassum}), $\log\log |E|\gg\mathcal F_{\phi(G)}^2$; thus $\ref{arithproexpoconst}\mathcal F_{\phi(G)}^2\log s\ll\ref{arithproexpoconst}(\log\log |E|)^2$. Therefore (\ref{heightat}) and (\ref{heightun}) give a bound to the logarithmic Mahler measure of $u^na_t, \forall t=0,\cdots,s-1$: $$m(u^na_t)=h^\mathrm{Mah}(\phi(u^na_t))\leq \log\frac1{\epsilon_0}+\ref{largetopometricballCONST} \log |E|.$$
So $E_3\subset B_G^\mathrm{Mah}(\log\frac1{\epsilon_0}+\ref{largetopometricballCONST} \log |E|).E$ for some effective constant $\ref{largetopometricballCONST}(d)$, which proves the result.\end{proof}

\declareCONST{largetoposizeCONST} \declareCONST{largetopodensityCONST}
\begin{theorem}\label{largetopo} Suppose $G$ satisies Condition \ref{condG}. Then there are effective constants $\ref{largetopometricballCONST}$, $\ref{largetoposizeCONST} $, $\ref{largetopodensityCONST} $,  which depend only on the group $G$ such that $\forall\epsilon_0$, for any $\epsilon_0$-separated subset $E\subset\mathbb T^d$ of size $|E|\geq \ref{largetoposizeCONST} $, the set $B_G^\mathrm{Mah}(\log\frac1{\epsilon_0}+\ref{largetopometricballCONST} \log|E|).E$ is $(\log\log|E|)^{-\ref{largetopodensityCONST} }$-dense.\end{theorem}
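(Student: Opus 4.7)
The plan is to deduce this directly from Proposition \ref{largetopo'}, using Theorem \ref{condGfield} to pass from Condition \ref{condG} to Condition \ref{condG'} and Remark \ref{largermk} to convert between the two notions of separation. First, Theorem \ref{condGfield} guarantees that Condition \ref{condG} implies Condition \ref{condG'}, so $K$, $\Gamma$, $\phi$, $\psi$, $\mathcal M_\psi$ and $\mathcal F_{\phi(G)}$ are all determined by $G$ alone. In particular, all the ``effective constants depending only on $d$'' in Proposition \ref{largetopo'} are effective constants depending only on $G$.

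Next, since $E$ is $\epsilon_0$-separated in $\mathbb T^d$, for any two distinct $z,z'\in E$ we have $|\psi(z)-\psi(z')|\geq\|\psi^{-1}\|^{-1}\epsilon_0\geq\mathcal M_\psi^{-1}\mathcal S_\Gamma\cdot\epsilon_0$ by Definition \ref{isouni}, so $\psi(E)\subset X$ is $(\mathcal M_\psi^{-1}\epsilon_0)\mathcal S_\Gamma$-separated. Thus I would apply Proposition \ref{largetopo'} with $\mathcal M_\psi^{-1}\epsilon_0$ in the role of $\epsilon_0$. Its hypothesis (\ref{largeassum}) is
\[\log\log|E|\geq\max\bigl(\ref{largetopo'boundbyMCONST}\mathcal M_\psi^{30d},\max(\mathcal F_{\phi(G)},2)^{\ref{largetopo'boundbyFCONST}\mathcal F_{\phi(G)}^2}\bigr),\]
which depends only on $G$; hence there is an effective threshold $\ref{largetoposizeCONST}=\ref{largetoposizeCONST}(G)$ such that $|E|\geq\ref{largetoposizeCONST}$ suffices. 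Under this hypothesis, the proposition yields that $B_G^\mathrm{Mah}\bigl(\log\frac{\mathcal M_\psi}{\epsilon_0}+\ref{largetopometricballCONST}\log|E|\bigr).E$ is $(\log\log|E|)^{-\ref{largetopo'densityCONST}\mathcal F_{\phi(G)}^2}$-dense.

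Finally, I would absorb the extra $\log\mathcal M_\psi$ into the $\log|E|$ term: since (\ref{largeassum}) forces $\log\log|E|\gg\mathcal M_\psi^{30d}\gg\log\mathcal M_\psi$, one has $\log\mathcal M_\psi\leq\log|E|$, so enlarging $\ref{largetopometricballCONST}$ by $1$ (and relabeling it as the constant of Theorem \ref{largetopo}) gives a bound by $\log\frac1{\epsilon_0}+\ref{largetopometricballCONST}\log|E|$. Setting $\ref{largetopodensityCONST}:=\ref{largetopo'densityCONST}\mathcal F_{\phi(G)}^2$ yields the stated density. All three constants are manifestly effective and depend only on $G$. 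There is essentially no hard step here: the substance lives entirely in Proposition \ref{largetopo'}, and the present proof is a bookkeeping exercise translating separation in $\mathbb T^d$ to separation in $X$ and collecting dependences on $\mathcal M_\psi$, $\mathcal F_{\phi(G)}$ into constants depending on $G$.
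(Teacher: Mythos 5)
Your proposal is correct and is essentially the paper's own argument: the paper proves Theorem \ref{largetopo} by citing Theorem \ref{condGfield}, Proposition \ref{largetopo'} together with Remark \ref{largermk} (which performs exactly your conversion from $\epsilon_0$-separation in $\mathbb T^d$ to $\mathcal M_\psi^{-1}\epsilon_0\mathcal S_\Gamma$-separation in $X$ and absorbs $\log\mathcal M_\psi$ into $\ref{largetopometricballCONST}\log|E|$), and the fact that $d$, $\mathcal M_\psi$, $\mathcal F_{\phi(G)}$ are determined by $G$. Your write-up just makes that bookkeeping explicit.
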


\begin{proof} The theorem follows from Theorem \ref{condGfield}, Proposition \ref{largetopo'} (together with Remark \ref{largermk}) and the fact that $d$, $\mathcal M_\psi$ and $\mathcal F_{\phi(G)}$ are determined by $G$.\end{proof}

In particular, we have the following corollary:

\begin{corollary} If Condition \ref{condG}, any $G$-invariant finite subset $E\subset\mathbb T^d$ of size $|E|\geq \ref{largetoposizeCONST}$ is $(\log\log|E|)^{-\ref{largetopodensityCONST} }$-dense where $\ref{largetoposizeCONST}=\ref{largetoposizeCONST}(G)$, $\ref{largetopodensityCONST}=\ref{largetopodensityCONST}(G)$.\end{corollary}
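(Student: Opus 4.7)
The proof is essentially a one-line consequence of Theorem \ref{largetopo}, since $G$-invariance of $E$ makes the size of the metric ball $B_G^{\mathrm{Mah}}(\cdot).E$ immaterial to the conclusion.

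The plan is as follows. Given a finite $G$-invariant set $E \subset \mathbb T^d$ with $|E| \geq \ref{largetoposizeCONST}$, I let $\epsilon_0$ be any positive number for which $E$ is $\epsilon_0$-separated; the simplest choice is $\epsilon_0 := \min\{|x - y| : x, y \in E,\ x \neq y\} > 0$, which is positive because $E$ is finite. With this choice, all hypotheses of Theorem \ref{largetopo} are met, so the conclusion yields that the set $B_G^{\mathrm{Mah}}\bigl(\log\tfrac{1}{\epsilon_0} + \ref{largetopometricballCONST}\log|E|\bigr).E$ is $(\log\log|E|)^{-\ref{largetopodensityCONST}}$-dense in $\mathbb T^d$.

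Now the key observation is that because $E$ is $G$-invariant, $g.E \subseteq E$ for every $g \in G$, and in particular $B_G^{\mathrm{Mah}}(L).E = E$ for any $L \geq 0$. Applied with $L = \log\tfrac{1}{\epsilon_0} + \ref{largetopometricballCONST}\log|E|$, this gives $E = B_G^{\mathrm{Mah}}(L).E$, so $E$ itself is $(\log\log|E|)^{-\ref{largetopodensityCONST}}$-dense, which is exactly the claim. The constants $\ref{largetoposizeCONST}$ and $\ref{largetopodensityCONST}$ are inherited from Theorem \ref{largetopo} and therefore depend only on $G$.

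There is no obstacle here: the entire content of the corollary is absorbed by Theorem \ref{largetopo}, with $G$-invariance serving to strip off the $B_G^{\mathrm{Mah}}$ factor and thereby remove the dependence of the density on $\epsilon_0$. The slight subtlety, namely that $\epsilon_0$ might be extremely small and hence $\log\tfrac{1}{\epsilon_0}$ very large, is precisely what is killed by the invariance; the quantitative density bound $(\log\log|E|)^{-\ref{largetopodensityCONST}}$ is insensitive to this and depends only on the cardinality $|E|$.
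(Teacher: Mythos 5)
Your proposal is correct and is exactly the intended argument: the paper states this as an immediate corollary of Theorem \ref{largetopo}, and the proof is precisely to take $\epsilon_0$ to be the minimal pairwise distance in the finite set $E$ (so the theorem applies with constants independent of $\epsilon_0$ in the density bound) and then use $G$-invariance to collapse $B_G^\mathrm{Mah}(L).E$ to $E$. The observation that $\epsilon_0$ only enters through the radius of the Mahler ball, which invariance renders irrelevant, is the whole point, and you have stated it accurately.
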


Remark if $E$ contains an irrational point, then it is dense in $\mathbb T^d$ by Berend \cite{B83}. So the corollary is meaningful only for rational subsets. 

\subsection{Density of the orbit of a single point}

\hskip\parindent We now study a single orbit instead of the orbit of a set.

\declareCONST{dioQ'boundMCONST} \declareCONST{dioQ'boundFCONST} \declareCONST{dioQ'densityCONST}
\begin{proposition}\label{dioQ'} Let $G$ be as in Condition \ref{condG'} and $Q\in\mathbb N$ be such that \begin{equation}\label{dioQassum}\log\log\log Q\geq\max\big(\ref{dioQ'boundMCONST}\mathcal M_\psi^{30d}, \max(\mathcal F_{\phi(G)},2)^{\ref{dioQ'boundFCONST}\mathcal F_{\phi(G)}^2}\big).\end{equation} 

If a point $x\in\mathbb T^d$ satisfies either condition (i) or (ii) in Proposition \ref{dioQ}, Then the set $B_G^{\mathrm{Mah}}\big((k+2)\log Q\big).x$ is $(\log\log\log Q)^{-\ref{dioQ'densityCONST}\mathcal F_{\phi(G)}^2}$-dense. Here $\ref{dioQ'boundMCONST}$, $\ref{dioQ'boundFCONST}$, $\ref{dioQ'densityCONST}$ are effective constants that depend only on $d$ .

\end{proposition}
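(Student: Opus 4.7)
The plan is to reduce Proposition~\ref{dioQ'} to Proposition~\ref{largetopo'} by applying the latter to the finite set $E := B_G^{\mathrm{Mah}}(L_1).x \subset \mathbb T^d$ for a suitably chosen $L_1 \lesssim_d \log Q$. Since $B_G^{\mathrm{Mah}}(L_2).B_G^{\mathrm{Mah}}(L_1).x \subset B_G^{\mathrm{Mah}}(L_1+L_2).x$ by subadditivity of the Mahler measure, the budget constraint becomes $L_1 + \log(1/\epsilon_0) + \ref{largetopometricballCONST}\log|E| \leq (k+2)\log Q$, where $\epsilon_0$ is the separation of $\psi(E)$ in $X$. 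Simultaneously, one needs $|E|$ large enough that the threshold (\ref{largeassum}) on $\log\log|E|$ is satisfied; this is precisely what forces the triple-log assumption (\ref{dioQassum}) on $Q$.

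To bound $|E|$ from below, use the cyclic orbit of the totally irreducible element $g_0 \in G$ from Proposition~\ref{totirrheight}, whose logarithmic Mahler measure is at most $\ref{totirrheightconst}\mathcal F_{\phi(G)}$. In case (i), $g_0^n x = g_0^m x$ in $\mathbb T^d$ would force $x$ to be rational (since $g_0^m(g_0^{n-m}-I)$ is invertible over $\mathbb Q$ by total irreducibility, for then the only eigenvalue $1$ of $g_0^{n-m}$ could arise if $g_0^{n-m}=I$, which is impossible as $g_0$ has infinite order), contradicting the diophantine hypothesis; thus the first $\sim L_1/h^{\mathrm{Mah}}(\phi(g_0))$ iterates of $x$ are pairwise distinct. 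In case (ii), a nontrivial coincidence $g_0^N v \equiv v\pmod Q$ with small $N$ would require the nonzero integer vector $(g_0^N - I)v$ to have each coordinate divisible by $Q$, which by a size estimate forces $\|g_0\|^N \gtrsim Q$, i.e.\ $N \gtrsim \log Q / h^{\mathrm{Mah}}(\phi(g_0))$. Either way $|E| \gtrsim \log Q / \mathcal F_{\phi(G)}$, so $\log\log|E| \geq \log\log\log Q - O(\log\log\mathcal F_{\phi(G)})$, which meets the threshold of Proposition~\ref{largetopo'} under (\ref{dioQassum}) provided $\ref{dioQ'boundMCONST}, \ref{dioQ'boundFCONST}$ are taken slightly larger than $\ref{largetopo'boundbyMCONST}, \ref{largetopo'boundbyFCONST}$.

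For the separation $\epsilon_0$: in case (ii), $E \subset \frac{1}{Q}\mathbb Z^d/\mathbb Z^d$ is automatically $1/Q$-separated in $\mathbb T^d$, so via Remark~\ref{largermk} one may take $\log(1/\epsilon_0) \leq \log Q + O(\log\mathcal M_\psi)$. In case (i), for distinct $g_1, g_2 \in B_G^{\mathrm{Mah}}(L_1)$ set $M := g_1 - g_2$, which is invertible over $\mathbb Q$ by the same irreducibility argument. If $|g_1 x - g_2 x|_{\mathbb T^d} = |\omega|$ with $Mx = n + \omega$, $n \in \mathbb Z^d$, then $x$ is within distance $\|M^{-1}\|\,|\omega|$ of the rational point $M^{-1}n$ whose reduced denominator divides $|\det M|$. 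The diophantine bound $\|M^{-1}\|\,|\omega| \geq |\det M|^{-k}$, combined with the adjugate estimate $\|M^{-1}\| \leq \|M\|^{d-1}/|\det M|$ and $\|M\| \lesssim \mathcal M_\psi^{2}\cdot 2^{L_1}$, yields $|\omega| \geq \|M\|^{-(dk-1)}$, i.e.\ $\log(1/\epsilon_0) \leq (dk-1)L_1 + O_d(\log\mathcal M_\psi)$.

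Setting $L_1 = c_1\log Q$ for a constant $c_1 = c_1(d) \leq 2/d$, the total bound becomes $L_1 + \log(1/\epsilon_0) + \ref{largetopometricballCONST}\log|E| \leq c_1 dk\log Q + O(\log\log Q + \log\mathcal M_\psi + \log\mathcal F_{\phi(G)})$, which is at most $(k+2)\log Q$ for every $k\geq 1$ under (\ref{dioQassum}). Proposition~\ref{largetopo'} then delivers $(\log\log|E|)^{-\ref{largetopo'densityCONST}\mathcal F_{\phi(G)}^2} \leq (\log\log\log Q)^{-\ref{dioQ'densityCONST}\mathcal F_{\phi(G)}^2}$-density of $B_G^{\mathrm{Mah}}((k+2)\log Q).x$, as claimed. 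The principal technical obstacle is the separation estimate in case (i): the amplification factor $dk-1$ in $\log(1/\epsilon_0)$ consumes most of the budget $(k+2)\log Q$, which is what forces $L_1$ to be only a strict fraction $(2/d)\log Q$ of the total rather than $\log Q$ itself, and what forces assumption (\ref{dioQassum}) to be strong enough to absorb the lower-order $\log\mathcal M_\psi$ and $\log\mathcal F_{\phi(G)}$ terms coming from the operator norm estimate and the size of $E$.
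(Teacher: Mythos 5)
Your overall strategy coincides with the paper's: the paper also deduces Proposition \ref{dioQ'} by feeding a separated piece of the orbit of $x$ into Proposition \ref{largetopo'} and using subadditivity of the Mahler measure to collect the budget $(k+2)\log Q$. The difference is in the seed set and the separation lemma. The paper takes $E=\{g^m.x\,:\,0\le m<\min(\mathcal F_{\phi(G)}^{-1},1)\log(2^{-d}Q)\}$ for a single element $g$ with $0<h^{\mathrm{Mah}}(\phi(g))\le\mathcal F_{\phi(G)}$, and Lemma \ref{dioQnonrec} shows $\psi(E)$ is $\mathcal M_\psi^{-1}Q^{-k}\mathcal S_\Gamma$-separated; the key there is the eigenvalue-product bound $|\det(g^m-g^{m'})|\le 2^{d+m\mathcal F_{\phi(G)}}<Q$, i.e.\ a Mahler-measure/determinant estimate rather than an operator-norm estimate, so the separation costs only $k\log Q+\log\mathcal M_\psi$ and the budget closes with room to spare. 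Your choice $E=B_G^{\mathrm{Mah}}(L_1).x$ with a pairwise adjugate-plus-diophantine separation bound is lossier (exponent $\sim dk$ in $L_1$) but can be made to fit, so it is a legitimate variant of the same reduction.

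There are, however, two concrete problems. First, your case (ii) lower bound on $|E|$ fails as stated: from $(g_0^N-I)v\in Q\mathbb Z^d$ with $v\neq0$ you conclude "by a size estimate" that $\|g_0\|^N\gtrsim Q$, but the representative $v$ of $x=\frac vQ$ has $\|v\|_\infty$ of order up to $Q$, so the only available inequality $Q\le\|(g_0^N-I)v\|_\infty\le\|g_0^N-I\|\,\|v\|_\infty$ is vacuous and gives no lower bound on $N$ whatsoever. The correct deduction is the one in Lemma \ref{dioQnonrec}: coprimality of $v$ with $Q$ forces $Q\mid\det(g_0^N-I)$ (equivalently, $\tilde x=(g_0^N-I)^{-1}\Xi$ would have denominator dividing $|\det(g_0^N-I)|$), and $|\det(g_0^N-I)|\le 2^{d+Nh^{\mathrm{Mah}}(\phi(g_0))}$, which is $<Q$ for $N<h^{\mathrm{Mah}}(\phi(g_0))^{-1}\log(2^{-d}Q)$ --- exactly the adjugate/denominator reasoning you already employ in case (i), so the repair is easy but the step as written would fail. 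Second, your budget arithmetic is off at the end: with $\log\frac1{\epsilon_0}\le(dk-1)L_1+O_d(k\log\mathcal M_\psi)$ (note the factor $k$ on the $\log\mathcal M_\psi$ term, harmless under (\ref{dioQassum})), the total is about $dk\,L_1$, and with $c_1$ as large as $2/d$ the claim $c_1dk\log Q\le(k+2)\log Q$ "for every $k\ge1$" is false once $k>2$; you need $c_1\le(k+2)/(dk)$, e.g.\ $c_1=1/d$, which still gives $|E|\gtrsim\log Q/\mathcal F_{\phi(G)}$ and leaves $2\log Q$ of slack for the lower-order terms, so this is a fixable slip rather than a structural flaw.
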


Before proving the theorem, we construct a metric ball inside the orbit $G.x$ which is large in cardinality and satisfies, although quite weak, some separatedness condition.

\begin{lemma}\label{dioQnonrec} Assuming Condition \ref{condG'}, for the element $g\in G$ and the constant $\ref{totirrheightconst}=\ref{totirrheightconst}(d)$ in Proposition \ref{totirrheight}, if $x\in\mathbb T^d$ meets one of the conditions in Proposition \ref{dioQ} then $\forall m\neq m'$, $0\leq m,m'<\mathcal F_{\phi(G)}^{-1}\log(2^{-d}Q)$, the distance $|\psi(g^m.x)-\psi(g^{m'}.x)|$ between $\psi(g^m.x)$ and $\psi(g^{m'}.x)$ is at least $\mathcal M_\psi^{-1}Q^{-k}\mathcal S_\Gamma$, where $k=1$ if $x$ satisifes condition (ii) in Theorem \ref{dioQ}.\end{lemma}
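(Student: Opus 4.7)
The plan is to transfer the question to the torus $\mathbb T^d$ via $\psi$ and then reduce it to a separation estimate on $(g^m-g^{m'}).x$ there. Assume $m>m'$ without loss of generality, set $n=m-m'\ge1$, and write $h=g^m-g^{m'}=g^{m'}(g^n-I)\in M_d(\mathbb Z)$; because $g^n$ is irreducible, $1$ is not an eigenvalue of $g^n$, so $\det h=\det(g^n-I)\ne 0$. From $\|\psi^{-1}\|\le\mathcal M_\psi\mathcal S_\Gamma^{-1}$ (Definition \ref{isouni}) we get
\begin{equation*}
|\psi(g^m.x)-\psi(g^{m'}.x)|\ge\mathcal M_\psi^{-1}\mathcal S_\Gamma\cdot d_{\mathbb T^d}(hx,0),
\end{equation*}
so it suffices to prove $d_{\mathbb T^d}(hx,0)\ge Q^{-k}$.

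For case (ii) ($x=v/Q$ with $\gcd(v,Q)=1$) the point $hx=hv/Q$ lies in $\tfrac1Q\mathbb Z^d/\mathbb Z^d$ and is either zero or at $\mathbb T^d$-distance $\ge Q^{-1}$, so the only thing to rule out is $hv\equiv0\pmod Q$. Suppose it holds; invertibility of $g^{m'}\in SL_d(\mathbb Z)$ modulo $Q$ reduces this to $(g^n-I)v\equiv0\pmod Q$, and multiplying by the integer matrix $\mathrm{adj}(g^n-I)$ yields $\det(g^n-I)\cdot v\equiv0\pmod Q$. Coprimality of $v$ with $Q$ then forces $Q\mid\det(g^n-I)$. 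Finally, the elementary estimate $|\det(g^n-I)|\le\prod_i(1+|\zeta_{g^n}^i|)\le 2^d\cdot 2^{nh^{\mathrm{Mah}}(\phi(g))}$ combined with $h^{\mathrm{Mah}}(\phi(g))\le\ref{totirrheightconst}\mathcal F_{\phi(G)}$ (Proposition \ref{totirrheight}) and the hypothesis $n<\mathcal F_{\phi(G)}^{-1}\log(2^{-d}Q)$ (with the constant $\ref{totirrheightconst}$ absorbed into how the bound on $m,m'$ is normalized) makes $|\det(g^n-I)|<Q$, a contradiction.

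For case (i) ($x$ diophantine with exponent $k$) we argue by contradiction along the same lines: if $|hx-w|<Q^{-k}$ for some $w\in\mathbb Z^d$, invertibility of $h$ lets us write $h^{-1}w=\bar v/\bar q$ in lowest terms with $\bar q\mid|\det h|$, and $|x-\bar v/\bar q|\le\|h^{-1}\|\,Q^{-k}$. For $\bar q\ge 2$ the diophantine hypothesis gives $\bar q^{-k}\le\|h^{-1}\|\,Q^{-k}$, i.e.\ $Q\le\bar q\,\|h^{-1}\|^{1/k}\le|\det(g^n-I)|\cdot\|h^{-1}\|^{1/k}$. Bounding $\|h^{-1}\|\le\|h\|^{d-1}$ (since $|\det h|\ge 1$) and $\|h\|\lesssim 2^{mh^{\mathrm{Mah}}(\phi(g))}$ up to a factor of $\mathcal M_\psi$ (the eigenvalues are read through the $\phi$-conjugating basis), taking logarithms gives $\log Q\lesssim m\,h^{\mathrm{Mah}}(\phi(g))\lesssim m\mathcal F_{\phi(G)}$, again contradicting $m<\mathcal F_{\phi(G)}^{-1}\log(2^{-d}Q)$. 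The degenerate subcase $\bar q=1$ is dispatched by noting that the diophantine condition applied to denominator $2$ already forces $|x|_{\mathbb T^d}\ge 2^{-k}$, which is incompatible with $|x-h^{-1}w|\le\|h^{-1}\|Q^{-k}$ being very small for large $Q$.

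The main obstacle is case (i): controlling $\|h^{-1}\|$ tightly requires lower bounds on $|\zeta_{g^n}^i-1|$, which become small exactly when some eigenvalue of $g^n$ approaches $1$. The sharp bound rests on total irreducibility (those eigenvalues are algebraic integers of degree $d$ distinct from $1$) together with a Baker--W\"ustholz--type estimate in the spirit of Lemma \ref{BWlog}, and the factor $\|g^{-m'}\|$ has to be tracked through the $\phi$-eigenbasis, picking up additional factors of $\mathcal M_\psi$. Coordinating these estimates with the form $(k+2)\log Q$ of the Mahler ball radius appearing in Proposition \ref{dioQ'} is the delicate part.
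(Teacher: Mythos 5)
Your reduction to a lower bound for $d_{\mathbb T^d}(hx,0)$ with $h=g^m-g^{m'}$, and your treatment of case (ii), are fine and essentially coincide with the paper's argument (the paper phrases it as: if $g^m.\tilde x-g^{m'}.\tilde x-\Xi=0$ then $\tilde x=(g^m-g^{m'})^{-1}\Xi$ has denominator at most $|\det(g^m-g^{m'})|<Q$). The genuine gap is in case (i). From $\bar q^{-k}\le\|h^{-1}\|Q^{-k}$ you conclude ``$\log Q\lesssim m\mathcal F_{\phi(G)}$, contradiction,'' but the hypothesis only gives $m\mathcal F_{\phi(G)}<\log Q-d$ with constant exactly $1$, and your chain actually yields $\log Q\le\log\bar q+\tfrac1k\log\|h^{-1}\|\le\big(m\mathcal F_{\phi(G)}+d\big)+\tfrac{d-1}{k}\big(m\mathcal F_{\phi(G)}+O_d(1+\log\mathcal M_\psi)\big)$. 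Since $m\mathcal F_{\phi(G)}$ is allowed to be as large as $\log Q-d$, the extra term $\tfrac{d-1}{k}\,m\mathcal F_{\phi(G)}$ destroys the contradiction for every fixed $k$. Nor can this be repaired by the Baker--W\"ustholz input you propose: the small eigenvalues of $h=g^{m'}(g^{m-m'}-\mathrm{id})$ are of the form $|\zeta_g^i|^{m'}\,|(\zeta_g^i)^{m-m'}-1|$, and in a contracting direction ($|\zeta_g^i|<1$) the factor $|\zeta_g^i|^{m'}$ is exponentially small in $m'$ no matter how well $|(\zeta_g^i)^{m-m'}-1|$ is bounded below, so $\|h^{-1}\|$ genuinely has size exponential in $m'\mathcal F_{\phi(G)}$ and $\|h^{-1}\|^{1/k}$ can never be treated as a harmless factor. (A small side remark: the paper's proof does not use the totally irreducible element of Proposition \ref{totirrheight} at all; it takes any $g$ with $0<h^\mathrm{Mah}(\phi(g))\le\mathcal F_{\phi(G)}$, positivity of the Mahler measure being exactly what rules out unit eigenvalues of $g^{m-m'}$.)

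The paper closes case (i) without ever inverting $h$ in operator norm and without any lower bound on eigenvalue differences. Writing $(g^m-g^{m'})^{-1}\Xi=v/q$ with $q\le|\det(g^m-g^{m'})|=\prod_i|(\zeta_g^i)^m-(\zeta_g^i)^{m'}|$, it plays the diophantine bound $|\tilde x-v/q|\ge q^{-k}\ge\prod_i|(\zeta_g^i)^m-(\zeta_g^i)^{m'}|^{-k}$ against the coordinatewise estimate $|\tilde x-(g^m-g^{m'})^{-1}\Xi|\le\mathcal M_\psi\mathcal S_\Gamma^{-1}\,|(\zeta_g^j)^m-(\zeta_g^j)^{m'}|^{-1}\,|\psi(g^m.x)-\psi(g^{m'}.x)|$, where $j$ is the index of the smallest eigenvalue difference. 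Solving for the distance leaves the factor $|(\zeta_g^j)^m-(\zeta_g^j)^{m'}|^{-(k-1)}\prod_{i\ne j}|(\zeta_g^i)^m-(\zeta_g^i)^{m'}|^{-k}$; since $k\ge1$ every exponent here is nonpositive, so small eigenvalue differences only help, and the product is bounded below by $\prod_i\max(|(\zeta_g^i)^m-(\zeta_g^i)^{m'}|,1)^{-k}\ge Q^{-k}$ using the same Mahler-measure estimate $\prod_i\max(\cdot,1)\le 2^{d+m\mathcal F_{\phi(G)}}<Q$ that you already used in case (ii). In short, the ``delicate part'' you flag at the end is not needed at all, and along your route it could not be supplied anyway.
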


\begin{proof} We may assume $m>m'$. Let $\tilde x\in \mathbb R^d$ be a lift of $x$ in $\mathbb T^d$. By definition of $\mathcal F_{\phi(G)}$, we may take a toral automorphism $g$ from $G$ with $0<h^\mathrm{Mah}(\phi(g))\leq\mathcal F_{\phi(G)}$. The distance $\mathrm |\psi(g^m.x)-\psi(g^{m'}.x)|$ in $\mathbb T^d$ is given by $|\psi\big((g^m-g^{m'}).\tilde x-\Xi\big)|$ where $\Xi$ is some vector from $\mathbb Z^d$.

As $h^\mathrm{Mah}(\phi(g))>0$, $g^{m-m'}-\mathrm{id}$ is an invertible matrix thus so is $g^m-g^{m'}$. (Otherwise one of the eigenvalues $\zeta_g^i=\sigma_i(\phi(g))$ of $g$ is a root of unity, so $h^\mathrm{Mah}(\phi(g))=h^\mathrm{Mah}(\zeta_g^i)=0$.)

Remark $\tilde x-(g^m-g^{m'})^{-1}\Xi=(g^m-g^{m'})^{-1}.\big((g^m-g^{m'}).\tilde x-\Xi\big)$. Hence since $(g^m-g^{m'})^{-1}$ is simultaneously diagonalizable with $g$ over $\mathbb C$, $$\psi(\tilde x-(g^m-g^{m'})^{-1}\Xi)=\times_{\phi\big((g^m-g^{m'})^{-1}\big)}.\psi\big((g^m-g^{m'}).\tilde x-\Xi\big).$$ Because $\times_{\phi\big((g^m-g^{m'})^{-1}\big)}$ acts as a multiplication on each component of $\mathbb R^d=\oplus_{i=1}^{r_1+r_2}V_i$, we see \begin{equation}\label{deviation}\begin{split}
&\tilde x-(g^m-g^{m'})^{-1}\Xi\\
\leq&\|\psi^{-1}\|\cdot\big|\psi\big(\tilde x-(g^m-g^{m'})^{-1}\Xi\big)\big|\\
\leq&\mathcal M_\psi\mathcal S_\Gamma^{-1}\max_{1\leq i\leq d}\big(\big|(\zeta^i_{g})^m-(\zeta^i_{g})^{m'}\big|^{-1}\big)\big|\psi\big((g^m-g^{m'}).\tilde x-\Xi\big)\big|\\
=&\mathcal M_\psi\mathcal S_\Gamma^{-1}\max_{1\leq i\leq d}\big(\big|(\zeta^i_{g})^m-(\zeta^i_{g})^{m'}\big|^{-1}\big)\big|\psi(g^m.x)-\psi(g^{m'}.x)\big|.\end{split}\end{equation}

It follows from the inequality $\max(|z-1|,1)\leq2\max(|z|,1),\forall z\in\mathbb C$ and the fact $\prod_{1\leq i\leq d}\max(|\zeta^i_{g}|,1)=2^{h^\mathrm{Mah}(\phi(g))}\leq 2^{\mathcal F_{\phi(G)}}$ that 
\begin{equation}\label{detQ}\begin{split}&|\det(g^m-g^{m'})|\\
=&\prod_{1\leq i\leq d}\big(|(\zeta^i_{g})^{m-m'}-1|\cdot|\zeta^i_{g}|^{m'}\big)\\
\leq&\prod_{1\leq i\leq d}\Big(\max\big(|(\zeta^i_{g})^{m-m'}-1|,1\big)\cdot\max(|\zeta^i_{g}|^{m'},1)\Big)\\
\leq& 2^d\prod_{1\leq i\leq d}\big(\max(|\zeta^i_{g}|^{m-m'},1)\cdot\max(|\zeta^i_{g}|^{m'},1)\big)\\
=&2^d\prod_{1\leq i\leq d}\max(|\zeta^i_{g}|,1)^m\leq 2^{d+m\mathcal F_{\phi(G)}}<Q.\end{split}\end{equation}

(i) Suppose $x$ is diophantine generic. 

Since $\det(g^m-g^{m'})\cdot(g^m-g^{m'})^{-1}\in M_d(\mathbb Z)$, $(g^m-g^{m'})^{-1}\Xi$ can be written in the form $\frac vq$ where $v\in\mathbb Z^d$ and $q\in\mathbb N$ is a factor of $\det(g^m-g^{m'})$; in particular, $q\leq Q$. Because $\tilde x$ satisfies the same diophantine property as $x$,
Thus by (\ref{deviation}), there exists $1\leq j\leq d$ such that $$\mathcal M_\psi\mathcal S_\Gamma^{-1}|(\zeta^j_{g})^m-(\zeta^j_{g})^{m'}|^{-1}|\psi(g^m.x)-\psi(g^{m'}.x)|\geq\prod_{1\leq i\leq d}|(\zeta^i_{g})^m-(\zeta^i_{g})^{m'}|^{-k},$$ therefore

\begin{equation}\label{recdevest}\begin{split}&|\psi(g^m.x)-\psi(g^{m'}.x)|\\
\geq&\mathcal M_\psi^{-1}\mathcal S_\Gamma|(\zeta^j_{g})^m-(\zeta^j_{g})^{m'}|\cdot\prod_{1\leq i\leq d}|(\zeta^i_{g})^m-(\zeta^i_{g})^{m'}|^{-k}\\
=&\mathcal M_\psi^{-1}\mathcal S_\Gamma|(\zeta^j_{g})^m-(\zeta^j_{g})^{m'}|^{-(k-1)}\cdot\prod_{1\leq i\leq d,i\neq j}|(\zeta^i_{g})^m-(\zeta^i_{g})^{m'}|^{-k}\\
\geq&\mathcal M_\psi^{-1}\mathcal S_\Gamma\prod_{1\leq i\leq d}\max(|(\zeta^i_{g})^m-(\zeta^i_{g})^{m'}|,1)^{-k}\\
\geq&\mathcal M_\psi^{-1}\mathcal S_\Gamma\prod_{1\leq i\leq d}\Big(\max(|(\zeta^i_{g})^{m-m'}-1|,1)^{-k}\cdot\max(|(\zeta^i_{g})|^{m'},1)^{-k}\Big)\\
\geq&\mathcal M_\psi^{-1}\mathcal S_\Gamma Q^{-k},\end{split}\end{equation}
where the last step was deduced as in (\ref{detQ}). This completes the proof in case (i).

(ii) If $x$ is rational with denominator $Q$, then $g^m.\tilde x-g^{m'}.\tilde x-\Xi$ has denominator $Q$ as well, thus $|g^m.\tilde x-g^{m'}.\tilde x-\Xi|\geq Q^{-1}$ as long as it doesn't vanish. Since $\|\psi^{-1}\|\leq\mathcal M_\psi\mathcal S_\Gamma^{-1}$ by definition, $|\psi(g^m.x)-\psi(g^{m'}.x)|=|\psi\big(g^m.\tilde x-g^{m'}.\tilde x-\Xi\big)|\geq M_\psi^{-1}\mathcal S_\Gamma Q^{-1}$. Therefore if suffices to show $g^m.\tilde x-g^{m'}.\tilde x-\Xi\neq 0$.

Suppose by absurd that $g^m.\tilde x-g^{m'}.\tilde x-\Xi=0$, then $\tilde x=(g^m-g^{m'})^{-1}\Xi$. As we already found out in case (i), the vector $(g^m-g^{m'})^{-1}\Xi$ is rational and its denominator is at most $|\det(g^m-g^{m'})|< Q$. But $\tilde x$ has denominator $Q$, contradiction.
\end{proof}

\begin{proof}[\hypertarget{proofdioQ'}{Proof of Proposition \ref{dioQ'}}] Fix $g\in G$ with $0<h^\mathrm{Mah}(\phi(g))\leq\mathcal F_{\phi(G)}$, which is possible by the definition of $\mathcal F_{\phi(G)}$. By lemma, if we denote \begin{equation}\label{startingblock}E:=\{g^m.x|0\leq m<\min(\mathcal F_{\phi(G)}^{-1},1)\log(2^{-d}Q)\},\end{equation} then the $g^m.x$'s are all different for different $m$'s and $\psi(E)$ is $\mathcal M_\psi^{-1}Q^{-k}\mathcal S_\Gamma$-separated.

By assumption (\ref{dioQassum}), when $\ref{dioQ'boundMCONST}$ and $\ref{dioQ'boundFCONST}$ are sufficiently large with respect to $d$, we can safely claim that $\min(\mathcal F_{\phi(G)}^{-1},1)\log(2^{-d}Q)\geq(\log Q)^\frac12$. Hence $$\log\log|E|\gtrsim_{d}\log\log\log Q\geq \big(\ref{dioQ'boundMCONST}\mathcal M_\psi^{30d}, \max(\mathcal F_{\phi(G)},2)^{\ref{dioQ'boundFCONST}\mathcal F_{\phi(G)}^2}\big).$$ By choosing sufficiently large (but still effective) constants $\ref{dioQ'boundMCONST}$ and $\ref{dioQ'boundFCONST}$, this implies assumption (\ref{largeassum}) (using again $\mathcal F_{\phi(G)}\gtrsim_d1$).

Therefore, by Proposition \ref{largetopo'}, the set $B_G^\mathrm{Mah}\big(\log(\mathcal M_\psi Q^k)+\ref{largetopometricballCONST} \log|E|).E$ is $(\log\log |E|)^{-\ref{largetopo'densityCONST} \mathcal F_{\phi(G)}^2}$-dense. 

However, by the definition of $E$, it is contained in $$B_G^\mathrm{Mah}\big(\mathcal F_{\phi(G)}^{-1}\log(2^{-d}Q)\cdot h^\mathrm{Mah}(\phi(g))\big).x.$$ But $h^\mathrm{Mah}(\phi(g))\leq\mathcal F_{\phi(G)}$, so $E\subset B_G^\mathrm{Mah}\big(\log(2^{-d}Q)\big).x$ and \begin{align*}&B_G^\mathrm{Mah}\big(\log(\mathcal M_\psi Q^k)+\ref{largetopometricballCONST} \log|E|\big).E\\\subset& B_G^\mathrm{Mah}\big(\log(\mathcal M_\psi Q^k)+\ref{largetopometricballCONST} \log|E|+\log(2^{-d}Q)\big).x\end{align*}
Notice $|E|\leq \log(2^{-d}Q)+1\leq\log Q$ and by (\ref{dioQassum}), $\log\mathcal M_\psi\lesssim_{d}\log\log\log Q$. So $9\log\mathcal M_\psi+\ref{largetopometricballCONST} \log|E|+\log(2^{-d}Q)\leq2\log Q$ when $Q$ is sufficiently large (which is true if $\ref{dioQ'boundMCONST}$ and $\ref{dioQ'boundFCONST}$ are large.) Thus $$9\log(\mathcal M_\psi Q^k)+\ref{largetopometricballCONST} \log|E|+\log(2^{-d}Q)\leq \log(Q^k)+2\log Q\leq (k+2)\log Q$$ and $B_G^\mathrm{Mah}\big(\log(\mathcal M_\psi Q^k)+\ref{largetopometricballCONST} \log|E|\big).E\subset B_G^\mathrm{Mah}\big((k+2)\log Q\big).x$.

On the other hand, $(\log\log |E|)^{-\ref{largetopo'densityCONST} \mathcal F_{\phi(G)}^2}\leq(\log\log\log Q)^{-\ref{dioQ'densityCONST}\mathcal F_{\phi(G)}^2}$ for $\ref{dioQ'densityCONST}=\frac12\ref{largetopo'densityCONST} $ when $\ref{dioQ'boundMCONST}$, $\ref{dioQ'boundFCONST}$ are large (so that $Q\gg 1$) because $\log\log|E|\gtrsim_{d}\log\log\log Q$.

Hence we finally proved $B_G^\mathrm{Mah}\big((k+2)\log Q\big).x$ is $(\log\log\log Q)^{-\ref{dioQ'densityCONST}\mathcal F_{\phi(G)}^2}$ dense when $\ref{dioQ'boundMCONST}$, $\ref{dioQ'boundFCONST}$ and $\ref{dioQ'densityCONST}$ are properly chosen.\end{proof}

\begin{proof}[Proof of Theorem \ref{dioQ}] By Theorem \ref{condGfield}, Proposition \ref{dioQ'} contains the theorem as $d$, $\mathcal M_\psi$ and $\mathcal F_{\phi(G)}$ are all determined by the group $G$.\end{proof}

\section{Appendix: A number-theoretical application}\label{minmaxapp}

\hskip\parindent Before finishing the paper, we give an example of how our results can be applied to number theory. \\

This appendix follows observations by Cerri \cite{C07} and  Bourgain-Lindenstrauss \cite[\S0]{BC07}.\\

Let $K$ be a number field and $\mathcal O_K$ be its ring of integers.  For an integral ideal $I$ and an invertible residue class $\beta\in(\mathcal O_K/I)^*$, define the minimal norm on $\beta$ by \begin{equation}N_I(\beta):=\min_{y\in\beta}|N_K(y)|\end{equation} where $N_K$ is the norm in the number field $K$. Define \begin{equation}L(K,I):=\max_{\beta\in(\mathcal O_K/I)^*}N_I(\beta).\end{equation}

The norm of an ideal $I$ is $N(I):=|\mathcal O_K/I|$. Recall $\forall x\in I$, $N(I)|N_K(x)$.

In \cite{KS99} it was proved that

\begin{theorem}\label{KS}{\rm (Konyagin-Shparlinski)} $L(K,\mathfrak p)=o(N(\mathfrak p))$ for a sequence of prime ideals of asymptotic density 1 as $N(\mathfrak p)\rightarrow\infty$.\end{theorem}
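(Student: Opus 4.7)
The plan is to apply Theorem \ref{dioQ}(ii) to the rational points of $\mathbb T^d$ arising from nonzero residue classes modulo $\mathfrak p$, exploiting the multiplicative action of $U_K$. The crucial preliminary observation is that $N_I$ is \emph{constant on $U_K$-orbits} in $(\mathcal O_K/\mathfrak p)^*$: for any unit $u$, multiplication by $u$ bijects $\beta$ onto $u\beta$ (since $u\mathfrak p=\mathfrak p$) and preserves $|N_K|$ on $\mathcal O_K$, so $N_I(u\beta)=N_I(\beta)$. Hence it suffices to find, for every $\beta$, some unit $u$ with $N_I(u\beta)=o(N(\mathfrak p))$.

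Restricting to non-CM $K$ with $\mathrm{rank}(U_K)\geq 2$ so that $G=U_K$ satisfies Condition \ref{condG'} on the torus $X=(K\otimes_{\mathbb Q}\mathbb R)/\mathfrak p$, I would fix $\beta$ and a lift $y_\beta\in\beta\setminus\mathfrak p$, then pull back its image $\bar y_\beta\in X$ via $\psi$ to a rational point of $\mathbb T^d$. Since $\mathcal O_K/\mathfrak p\cong\mathbb F_{p^f}$ is $p$-torsion (where $p$ is the rational prime below $\mathfrak p$) and $y_\beta$ is nonzero there, this rational point has primitive denominator exactly $p$. Theorem \ref{dioQ}(ii) with $Q=p$ then yields $u\in U_K$ with $m(u)\leq 3\log p\leq 3\log N(\mathfrak p)$ such that $u\bar y_\beta$ lies within $(\log\log\log p)^{-\ref{dioQdensityCONST}}$ of the origin in $\mathbb T^d$.

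Translating through $\psi$ and using $\mathcal S_\Gamma\asymp_K N(\mathfrak p)^{1/d}$, I obtain $z\in u y_\beta+\mathfrak p\subset\mathcal O_K$ with
\[
\|z\|_\infty\lesssim_K (\log\log\log p)^{-\ref{dioQdensityCONST}}\,N(\mathfrak p)^{1/d},
\]
and hence $|N_K(z)|\leq\|z\|_\infty^{\,d}\lesssim_K (\log\log\log p)^{-d\ref{dioQdensityCONST}}N(\mathfrak p)$. As $z\in u\beta$, orbit-invariance gives $N_I(\beta)=N_I(u\beta)\leq|N_K(z)|=o(N(\mathfrak p))$; taking the max over $\beta$ proves $L(K,\mathfrak p)=o(N(\mathfrak p))$. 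The density-$1$ conclusion is automatic: for each rational prime $p$ there are at most $d$ primes of $\mathcal O_K$ above it, so $p\to\infty$ as $N(\mathfrak p)\to\infty$ outside a sparse exceptional set.

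The main obstacle is ensuring \emph{uniformity of the constants} (especially $\mathcal M_\psi$ from Remark \ref{numbertheorydependence}) as $\mathfrak p$ varies, since a priori the isomorphism $\psi:\mathbb R^d\to K\otimes\mathbb R$ with $\psi(\mathbb Z^d)=\mathfrak p$ depends on $\mathfrak p$ and a poor choice would make $\mathcal M_\psi$ grow with $N(\mathfrak p)$. This is remedied by choosing a balanced representative within the ideal class — for principal $\mathfrak p=(\pi)$, by Dirichlet's unit theorem one multiplies $\pi$ by a suitable unit so that $|\sigma_i(\pi)|\asymp_K N(\mathfrak p)^{1/d}$ for all $i$; in general one works within each of the finitely many ideal classes of $K$ — after which $\mathcal M_\psi$ and $\mathcal F_{\phi(G)}$ depend only on $K$. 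A secondary restriction is that the argument is confined to non-CM fields with $\mathrm{rank}(U_K)\geq 2$; fields of CM type or smaller unit rank fall outside the scope of this paper's machinery and require separate, non-dynamical treatment.
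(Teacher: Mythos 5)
There is a genuine gap, and it is one of scope rather than of technique. Theorem \ref{KS} is stated (and was proved by Konyagin--Shparlinski in \cite{KS99}, via exponential sums over finite fields) for an \emph{arbitrary} number field $K$: it covers $\mathbb Q$, imaginary quadratic fields, real quadratic and other unit-rank-one fields, and CM fields. Your argument, as you yourself note at the end, only runs when $K$ is non-CM with $\mathrm{rank}(U_K)\geq 2$, because that is exactly when Condition \ref{condG'} holds and the orbit of a residue class under $U_K$ is large enough for any density statement; when $U_K$ is finite or of rank one (or when $K$ is CM, so that the units essentially live in a proper subfield), the unit orbit of $\beta$ in $(\mathcal O_K/\mathfrak p)^*$ is far too small and no amount of the paper's machinery recovers the result. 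Calling this a ``secondary restriction'' understates it: what you prove is a prime-ideal case of Theorem \ref{appthm} (which carries precisely these hypotheses), not Theorem \ref{KS}. Note also that the paper itself does not prove Theorem \ref{KS}; it quotes it, explicitly attributing the proof to exponential-sum methods, and its own dynamical contribution is Theorem \ref{appthm}, proved under the same extra hypotheses you impose.

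Within the restricted class of fields your outline is essentially sound and closely parallels the paper's appendix: constancy of $N_I$ on $U_K$-orbits, the residue class viewed as a rational point of denominator $p$ in $\mathbb T^d$ (the coprimality you need is automatic since $p$ is prime and $y_\beta\notin\mathfrak p$), the effective density of a Mahler-ball orbit, and the conversion of a small torus distance into a small norm via $\mathcal S_\Gamma\asymp_K N(\mathfrak p)^{1/d}$. Two remarks on the details: you must invoke Proposition \ref{dioQ'} (or \ref{largetopo'}) rather than Theorem \ref{dioQ} as a black box, since the conjugated group $G$ changes with $\mathfrak p$ and only the explicit dependence on $d$, $\mathcal M_\psi$ and $\mathcal F_{U_K}$ makes the constants uniform; and the uniform bound on $\mathcal M_\psi$ is handled in the paper not by balancing generators within ideal classes but by Lemma \ref{idealunif}, a Minkowski second theorem argument giving $\mathcal M_\psi\lesssim_d D(K)^{(d-1)/(2d)}$ for every ideal at once, which is cleaner than your sketch. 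With those repairs your argument would in fact give the bound for \emph{all} primes of large norm (no density-one exceptional set) --- but only for non-CM fields of unit rank at least two, so it does not establish the stated theorem.
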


Konyagin and Shparlinski also asked whether the same claim is true for almost all ideals. Recently this question was affirmatively answered by Bourgain and Chang \cite{BC07}. They also improved the result regarding prime ideals by reducing the aymptotic size of the exceptional set. 

\begin{theorem}\label{BC}{\rm (Bourgain-Chang)} $\forall\epsilon>0$, $\exists \delta=\delta(\epsilon)$ such that $\lim_{\epsilon\rightarrow 0}\delta=0$ and \begin{equation}\label{BCbound}L(K,I)\leq N(I)^{1-\epsilon}\end{equation} holds for all ideals outside an exceptional collection $\Omega$ of ideals of asymptotic density at most $\delta$ as $N(I)\rightarrow\infty$.\end{theorem}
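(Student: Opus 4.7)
The plan is to prove Theorem~\ref{BC} by realizing each residue class $\beta\in(\mathcal{O}_K/I)^*$ as a rational point on a torus and applying the orbit-density estimate of Theorem~\ref{dioQ}. Set $Q=N(I)$ and take $\Gamma=I$ in Condition~\ref{condG'}: since $I$ is a (fractional) ideal, $U_K$ preserves $I$, so after passing to a finite-index subgroup satisfying Condition~\ref{condG} we obtain a $G$-action on $X=(K\otimes_\mathbb{Q}\mathbb{R})/I$. From $Q\mathcal{O}_K\subset I\subset\mathcal{O}_K$, every $\beta\in\mathcal{O}_K/I$ maps to a point of order dividing $Q$ in $X$; pulled back through $\psi$ it becomes $v_\beta/Q\in\mathbb{T}^d$, and for $\beta$ coprime to $I$ one checks that $\gcd(v_\beta,Q)=1$, placing us in case (ii) of Theorem~\ref{dioQ}.

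The basic mechanism is then as follows. Theorem~\ref{dioQ}(ii) produces $u\in U_K$ with $h^{\mathrm{Mah}}(u)\leq 3\log Q$ and some $\xi\in I$ such that $|u\beta-\xi|\lesssim\delta\,\mathcal{S}_I$ in $K\otimes_\mathbb{Q}\mathbb{R}$, where $\delta\lesssim(\log\log\log Q)^{-c}$ and $\mathcal{S}_I\sim Q^{1/d}$. The element $y:=u\beta-\xi\in\mathcal{O}_K$ lies in the class of $u\beta$ modulo $I$ and satisfies $|\sigma_i(y)|\lesssim\delta\,Q^{1/d}$ for every embedding, hence $|N_K(y)|\lesssim\delta^d Q$. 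Multiplying by $u^{-1}$ (a unit, preserving both $I$ and the absolute norm) yields a representative of $\beta$ itself with the same norm bound, giving the unconditional estimate $L(K,I)\lesssim N(I)/(\log\log\log N(I))^{cd}$ for every ideal $I$.

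The main obstacle is the huge gap between this sub-logarithmic saving and the polynomial saving $N(I)^{-\epsilon}$ demanded by Theorem~\ref{BC}. Since the Fourier-analytic entropy-to-density conversion of \S\ref{measresults} cannot be pushed past sub-logarithmic rates on a single orbit, closing the gap requires exploiting the ``exceptional set of density~$\delta$'' structure by averaging over ideals. The natural plan is, for each bad ideal (one with $L(K,I)>N(I)^{1-\epsilon}$), to select a witness class $\beta_I\in(\mathcal{O}_K/I)^*$ whose $G$-orbit on $X$ misses the $N(I)^{-\epsilon/d}$-neighborhood of the origin, and then to bound the density of such $(I,\beta_I)$ by applying the measure-theoretic Theorem~\ref{effmeas} to the empirical measure of the pooled witnesses: the full-entropy regime $\alpha\approx 1$ is exactly where the $\delta$-trade-off in that theorem permits savings polynomial in $N(I)$.

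The hardest single step is this final averaging. Translating the pointwise orbit estimate into a distributional statement across the ensemble of ideals appears to require a supplementary counting or sumset inequality controlling how often a $G$-orbit can concentrate away from the origin in $X$; I would expect to produce such an inequality by exploiting the eigenspace decomposition of \S\ref{GVi} and the positive-entropy-in-$V_i$ dichotomy of Proposition~\ref{dominated}, thereby reducing the problem on $X$ to a one- or two-dimensional arithmetic question on $V_i$ where Baker--W\"ustholz-type estimates (as already used in \S\ref{GVi}) can be combined with a pigeonhole over ideals.
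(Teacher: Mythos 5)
There is a genuine gap, and it is structural rather than technical. Theorem~\ref{BC} is a quoted result of Bourgain--Chang: the paper does not prove it, and explicitly records that its proof (like that of Theorem~\ref{KS}) rests on exponential/character sum estimates over finite fields. The dynamical machinery of this paper is stated by the author to give only a bound ``weaker than (\ref{BCbound}), but still $o(N(I))$,'' valid for all ideals with no exceptional set; that is exactly Theorem~\ref{appthm} in the appendix. Your first mechanism (realize $\beta$ as a rational point of $X=(K\otimes_{\mathbb Q}\mathbb R)/I$, apply the orbit-density result, pull the nearby orbit point back to a small-norm representative of $\beta$) is essentially the appendix argument --- the paper runs it through Proposition~\ref{largetopo'} applied to the orbit $U_K.\beta$ after bounding the stabilizer, rather than through Theorem~\ref{dioQ}(ii), and your reduction to case (ii) is itself shaky since the additive order of $\beta$ in $\mathcal O_K/I$ need not equal $N(I)$ --- but in any case it only yields a saving of the form $(\log\log\log N(I))^{-c}$, not $N(I)^{-\epsilon}$.

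The proposed bridge across this gap does not work. The conclusion of Theorem~\ref{effmeas} is a discrepancy bound of size $(\log\frac1\epsilon)^{-O(\delta)}$ with $\delta\le\alpha/2\le 1/2$, i.e.\ at best polynomial in $\log\frac1\epsilon$; since every relevant scale in your setup is comparable to a power of $N(I)$, no choice of $\alpha$, $\delta$, or pooling of ``witness'' classes across ideals can convert this into a saving that is polynomial in $N(I)$. The bottleneck is intrinsic to the method: the quantitative irrationality of the eigenspaces (Proposition~\ref{quantirr}) and the length of the approximate arithmetic progressions (Proposition~\ref{arithpro}) are controlled by Baker--W\"ustholz lower bounds, which are only polynomial in logarithmic quantities, so the Fourier decay one can extract is logarithmic in the scale no matter how the entropy input is assembled. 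Consequently the ``final averaging'' step is not a missing lemma one could supply within this framework; proving (\ref{BCbound}) genuinely requires the finite-field exponential sum techniques of Bourgain--Chang, which your proposal never engages.
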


The proofs of both Theorem \ref{KS} and Theorem \ref{BC} are based on exponential sums on finite fields.

Bourgain and Lindenstrauss observed that, assuming $K$ is not a CM-field and $\mathrm{rank}(U_K)\geq 2$, a bound which is weaker than (\ref{BCbound}), but still $o(N(I))$, can be achieved by ergodic-theoretical methods for all ideals without exceptional set. Cerri made observations along the same lines in the study of Euclidean minima of number fields.

Such observations can be implemented using Proposition \ref{largetopo'}.

\declareCONST{appthmdensityCONST} \declareCONST{appthmboundDCONST} \declareCONST{appthmboundFCONST}
\begin{theorem}\label{appthm}Suppose $K$ is a non CM-number field of degree $d$ with $\mathrm{rank}(U_K)\geq 2$. Then $L(K,I)=o(N(I))$ holds for all ideals $I$ as $N(I)\rightarrow\infty$. 

More precisely, there are effective constants $\ref{appthmdensityCONST}$, $\ref{appthmboundDCONST}$, $\ref{appthmboundFCONST}$ depending only on $d$ such that \begin{equation}\label{minresidue}\frac{L(K, I)}{N(I)}\leq \big(\log\log\log N(I)\big)^{-\ref{appthmdensityCONST}\mathcal F_{U_K}^2}\end{equation} for all ideals $I$ with \begin{equation}\label{condI}\log\log\log N(I)\geq\big(\ref{appthmboundDCONST}D(K)^{15(d-1)},\max(\mathcal F_{U_K},2)^{\ref{appthmboundFCONST}\mathcal F_{U_K}^2}\big).\end{equation} Here $D(K)$ is the discriminant of $K$, and $\mathcal F_{U_K}$ is the logarithmic Mahler measure bound on some set that generates $U_K$ up to finite index, as defined in Definition \ref{funduni}.\end{theorem}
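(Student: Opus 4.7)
The plan is to recast the problem as a quantitative orbit-density question on the torus $X_I=(K\otimes_{\mathbb Q}\mathbb R)/\sigma(I)$ under the action of the unit group, and apply Proposition \ref{dioQ'}. I would first fix an ideal $I$ satisfying (\ref{condI}), set $\Gamma=\sigma(I)$, and pick a linear isomorphism $\psi_I\colon\mathbb R^d\to K\otimes_{\mathbb Q}\mathbb R$ with $\psi_I(\mathbb Z^d)=\Gamma$ whose uniformity obeys $\mathcal M_{\psi_I}\lesssim_d|D(K)|^{(d-1)/(2d)}$; such a $\psi_I$ exists by Minkowski-reducing any basis of $\sigma(I)$. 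Because $K$ is non-CM with $\mathrm{rank}(U_K)\geq 2$, taking $G:=U_K$ with $\phi=\mathrm{id}$ realizes Condition \ref{condG'} for the $G$-action on $X_I$, and $\mathcal F_{\phi(G)}=\mathcal F_{U_K}$.

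For each $\beta\in(\mathcal O_K/I)^*$, I would consider the image $p_\beta\in X_I$ of any representative of $\beta$ under $\sigma$. A direct check shows that the denominator of $\psi_I^{-1}(p_\beta)$ in $\mathbb T^d$ equals the additive order of $\beta$ in $\mathcal O_K/I$; invertibility of $\beta$ forces this to coincide with the characteristic $N_0(I)$ of $\mathcal O_K/I$, i.e.\ the positive generator of $I\cap\mathbb Z$, since in a ring a unit and $1$ have the same additive order. From $(N_0(I))\subset I$ one obtains $N_0(I)^d\geq N(I)$, so $\log\log\log N_0(I)=\log\log\log N(I)+O_d(1)$, and with the choice of $\psi_I$ above the hypothesis (\ref{condI}) implies (\ref{dioQassum}) with $Q=N_0(I)$. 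Proposition \ref{dioQ'} (case (ii) of Theorem \ref{dioQ}, so $k=1$) then gives that $B_G^{\mathrm{Mah}}\!\bigl(3\log N_0(I)\bigr)\cdot p_\beta$ is $(\log\log\log N(I))^{-\ref{dioQ'densityCONST}\mathcal F_{U_K}^2}$-dense in $X_I$.

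Density at the origin produces $u\in U_K$ with $m(u)\leq 3\log N(I)$ such that $u\cdot p_\beta$ lies within $\mathcal M_{\psi_I}\,\mathcal S_\Gamma\,(\log\log\log N(I))^{-\ref{dioQ'densityCONST}\mathcal F_{U_K}^2}$ of $0\in X_I$; extracting the nearest lift yields $y\in u\beta$ with $|\sigma(y)|$ no larger than this quantity. Combining the AM--GM bound $|N_K(y)|\leq d^{-d/2}|\sigma(y)|^d$ with $\mathcal S_\Gamma^d=N(I)\,\mathrm{covol}(\sigma(\mathcal O_K))\lesssim_d N(I)\,|D(K)|^{1/2}$ then gives
\[
 |N_K(y)|\lesssim_d \mathcal M_{\psi_I}^d\,|D(K)|^{1/2}\,N(I)\,(\log\log\log N(I))^{-d\,\ref{dioQ'densityCONST}\mathcal F_{U_K}^2}.
\]
Since $u$ is a unit, $u^{-1}y\in\beta$ has the same norm, so $N_I(\beta)$ inherits this bound, and taking the maximum over $\beta$ yields (\ref{minresidue}) upon setting (say) $\ref{appthmdensityCONST}=\tfrac12 d\,\ref{dioQ'densityCONST}$.

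The main hurdle will be absorbing the $|D(K)|$-dependent prefactor $C_d\mathcal M_{\psi_I}^d|D(K)|^{1/2}$ into the density rate: this uses the lower bound $\log\log\log N(I)\geq\ref{appthmboundDCONST}D(K)^{15(d-1)}$ from (\ref{condI}) and requires keeping $\mathcal M_{\psi_I}^d$ uniformly polynomial in $|D(K)|$ as $I$ varies over all ideals, which is precisely what reduction of $\sigma(I)$ delivers — but the exponent in $|D(K)|$ must be tracked carefully against the $15(d-1)$ in (\ref{condI}), and a second smaller hurdle is verifying the almost-tautological lower bound $N_0(I)\geq N(I)^{1/d}$ transfers cleanly through the triple logarithm.
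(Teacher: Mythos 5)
Your argument is correct, and it reaches the key density input by a genuinely different route than the paper. You treat $\beta$ as a single rational point of $\mathbb T^d$ whose exact denominator is the characteristic $N_0(I)$ of $\mathcal O_K/I$ (your observations that a unit has the same additive order as $1$, and that $(N_0(I))\subset I$ gives $N_0(I)^d\geq N(I)$, are both correct), and you then invoke Proposition \ref{dioQ'} in case (ii) with $Q=N_0(I)$, $k=1$, as a black box; the loss from $N_0(I)\geq N(I)^{1/d}$ is only an additive $O_d(1)$ at the $\log\log\log$ scale, absorbable by the constants in (\ref{condI}), and your matching of $\mathcal M_{\psi_I}^{30d}\lesssim_d D(K)^{15(d-1)}$ against (\ref{condI}), together with the absorption of the prefactor $\mathcal M_{\psi_I}^d D(K)^{1/2}\lesssim_d D(K)^{d/2}$, is exactly the paper's own final bookkeeping. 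The paper instead never checks any denominator or coprimality condition: it bounds $|U_K.\beta|$ from below by noting that any stabilizing unit satisfies $u\equiv 1\pmod I$ and hence $h^{\mathrm{Mah}}(u)\geq\log(2^{-d}N(I))$, which yields (\ref{orbitsize}); since the orbit is $G$-invariant it then applies the large-set Proposition \ref{largetopo'} directly, rerunning the proof of Proposition \ref{dioQ'} with (\ref{orbitsize}) in place of the separated block (\ref{startingblock}). Your reduction buys economy (no stabilizer computation, no re-run of the \ref{largetopo'} machinery, a clean citation of an already-proved single-orbit statement), while the paper's route buys independence from the rational structure of $\beta$, using only the congruence $u\equiv 1\pmod I$. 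One caveat: your one-line justification of $\mathcal M_{\psi_I}\lesssim_d D(K)^{(d-1)/(2d)}$ by ``Minkowski-reducing a basis'' is not sufficient for an arbitrary lattice, since reduced bases only control $\|\psi\|\,\|\psi^{-1}\|$ in terms of the ratio of successive minima, which is unbounded in general; you also need the arithmetic input that every nonzero $y\in I$ has $|N_K(y)|\geq N(I)$, which is what keeps the successive minima of $\sigma(I)$ balanced uniformly in $N(I)$. This is precisely the content of the paper's Lemma \ref{idealunif}, so the fact you need is available, but it should be cited or reproved rather than attributed to generic reduction theory.
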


The group of units $U_K$ acts on $O_K$ by multiplication and preserves all ideals $I$. We know the embedding $\sigma$ in (\ref{KotimesR}) identifies $K\otimes_{\mathbb Q}\mathbb R$ with $\mathbb R^d$ and embedds $\mathcal O_K$ as a full rank lattice. An ideal $I$ is embedded as a sublattice of $\mathcal O_K\subset K\otimes_{\mathbb Q}\mathbb R$.

Recall the $i$-th coordinate $y_i$ in $\mathbb R^d$ of $y\in K$ is $\sigma_i(y)$ for $1\leq i\leq r_1$ and $y_{r_1+j}+\mathrm{i}y_{r_1+r_2+j}=\sigma_{r_1+j}(y)$ for $1\leq j\leq r_1$. Thus 
\begin{equation}\label{radiusnorm}\begin{split}
|N_K(y)|=&\prod_{i=1}^d|\sigma_i(y)|=\prod_{i=1}^{r_1}|y_i|\prod_{j=1}^{r_2}|y_{r_1+j}+\mathrm iy_{r_1+r_2+j}|^2\\
\leq&(\max_{i=1}^d|y_i|)^{r_1}\big(2(\max_{i=1}^d|y_i|)^2\big)^{r_2}=2^{r_2}(\max_{i=1}^d|y_i|)^d.\end{split}\end{equation}

Identify $I$ with the lattice $\sigma(I)$ in $\mathbb R^d$. We are going to show that $I$ doesn't degenerate.  More precisely, for all $I$, there exists an isomorphism $\psi$ between $\mathbb T^d$ and $(K\otimes_{\mathbb Q}\mathbb R)/I$ whose uniformity $\mathcal M_\psi$ is bounded in terms of $D(K)$.

\begin{lemma}Let $C\subset\mathbb R^d$ be a closed convex set that is symmetric with respect to the origin and $\Lambda<R^d$ be a full rank lattice, $d\geq 2$. Denote by $m_1\leq\cdots\leq m_d$ the successive minima of $C$ with respect to $\Lambda$. Then there is a basis $w^1,\cdots w^n$ for $\Lambda$ such that $w^i\in(\frac 32)^{i-1}m_iC, \forall i$.\end{lemma}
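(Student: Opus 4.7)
The statement is a standard result on the existence of a lattice basis whose vectors are controlled by the successive minima. My plan is to construct such a basis inductively, filtering $\Lambda$ by the primitive sublattices associated to the linearly independent vectors realizing the successive minima.

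First, I would invoke the definition of successive minima to pick linearly independent $v^1,\dots,v^d\in\Lambda$ with $v^i\in m_iC$. Setting $\Lambda_i:=\Lambda\cap\mathrm{span}(v^1,\dots,v^i)$, I aim to build the required basis inductively so that $\{w^1,\dots,w^i\}$ is a basis of $\Lambda_i$ and $w^i\in(3/2)^{i-1}m_iC$. The base case is $w^1=v^1$, which lies in $m_1C$; moreover $v^1$ must be primitive in $\Lambda$, for otherwise its primitive generator would give an even shorter lattice vector, contradicting minimality.

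For the inductive step, suppose $w^1,\dots,w^{i-1}$ is already a basis of $\Lambda_{i-1}$ with $w^j\in(3/2)^{j-1}m_jC$. Since $\Lambda_{i-1}$ is primitive of corank one in $\Lambda_i$, there exists $z\in\Lambda_i$ such that $\{w^1,\dots,w^{i-1},z\}$ is a basis of $\Lambda_i$. Writing
\begin{equation*}
v^i=c_1w^1+\cdots+c_{i-1}w^{i-1}+Nz,\qquad c_j\in\mathbb{Z},\ N\in\mathbb{Z}\setminus\{0\},
\end{equation*}
I replace $z$ by $w^i:=z+\sum_{j<i}a_jw^j$ where each $a_j\in\mathbb{Z}$ is chosen to be the integer closest to $c_j/N$, so that $|a_j-c_j/N|\leq 1/2$. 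Then
\begin{equation*}
w^i=\tfrac{1}{N}v^i+\sum_{j=1}^{i-1}\bigl(a_j-c_j/N\bigr)w^j,
\end{equation*}
and because $C$ is convex and symmetric about the origin, combined with $m_j\leq m_i$ and $|N|\geq 1$, this puts $w^i$ inside the set
\begin{equation*}
\Bigl(\tfrac{1}{|N|}+\sum_{j=1}^{i-1}\tfrac{1}{2}(3/2)^{j-1}\tfrac{m_j}{m_i}\Bigr)m_iC\subset\Bigl(1+\sum_{j=1}^{i-1}\tfrac{1}{2}(3/2)^{j-1}\Bigr)m_iC.
\end{equation*}
The geometric sum evaluates as $\sum_{j=1}^{i-1}\tfrac12(3/2)^{j-1}=(3/2)^{i-1}-1$, so $w^i\in(3/2)^{i-1}m_iC$, completing the induction.

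I do not anticipate a substantial obstacle here: the only delicate point is the rounding estimate, which is straightforward once one notes that the freedom in choosing $z$ modulo $\Lambda_{i-1}$ is exactly freedom in shifting the coefficients $c_j/N$ by integers. The chain of sublattices $\Lambda_i$ being primitive in $\Lambda_{i+1}$ guarantees that the constructed $\{w^1,\dots,w^d\}$ is indeed a basis of all of $\Lambda$ (since $\Lambda_d=\Lambda$), which settles the lemma.
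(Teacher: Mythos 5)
Your proof is correct and follows essentially the same route as the paper: the same filtration $\Lambda_i=\Lambda\cap\mathrm{span}(v^1,\dots,v^i)$, the same inductive extension of a basis of $\Lambda_{i-1}$ via the rank-one torsion-free quotient $\Lambda_i/\Lambda_{i-1}\cong\mathbb Z$, and the same rounding of the coefficients $c_j/N$ to within $1/2$ followed by the geometric sum giving the factor $(3/2)^{i-1}$. The only cosmetic difference is at the base case, where you take $w^1=v^1$ after noting primitivity while the paper takes the generator of the cyclic lattice $\Lambda_1$; these are equivalent.
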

\begin{proof} By definition of successive minima, $\Lambda$ contains $d$ linearly independent vectors $v^1,\cdots, v^d$ such that $v^i\in m_iC$. They generate a finite-index sublattice of $\Lambda$.

For $i=1,\cdots, d$, let $L_i$ be the vector subspace spanned by $v^1,\cdots,v^i$ and $\Lambda_i=\Lambda\cap L_i$. Then $\Lambda_i$ is a lattice of rank $i$ and $\Lambda_d=\Lambda$.

We claim that there exists a set of vectors $w^1,\cdots,w^d$ such that for all $i$, $w^i\in\Lambda_i\cap(\frac32)^{i-1}m_iC$ and $w^1,\cdots,w^i$ form a basis of $\Lambda_i$. The proof is by induction. 

When $i=1$, $L_1=\mathbb Rv^1\cong\mathbb R$ and $\Lambda_1$ is a discrete subgroup in it. Thus $\Lambda_1$ is cyclic. Suppose $w^1$ is a generator, then $v^1=nw^1$ for some $n\in\mathbb N$ up to a change of sign. Hence $w^1=\frac1nv^1\in \frac1n m_dC\subset m_dC$, which proves the $i=1$ case.

Suppose the claim is true for $i-1$. Consider the quotient group $\Lambda_i/\Lambda_{i-1}=\Lambda_i/(\Lambda_i\cap L_{i-1})$, which canonically embedds into $L_i/L_{i-1}\cong\mathbb R$ and hence is torsion-free. Moreover, its torsion free rank is $\mathrm{rank}(\Lambda_i)-\mathrm{rank}(\Lambda_{i-1})=1$. Thus $\Lambda_i/\Lambda_{i-1}\cong\mathbb Z$. Take $w\in\Lambda_i$ such that the coset $w+\Lambda_{i-1}$ is a generator. Notice since $v^i\notin L_{i-1}$, $v^i+\Lambda_{i-1}$ is a non-trivial coset. So $\forall v\in\Lambda_i$, up to a change of sign, $\exists n\in\mathbb N$ such that $n(w+\Lambda_{i-1})=v^i+\Lambda_{i-1}$. In other words, $v=nw+\sum_{j=1}^{i-1}l_jw^j$ for some integers $l_1,\cdots,l_{i-1}$ or equivalently, $w^1,\cdots,w^{i-1}$ and $w$ generate $\Lambda_i$. 

In particular, if we take $v=v^i$, then $v^i=nw+\sum_{j=1}^{i-1}l_jw^j$ and $w=\frac1nv^i+\sum_{j=1}^{i-1}\frac{l_j}nw^j$ for some $n\in\mathbb N$ and $l_1,\cdots, l_{i-1}\in Z$.  Decompose $\frac{l_j}n=a_j+k_j$ with $a_j\in[-\frac12,\frac12]$ and $k_j\in\mathbb Z$, then $w=w^i+\sum_{j=1}^{i-1}k_jw^j$ where $w^i=\frac1nv^i+\sum_{j=1}^{i-1}a_jw^j$. Hence $w^1,\cdots,w^{i-1}$ and $w^i$ generate $\Lambda_i$. It suffices to show $w^i\in (\frac32)^im_dC$.

As $v^i\in m_iC$ and $w_j\in(\frac32)^{j-1}m_jC, \forall j<i$, 
\begin{align*}w^i\in&\frac1nm_iC+\sum_{j=1}^{i-1}a_j(\frac32)^{j-1}m_jC\\
\subset&m_iC+\frac12\sum_{j=1}^{i-1}(\frac32)^{j-1}m_iC=\big(1+\frac12\sum_{j=1}^{i-1}(\frac32)^{j-1}\big)m_iC\\
=&(\frac32)^{i-1}m_iC.\end{align*}

Thus the claims is proved. Let $i=d$, the lemma follows.\end{proof}

\begin{lemma}\label{idealunif} There exists an isomorphim $\psi:\mathbb R^d\mapsto\mathbb R^d$ such that $\psi(\mathbb Z^d)=I$, $\mathcal M_\psi\lesssim_d D(K)^\frac{d-1}{2d}$.\end{lemma}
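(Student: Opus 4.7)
The plan is to produce $\psi$ from a well-chosen basis of $\sigma(I)\subset\mathbb{R}^d$ via the lemma just established. First, I would compute the covolume: the standard discriminant formula, combined with the factor of $2^{-r_2}$ coming from the real/imaginary decomposition in (\ref{KotimesR}), gives $\mathrm{covol}(\sigma(I))=2^{-r_2}N(I)\sqrt{|D(K)|}$, so the scale is $\mathcal{S}_I\asymp_d N(I)^{1/d}|D(K)|^{1/(2d)}$. Next, I would bound the first Euclidean successive minimum $m_1$ of $\sigma(I)$: for any nonzero $y\in I$, $|N_K(y)|\geq N(I)$, and estimate (\ref{radiusnorm}), namely $|N_K(y)|\leq 2^{r_2}(\max_i|y_i|)^d\leq 2^{r_2}|\sigma(y)|^d$, forces $|\sigma(y)|\geq (N(I)/2^{r_2})^{1/d}$, hence $m_1\gtrsim_d \mathcal{S}_I\cdot|D(K)|^{-1/(2d)}$. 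Minkowski's second theorem gives $m_1\cdots m_d\asymp_d\mathcal{S}_I^d$, so combined with the bound on $m_1$ this yields $m_d\leq \mathcal{S}_I^d/m_1^{d-1}\lesssim_d \mathcal{S}_I\cdot|D(K)|^{(d-1)/(2d)}$.

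Applying the preceding lemma with $C$ the closed Euclidean unit ball produces a basis $\{w^1,\ldots,w^d\}$ of $\sigma(I)$ with $|w^i|\leq (3/2)^{i-1}m_i$. I take $\psi$ to be the linear map $e_i\mapsto w^i$, which satisfies $\psi(\mathbb{Z}^d)=\sigma(I)$. The upper bound on $\|\psi\|$ is immediate: $\|\psi\|\leq(\sum_i|w^i|^2)^{1/2}\leq \sqrt{d}(3/2)^{d-1}m_d\lesssim_d\mathcal{S}_I\cdot|D(K)|^{(d-1)/(2d)}$, giving the desired estimate on $\|\psi\|\cdot\mathcal{S}_I^{-1}$.

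The hard part will be controlling $\|\psi^{-1}\|\cdot\mathcal{S}_I$: the naive estimate $\|\psi^{-1}\|\leq\|\psi\|^{d-1}/|\det\psi|$ only produces exponent $(d-1)^2/(2d)$, too weak as soon as $d\geq 3$. To obtain the sharp exponent I would need to sharpen the basis construction so that the smallest singular value $\lambda_1(\psi)$ is comparable (up to a $d$-only factor) to $m_1$. There are two natural routes: either replace Minkowski reduction by Korkine--Zolotareff reduction, under which the Gram--Schmidt lengths of the basis are comparable to the successive minima up to dimension-dependent factors, so that the singular values of $\psi$ are comparable to the $m_i$; or apply the same lemma to the Euclidean dual lattice $\sigma(I)^\vee$, obtaining a short basis $\{v^i\}$ with $|v^i|\lesssim_d m_d(\sigma(I)^\vee)$, and take $\{w^i\}$ to be the basis of $\sigma(I)$ dual to $\{v^i\}$. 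Banaszczyk's transference inequality $m_1(\sigma(I))\cdot m_d(\sigma(I)^\vee)\lesssim_d 1$ then yields $\|\psi^{-1}\|=\|\psi^{-T}\|\lesssim_d m_d(\sigma(I)^\vee)\lesssim_d 1/m_1(\sigma(I))\lesssim_d |D(K)|^{1/(2d)}/\mathcal{S}_I$, so $\|\psi^{-1}\|\cdot\mathcal{S}_I\lesssim_d|D(K)|^{1/(2d)}\leq|D(K)|^{(d-1)/(2d)}$. Combining with the first estimate gives $\mathcal{M}_\psi\lesssim_d|D(K)|^{(d-1)/(2d)}$.
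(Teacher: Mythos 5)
Your setup coincides with the paper's own proof up to the midpoint: lower-bound the first successive minimum via $N(I)\mid N_K(y)$ and (\ref{radiusnorm}) to get $m_1\gtrsim_d N(I)^{1/d}$, upper-bound $m_d$ (and products of the $m_i$) by Minkowski's second theorem, extract a basis with $|w^i|\lesssim_d m_i$ from the preceding lemma, and conclude $\|\psi\|\,\mathcal S_I^{-1}\lesssim_d D(K)^{\frac{d-1}{2d}}$. Where you diverge is the inverse bound. You are right that the crude estimate $\|\psi^{-1}\|\leq\|\psi\|^{d-1}/|\det\psi|$ is too lossy, but your conclusion that one must sharpen the reduction (Korkine--Zolotareff) or pass to the dual lattice and invoke Banaszczyk's transference theorem overshoots: the paper closes the gap elementarily with the basis you already have. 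Each entry of $\psi^{-1}$ is a $(d-1)\times(d-1)$ minor of $\psi$ divided by $\det\psi$, and such a minor involves only $d-1$ of the basis vectors, hence is $\lesssim_d\prod_{i\neq k}m_i\leq\big(\prod_i m_i\big)/m_1\lesssim_d D(K)^{\frac12}N(I)^{\frac{d-1}d}$, using precisely the lower bound on $m_1$ you derived; this yields $\|\psi^{-1}\|\lesssim_d N(I)^{-\frac1d}$, i.e. $\|\psi^{-1}\|\,\mathcal S_I\lesssim_d D(K)^{\frac1{2d}}$. In other words, the individual bounds $|w^i|\lesssim_d m_i$ (rather than the uniform bound $m_d$) already carry the needed information, and no reduction theory beyond the preceding lemma is required. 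Your route via KZ/size-reduced bases does work, but it rests on standard facts (Gram--Schmidt lengths comparable to successive minima, and control of the triangular inverse) that are heavier than anything the lemma needs.

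Your second route has a genuine slip as written: once you replace the basis by the one dual to a short basis of $\sigma(I)^\vee$, the matrix $\psi$ changes, so the bound $\|\psi\|\,\mathcal S_I^{-1}\lesssim_d D(K)^{\frac{d-1}{2d}}$ obtained from your first construction no longer applies and cannot simply be ``combined'' with the new bound on $\|\psi^{-1}\|$. It is repairable -- for the new $\psi$ one gets $\|\psi\|\lesssim_d\|\psi^{-1}\|^{d-1}|\det\psi|\lesssim_d D(K)^{\frac{d-1}{2d}}\mathcal S_I$ by the cofactor bound applied to $\psi^{-1}$ -- but as stated the two halves of your estimate refer to two different matrices.
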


\begin{proof} Define a closed symmetric convex set $C=\{z\in\mathbb R^d\big|\max_{i=1}^d|z_i|\leq 1\}$ in $\mathbb R^d$, whose volume is $2^d$. Suppose $m_1\leq\cdots\leq m_d$ are the succesive minima of $C$ with respect to the lattice $I$. Then by Minkowski's Second Theorem:
\begin{equation}\label{idealsuc1} m_1^{d-1}m_d\leq m_1m_2\cdots m_d\lesssim_d\mathrm{covol}(I)=D(K)^{\frac12}N(I).\end{equation}
On the other hand, by the definition of successive minima, $\exists y\in I\cap m_1C$, $y\neq 0$. Then $|y_i|\leq m_1$, $\forall i$. By (\ref{radiusnorm})
\begin{equation}\label{idealsuc2}|N_K(y)|\lesssim_dm_1^d.\end{equation}

Notice $N(I)|N_K(y)$ and $N_K(y)\neq 0$, thus $N(I)\leq|N_K(y)|$. Compare (\ref{idealsuc1}) and (\ref{idealsuc2}), we see \begin{equation}\label{maxsucminbound} m_d\lesssim_d{m_1}^{-(d-1)}D(K)^{\frac12}N(I)\lesssim_d |N_K(y)|^{-\frac{d-1}d}D(K)^{\frac12}N(I)\leq D(K)^{\frac12}N(I)^\frac1d.\end{equation}
and for all $k$,
\begin{equation}\label{prosucminbound}\begin{split}\prod_{\substack{i=1,\cdots,d\\i\neq k}}m_i\lesssim_d&\ {m_1}^{-1}D(K)^{\frac12}N(I)\lesssim_d |N_K(y)|^{-\frac1d}D(K)^{\frac12}N(I)\\\leq\ &D(K)^{\frac12}N(I)^\frac{d-1}d.\end{split}\end{equation}

The previous lemma says that there is basis $y^1,\cdots, y^d$ of $I$ such that \begin{equation}\label{basis}\max_{j=1}^d|y^i_j|\lesssim_d m_i,\forall i=1,\cdots,d,\end{equation} where $y^i_j$ is the $j$-th coordinate of $y^i$.

Define a $d\times d$ matrix $\psi:=(y^i_j)$, then $\psi(\mathbb Z^n)=I$. In particular \begin{equation}\label{idealscale}|\det\psi|=\mathcal S_I^d=\mathrm{covol}(I)=D(K)^{\frac12}N(I).\end{equation}

From (\ref{maxsucminbound}) and (\ref{basis}) it is easy to see \begin{equation}\label{psibound}\|\psi\|\lesssim_dm_d\lesssim_dD(K)^{\frac12}N(I)^\frac1d.\end{equation} Denote $\psi^{-1}$ by $(a^j_i)$, then each entry $a^i_j=\frac{A^j_i}{|\det\psi|}$ where $A^j_i$ is the determinant of some $(d-1)\times(d-1)$-submatrix in $\psi$. By (\ref{prosucminbound}) and (\ref{basis}) we obtain $|A^i_j|\lesssim_dD(K)^{\frac12}N(I)^\frac{d-1}d$, so 
\begin{equation}\label{psiinversebound}\|\psi^{-1}\|\lesssim_d\max_{i,j}|a^i_j|\lesssim_d\frac{D(K)^{\frac12}N(I)^\frac{d-1}d}{D(K)^{\frac12}N(I)}=N(I)^{-\frac1d}.\end{equation}

Combining (\ref{idealscale}), (\ref{psibound}) and (\ref{psiinversebound}), we obtain 
\begin{align*}\mathcal M_\psi=\ &\max(\|\psi\|\mathcal S_I^{-1},\|\psi^{-1}\|\mathcal S_I)\\
\lesssim_d&\max\Big(D(K)^{\frac12}N(I)^\frac1d\big(D(K)^{\frac12}N(I)\big)^{-\frac1d},N(I)^{-\frac1d}\big(D(K)^{\frac12}N(I)\big)^{\frac1d}\Big)\\
=\ &D(K)^\frac{d-1}{2d}.\end{align*}\end{proof}

Now we are ready to prove the theorem.

\begin{proof}[Proof of Theorem \ref{appthm}] As $U_K=\mathcal O_K^*$ preserves $I$. $U_K$ acts as an automorphism group of the twisted torus $X=(\mathbb K\otimes_{\mathbb Q}\mathbb R)/I$, moreover $U_K$ is volume preserving as $\forall u\in U_K$, $|\det(\times_u)|=|N(u)|=1$. Thus using the isomorphism $\psi$ constructed in Lemma \ref{idealunif} as a conjugation map, the $U_K$ action is conjugate to the $G$-action on $\mathbb T^d$ where $G\cong U_K$ is an abelian subgroup in $SL_d(\mathbb Z)$. In other words, there is a group isomorphism $\phi:G\overset{\sim}\mapsto U_K$ such that $g.x=\psi^{-1}\big(\times_u.\psi(x)\big), \forall x\in\mathbb T^d$.

So $G$ satisfies Condition \ref{condG'} with $\Gamma=I$ as we assumed $K$ is not CM and $\mathrm{rank}(U_K)\geq 2$.

$\beta\in(\mathcal O_K/I)^*\subset \mathcal O_K/I$ is a rational point in the twisted torus $X$. Let us study its orbit $U_K.\beta\subset X$ under the $U_K$ action. Denote the stabilizer in $U_K$ of $\beta$ by $U_K^\beta$. Suppose $u\in U_K^\beta$, then $u.\beta=\beta$. Because $\beta$ is invertible in the ring $\mathcal O_K/I$, this implies $u.(1+I)=(1+I)$ or equivalently $u\in 1+I$. This helps us to estimate the size of $u$. Notice \begin{align*}2^{h^\mathrm{Mah}(u)}=&\prod_{i=1}^d\max(|\sigma_i(u)|,1)\geq\prod_{i=1}^d\frac{|\sigma_i(u)|+1}2\geq 2^{-d}\prod_{i=1}^d(|\sigma_i(u)-1|)\\
\geq& 2^{-d}\prod_{i=1}^d(|\sigma_i(u-1)|)=2^{-d}|N(u-1)|.\end{align*} However as $u-1\in I$, if $u\neq 1$ then $|N(u-1)|\geq N(I)$ thus its logarithmic Mahler measure is bounded from below: $h^\mathrm{Mah}(u)\geq\log(2^{-d}N(I))$.

Let $\mathcal F_{U_K}$ be defined by Definition \ref{funduni}. Then there are $d$ multiplicatively independent elements $u_1,\cdots,u_r\in U_K$ such that $h^\mathrm{Mah}(u_i)\leq \mathcal F_{U_K}$ where $r=\mathrm{rank}(U_K)$. Consider $Z=\{\prod_{i=1}^ru_i^{n_i}\big|0\leq n_i<\frac1d\mathcal F_{U_K}^{-1}\log(2^{-d}N(I))\}$, then it is not hard to see that for any pair $u\neq u'$  inside $Z$,  $h^\mathrm{Mah}(u^{-1}u')<\log(2^{-d}N(I))$ so $u^{-1}u'\notin U_K^\beta$. Therefore $|U_K.\beta|=|U_K/U_K^\beta|\geq |Z|\gtrsim_d\mathcal F_{U_K}^{-r}\big(\log(2^{-d}N(I))\big)^r,$ where the last inequality relies on assumption (\ref{condI}).

Let $x=\psi^{-1}(\beta)\in\mathbb T^d$, then its orbit is $G.x=\psi^{-1}(U_K.\beta)$, thus \begin{equation}\label{orbitsize}|G.x|\gtrsim_d\mathcal F_{U_K}^{-r}\big(\log(2^{-d}N(I))\big)^r.\end{equation}

At this point we may deduce the follow claim:

\declaresubconst{appthmsubconst1}\declaresubconst{appthmsubconst2}
{\bf Claim}{ \it There are effective constants $\ref{appthmsubconst1}(d)$, $\ref{appthmsubconst2}(d)$ and $\ref{appthmboundFCONST}(d)$ such that if 
\begin{equation}\label{condI'}\log\log\log N(I)\geq\big(\ref{appthmsubconst2}\mathcal M_\psi^{30d},\max(\mathcal F_{U_K},2)^{\ref{appthmboundFCONST}\mathcal F_{U_K}^2}\big)\end{equation}
then the orbit $G.x$ is $\big(\log\log\log N(I)\big)^{-\ref{appthmsubconst1}\mathcal F_{U_K}^2}$-dense.}

The claim is proved by applying Proposition \ref{largetopo'} to the set $E=G.x$. The proof is almost identical to \hyperlink{proofdioQ'}{that} of Proposition \ref{dioQ'}, based on (\ref{orbitsize}) instead of on (\ref{startingblock}) (notice $\mathcal F_{\phi(G)}=\mathcal F_{U_K}$); so we are not going to elaborate here.

By Lemma \ref{idealunif}, when $\ref{appthmboundDCONST}$ is large enough then assumption (\ref{condI'}) follows from the condition (\ref{condI}) in the theorem so the claims applies and $U_K.\beta=\psi(G.x)$ is $\rho_0$-dense in $X$ where $\rho_0=\|\psi\|\big(\log\log\log N(I)\big)^{-\ref{appthmsubconst1}\mathcal F_{U_K}^2}$. This implies that $\exists y\in\beta$ and $u\in U_K$ such that $\sigma(uy)\in\mathbb R^d$ is within distance $\rho_0$ from the origin. By (\ref{radiusnorm}) and (\ref{psibound}) we have \begin{equation}\begin{split}N_I(\beta)\leq&|N_K(y)|=|N_K(uy)|\\
\lesssim_d&\ \rho_0^d=\|\psi\|^d\big(\log\log\log N(I)\big)^{-d\ref{appthmsubconst1}\mathcal F_{U_K}^2}
\\\leq\ &D(K)^{\frac d2}N(I)\big(\log\log\log N(I)\big)^{-d\ref{appthmsubconst1}\mathcal F_{U_K}^2}.\end{split}\end{equation}

As $\log\log\log N(I)\gtrsim_d D(K)^{15(d-1)}$ by assumption and $F_{U_K}\gtrsim_d 1$, whenever $\ref{appthmdensityCONST}$, $\ref{appthmboundDCONST}$, $\ref{appthmboundFCONST}$ in the statement of the theorem are made sufficiently large, in the inequality above both $D(K)^{\frac d2}$ and the implied constant can be absorbed into the last factor, which gives $$N_I(\beta)\leq N(I)\big(\log\log\log N(I)\big)^{-\ref{appthmdensityCONST}\mathcal F_{U_K}^2}.$$ (\ref{minresidue}) follows.
\end{proof}

{\small}

\end{document}